\newtheorem{theorem}{Theorem}[section]
\newtheorem{lemma}[theorem]{Lemma}
\newtheorem{sublemma}{}[theorem]
\newcommand{\ba}{\backslash}
\newcommand{\cl}{{\rm cl}}
\newcommand{\fcl}{{\rm fcl}}
\newcommand{\co}{{\rm co}}
\newcommand{\si}{{\rm si}}
\newcommand{\cls}{{\cl^*}}
\newcommand{\thc}{$3$-connected}
\newcommand{\ths}{$3$-separation}
\newcommand{\ifc}{internally $4$-connected}
\newcommand{\sfc}{sequentially $4$-connected}
\newcommand{\ffsc}{$(4,4,S)$-connected}
\newcommand{\cn}{contradiction}
\newcommand{\btu}{\bigtriangleup}
\newcommand{\ftv}{$(4,3)$-violator}
\newcommand{\ffsv}{$(4,4,S)$-violator}
\newcommand{\ort}{orthogonality}
\newcommand{\ns}{non-sequential}
\newcommand{\al}{\alpha}
\newcommand{\be}{\beta}
\newcommand{\ga}{\gamma}
\newcommand{\de}{\delta}
\begin{document}

\title[Towards a Splitter Theorem  VI]{Towards a Splitter Theorem for Internally $4$-connected Binary Matroids VI}

\thanks{The first author was supported by NSF IRFP Grant OISE0967050, an LMS Scheme 4 grant, and an AMS-Simons travel grant. The second author was supported by the National Security Agency.}

\author{Carolyn Chun}
\address{School of Mathematical Sciences,
Brunel University,
London,  
England}
\email{chchchun@gmail.com}

\author{James Oxley}
\address{Department of
 Mathematics, Louisiana State University, Baton Rouge, Louisiana, USA}
\email{oxley@math.lsu.edu}

\subjclass{05B35, 05C40}
\keywords{splitter theorem, binary matroid, internally $4$-connected}
\date{\today}

\begin{abstract} 
Let $M$ be a $3$-connected binary matroid; $M$ is called internally $4$-connected if one side of every $3$-separation is a triangle or a triad, and $M$ is \ffsc\ if one side of every $3$-separation is a triangle, a triad, or a $4$-element fan. 
Assume $M$ is   \ifc\ and that neither $M$ nor its dual is  a cubic 
M\"{o}bius or planar ladder or a certain coextension thereof. Let $N$ be an \ifc\ proper minor of $M$.  
Our aim is to show that  $M$   has a proper \ifc\ minor with an $N$-minor that can be obtained from $M$ either by removing at most four elements, or by   removing elements in an easily described way from a special substructure of $M$. When this aim cannot be met, the earlier papers in this series showed that, 
 up to duality, $M$ has a good bowtie, that is, a pair, $\{x_1,x_2,x_3\}$ and $\{x_4,x_5,x_6\}$, of disjoint triangles and a cocircuit, $\{x_2,x_3,x_4,x_5\}$, where $M\ba x_3$ has an $N$-minor and is \ffsc. 
We also showed that, when $M$ has a good bowtie, either $M\ba x_3,x_6$ has an $N$-minor; or $M\ba x_3/x_2$ has an $N$-minor and is \ffsc. 
In this paper, we show that, when $M\ba x_3,x_6$ has an $N$-minor but is not \ffsc,   $M$ 
has an \ifc\ proper minor  with an $N$-minor that can be obtained  from $M$ by removing at most three elements, or by   removing elements in a well-described way from one of several  special substructures of $M$. 
This is a significant step towards obtaining a splitter theorem for the class of \ifc\ binary matroids.
\end{abstract}

\maketitle

\section{Introduction}
\label{introduction}

Seymour's Splitter Theorem \cite{seymour}  established that if $N$ is a proper $3$-connected minor of   
a $3$-connected matroid $M$,  then $M$ has a proper $3$-connected minor $M'$ with an $N$-minor such that $|E(M) - E(M')| = 1$ unless $r(M) \ge 3$ and $M$ is a wheel or a whirl. The current paper is the sixth in a series whose aim is to obtain a splitter theorem   for the class of \ifc\ binary matroids. Specifically, we believe we can prove that if $M$ and $N$ are
\ifc\ binary matroids, and $M$ has
a proper $N$-minor, then $M$ has a proper minor $M'$ such that
$M'$ is internally $4$-connected with an $N$-minor, and
$M'$ can be produced from $M$ by a small number of
simple operations.

Any unexplained matroid terminology used here will follow \cite{oxrox}. The only $3$-separations allowed in an internally $4$-connected matroid have a triangle or a triad on one side. A $3$-connected matroid  $M$ is {\it $(4,4,S)$-connected} if, for every $3$-separation $(X,Y)$ of $M$, one of $X$ and $Y$ is a triangle, a triad, or a $4$-element fan, that is, a $4$-element set $\{x_1,x_2,x_3,x_4\}$ that can be ordered so that $\{x_1,x_2,x_3\}$ is a triangle and $\{x_2,x_3,x_4\}$ is a triad. 

To provide a context for our main theorem, we briefly describe our progress towards obtaining the desired splitter theorem.  Johnson and Thomas \cite{johtho} showed that, even for graphs, a splitter theorem in the internally $4$-connected case must take account of some special examples. For $n \ge 3$, let  $G_{n+2}$  be the {\it biwheel} with $n+2$ vertices, that is, $G_{n+2}$ consists of  an $n$-cycle $v_1,v_2,\ldots,v_{n},v_1$, the {\it rim}, and two additional  vertices, $u$ and $w$, both of which are adjacent to every $v_i$. Thus  the dual of $G_{n+2}$ is a cubic planar ladder. Let $M$ be the cycle matroid of $G_{2n+2}$ for some $n \ge 3$ and 
let $N$ be the cycle matroid of the graph that is obtained by proceeding around the rim of $G_{2n+2}$ and alternately deleting the edges from the rim vertex to $u$ and to $w$. Both $M$ and $N$ are internally $4$-connected but there is no   internally $4$-connected proper minor of $M$ that has a proper $N$-minor. We can modify $M$ slightly and still see the same phenomenon. Let $G_{n+2}^+$ be obtained from $G_{n+2}$ by adding a new edge $z$ joining the hubs $u$ and $w$. Let $\Delta_{n+1}$ be the binary matroid that is obtained from 
$M(G_{n+2}^+)$ by deleting the edge $v_{n-1}v_n$ and adding the third element on the line spanned by $wv_n$ and $uv_{n-1}$. This new element is also on the line spanned by $uv_n$ and $wv_{n-1}$. For $r \ge 3$, Mayhew, Royle, and Whittle~\cite{mayroywhi} call $\Delta_r$ the {\it rank-$r$ triangular M\"{o}bius matroid} and note that $\Delta_r \ba z$ is the dual of the cycle matroid of a cubic M\"{o}bius ladder. 
The following is the main result of \cite[Theorem~1.2]{cmoIII}. 

\begin{theorem}
\label{44S}
Let $M$ be an \ifc~binary matroid with an \ifc~proper minor $N$ such that $|E(M)|\geq 15$ and $|E(N)|\geq 6$.  
Then 
\begin{itemize}
\item[(i)] $M$ has a proper minor $M'$ such that $|E(M)-E(M')|\leq 3$ and $M'$ is \ifc\ with an $N$-minor; or 
\item[(ii)] for some $(M_0,N_0)$ in $\{(M,N), (M^*,N^*)\}$, the matroid $M_0$ has a triangle $T$ that contains an element $e$ such that  $M_0\ba e$  is $(4,4,S)$-connected with an $N_0$-minor; or 
\item[(iii)] $M$ is isomorphic to $M(G_{r+1}^+)$, $M(G_{r+1})$, $\Delta_r$,   or $\Delta_r \ba z$ for some $r \ge 5$.
\end{itemize}
\end{theorem}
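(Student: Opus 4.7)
The plan is to begin with Seymour's Splitter Theorem, then refine by a detailed \ths\ analysis of the matroid obtained after a single deletion or contraction, following the general scheme used in earlier papers in this series. By Seymour, since $N$ is a proper \thc\ minor of $M$, there exists $e \in E(M)$ such that either $M \ba e$ or $M/e$ is \thc\ with an $N$-minor. After passing to $(M^*, N^*)$ if necessary, I assume that $M \ba e$ is \thc\ with an $N$-minor. If $M \ba e$ is already \ifc, conclusion (i) holds with $|E(M) - E(M')| = 1$, so I may assume it has a \ths\ $(X, Y)$ with neither side a triangle or a triad.

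Next I would leverage the fact that $M$ is \ifc\ to constrain this \ths. The partition $(X \cup \{e\}, Y)$ or $(X, Y \cup \{e\})$ is a \ths\ of $M$, so one of its sides is a triangle or a triad; unpacking this forces $e$ to sit in a short circuit or cocircuit tightly interacting with the \ths. A first dichotomy is whether every \ths\ of $M \ba e$ has a triangle, triad, or $4$-element fan on one side: if so, $M \ba e$ is \ffsc\ and, choosing a triangle $T$ of $M$ containing $e$, conclusion (ii) holds. Otherwise, some \ths\ of $M \ba e$ is \ns, and my next step would be to locate an element $f \in E(M \ba e)$ whose removal preserves both the $N$-minor and \thc, with the new matroid having strictly fewer obstructing \ths s.

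Iterating this reduction, I aim to remove at most three elements before reaching an \ifc\ minor, giving conclusion (i). The obstruction to termination, and what forces conclusion (iii), is that the chain of overlapping triangles, triads, and fans produced by repeated \ths\ analysis may wrap around and cover essentially all of $M$. I would classify such maximal chains and show that, together with the \ifc\ and binary hypotheses, the only possibilities are the biwheels $M(G_{r+1})$ and $M(G_{r+1}^+)$ and the triangular M\"{o}bius matroids $\Delta_r$ and $\Delta_r \ba z$, recovering the families identified by Johnson and Thomas and by Mayhew, Royle, and Whittle. The lower bounds $|E(M)| \ge 15$ and $|E(N)| \ge 6$ serve to rule out small sporadic cases and to ensure the chain analysis has room to operate.

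The hardest step, I expect, will be the case analysis for how overlapping triangles, triads, and $4$-element fans can appear in $M$ around the element $e$. I would need to bound how far a fan can be extended while retaining the $N$-minor, and to show that if no removal of up to three elements yields an \ifc\ minor with an $N$-minor, then the fan data globally reconstruct $M$ as one of the four exceptional matroids. Careful dualization is also essential, since the exceptional families come in both primal and dual versions and the one-element reduction is not self-dual; handling the two sides uniformly is precisely where the auxiliary triangle-deletion case (ii) must absorb the asymmetry.
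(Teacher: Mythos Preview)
The paper does not prove this theorem. It is quoted, immediately before its statement, as ``the main result of \cite[Theorem~1.2]{cmoIII}'', i.e.\ it is imported from an earlier paper in the series and used here only as background. So there is no proof in the present paper to compare your proposal against.

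As for your sketch itself: the broad shape (start from Seymour's Splitter Theorem, pass to the dual if necessary, analyse the \ths s of $M\ba e$, and show that failure to reach an \ifc\ minor in at most three steps forces one of the listed exceptional matroids) is plausible as a programme, but several steps are not really arguments. In particular, ``choosing a triangle $T$ of $M$ containing $e$'' in your (ii) case is not justified: nothing you have established forces $e$ to lie in a triangle, and conclusion (ii) requires precisely that. Likewise, the passage from ``some \ths\ of $M\ba e$ is non-sequential'' to ``remove at most three elements and reach an \ifc\ minor'' hides essentially all of the work; the actual argument in \cite{cmoIII} is a long structural case analysis, not a short iteration. Finally, your claim that the wrap-around chains are classified by the four families is asserted rather than argued; this classification is itself the substantial content of the theorem.
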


\begin{figure}[htb]
\centering
\includegraphics[scale = 0.7]{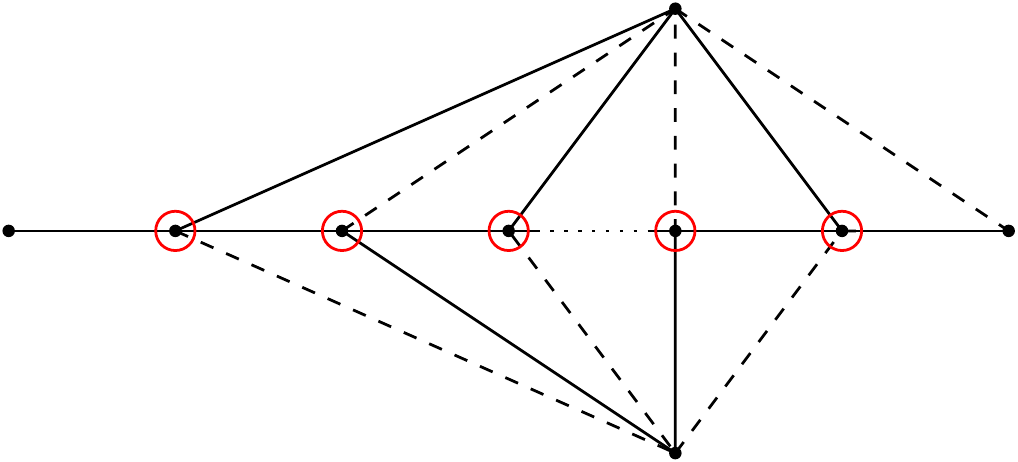}
\caption{All the elements shown are distinct. There are at least three dashed elements; and all dashed elements are deleted.}
\label{gmcdashed}
\end{figure}

That theorem prompted us to consider those matroids for which the second outcome in the theorem holds. In order to state the next result, we need to define some special structures. Let $M$ be an \ifc\ binary matroid and $N$ be an \ifc\ proper minor of $M$. Suppose $M$ has 
 disjoint triangles $T_1$ and $T_2$ and a $4$-cocircuit $D^*$ contained in their union. 
We call this structure a {\it bowtie} and denote it by $(T_1,T_2,D^*)$. If   $D^*$ has an element $d$ such that $M\ba d$ has an $N$-minor and $M\ba d$ is $(4,4,S)$-connected, then $(T_1,T_2,D^*)$ is a {\it good bowtie}. Motivated by (ii)  of the last theorem, we seek to determine more about the structure of $M$ when it has a triangle containing an element $e$ such that $M\ba e$ is \ffsc\ with an $N$-minor. One possible outcome here is that $M$ has a good bowtie. Indeed, as the next result shows, if that outcome or its dual does not arise, we get a small number of easily described alternatives. We shall need two more definitions.  A {\it terrahawk} is the graph  that is obtained  by adjoining a new vertex to a  cube and adding edges from the new  vertex to each of the four vertices that bound some fixed face of the cube. Figure~\ref{gmcdashed} shows  a modified graph diagram, which we use to keep track of some of the circuits and cocircuits in $M$.  
Each of the cycles in that diagram corresponds to a circuit  of $M$ while a circled vertex indicates a known cocircuit of $M$. At the end of Section~\ref{outline}, we shall say more about what can be inferred from such a diagram. 
We shall call a structure of the form shown in  Figure~\ref{gmcdashed} an {\it open rotor chain} 
noting that all of the elements in the figure are distinct and, for some $n \ge 3$, there are $n$ dashed elements. We will refer 
  to deleting the dashed elements from Figure~\ref{gmcdashed} as {\it trimming an open rotor chain}.   
The following is a special case of \cite[Corollary~1.4]{cmoV}.
 
 \begin{theorem}
\label{mainone4}
Let $M$ and $N$ be \ifc\ binary matroids such that $|E(M)| \ge 16$ and $|E(N)|\geq 6$. Suppose that $M$ has a triangle $T$ containing an element $e$ for which $M\ba e$ is \ffsc\ with an $N$-minor. Then one of the following holds.
\begin{itemize}
\item[(i)] $M$ has an \ifc\ minor $M'$ that has an $N$-minor such that  $1 \le |E(M) - E(M')| \le 4$;  or 
\item[(ii)] $M$ or $M^*$  has a good bowtie; or 
\item[(iii)]  $M$ is the cycle matroid of a terrahawk; or
\item[(iv)] for some $(M_0,N_0)$ in $\{(M,N), (M^*,N^*)\}$, the matroid $M_0$ contains 
an open rotor chain that can be trimmed to obtain an \ifc\ matroid with an $N_0$-minor.  
\end{itemize}
\end{theorem}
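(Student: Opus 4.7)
The plan is to exploit the $(4,4,S)$-connectivity of $M\ba e$ to either produce the required internally $4$-connected minor or drive out the structural obstructions listed in the theorem. First, if $M\ba e$ is in fact internally $4$-connected, then $M' = M\ba e$ already gives outcome (i) with $|E(M)-E(M')|=1$. So assume otherwise; then $M\ba e$ has a $3$-separation whose small side is a genuine $4$-element fan $\{x_1,x_2,x_3,x_4\}$, with $\{x_1,x_2,x_3\}$ a triangle and $\{x_2,x_3,x_4\}$ a triad of $M\ba e$. Since $|E(M)|\ge 16$ forbids a $4$-element fan in the internally $4$-connected matroid $M$, the triad of $M\ba e$ must extend to a $4$-cocircuit $D^* = \{x_2,x_3,x_4,e\}$ of $M$.

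Next, I use orthogonality between the triangle $T=\{a,b,e\}$ and the cocircuit $D^*$ in the binary matroid $M$: the intersection $T\cap D^*$ has even size, so since it contains $e$, it has size exactly $2$. If the second common element is $x_4$, then $T$ and $\{x_1,x_2,x_3\}$ are disjoint triangles whose union contains $D^*$, producing a bowtie $(T,\{x_1,x_2,x_3\},D^*)$; because $e\in D^*$ and $M\ba e$ is $(4,4,S)$-connected with an $N$-minor, this bowtie is good and outcome (ii) holds. Otherwise the second common element of $T$ and $D^*$ lies in $\{x_2,x_3\}$, so $T$ meets the fan in a single element and no bowtie is produced at this stage.

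To handle the overlap case, I would iterate. Look for a further $3$-separation of $M\ba e$ on the opposite side of the current fan, producing another $4$-fan attached to the configuration, and repeat the triangle/cocircuit analysis. At each stage either (a) we terminate with an internally $4$-connected minor of $M$ obtained by removing at most four elements, giving (i); (b) we uncover a genuine good bowtie in $M$ or $M^*$, giving (ii); or (c) we append another triangle and $4$-cocircuit to a chain growing out of $T$. If case (c) keeps recurring, the accumulated triangles and cocircuits line up into the pattern of Figure~\ref{gmcdashed}, so that trimming the dashed elements produces an internally $4$-connected matroid with an $N$-minor; this is outcome (iv). The one closed geometry in which the chain wraps around while $M$ remains internally $4$-connected on at least $16$ elements is the terrahawk, which is outcome (iii). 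Dualization accounts for the symmetric cases recorded in (ii) and (iv).

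The main obstacle will be controlling this iteration. Showing that the $N$-minor and the $(4,4,S)$-connectivity both survive each new link of the chain requires a careful choice of which element of each newly produced $4$-cocircuit to delete, together with repeated appeals to Theorem~\ref{44S} applied to intermediate matroids. One must also use orthogonality between the circuits and cocircuits generated along the chain to rule out extraneous triangles or cocircuits that would break the rotor pattern. Finally, the recognition of the terrahawk as the unique closed possibility is a delicate global connectivity check that is likely to occupy a substantial portion of the full proof.
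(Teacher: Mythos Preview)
This theorem is not proved in the present paper. It is imported as background: immediately before the statement the paper says ``The following is a special case of \cite[Corollary~1.4]{cmoV},'' and no proof is given here. So there is no argument in this paper to compare your sketch against; the actual proof lives in the earlier paper~V of the series and is substantial.

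As for your sketch on its own merits: the opening moves are correct --- if $M\ba e$ is internally $4$-connected you are done, and otherwise a $4$-fan in $M\ba e$ yields a $4$-cocircuit $D^*$ of $M$ containing $e$, and orthogonality with $T$ forces $|T\cap D^*|=2$. The bowtie case is essentially right (modulo checking that the third element of $T$ is not $x_1$). But the rest is a plan, not a proof. The phrase ``iterate'' hides nearly all of the work: you have not explained what structure you look for next when $T$ meets the fan in $\{x_2,x_3\}$, how the $N$-minor is tracked through successive deletions, why the chain you build has the specific rotor shape of Figure~\ref{gmcdashed} rather than some other degenerate pattern, how and why the dual cases in (ii) and (iv) arise, or why the terrahawk is the unique closed configuration. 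Each of these is a genuine piece of case analysis, and the proof in~\cite{cmoV} devotes many pages to exactly these points. Your outline is a plausible table of contents for such a proof, but it does not yet contain any of the arguments.
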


We remark that there is a small error in \cite[Theorem 1.1]{cmoV} since it requires at least five elements to be removed when trimming an open rotor chain. But, as the proof there makes clear, trimming exactly four elements is a possibility. Trimming exactly three elements is also possible but that is included under (i) of \cite[Theorem~1.1]{cmoV}. 
 
This theorem leads us to consider a good bowtie $(\{x_1,x_2,x_3\}, \linebreak \{x_4,x_5,x_6\}, \{x_2,x_3,x_4,x_5\})$ in an \ifc\ binary matroid $M$ where $M\ba x_3$ is \ffsc\ with an $N$-minor. In $M\ba x_3$, we see that $\{x_5,x_4,x_2\}$ is a triad and $\{x_6,x_5,x_4\}$ is a
triangle, so
$\{x_6,x_5,x_4,x_2\}$ is a $4$-element fan. It follows, by \cite[Lemma 2.5]{cmoIV}, that either
\begin{itemize}
\item[(i)] $M\ba x_3,x_6$ has an $N$-minor; or 
\item[(ii)] $M\ba x_3,x_6$ does not have an $N$-minor, but $M\ba x_3/x_2$ is \ffsc\ with an $N$-minor.
\end{itemize}
In this paper, we focus on the first of these two cases and assume, in addition, that 
$M\ba x_6$ is not \ffsc. In \cite{cmoVII}, we treat the second of these two cases. Finally, in \cite{cmoVIII}, we treat the remaining subcase of (i)

\begin{figure}[htb]
\center
\includegraphics{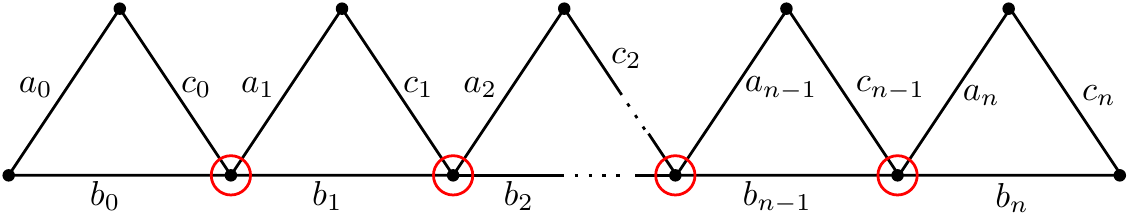}
\caption{A  string of bowties.  All elements are distinct except that $a_0$ may be the same as $c_n$.}
\label{bowtiechainfig}
\end{figure}

In a matroid $M$, a {\it string of bowties}  is a sequence $\{a_0,b_0,c_0\},\linebreak \{b_0,c_0,a_1,b_1\},\{a_1,b_1,c_1\}, 
\{b_1,c_1,a_2, 
 b_2\},\dots,\{a_n,b_n,c_n\}$ with $n \ge 1$ such that   
 \begin{itemize} 
 \item[(i)] $\{a_i,b_i,c_i\}$ is a triangle for all $i$ in $\{0,1,\dots, n\}$;  
 \item[(ii)] $\{b_j,c_j,a_{j+1},b_{j+1}\}$ is a cocircuit for all $j$ in $\{0,1,\dots ,n-1\}$; and 
 \item[(iii)] the elements $a_0,b_0,c_0,a_1,b_1,c_1,\ldots,a_n,b_n,c_n$ are distinct except that $a_0$ and $c_n$ may be equal.
 \end{itemize}

The reader should note that this differs slightly from the definition we gave in \cite{cmochain} in that  here we allow $a_0$ and $c_n$ to be equal instead of requiring all of the elements to be distinct. 
Figure~\ref{bowtiechainfig} illustrates a string of bowties, but this diagram may obscure the  potential complexity of such a string. Evidently $M\ba c_0$ has $\{c_1,b_1,a_1,b_0\}$ as a 4-fan. Indeed, $M\ba c_0,c_1,\ldots,c_i$ has a $4$-fan for all $i$ in $\{0,1,\ldots,n-1\}$. We shall say that the matroid $M\ba c_0,c_1,\ldots,c_n$ has been obtained from $M$ by {\it trimming a string of bowties}. This operation plays a prominent role in our main theorem, and is the underlying operation in trimming an open rotor chain.  
Before stating this result, we introduce the other operations that incorporate this process of trimming a string of bowties. 
Such a string can attach to the rest of the matroid in a variety of ways. In most of these cases, the operation of trimming the string will produce an \ifc\ minor of $M$ with an $N$-minor. But, in three cases, when the bowtie string is embedded in a modified  quartic ladder in certain ways, we need to adjust the trimming process. 

\begin{figure}[htb]
\center
\includegraphics[scale=0.72]{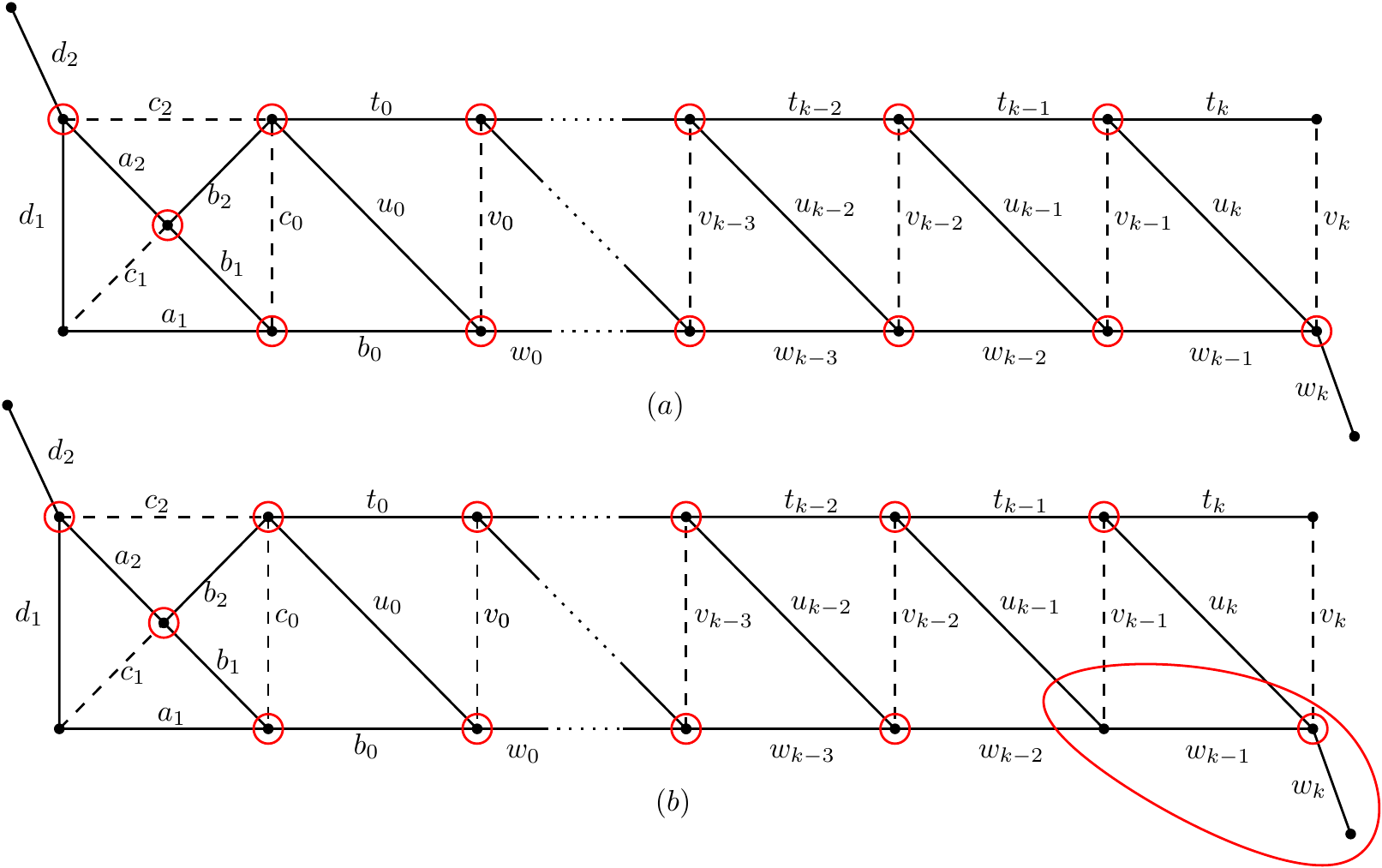}
\caption{In both (a) and (b), all  elements shown are distinct, except that $d_2$ may be $w_k$.  Furthermore, in (a), $k\geq 0$; and, in (b), $k \geq 1$ and $\{w_{k-2},u_{k-1},v_{k-1},u_k,v_k\}$ is a cocircuit.}
\label{bonesaws0}
\end{figure}

\begin{figure}[htb]
\center
\includegraphics[scale=0.72]{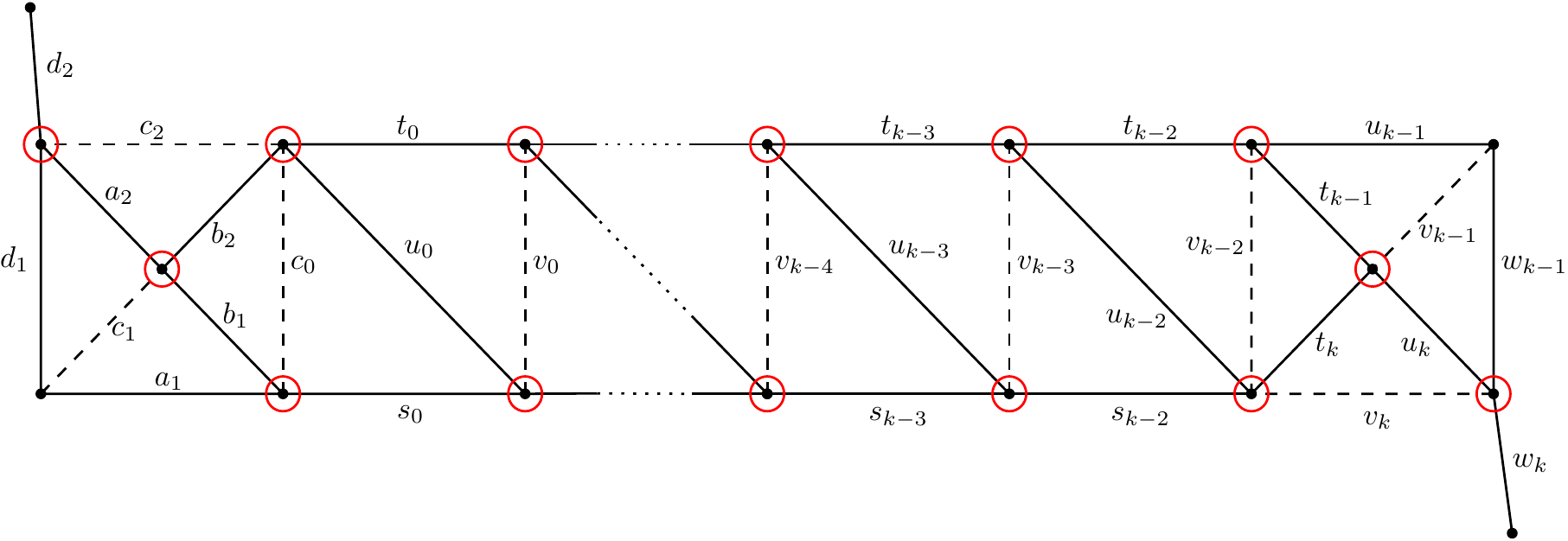}
\caption{In this configuration, $k\geq 2$ and the elements are all distinct except that $d_2$ may be $w_k$.}
\label{caterpillarwhole0}
\end{figure}

\begin{figure}[htb]
\center
\includegraphics[scale= 0.8]{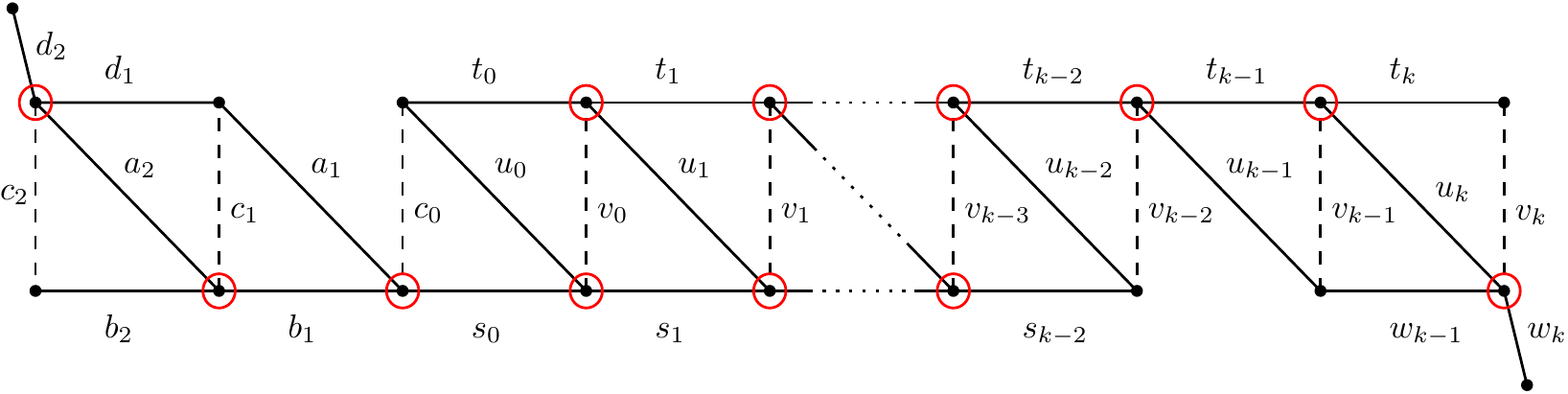}
\caption{The configuration in Figure~\ref{caterpillarwhole0} redrawn omitting two triangles and two $5$-cocircuits.}
\label{unravel}
\end{figure}

Consider the three configurations shown in  Figure~\ref{bonesaws0} and Figure~\ref{caterpillarwhole0} where the elements in each configuration are distinct except that $d_2$ may equal $w_k$. We refer to each of these configurations as an {\it enhanced quartic ladder}. Indeed, in each configuration, we can see a portion of a quartic ladder, which can be thought of as two interlocking bowtie strings, one pointing up and one pointing down. In each case, we focus on $M\ba c_2,c_1,c_0,v_0,v_1,\ldots,v_k$ saying that this matroid has been obtained from $M$ by an {\it enhanced-ladder move}. In Figure~\ref{unravel}, the configuration in 
Figure~\ref{caterpillarwhole0} has been redrawn omitting the triangles $\{c_0,b_1,b_2\}$ and $\{v_{k-2},t_{k-1},t_k\}$ as well as the cocircuits $\{b_2,c_0,c_2,t_0,u_0\}$ and $\{s_{k-2},u_{k-2},v_{k-2},t_k,v_k\}$. The ladder structure is evident there and the enhanced ladder move corresponds to deleting all of the dashed edges.

\begin{figure}[htb]
\center
\includegraphics[scale=0.72]{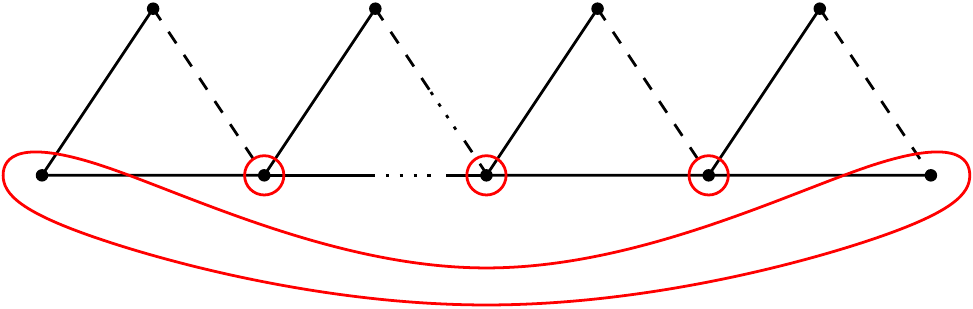}
\caption{A bowtie ring.  All elements are distinct.  The ring contains at least three triangles.}
\label{btringfig}
\end{figure}

For some $n \ge 2$, let $\{a_0,b_0,c_0\}, \{b_0,c_0,a_1,b_1\}, \{a_1,b_1,c_1\},\dots, \{a_n,b_n,c_n\}$ be a bowtie string in a matroid $M$.  Assume, 
in addition, that $\{b_n,c_n,a_0,b_0\}$ is a cocircuit. Then the string of bowties has wrapped around on itself as in Figure~\ref{btringfig}. We call the resulting structure a {\it ring of bowties} and denote it by $(\{a_0,b_0,c_0\}, \{b_0,c_0,a_1,b_1\}, \{a_1,b_1,c_1\},\dots, \{a_n,b_n,c_n\}, \{b_n,c_n,a_0,b_0\})$.  We also require that the elements in a bowtie ring are distinct, although this is guaranteed if $M$ is \ifc. 
We refer to each of the structures in Figure~\ref{laddery} as a {\it ladder structure} and we refer to removing the dashed elements in Figure~\ref{btringfig}  and Figure~\ref{laddery} as {\it trimming a ring of bowties} and  {\it trimming a ladder structure}, respectively.  

In the case that trimming a string of bowties in $M$ yields an \ifc\ matroid with an $N$-minor, we are able to ensure that the string of bowties belongs to one of the  more highly structured objects shown in one of Figures~\ref{bonesaws0}, \ref{caterpillarwhole0}, \ref{btringfig}, or \ref{laddery}.  
The following theorem is the main result of this paper.

\begin{figure}[htb]
\center
\includegraphics[scale=0.72]{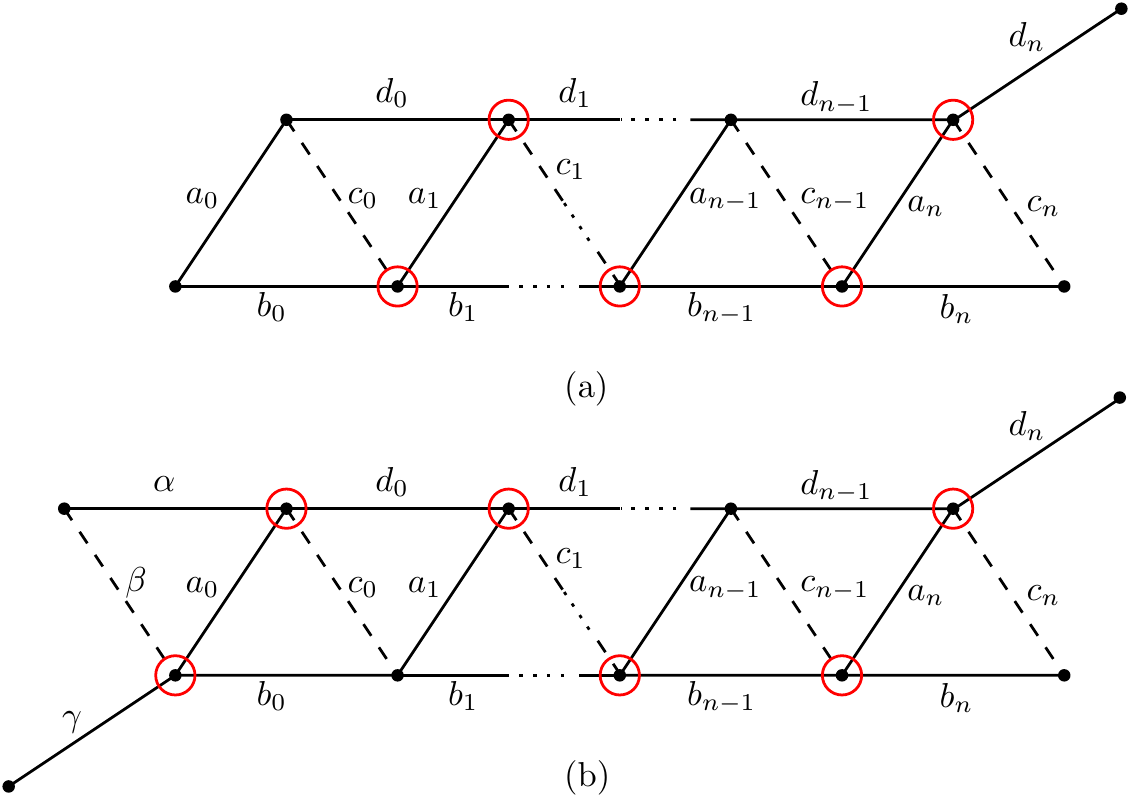}
\caption{In (a) and (b), the elements shown are distinct, with the exception that $d_n$ may be the same as $\gamma$ in (b).  Either $\{d_{n-2},a_{n-1},c_{n-1},d_{n-1}\}$ or $\{d_{n-2},a_{n-1},c_{n-1},a_n,c_n\}$ is a cocircuit in (a) and (b).  Either $\{b_0,c_0,a_1,b_1\}$ or $\{\be,a_0,c_0,a_1,b_1\}$ is also a cocircuit in (b).  Furthermore, $n\geq 2$.}
\label{laddery}
\end{figure}

\begin{theorem}
\label{killcasek}
Let $M$ and $N$ be \ifc\ binary matroids such that $|E(M)|\geq 13$ and $|E(N)|\geq 7$.  
Assume that $M$ has  a bowtie $(\{x_0,y_0,z_0\},\{x_1,y_1,z_1\},\{y_0,z_0,x_1,y_1\})$, where $M\ba z_0$ is \ffsc, $M\ba z_0,z_1$ has an $N$-minor, and $M\ba z_1$ is not \ffsc.  
Then one of the following holds.  
\begin{itemize}
\item[(i)] $M$ has a proper minor $M'$ such that $|E(M)|-|E(M')|\leq 3$ and $M'$ is \ifc\ with an $N$-minor; or 
\item[(ii)] $M$ contains an open rotor chain, a ladder structure, or a ring of bowties that can be trimmed to obtain an \ifc\ matroid with an $N$-minor; or
\item[(iii)] $M$ contains an enhanced quartic ladder from which an \ifc\ minor of $M$ with an $N$-minor can be obtained by an enhanced-ladder move.
\end{itemize}
\end{theorem}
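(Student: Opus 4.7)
The plan is to use the failure of $(4,4,S)$-connectedness of $M\ba z_1$ as the engine that propagates the bowtie structure. Since $M\ba z_1$ is not \ffsc, it has a $3$-separation $(U,V)$ whose sides are none of a triangle, triad, or $4$-fan. Internal $4$-connectivity of $M$ forces $z_1$ to lie in the closure or coclosure of one of $U,V$. Applying orthogonality against the known triangle $\{x_1,y_1,z_1\}$ and cocircuit $\{y_0,z_0,x_1,y_1\}$, together with the hypothesis that $M\ba z_0$ is \ffsc\ (so that its $3$-separations are tightly controlled), the generic conclusion is that $M$ must contain a further triangle $\{x_2,y_2,z_2\}$ such that $\{y_1,z_1,x_2,y_2\}$ is a $4$-cocircuit disjoint from the first triangle. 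Thus the original bowtie extends by one link to a bowtie string of length three. Any degenerate arrangement that blocks this extension (for instance, forcing a $5$-cocircuit joining $T_1$ to new elements on the $z_1$ side) will already match one of the structured configurations listed in the statement.

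I would then set up an induction on the length of the bowtie string. Given a bowtie string $T_0,T_1,\dots,T_n$ in $M$ with linking $4$-cocircuits, I consider the trimmed matroid $M\ba c_0,c_1,\dots,c_n$, where $c_i$ plays the role of $z_0,z_1,\dots$ in turn. The persistence of an $N$-minor along the trimming is handled by repeated application of Lemma~2.5 of \cite{cmoIV}. If the trimmed matroid is \ifc\ and retains an $N$-minor, we are in outcome~(ii). Otherwise, either the $N$-minor is lost (in which case one argues, by case analysis on where along the string the loss occurs and by contracting rather than deleting at that spot, that a proper \ifc\ minor with at most three fewer elements exists, producing outcome~(i)) or the trimmed matroid has a bad $3$-separation. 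In this last case, one re-applies the same orthogonality machinery used to extend the original bowtie: the bad separation either supplies a new triangle and linking $4$-cocircuit that lengthens the string, or produces one of the exceptional attachments---a $5$-cocircuit hitting the string from above or below---that identifies the structure as a ladder structure as in Figure~\ref{laddery}, as an enhanced quartic ladder as in Figure~\ref{bonesaws0} or Figure~\ref{caterpillarwhole0}, or as a ring of bowties as in Figure~\ref{btringfig} once the string closes up on itself. Theorem~\ref{44S} is invoked throughout to rule out the M\"{o}bius/ladder exceptional matroids, and Theorem~\ref{mainone4} handles reductions that pass through an open rotor chain (noting, via the dual hypothesis and the good-bowtie setup, that the rotor-chain outcome is available whenever the obstruction has that particular flavour).

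The main obstacle will be the exhaustive classification of the ways the bowtie string can terminate without yielding an \ifc\ matroid upon trimming. At each extension step there are many possibilities to track: the newly found triangle may share an element with an earlier triangle (closing into a ring), the newly forced cocircuit may have size $5$ rather than $4$ (initiating a ladder or enhanced-ladder configuration), or two interlocking bowtie strings may appear simultaneously in the quartic-ladder pattern of Figure~\ref{unravel}. Each such subcase demands precise matching of elements to the vertices and edges of the configurations in the statement, and careful verification that the corresponding trimming or enhanced-ladder move preserves the $N$-minor while restoring internal $4$-connectivity. Establishing that these are the only terminal configurations---and that in every other subcase a valid move into outcome~(i), (ii), or~(iii) exists---is where the bulk of the technical work lies.
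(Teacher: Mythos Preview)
Your outline captures the broad shape of a bowtie-string induction, but it misses the structural lever that actually makes the argument close. When $M\ba z_1$ fails to be \ffsc, Lemma~\ref{6.3rsv} does not merely produce a further triangle and linking $4$-cocircuit; it forces $\{x_1,y_1,z_1\}$ to be the \emph{central triangle of a quasi rotor}, i.e., you also obtain the diagonal triangle $\{z_0,y_1,x_2\}$. The paper's proof depends on this extra triangle in an essential way: it is what makes Lemma~\ref{rotorwin} applicable, and that lemma is precisely what disposes of the cases where the $N$-minor is lost along the chain (if $M/b_i$ has an $N$-minor for an internal $b_i$, you immediately get a quick win). Your ``case analysis on where along the string the loss occurs and by contracting rather than deleting at that spot'' has no mechanism for establishing internal $4$-connectedness of the resulting minor without the quasi-rotor machinery of Lemmas~\ref{rotorwin} and~\ref{betweenbts}.

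Relatedly, the paper's argument is not a single bowtie-string induction but a two-stage process: first a right-maximal \emph{rotor chain} is built (via Lemmas~\ref{rotorchainends} and~\ref{betweenbts}), and only then is it extended to a right-maximal bowtie string, which Lemma~\ref{reachtheend} forces to close into a ring. The ring case is then finished by the purpose-built Lemmas~\ref{btring} and~\ref{hazwaste}, ending with an explicit verification that $M/b_2\ba c_2$ is \ifc. Your invocations of Theorem~\ref{44S} and Theorem~\ref{mainone4} are misplaced: neither is used anywhere in the proof of Theorem~\ref{killcasek}; they are context for the paper, not tools in this argument.
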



\section{Preliminaries}
\label{prelim}

In this section, we give some basic definitions mainly relating to matroid connectivity. The subsequent section contains some straightforward properties of connectivity along with a lemma concerning bowties that distinguishes various cases whose analysis is fundamental to completing our work on the splitter theorem. The main result of this paper completely resolves what happens in one of these cases. In Section~\ref{outline}, we outline the proof of the main result,  Theorem~\ref{killcasek}.

Let $M$ and $N$ be matroids. We shall sometimes write $N\preceq M$ to indicate that $M$ has an {\it $N$-minor}, that is, a minor isomorphic to $N$. 
Now let $E$ be the ground set of $M$ and $r$ be its rank function.   
The {\it connectivity function} $\lambda_M$ of $M$ is defined on all subsets $X$ of $E$ by $\lambda_M(X) = r(X) + r(E-X) - r(M)$.  Equivalently, $\lambda_M(X) = r(X) + r^*(X) - |X|.$ We will sometimes abbreviate $\lambda_M$ as $\lambda$.
For a positive integer $k$, a subset $X$ or a partition $(X,E-X)$ of $E$ is 
{\em $k$-separating} if $\lambda_M(X)\le k-1$.  A $k$-separating partition $(X,E-X)$ of $E$ is a {\it $k$-separation} if  $|X|,|E-X|\ge k$. If $n$ is an integer exceeding one, a matroid  is {\it $n$-connected} if it has  no $k$-separations for all $k < n$.  This definition \cite{wtt} has the attractive property that a matroid is $n$-connected if and only if its dual is. Moreover, this matroid definition of $n$-connectivity is relatively compatible with the graph notion of $n$-connectivity when $n$ is $2$ or $3$. For example, when $G$ is a graph with at least four vertices and with no isolated vertices, $M(G)$ is a $3$-connected matroid if and only if $G$ is a $3$-connected simple graph. But the link between $n$-connectivity for matroids and graphs breaks down for $n \ge 4$. In particular, a $4$-connected matroid with at least six elements cannot have a triangle. Hence, for $r \ge 3$, neither $M(K_{r+1})$ nor $PG(r-1,2)$ is $4$-connected. This motivates the consideration of other types of $4$-connectivity  in which certain $3$-separations are allowed.

A matroid is \emph{internally $4$-connected} if it is $3$-connected and, whenever $(X,Y)$ is a $3$-separation, either $|X|=3$ or $|Y|=3$.  Equivalently, a $3$-connected matroid $M$ is \ifc\ if and only if, for every $3$-separation $(X,Y)$ of $M$, either $X$ or $Y$ is a triangle or a triad of $M$.  A graph $G$ without isolated vertices is {\it internally $4$-connected} if $M(G)$ is internally $4$-connected. 
 
In a matroid $M$, a subset $S$  of $E(M)$ is a {\em fan}   if $|S| \ge 3$ and there is an ordering $(s_1,s_2,\ldots,s_n)$
of $S$ such that $\{s_1,s_2,s_3\},\{s_2,s_3,s_4\},\ldots,\{s_{n-2},s_{n-1},s_n\}$  alternate between triangles and triads.   
We call $(s_1,s_2,\ldots,s_n)$   a {\it fan ordering} of 
$S$. 
For convenience, we will often refer to the fan ordering as the fan.  
We will be mainly concerned with $4$-element and $5$-element fans. By convention, we shall always view a fan ordering of a $4$-element fan as beginning with a triangle and we shall use the term {\it $4$-fan} to refer to both the $4$-element fan and such a fan ordering of it. Moreover, we shall use the terms {\it $5$-fan} and {\it $5$-cofan} to refer to the two different types of $5$-element fan where the first contains two triangles and the second two triads. Let $(s_1,s_2,\ldots,s_n)$ be a fan ordering of a fan $S$. When $M$ is $3$-connected and $n \ge 4$, every fan ordering of $S$ has its first and last elements in $\{s_1,s_n\}$. We call these elements  the {\it ends} of the fan while the elements of $S - \{s_1,s_n\}$  are called the {\it internal elements} of the fan.  When $(s_1,s_2,s_3,s_4)$ is a $4$-fan, our convention is that $\{s_1,s_2,s_3\}$ is a triangle, and we call $s_1$  the {\it guts    element} of the fan and $s_4$ the {\it coguts    element} of the fan since $s_1 \in \cl(\{s_2,s_3,s_4\})$ and $s_4 \in \cls(\{s_1,s_2,s_3\})$.

A set $U$ in a matroid $M$ is {\em fully closed} if it is closed in both $M$ and $M^*$. Let $(X,Y)$ be a partition of $E(M)$. If $(X,Y)$ is $k$-separating in $M$ for some positive integer $k$,  and $y$ is an element of  $Y$ that is also in $\cl(X)$ or $\cl ^*(X)$, then it is well known and easily checked that $(X \cup y,Y-y)$ is $k$-separating, and we say that we have {\it moved} $y$ into $X$. More generally, $(\fcl(X),Y-\fcl(X))$ is $k$-separating in $M$. Let $n$ be an   integer exceeding one. If $M$ is $n$-connected, an $n$-separation $(U,V)$ of $M$ is {\it sequential} if $\fcl(U)$ or $\fcl(V)$ is $E(M)$. In particular, when $\fcl(U) = E(M)$, there is an ordering $(v_1,v_2,\ldots,v_m)$ of the elements of $V$ such that 
$U \cup \{v_m,v_{m-1},\ldots,v_i\}$ is $n$-separating for all $i$ in $\{1,2,\ldots,m\}$. When this occurs, the set $V$ is called {\it sequential}. Moreover, if $n \le m$, then $\{v_1,v_2,\ldots,v_n\}$ is a circuit or a cocircuit of $M$. 
A \thc\ matroid is {\it \sfc} if all of its $3$-separations are sequential. It is straightforward to check that, when $M$ is  binary, a sequential set with $3, 4$, or $5$ elements is a fan. Let $(X,Y)$ be a \ths\ of a \thc\ binary matroid $M$. We shall frequently be interested in $3$-separations that indicate that $M$ is, for example, not \ifc. We call $(X,Y)$ or $X$ a {\it \ftv} if $|Y| \ge |X| \ge 4$. Similarly, $(X,Y)$ is a {\it \ffsv} if, for each $Z$ in $\{X,Y\}$, either $|Z| \ge 5$, or $Z$ is \ns.

\begin{figure}[htb]
\centering
\includegraphics{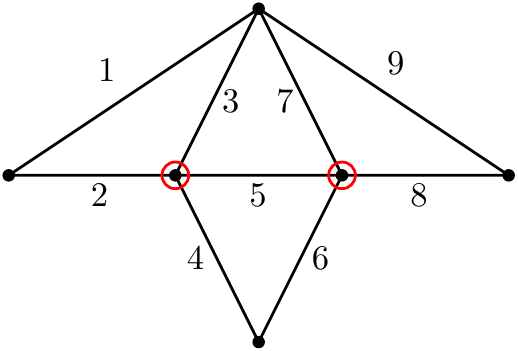}
\caption{A quasi rotor, where $\{2,3,4,5\}$ and $\{5,6,7,8\}$ are cocircuits.}
\label{rotorfig}
\end{figure}

Next we note another special structure from~\cite{zhou}, which has already arisen frequently in our work towards the desired splitter theorem. In an internally $4$-connected binary matroid $M$, we shall call 
$(\{1,2,3\},\{4,5,6\},\{7,8,9\},\{2,3,4,5\},\{5,6,7,8\},\{3,5,7\})$  a {\it quasi rotor} with {\it central triangle} $\{4,5,6\}$ and {\it central element $5$} 
if $\{1,2,3\}, \{4,5,6\},$ and $\{7,8,9\}$ are disjoint triangles in $M$ such that $\{2,3,4,5\}$ and $\{5,6,7,8\}$ are cocircuits and $\{3,5,7\}$ is a triangle.  
Section~\ref{qrsec} is dedicated to results concerning bowties and quasi rotors.  

\begin{figure}[htb]
\centering
\includegraphics[scale=0.75]{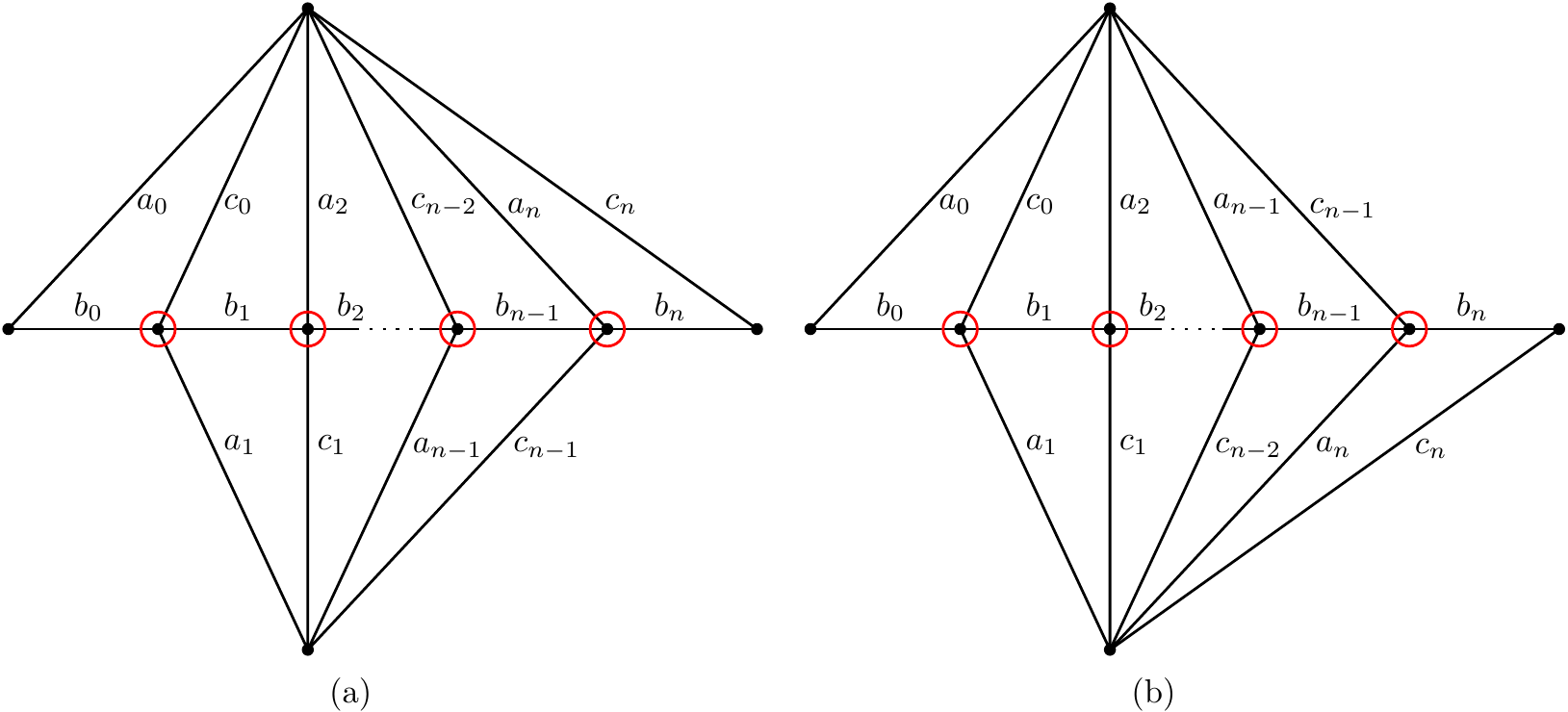}
\caption{Right-maximal rotor chain configurations.  In the case that $n$ is even, the rotor chain is depicted on the left.  If $n$ is odd, then the rotor chain has the form on the right.}
\label{rotorchainabc}
\end{figure}

For all non-negative integers $i$, it will be convenient  to adopt the convention throughout the paper of using $T_i$ and $D_i$ to denote, respectively, a triangle $\{a_i,b_i,c_i\}$ and a cocircuit $\{b_i,c_i,a_{i+1},b_{i+1}\}$.  
Let $M$ have $(T_0,T_1,T_2,D_0,D_1,\{c_0,b_1,a_2\})$ as a quasi rotor.  
Now $T_2$ may also be the central triangle of a quasi rotor.  
In fact, we may have a structure like one of the two depicted in Figure~\ref{rotorchainabc}.  
If $T_0,D_0,T_1,D_1,\dots ,T_n$ is a string of bowties in $M$, for some $n\geq 2$, and $M$ has the additional structure that $\{c_i,b_{i+1},a_{i+2}\}$ is a triangle for all $i$ in  $\{0,1,\dots ,n-2\}$, then we say that $((a_0,b_0,c_0),(a_1,b_1,c_1),\dots ,(a_n,b_n,c_n))$ is a {\it rotor chain}.  Clearly, deleting $a_0$ from a rotor chain gives an open rotor chain. 
Observe that every  three consecutive triangles within a rotor chain have the structure of a quasi rotor;   
that is, for all $i$  in $\{0,1,\dots ,n-2\}$, the sequence $(T_i,T_{i+1},T_{i+2},D_i,D_{i+1},\{c_i,b_{i+1},a_{i+2}\})$ is a quasi rotor.  
Zhou~\cite{zhou} considered a similar structure  called a {\it double fan of 
length $n-1$};  it  consists of all of the  elements in the rotor chain except for $a_0,b_0,b_n$, and $c_n$.  

If a rotor chain $((a_0,b_0,c_0),(a_1,b_1,c_1),\dots ,(a_n,b_n,c_n))$ cannot be extended to a rotor chain of the form $((a_0,b_0,c_0),(a_1,b_1,c_1),\dots ,(a_{n+1},b_{n+1},c_{n+1}))$, then we call it a {\it right-maximal rotor chain}.  

In the introduction, we defined a string of bowties. 
We say that such a string $T_0,D_0,T_1,D_1,\dots ,T_n$ is a {\it right-maximal bowtie string} in $M$ if $M$ has no triangle $\{u,v,w\}$ such that $T_0,D_0,T_1,D_1,\dots,T_n,\{x,c_n,u,v\},\{u,v,w\}$ is a bowtie string for some $x$ in $\{a_n,b_n\}$. Now let $(\{a_0,b_0,c_0\}, \{b_0,c_0,a_1,b_1\}, \linebreak \{a_1,b_1,c_1\},\dots, \{a_n,b_n,c_n\}, \{b_n,c_n,a_0,b_0\})$ be a ring of bowties. It is tempting to assume that, in such a ring, the set $\{b_0,b_1,\ldots,b_n\}$ is a circuit of $M$. Indeed, when $M$ is  internally $4$-connected, if $M = M(G)$ for some graph $G$, then it is not difficult to check that each of the cocircuits in the bowtie ring corresponds to the set of edges meeting some vertex of $G$. It follows that $\{b_0,b_1,\ldots,b_n\}$ is indeed a circuit of $M$.  However, if $M$ is not graphic, then $\{b_0,b_1,\ldots,b_n\}$ need not be a circuit of $M$.  To see this, observe that  $(\{a_0,b_0,c_0\}, \{b_0,c_0,a_1,b_1\}, \{a_1,b_1,c_1\},\dots, \{a_3,b_3,c_3\}, \{b_3,c_3,a_0,b_0\})$ is a bowtie ring in the bond matroid of the graph $G$ shown in Figure~\ref{biwheel}. However, $\{b_0,b_1,b_2,b_3\}$ is not a bond of $G$.

\begin{figure}[htb]
\center
\includegraphics[scale=0.8]{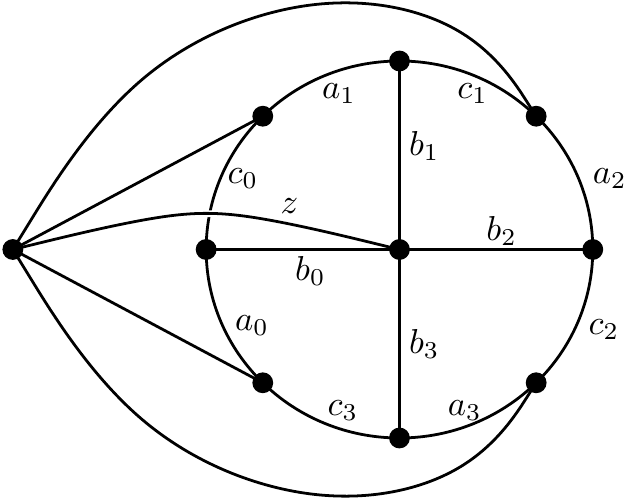}
\caption{The set $\{b_0,b_1,b_2,b_3\}$ is not a bond of this graph.}
\label{biwheel}
\end{figure}



\section{Outline of the proof}
\label{outline}

This section gives an outline of the strategy used to prove the  main theorem of the paper. The rest of the paper is concerned with implementing that strategy. Recall that a matroid $M$ is \ffsc\ if it is \thc\ and every $3$-separation in $M$ has, as one of its sides, a triad, a triangle, or a $4$-fan.  The hypotheses of the theorem 
present us with a bowtie $(\{a_0,b_0,c_0\}, \{a_1,b_1,c_1\}, \{b_0,c_0,a_1,b_1\})$ where $M\ba c_0$ is \ffsc, $M\ba c_0,c_1$ has an $N$-minor, and $M\ba c_1$ is not \ffsc. 

Because $M\ba c_1$ is not \ffsc, Lemma~\ref{6.3rsv} gives us that $\{a_1,b_1,c_1\}$ is the central triangle of a quasi rotor $(T_0,T_1,T_2,D_0,D_1, \{c_0,b_1,a_2\})$ where we recall that   $T_i$   denotes a triangle $\{a_i,b_i,c_i\}$, and $D_i$   denotes a $4$-cocircuit $\{b_i,c_i,a_{i+1},b_{i+1}\}$. Thus, for some $n \ge 2$, we have a right-maximal rotor chain $((a_0,b_0,c_0),(a_1,b_1,c_1),\dots,(a_n,b_n,c_n))$. In Lemma~\ref{betweenbts}, we prove that either  the theorem holds, or $M$ has such a right-maximal rotor chain in which 
$M\ba c_0,c_1,\dots,c_n$ is \sfc\ with an $N$-minor, $M/b_i$ has no $N$-minor for all $i$ in $\{1,2,\dots,n-1\}$, and $M\ba c_n$ is \ffsc, while $M$ has a triangle $T_{n+1}$ and a $4$-cocircuit $D_n$ such that 
$T_0,D_0,T_1,D_1,\dots,T_n,D_n,T_{n+1}$ is a bowtie string in $M$, and 
$M\ba c_0,c_1,\ldots,c_{n+1}$ has an $N$-minor.

We extend this bowtie string to a right-maximal bowtie string 
$T_0,D_0,T_1,D_1,\dots,T_n,D_n,\dots,T_{k}$. Then $k \ge n+1$. In Lemma~\ref{stringybark}, we show that $M\ba c_0,c_1,\dots,c_k$ has an $N$-minor. In Lemma~\ref{reachtheend}, we deal with the case when this bowtie string does not wrap around on itself to form a bowtie 
ring, and we show that the theorem holds in this case.

We may now assume that $(T_k,T_0,\{b_k,a_k,a_0,b_0\})$ is  a bowtie, so 
$(T_0,D_0,T_1,D_1,\dots,T_k,D_k)$ is a bowtie ring. In that case, Lemma~\ref{btring} shows that, when the theorem does not hold, $M\ba c_0,c_1,\ldots,c_k$ is \sfc\,  $M$ has a
triangle $\{e_1,f_1,g_1\}$ that is disjoint from $T_0 \cup T_1 \cup \dots \cup T_k$,  and $M$ has a cocircuit $\{e_1,f_1,c_j,z_j\}$ for some $j$ in $\{0,1,\dots,k\}$ where $z_j$ is $b_j$ or $a_j$.  Situations corresponding to the two possibilities for $z_j$ are illustrated in Figure~\ref{ibis_n_haz}. The proof of the theorem is completed when these two cases are treated in Lemma~\ref{hazwaste}.  

\begin{figure}[htb]
\centering
\includegraphics[scale=1.]{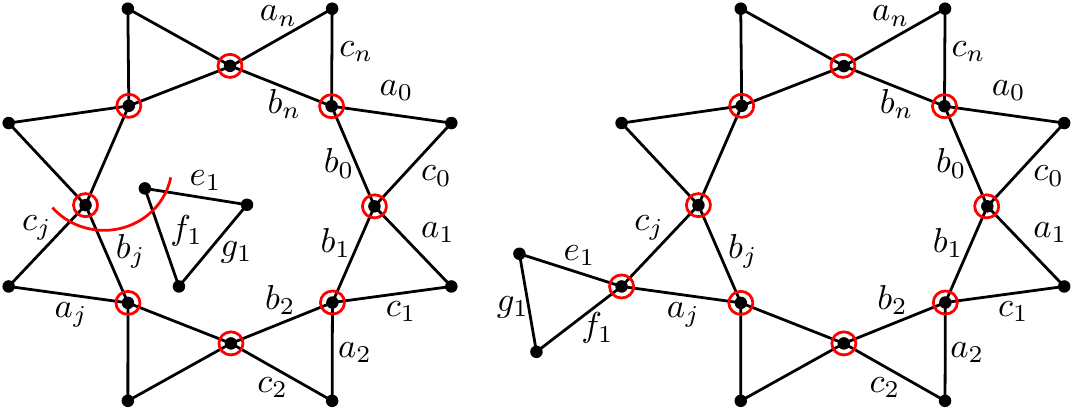}
\caption{The two quintessential examples of obstructions to $M\ba c_0,c_1,\dots, c_n$ being \ifc\ when $M$ contains a bowtie ring.  The  curved line on the left indicates that $\{e_1,f_1,b_j,c_j\}$ is a cocircuit.}
\label{ibis_n_haz}
\end{figure}

In the two graph diagrams shown in Figure~\ref{ibis_n_haz}, the diagrams suggest that the set 
$\{b_0,b_1,\ldots,b_n\}$ is a circuit and this is certainly true when $M$ is graphic. But this need not be so for an arbitrary \ifc\ binary matroid $M$. This means that the reader needs to exercise some caution in dealing with such diagrams when they wrap around. Each of the triangles shown   is certainly a circuit of the matroid $M$, and one can infer that other cycles in the graphs are circuits in the matroid when, for example, such cycles can be built by taking symmetric differences of sets of overlapping triangles.

In the next section, we note some properties of bowties and quasi rotors. In Section~\ref{bowtiestring}, we prove some results for strings of bowties, while Section~\ref{ladderseg} presents some results for quartic ladder segments. The rest of the proof of our splitter theorem both here and in the sequels to this paper will essentially amount to an analysis of the behavior of bowtie strings and ladder segments.

\section{Some results for bowties and quasi rotors}
\label{qrsec}

The following lemma will be used repeatedly throughout the paper. It extends \cite[Lemma 2.2]{cmoII} to include seven-element \ifc\ binary matroids, noting that the only such matroids are $F_7$ and its dual, and neither of these matroids has a $4$-element fan.

\begin{lemma}
\label{2.2} 
Let $N$ be an \ifc\  matroid having at least seven elements and $M$ be a binary matroid with an $N$-minor. If $(s_1,s_2,s_3,s_4)$ is a $4$-fan in $M$, then $M\ba s_1$ or $M/s_4$ has an $N$-minor. If $(s_1,s_2,s_3,s_4,s_5)$ is a $5$-fan in $M$, then either $M\ba s_1,s_5$ has an $N$-minor, or both $M\ba s_1/s_2$ and $M\ba s_5/s_4$ have $N$-minors. 
\end{lemma}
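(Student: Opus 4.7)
The plan is to handle the case $|E(N)|\ge 8$ by direct appeal to \cite[Lemma 2.2]{cmoII}, and then reduce the remaining case $|E(N)|=7$ to a short classification argument. Since $N$ is an \ifc\ binary matroid on seven elements, we must have $N\cong F_7$ or $N\cong F_7^*$. The key observation, already highlighted by the authors, is that neither matroid contains a $4$-fan: the cocircuits of $F_7$ are complements of its lines and hence have size four, so $F_7$ has no triad; dually, $F_7^*$ has no triangle. In particular, neither contains a $5$-fan either.

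For the $4$-fan statement in the case $|E(N)|=7$, I would take any $N$-minor $M/C\backslash D\cong N$ of $M$. If $s_1\in D$ then this is a minor of $M\backslash s_1$; if $s_4\in C$ then this is a minor of $M/s_4$. So I may assume $s_1\in C\cup E(N)$ and $s_4\in D\cup E(N)$, aiming for a contradiction. The plan is to argue by case analysis on the placement of $s_2$ and $s_3$ in $C$, $D$, or $E(N)$, using two structural facts: contracting an element of the triangle $\{s_1,s_2,s_3\}$ makes the other two parallel in the resulting minor, and deleting an element of the triad $\{s_2,s_3,s_4\}$ makes the other two coparallel. Since $N$ is $3$-connected with at least four elements, it has no parallel or coparallel pair, which severely restricts which elements can lie in $C\cup D$. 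After eliminating the inconsistent subcases (each of which forces $s_1\in D$ or $s_4\in C$, contradicting our assumption, by following chains of loops and coloops created when triangle/triad elements are contracted or deleted), the only surviving possibility is $\{s_1,s_2,s_3,s_4\}\subseteq E(N)$. In that situation, $\{s_1,s_2,s_3\}$ and $\{s_2,s_3,s_4\}$ descend to a triangle and a triad of $N$ respectively (using that $N$, being $3$-connected, has no smaller dependent or codependent subsets of these sets), producing a $4$-fan in $N$ and contradicting $N\in\{F_7,F_7^*\}$.

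For the $5$-fan statement, the case $|E(N)|\ge 8$ again follows from \cite[Lemma 2.2]{cmoII}. For $|E(N)|=7$, I would apply the just-proved $4$-fan conclusion to the $4$-fan $(s_1,s_2,s_3,s_4)$, obtaining that $M\backslash s_1$ or $M/s_4$ has an $N$-minor, and similarly to the reversed $4$-fan $(s_5,s_4,s_3,s_2)$, obtaining that $M\backslash s_5$ or $M/s_2$ has an $N$-minor. The main obstacle is combining these two disjunctions into the stated joint conclusion. I would resolve this by iterating: if $M\backslash s_1$ has an $N$-minor, then $(s_5,s_4,s_3,s_2)$ is still a $4$-fan in $M\backslash s_1$ (the triangle $\{s_3,s_4,s_5\}$ and triad $\{s_2,s_3,s_4\}$ both survive the deletion of $s_1$), so a second application of the $4$-fan conclusion inside $M\backslash s_1$ yields either $M\backslash s_1,s_5$ with an $N$-minor or $M\backslash s_1/s_2$ with an $N$-minor. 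A symmetric iteration starting from $M/s_4$, in which the triangle $\{s_3,s_4,s_5\}$ becomes a parallel pair $\{s_3,s_5\}$ that must be broken to recover $N$, produces either $M\backslash s_5/s_4$ or $M\backslash s_1/s_2$ with an $N$-minor. Piecing the four combinations together, either $M\backslash s_1,s_5$ has an $N$-minor, or both $M\backslash s_1/s_2$ and $M\backslash s_5/s_4$ do. The hardest part is the careful bookkeeping in this final step to ensure that the two iterated applications of the $4$-fan result produce compatible outcomes on both sides of the $5$-fan.
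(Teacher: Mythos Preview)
Your approach is correct and matches the paper's, which gives no proof beyond the remark that \cite[Lemma~2.2]{cmoII} (stated there for $|E(N)|\ge 8$) extends to $|E(N)|=7$ because the only \ifc\ binary matroids on seven elements are $F_7$ and $F_7^*$, neither of which has a $4$-fan. Your case analysis for the $4$-fan statement, driving toward the contradiction that $N$ itself contains a $4$-fan, is exactly the mechanism behind the cited result.

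One point to tighten in your $5$-fan argument: the ``piecing together'' of the four combinations does not quite close as written. In each of the non-trivial subcases you obtain an $N$-minor in exactly one of $M\backslash s_1/s_2$ or $M\backslash s_5/s_4$, not both. The clean fix is to observe directly that these two matroids are isomorphic: using the triangle $\{s_1,s_2,s_3\}$, the triad $\{s_2,s_3,s_4\}$, and the triangle $\{s_3,s_4,s_5\}$ in turn,
\[
M\backslash s_1/s_2 \;\cong\; M\backslash s_3/s_2 \;\cong\; M\backslash s_3/s_4 \;\cong\; M\backslash s_5/s_4.
\]
Once this is in hand, an $N$-minor in either one yields it in both, and the $5$-fan conclusion follows without the delicate bookkeeping you anticipated. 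You already used two of the three links in this chain (to pass from $M/s_4\backslash s_3$ to $M\backslash s_1/s_2$); completing it closes the gap.
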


We will use the following elementary result frequently when considering bowtie structures.  

\begin{lemma}
\label{bowwow}
Let $M$ be  an \ifc\ matroid having at least ten elements. If $(\{1,2,3\}, \{4,5,6\}, \{2,3,4,5\})$ is a bowtie in $M$, then $\{2,3,4,5\}$ is the unique $4$-cocircuit of $M$ that meets both $\{1,2,3\}$ and  $\{4,5,6\}$.
\end{lemma}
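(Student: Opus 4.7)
The plan is to argue by contradiction: suppose $D^{*}$ is a $4$-cocircuit of $M$ distinct from $\{2,3,4,5\}$ that meets both $\{1,2,3\}$ and $\{4,5,6\}$. Since $M$ is binary, the \ort\ of circuits and cocircuits forces $|D^{*}\cap\{1,2,3\}|$ and $|D^{*}\cap\{4,5,6\}|$ to be even. Each of these intersections is non-empty and has at most three elements, so each has exactly two elements, and since $|D^{*}|=4$ we get $D^{*}\subseteq\{1,2,3\}\cup\{4,5,6\}$. This first step is purely mechanical.

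Next I would split on $|D^{*}\cap\{2,3,4,5\}|$, which is even and strictly less than $4$ (since $D^{*}\neq\{2,3,4,5\}$), hence equals $0$ or $2$. If the intersection is empty, then $D^{*}\symdif\{2,3,4,5\}$ has size two; but in a binary matroid the symmetric difference of two cocircuits is a disjoint union of cocircuits, so $M$ would have a cocircuit of size at most two, contradicting that $M$ is \thc.

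The substantive case is $|D^{*}\cap\{2,3,4,5\}|=2$, in which case $D^{*}\cup\{2,3,4,5\}$ has exactly six elements and equals $X:=\{1,2,3,4,5,6\}$. Then $E(M)-X$ is precisely the intersection of the two distinct hyperplanes $E(M)-\{2,3,4,5\}$ and $E(M)-D^{*}$. A standard submodular rank argument shows that two distinct hyperplanes of $M$ intersect in a flat of rank $r(M)-2$, so $r(E(M)-X)=r(M)-2$. Combined with the bound $r(X)\le r(\{1,2,3\})+r(\{4,5,6\})=4$, this yields
\[\lambda_{M}(X)=r(X)+r(E(M)-X)-r(M)\le 4+(r(M)-2)-r(M)=2.\]

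Thus $(X,E(M)-X)$ is a $3$-separation of $M$. Since $|X|=6$ and $|E(M)-X|\ge |E(M)|-6\ge 4$ by the hypothesis $|E(M)|\ge 10$, neither side is a triangle or a triad, contradicting that $M$ is \ifc. The main obstacle is the rank computation for $E(M)-X$; once one recognises that $E(M)-X$ is the intersection of two distinct hyperplanes of $M$, everything else is routine bookkeeping about sizes and intersections.
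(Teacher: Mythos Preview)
Your core idea---two distinct $4$-cocircuits inside $X=\{1,2,3,4,5,6\}$ force $\lambda_M(X)\le 2$---is exactly the paper's argument, and your hyperplane computation is the right way to justify it. But the case split on $|D^*\cap\{2,3,4,5\}|$ is flawed and leaves a gap. Orthogonality constrains intersections of circuits with cocircuits, not of two cocircuits, so nothing forces $|D^*\cap\{2,3,4,5\}|$ to be even; a candidate such as $D^*=\{2,3,4,6\}$ meets each triangle in two elements yet meets $\{2,3,4,5\}$ in three, and that possibility is never handled. (The case $|D^*\cap\{2,3,4,5\}|=0$ is vacuous, since then $D^*\subseteq\{1,6\}$; your symmetric-difference claim there is also wrong, as disjoint $4$-sets have symmetric difference of size $8$.)

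The repair is simply to drop the case split: your hyperplane argument only uses that $D^*$ and $\{2,3,4,5\}$ are distinct cocircuits contained in $X$, and $E(M)-X\subseteq (E(M)-D^*)\cap(E(M)-\{2,3,4,5\})$ already gives $r(E(M)-X)\le r(M)-2$ regardless of whether the union fills $X$. Two minor points: the lemma does not assume $M$ is binary, so invoke ordinary orthogonality (a circuit and cocircuit cannot meet in exactly one element) rather than parity---it still forces $|D^*\cap\{1,2,3\}|=|D^*\cap\{4,5,6\}|=2$; and two distinct hyperplanes meet in a flat of rank \emph{at most} $r(M)-2$, not necessarily exactly $r(M)-2$, though the inequality is all you need.
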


\begin{proof} Suppose $M$ has a $4$-cocircuit $C^*$ other than $\{2,3,4,5\}$ that meets both $\{1,2,3\}$ and  $\{4,5,6\}$. Then, by \ort, $C^* \subseteq \{1,2,3,4,5,6\}$. Hence $\lambda(\{1,2,3,4,5,6\}) \le 2$; a \cn\ as $|E(M)| \ge 10$.
\end{proof}

Observe that the last result need not hold if $|E(M)| < 10$. Indeed, every bowtie of  $M^*(K_{3,3})$ has three distinct $4$-cocircuits meeting both of its triangles.

The following result uses Lemma~\ref{bowwow} to make a straightforward modification of  \cite[Lemma   2.6]{cmoIV}.

\begin{lemma}
\label{6.3rsv}
Let $(\{1,2,3\}, \{4,5,6\}, \{2,3,4,5\})$ be a bowtie in an \ifc\ binary matroid $M$ with $|E(M)| \ge 13$. Then $M\ba 6$ is \ffsc\ unless $\{4,5,6\}$ is the central triangle of 
a quasi rotor $(\{1,2,3\},\{4,5,6\},\{7,8,9\},\{2,3,4,5\},\{y,6,7,8\},\{x,y,7\})$ for some $x$ in $\{2,3\}$ and some $y$ in $\{4,5\}$.  
In addition, when $M\ba 6$ is \ffsc, one of the following holds. 
\begin{itemize}
\item[(i)] $M\ba 6$ is \ifc; or
\item[(ii)] $M$ has a triangle $\{7,8,9\}$ disjoint from $\{1,2,3,4,5,6\}$ such that $(\{4,5,6\}, \{7,8,9\}, \{a,6,7,8\})$ is a bowtie for some $a$ in $\{4,5\}$; or
\item[(iii)] every \ftv\ of $M\ba 6$ is a $4$-fan of the form $(u,v,w,x)$, where $M$ has a triangle $\{u,v,w\}$ and a cocircuit $\{v,w,x,6\}$ where $u$ and  $v$  are in $\{2,3\}$ and $\{4,5\}$, respectively, and $|\{1,2,3,4,5,6,w,x\}|=8$; or
\item[(iv)] $M\ba 1$ is \ifc\ and $M$ has a triangle $\{1,7,8\}$ and a cocircuit $\{a,6,7,8\}$ where $|\{1,2,3,4,5,6,7,8\}| = 8$ and  $a \in \{4,5\}$. 
\end{itemize}
\end{lemma}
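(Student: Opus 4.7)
The plan is to deduce this result directly from \cite[Lemma~2.6]{cmoIV}. That earlier lemma draws exactly the same dichotomy and four-way case description but under the additional hypothesis that $\{2,3,4,5\}$ is the unique $4$-cocircuit of $M$ meeting both triangles of the bowtie. Since the present hypothesis $|E(M)|\ge 13$ implies $|E(M)|\ge 10$, Lemma~\ref{bowwow} supplies this uniqueness for free, so the cited conclusion transfers intact.

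To recall the mechanism driving the quasi-rotor dichotomy: one first checks that $M\ba 6$ is $3$-connected, since any triad of $M$ through $6$ would, by orthogonality with the triangle $\{4,5,6\}$ and the cocircuit $\{2,3,4,5\}$, contradict internal $4$-connectivity of $M$. If $M\ba 6$ is not \ffsc, take a \ftv\ $(X,Y)$. Because $\{4,5,6\}$ is a triangle, $6\in\cl_M(\{4,5\})$, and if $\{4,5\}$ sits on one side of $(X,Y)$ then restoring $6$ to that side produces a $3$-separation of $M$ with both sides of size at least four, contradicting $M$ being \ifc. Hence $\{4,5\}$ is split between $X$ and $Y$. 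Closure and coclosure moves along $\{2,3,4,5\}$, together with the uniqueness from Lemma~\ref{bowwow}, align this cocircuit onto a single side; the obstruction to re-adding $6$ then forces a triangle $\{7,8,9\}$ and a second $4$-cocircuit configured exactly as the quasi rotor in the statement, with the labels $x\in\{2,3\}$ and $y\in\{4,5\}$ recording where the new cocircuit and triangle attach.

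For the addendum, suppose $M\ba 6$ is \ffsc\ but not \ifc, so every \ftv\ of $M\ba 6$ is a $4$-fan $(u,v,w,x)$ in which $\{u,v,w\}$ is a triangle of $M$ and $\{v,w,x,6\}$ is a cocircuit of $M$. A routine orthogonality case-analysis on how $\{u,v,w\}$ and $\{v,w,x,6\}$ overlap $\{1,2,3,4,5,6\}$ isolates the four outcomes (i)--(iv): either no such fan exists (giving (i)); the fan is disjoint from $\{1,2,3,4,5,6\}$ and produces a new bowtie off $\{4,5,6\}$ (giving (ii)); the cocircuit $\{v,w,x,6\}$ meets $\{4,5\}$ at $v$ and the triangle meets $\{2,3\}$ at $u$ (giving (iii)); or the fan's triangle uses the guts element $1$ of the original triangle with $\{v,w,x,6\}$ attaching through $\{4,5\}$ (giving (iv)). Lemma~\ref{bowwow} is what rules out any ambiguity among these configurations, as in each sub-case it prevents a rogue second $4$-cocircuit from confusing the attachment pattern.

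The main obstacle, were the argument not already available in \cite{cmoIV}, would be the patient orthogonality bookkeeping that verifies the four-case list is exhaustive and that no other placement of the fan against the bowtie is possible. Given the citation, however, our proof collapses to the one-line observation that the hypothesis of \cite[Lemma~2.6]{cmoIV} is supplied by Lemma~\ref{bowwow} whenever $|E(M)|\ge 10$, and hence in particular when $|E(M)|\ge 13$.
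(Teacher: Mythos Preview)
Your proposal is correct and matches the paper's approach exactly: the paper states that this lemma ``uses Lemma~\ref{bowwow} to make a straightforward modification of \cite[Lemma 2.6]{cmoIV},'' which is precisely the one-line reduction you give in your final paragraph. The intervening sketch of the quasi-rotor mechanism and the four-case analysis is extra commentary rather than part of the proof, and the paper does not reproduce any of that detail either.
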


In this paper, we are focussing  on  the case in which $(\{1,2,3\},\{4,5,6\},\{2,3,4,5\})$ is a bowtie and $M\ba 6$ has an $N$-minor but is not \ffsc;  
that is, we are concerned with the case when $\{4,5,6\}$ is the central triangle of a quasi rotor.  The remaining cases that arise from this lemma will be treated in~\cite{cmoVII} and~\cite{cmoVIII}. 

We proved the following result in~\cite[Theorem 6.1]{cmochain}.

\begin{theorem}
\label{rotor}
Let $M$ be an \ifc~ binary matroid having $(\{1,2,3\},\{4,5,6\},\{7,8,9\},\{2,3,4,5\},\{5,6,7,8\},\{3,5,7\})$ as a quasi rotor   
and having at least thirteen elements. Then either 
\begin{itemize}
\item[(i)]  $M\ba 1$, $M\ba 9$, $M\ba 1/2$, or $M\ba 9/8$ is \ifc; or
\item[(ii)] $M$ has triangles $\{6,8,10\}$ and $\{2,4,11\}$ such that $|\{1,2,\ldots,11\}| = 11$, and  $M\ba 3,4/5$ is \ifc.
\end{itemize}
\end{theorem}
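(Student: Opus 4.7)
The plan is to apply Lemma~\ref{6.3rsv} at both ends of the quasi rotor---specifically to the bowties $(\{4,5,6\},\{1,2,3\},\{2,3,4,5\})$ and $(\{4,5,6\},\{7,8,9\},\{5,6,7,8\})$, where our elements $1$ and $9$ each play the role of the lemma's element~$6$. The mirror symmetry of the quasi rotor that swaps $1\leftrightarrow 9$, $2\leftrightarrow 8$, $3\leftrightarrow 7$, $4\leftrightarrow 6$ (fixing $5$ and the triangle $\{3,5,7\}$) will let me handle the two ends in parallel. If either $M\ba 1$ or $M\ba 9$ is already \ifc, outcome~(i) holds, so I may assume neither is.

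Suppose first that $M\ba 1$ is \ffsc. Then case~(ii), (iii), or~(iv) of Lemma~\ref{6.3rsv} must apply; case~(iv) directly yields outcome~(i). In cases (ii) and (iii), any remaining \ftv\ of $M\ba 1$ is a $4$-fan whose coguts element is $1$. Orthogonality arguments that use the cocircuit $\{5,6,7,8\}$, the triangle $\{7,8,9\}$, and the central-element triangle $\{3,5,7\}$, together with the binary symmetric-difference rule applied to the known triangles, should force every such fan to contain the element $2$; then $M\ba 1/2$ is \ifc\ and outcome~(i) holds. The symmetric argument at the other end shows $M\ba 9/8$ is \ifc\ whenever $M\ba 9$ is \ffsc\ but not \ifc.

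The substantive case is when neither $M\ba 1$ nor $M\ba 9$ is \ffsc. By the first assertion of Lemma~\ref{6.3rsv}, each of $\{1,2,3\}$ and $\{7,8,9\}$ is then the central triangle of a further quasi rotor sharing $\{4,5,6\}$ as an outer triangle. Each new quasi rotor produces a new cocircuit and a new central-element triangle of the form $\{x',y',a'\}$, with $x' \in \{4,5\}$ for the quasi rotor at $\{1,2,3\}$ and $x'' \in \{5,6\}$ for the one at $\{7,8,9\}$. Orthogonality of the triangle $\{x',y',a'\}$ with the cocircuit $\{5,6,7,8\}$ rules out $x'=5$, since $y' \in \{2,3\}$ and $a'$ is a new element, so the triangle would meet the cocircuit in only the single element~$5$; hence $x'=4$, and symmetrically $x''=6$. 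A further orthogonality analysis pins down the remaining elements and identifies the two new triangles as $\{2,4,11\}$ and $\{6,8,10\}$ with $|\{1,2,\ldots,11\}|=11$.

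The main obstacle is then to verify that $M\ba 3,4/5$ is \ifc. The plan is to take a hypothetical \ftv\ $(X,Y)$ of $M\ba 3,4/5$, lift it to a subset $X' \subseteq E(M)$ containing $5$ and disjoint from $\{3,4\}$, and use the accumulated structure---the triangles $\{1,2,3\}$, $\{4,5,6\}$, $\{7,8,9\}$, $\{3,5,7\}$, $\{2,4,11\}$, $\{6,8,10\}$ together with the cocircuits $\{2,3,4,5\}$ and $\{5,6,7,8\}$ and the new cocircuits supplied by the two additional quasi rotors---to enlarge $X'$ to a $\ths$ of $M$ whose smaller side is neither a triangle nor a triad. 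This will contradict the internal $4$-connectivity of $M$ and so establish outcome~(ii).
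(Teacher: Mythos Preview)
The paper does not prove Theorem~\ref{rotor}; it is quoted from \cite[Theorem~6.1]{cmochain} and used as a black box.  So there is no proof in this paper to compare your proposal against.

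On its own merits, your outline has the right overall shape---analyze each end of the quasi rotor with a bowtie lemma, and in the residual case exhibit the two extra triangles and verify that $M\ba 3,4/5$ is \ifc---but several of the claimed steps are either wrong as stated or are only wishes rather than arguments.  First, when you apply Lemma~\ref{6.3rsv} to the bowtie $(\{4,5,6\},\{1,2,3\},\{2,3,4,5\})$ with your element~$1$ playing the role of the lemma's element~$6$, the lemma's element~$1$ is your~$6$; so case~(iv) of Lemma~\ref{6.3rsv} asserts that $M\ba 6$ is \ifc, which is \emph{not} one of the conclusions in outcome~(i) of Theorem~\ref{rotor}.  You still need an argument to get from that to one of $M\ba 1$, $M\ba 9$, $M\ba 1/2$, $M\ba 9/8$ being \ifc.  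Second, the phrase ``a $4$-fan whose coguts element is~$1$'' cannot be right in $M\ba 1$, since $1$ is no longer present; in cases~(ii) and~(iii) of Lemma~\ref{6.3rsv} the fan has an element of $\{2,3\}$ as an interior element, but not necessarily~$2$, and your ``should force'' orthogonality argument has to rule out the case that~$3$ appears instead (which interacts nontrivially with the triangle $\{3,5,7\}$ and the third triangle $\{7,8,9\}$).  Third, in your main case the assertion that orthogonality with $\{5,6,7,8\}$ forces $x'=4$ is too quick: if $x'=5$ then the triangle $\{5,y',a'\}$ could still meet $\{5,6,7,8\}$ evenly provided $a'\in\{7,8\}$, and you need a separate $\lambda$-argument to exclude that.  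Finally, the verification that $M\ba 3,4/5$ is \ifc\ is the heart of the theorem, and ``lift a \ftv\ and enlarge it using the accumulated structure'' is a plan, not a proof; the actual argument in \cite{cmochain} is substantial.
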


We show next that if $M$ contains an element $e$ that is in two triangles of a quasi rotor and $M/e$ has an $N$-minor, then $M$ has an \ifc~ minor $M'$ that has an $N$-minor and satisfies $|E(M) - E(M')| \in \{1,2,3\}$.

\begin{lemma}
\label{rotorwin} Let $M$ be an \ifc~ binary matroid having $(\{1,2,3\},\{4,5,6\},\{7,8,9\},\{2,3,4,5\},\{5,6,7,8\},\{3,5,7\})$ as a quasi rotor   
and having at least thirteen elements. Let $N$ be an \ifc~ matroid containing at least seven elements such that $M/e$ has an $N$-minor for some $e$ in $\{3,5,7\}$. Then one of $M\ba 1$, $M\ba 9$, $M\ba 1/2$,  $M\ba 9/8$, or $M\ba 3,4/5$ is \ifc\ with an $N$-minor.
\end{lemma}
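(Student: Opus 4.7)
The proof will be a case analysis on $e\in\{3,5,7\}$, combined with Theorem~\ref{rotor}. The starting point is the standard parallel-pair reduction: since $N$ is \ifc\ with at least seven elements, $N$ is simple, and so whenever $M'$ has a parallel pair $\{a,b\}$ and $N\preceq M'$, one also has $N\preceq M'\ba a$. I would apply this to the parallel pairs that arise in $M/e$ from the triangles meeting $e$ in the quasi rotor. For $e=5$, the triangles $\{3,5,7\}$ and $\{4,5,6\}$ of $M$ give parallel pairs $\{3,7\}$ and $\{4,6\}$ in $M/5$, so iterating the reduction yields $N\preceq M/5\ba 3\ba 4 = M\ba 3,4/5$. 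For $e=3$, the triangle $\{1,2,3\}$ gives a parallel pair $\{1,2\}$ in $M/3$, so $N\preceq M/3\ba 1=M\ba 1/3$; because the matroids obtained from $M$ by deleting one element of the triangle $\{1,2,3\}$ and contracting another are pairwise isomorphic, this yields $N\preceq M\ba 1/2$ and hence $N\preceq M\ba 1$. The case $e=7$ is symmetric and gives $N\preceq M\ba 9/8$ and $N\preceq M\ba 9$.

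Combining with Theorem~\ref{rotor} handles most subcases directly. When Theorem~\ref{rotor}(ii) holds and $e=5$, the minor $M\ba 3,4/5$ is simultaneously \ifc\ and has an $N$-minor, so we are done. When Theorem~\ref{rotor}(i) holds with the \ifc\ candidate in $\{M\ba 1, M\ba 1/2\}$ and $e=3$, or with the \ifc\ candidate in $\{M\ba 9, M\ba 9/8\}$ and $e=7$, we are again done by the reductions above.

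The remaining task is to treat the ``cross'' subcases, where the value of $e$ does not immediately align with the \ifc\ candidate produced by Theorem~\ref{rotor}. For this I would use the auxiliary matroid $M\ba 5$: because $\{2,3,4,5\}$ and $\{5,6,7,8\}$ are cocircuits of $M$, the sets $\{2,3,4\}$ and $\{6,7,8\}$ are triads of $M\ba 5$, giving two $4$-fans $(1,2,3,4)$ and $(9,8,7,6)$. When $e\in\{3,7\}$, the parallel-pair reduction in $M/e$ applied to $\{5,7\}\subseteq\{3,5,7\}$ (respectively $\{3,5\}\subseteq\{3,5,7\}$) yields $N\preceq M\ba 5$. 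Lemma~\ref{2.2} applied to the two $4$-fans of $M\ba 5$ then gives that one of $M\ba 1$, $M\ba 5/4$ has an $N$-minor, and one of $M\ba 9$, $M\ba 5/6$ does; this suffices to produce an $N$-minor of the desired \ifc\ candidate, using in addition $N\preceq M\ba 3$, $N\preceq M\ba 4$, $N\preceq M\ba 6$, $N\preceq M\ba 7$ (the last four available when $e=5$ from the $\{3,7\}$ and $\{4,6\}$ parallel pairs in $M/5$).

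The main obstacle will be the bookkeeping in these cross subcases: ensuring that after iterating Lemma~\ref{2.2} on the fans in $M\ba 5$ (and the symmetric fan analysis available in $M\ba 3$ or $M\ba 7$), the propagated $N$-minor lands inside whichever of $M\ba 1$, $M\ba 9$, $M\ba 1/2$, $M\ba 9/8$, or $M\ba 3,4/5$ is selected by Theorem~\ref{rotor}. The key leverage is that each of those fans allows us to trade a contraction of a middle element of the quasi rotor for a deletion of an outer one, which exactly matches the shape of the five candidate minors listed in the statement.
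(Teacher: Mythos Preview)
Your reduction for $e=5$ to $N\preceq M\ba 3,4/5$ is fine and matches the paper. The gap is in your treatment of $e\in\{3,7\}$: the assertion that ``the matroids obtained from $M$ by deleting one element of the triangle $\{1,2,3\}$ and contracting another are pairwise isomorphic'' is false. From the triangle alone you only get $M/3\ba 1\cong M/3\ba 2$ (via the parallel pair in $M/3$), i.e.\ $M\ba 1/3\cong M\ba 2/3$; there is no reason for $M\ba 1/3$ and $M\ba 1/2$ to be isomorphic, and in general they are not. So from $e=3$ you obtain $N\preceq M\ba 1$, but not $N\preceq M\ba 1/2$, and your ``direct'' subcase where Theorem~\ref{rotor} selects $M\ba 1/2$ is not covered. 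Your cross-subcase sketch via the $4$-fans of $M\ba 5$ does not fill this either: Lemma~\ref{2.2} on $(1,2,3,4)$ yields only $N\preceq M\ba 1,5$ or $N\preceq M\ba 5/4$, neither of which gives $N\preceq M\ba 1/2$ or $N\preceq M\ba 3,4/5$ without further input.

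The missing idea is to use the \emph{cocircuit} $\{2,3,4,5\}$, not just the triangle. For $e=5$, once you have $N\preceq M\ba 3,4/5$, note that in $M\ba 3,4$ the set $\{2,5\}$ is a series pair, so $M\ba 3,4/5\cong M\ba 3,4/2$; then the parallel pair $\{1,3\}$ in $M/2$ gives $M\ba 3,4/2\cong M\ba 1,4/2$, whence $N\preceq M\ba 1/2$ (and symmetrically $N\preceq M\ba 9/8$). This single series/parallel swap handles all five candidates simultaneously when $e=5$. For $e=7$ (and, by the symmetry of the quasi rotor, $e=3$) the paper does not argue directly but instead shows that $N\preceq M/7$ forces $N\preceq M/5$: using $M/7\ba 5,8\cong M/6\ba 4,8$ and the $5$-fan $(1,2,3,5,7)$ in $M\ba 4,8$, Lemma~\ref{2.2} gives $N\preceq M\ba 7,8$ or $N\preceq M/5$; in the former case the series pair $\{5,6\}$ in $M\ba 7,8$ again yields $N\preceq M/5$. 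Once reduced to $e=5$, the argument above finishes.
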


\begin{proof} By symmetry, it suffices to prove the theorem for $e$ in $\{5,7\}$. We prove it first for $e = 5$ and then use that case to prove it for $e = 7$.

Assume $N \preceq M/5$. As $M/5$ has $\{3,7\}$ and $\{4,6\}$ as circuits, we deduce that $N \preceq M/5\ba 3,4$. The theorem holds for $e = 5$ if $M/5\ba 3,4$ is \ifc, so assume it is not. Then, by Theorem~\ref{rotor}, one of $M\ba 1$, $M\ba 9$, $M\ba 1/2$, or $M\ba 9/8$ is \ifc. By symmetry, if we can show that $N\preceq M\ba 1/2$, then the theorem will follow for $e= 5$. Now $N \preceq M/5\ba 3,4$ and, in $M\ba 3,4$, the elements $2$ and $5$ are in series. Hence $M/5\ba 3,4 \cong M/2\ba 3,4$. But, in $M/2$, the elements $1$ and $3$ are in parallel. Hence $M/2\ba 3,4 \cong M/2\ba 1,4$. We deduce that $N\preceq M\ba 1/2$, so the theorem holds for $e= 5$.

Now suppose that $N \preceq M/7$ but $N \not \preceq M/5$. Then $N\preceq M/7 \ba 5,8$. Now 
$M/7 \ba 5,8 \cong M\ba 5,8/6 \cong M/6\ba 4,8$. Since $(1,2,3,5,7)$ is a fan of $M\ba 4,8$, 
Lemma~\ref{2.2} implies  that $N\preceq M\ba 4,8\ba 1,7$ or $N\preceq M\ba 4,8/5\ba 7$. The second possibility is excluded because $N \not \preceq M/5$. Thus $N \preceq M\ba 7,8$. As $M \ba 7,8$ has $\{5,6\}$ as cocircuit, we deduce that $N \preceq M/5$; a contradiction. 
\end{proof}




\section{Strings and rings of bowties} 
\label{bowtiestring}

Strings and rings of bowties will   feature  prominently throughout the rest of the paper. 
This section develops some properties of such structures. 
Recall that, for each natural number $i$, we are using $T_i$ and $D_i$ to denote a triangle $\{a_i,b_i,c_i\}$  and a cocircuit $\{b_i,c_i,a_{i+1},b_{i+1}\}$, respectively.  

When bowtie strings appear in our theorems, they do so embedded in more highly structured configurations, specifically, open rotor chains, ladder structures, enhanced quartic ladders, and bowtie rings. Of the last four structures, bowtie rings allow for the most variation in the surrounding structure. Suppose that we have  a bowtie string as shown in Figure~\ref{bowtiechainfig} and that, in addition, $n \ge 2$ and $\{b_n,c_n,a_0,b_0\}$ is a cocircuit, $D_n$. Then $(T_0,D_0,T_1,D_1,\dots ,T_n,D_n)$ is a ring of bowties. Suppose that this ring occurs in an \ifc\ graphic matroid $M(G)$. As observed in the introduction, for all $i$ in $\{0,1,\ldots,n\}$, there is a vertex $v_i$ of $G$ such that $\{b_i,c_i,a_{i+1},b_{i+1}\}$ is the set of edges of $G$ meeting $v_i$. Hence $\{b_0,b_1,\dots,b_n\}$ is a cycle of $G$. For each $i$, let $w_i$ be the vertex that meets $a_i$ and $c_i$. The definition of a ring of bowties does not require $w_0,w_1,\dots,w_n$ to be distinct. Indeed, subject to some constraints to ensure that $G$ stays \ifc, they need not be. As an example, take a copy of $K_{10}$ with vertex set $\{u_0,u_1,\dots,u_9\}$ and let $n= 99$, so we have a bowtie ring with $100$ triangles. Suppose that, for all $j$ in $\{0,1,\dots,99\}$, the vertex $w_j$ is identified with $u_t$ 
where $j \equiv t\mod 10$. Let $G$ be the resulting $110$-vertex graph. Then $M(G)$ is easily shown to be \ifc. For $N = M(G) \ba c_0,c_1,\dots,c_{99}$, we see that $N$ is \ifc\ while there is no \ifc\ proper minor of $M$ that has an $N$-minor. 

As another example, let $H$ be the octahedron graph. Clearly $M(H)$ contains a bowtie ring $(T_0,D_0,T_1,D_1,T_2,D_2,T_3,D_3)$ with exactly four triangles. In addition, $M(H)$ has a $4$-cocircuit $D'$ such that $(T_0,D_0,T_1,D_1,T_2,D')$ is a bowtie ring. A bowtie ring is   {\it minimal} exactly when no proper subset of its set of triangles is the set of triangles of a bowtie ring.  Next we show that when we trim a bowtie ring to produce an \ifc\ matroid, that bowtie ring must be minimal. 

\begin{lemma}
\label{minbtring}
Let $(T_0,D_0,T_1,D_2,\dots ,T_n,D_n)$ be a bowtie ring in an \ifc\ binary matroid $M$ where $|E(M)| \ge 10$.  If $M\ba c_0,c_1,\dots ,c_n$ is \ifc,   
then $(T_0,D_0,T_1,D_1,\dots ,T_n,D_n)$ is a minimal bowtie ring.  
\end{lemma}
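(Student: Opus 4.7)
The plan is to assume, for a contradiction, that the bowtie ring $(T_0,D_0,\ldots,T_n,D_n)$ is not minimal and then produce a $1$- or $2$-cocircuit in $M':=M\ba c_0,c_1,\ldots,c_n$, contradicting its $3$-connectivity. From non-minimality I obtain a proper sub-bowtie-ring; since its triangle set is a proper subset of $\{T_0,\ldots,T_n\}$, at least one of its connecting cocircuits joins triangles that are not consecutive in the original cyclic order, and by Lemma~\ref{bowwow} this connecting cocircuit is distinct from every $D_i$. Thus I get a ``chord'' $4$-cocircuit $C^*\subseteq T_p\cup T_q$ with $|C^*\cap T_p|=2=|C^*\cap T_q|$ for some pair $T_p,T_q$ that is non-adjacent in the ring.

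First I would record that, for each $i$, the set $D_i-c_i=\{b_i,a_{i+1},b_{i+1}\}$ is a triad of $M'$, which is immediate since $M'$ is $3$-connected and hence contains no $1$- or $2$-cocircuits. The heart of the proof is then a case split on $|C^*\cap\{c_p,c_q\}|$, exploiting the standard binary-matroid fact that the symmetric difference of cocircuits is a disjoint union of cocircuits. If both $c_p$ and $c_q$ lie in $C^*$, then $C^*-\{c_p,c_q\}$ has only two elements and yields the desired cocircuit of $M'$ directly. If exactly one, say $c_p$, lies in $C^*$, then $C^*-c_p$ is a triad of $M'$, and its symmetric difference with the triad $D_{q-1}-c_{q-1}$ collapses to a $2$-element disjoint union of cocircuits. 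If neither $c_p$ nor $c_q$ lies in $C^*$, then $C^*=\{a_p,b_p,a_q,b_q\}$ is itself a $4$-cocircuit of $M'$, and taking symmetric differences successively with $D_{p-1}-c_{p-1}$ and $D_{q-1}-c_{q-1}$ produces $\{b_{p-1},b_{q-1}\}$ as a disjoint union of cocircuits of $M'$. In each branch, a $1$- or $2$-cocircuit of $M'$ appears, which is impossible.

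The main obstacle is the bookkeeping needed to justify that the intermediate symmetric differences really are cocircuits (or unions thereof) of $M'$ of the advertised sizes. Concretely, one must rule out that any $1$- or $2$-element subset of a reduction is itself the image in $M'$ of a small cocircuit of $M$. Using that every cocircuit of $M$ meets each triangle $T_i$ in an even number of elements, any such putative cocircuit of $M$ must take one of three forms, all of which are ruled out under our hypotheses: a genuine $2$-cocircuit of $M$ (impossible because $M$ is $3$-connected); the triangle $T_i$ itself being also a cocircuit (impossible in an \ifc\ matroid with $|E(M)|\geq 10$, since such a $3$-element set would be $2$-separating); or a second $4$-cocircuit joining two triangles of the ring, contradicting the uniqueness clause of Lemma~\ref{bowwow}. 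Since the triangles of a bowtie ring are pairwise disjoint and non-adjacency of $T_p,T_q$ forces $T_{p-1},T_{q-1},T_p,T_q$ to be four distinct triangles, each of the $2$-element sets arising above is genuinely a $2$-set, and the desired contradiction with $3$-connectivity of $M'$ follows in every case, establishing minimality of the bowtie ring.
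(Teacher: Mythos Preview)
Your argument is correct and lands on the same contradiction as the paper, but the paper organizes it more economically. Rather than split into three cases on $|C^*\cap\{c_p,c_q\}|$, the paper chooses $C^*$ at the outset to \emph{maximize} $|C^*\cap\{c_0,\ldots,c_n\}|$ among all $4$-cocircuits of $M$ meeting two of the $T_i$ and distinct from every $D_i$. If $\{a_i,b_i\}\subseteq C^*$, then $C^*\symdif D_{i-1}$ is another such cocircuit with strictly larger intersection with $\{c_0,\ldots,c_n\}$, contradicting maximality; hence $c_i\in C^*$ and, by symmetry, $c_j\in C^*$. This extremal device collapses your three cases to your first one in a single stroke. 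The symmetric-difference moves you perform in $M'$ to reduce Cases~2 and~3 are essentially the same operations the paper performs once in $M$ before deleting. Your route has the minor virtue of working entirely inside $M'$, at the cost of a longer case analysis.

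One simplification you should make: your final ``bookkeeping'' paragraph is unnecessary and somewhat muddled. Since $M'=M\ba c_0,\ldots,c_n$ is internally $4$-connected by hypothesis and $|E(M')|\ge 2(n+1)\ge 6$, it is $3$-connected and therefore has no cocircuits of size at most two. That single sentence does all the work: every nonempty disjoint union of cocircuits of $M'$ with at most three elements is a single triad, and one with at most two elements already gives the contradiction. There is no need to lift back to $M$, classify putative small cocircuits there, or invoke Lemma~\ref{bowwow} a second time.
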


\begin{proof}
Suppose the lemma does not hold.  
Then some proper subset of  $\{T_0,T_1,\dots ,T_n\}$ is the set of triangles of a bowtie ring.  Hence $M$ has a $4$-cocircuit that meets two triangles in this set and that is not contained in $\{D_0,D_1,\dots ,D_n\}$.  
Choose  such a $4$-cocircuit $C^*$ to maximize  $|C^* \cap \{c_0,c_1,\dots ,c_n\}|$.  

Take two distinct integers $i$ and $j$ in $\{0,1,\dots ,n\}$ such that $C^*$ meets $T_i$ and $T_j$.  
Lemma~\ref{bowwow} implies that $T_i$ and $T_j$ are not consecutive triangles in the ring. By orthogonality,   $C^*\subseteq T_i\cup T_j$.  
If $C^*\supseteq \{a_i,b_i\}$, then $C^*\btu D_{i-1}$ is a $4$-cocircuit that is not contained in $\{D_0,D_1,\dots ,D_n\}$  and that has a larger intersection with $\{c_0,c_1,\dots ,c_n\}$ than $C^*$ does; a \cn.  
Thus $c_i \in C^*$ and, by symmetry, $c_j \in C^*$.  
Then $M\ba c_0,c_1,\dots ,c_n$ has a $2$-cocircuit contained in $\{a_i,b_i,a_j,b_j\}$. This is a \cn\ as   $|E(M\ba c_0,c_1,\dots ,c_n)| \ge 6$ since $n \ge 2$. 
\end{proof} 

The following property of bowtie strings will be frequently used.


\begin{lemma}
\label{stringswitch}
Let $T_0,D_0,T_1,D_1,\dots ,T_n$ be a string of bowties in a matroid $M$. Then, 
for all $k$ in $\{1,2,\ldots,n\}$, 
\begin{align*}
M\ba c_0,c_1,\ldots,c_n/b_n &\cong M\ba c_0,c_1,\ldots c_{k-1}/b_k\ba a_k,a_{k+1},\ldots,a_n\\
&\cong M\ba c_0,c_1,\ldots c_{k-1}/b_{k-1}\ba a_k,a_{k+1},\ldots,a_n\\
&\cong M\ba a_0,a_1,\ldots,a_n/b_0.
\end{align*}

\end{lemma}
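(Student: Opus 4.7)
The plan is to introduce the auxiliary matroids $M_i := M \setminus c_0, c_1, \ldots, c_{i-1}$ for $0 \le i \le n+1$, and then to chain together elementary matroid isomorphisms that transform $M_{n+1}/b_n$ into each of the other three expressions in the lemma.  Every step of the chain will be an instance of one of two standard facts: if $\{x,y\}$ is a parallel pair of a matroid $N$, then $N\setminus x \cong N\setminus y$; and if $\{x,y\}$ is a series pair of $N$, then $N/x \cong N/y$.

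The chain is driven by two structural observations.  First, for each $i \in \{0, 1, \ldots, n\}$, the triangle $T_i$ is still a circuit of $M_i$, so $\{a_i, c_i\}$ is a parallel pair of $M_i/b_i$, giving $M_i/b_i\setminus c_i \cong M_i/b_i\setminus a_i$.  Second, for each $i \in \{1, \ldots, n\}$, the cocircuit $D_{i-1}$ loses only $c_{i-1}$ in passing from $M$ to $M_i$, so $\{b_{i-1}, a_i, b_i\}$ is a triad of $M_i$; deleting $a_i$ then makes $\{b_{i-1}, b_i\}$ a series pair of $M_i \setminus a_i$, whence $M_i\setminus a_i/b_i \cong M_i\setminus a_i/b_{i-1}$.

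Beginning with $M_{n+1}/b_n = M_n/b_n\setminus c_n$, one application of the first observation at $i = n$ produces the middle-left expression for $k = n$, and one application of the second at $i = n$ produces the middle-right expression for $k = n$.  Rewriting the result as $M_{n-1}/b_{n-1}\setminus c_{n-1}, a_n$ and applying the two observations at $i = n-1$ carries us through the two middle expressions for $k = n-1$, and so on.  Iterating down to $i = 1$, and ending with the first observation at $i = 0$ (which swaps $\setminus c_0$ for $\setminus a_0$ in $M/b_0$), completes the chain at $M\setminus a_0, a_1, \ldots, a_n/b_0$.  Transitivity then yields the lemma for every $k \in \{1, \ldots, n\}$.

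The main obstacle will be the minimality claim in the second observation: verifying that $\{b_{i-1}, b_i\}$ is genuinely a $2$-cocircuit of $M_i \setminus a_i$, and moreover remains one after the additional deletions $\setminus a_{i+1}, \ldots, a_n$, rather than properly containing a coloop.  This reduces to showing that $M$ has no cocircuit lying inside $\{b_{i-1}, a_i, b_i, c_0, \ldots, c_{i-1}, a_{i+1}, \ldots, a_n\}$ whose residue under the deletions is a single element.  Since $M$ is binary, orthogonality with the triangles $T_0, T_1, \ldots, T_n$ supplies the required parity argument: any such hypothetical cocircuit would meet some $T_j$ in exactly one element, contradicting orthogonality.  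A short case analysis, splitting according to which of $b_{i-1}, a_i, b_i$ the candidate cocircuit misses, rules out every possibility; the analogous sanity check for the first observation is immediate because $T_i$ is literally a circuit of $M_i$.
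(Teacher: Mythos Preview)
Your approach is exactly the paper's: chain together parallel swaps (from the triangles $T_i$) and series swaps (from the cocircuits $D_{i-1}$), descending from $i=n$ to $i=0$. The paper compresses the whole thing into one line (``Evidently, $M\ba c_0,\ldots,c_n/b_n \cong M\ba c_0,\ldots,c_{n-1}/b_n\ba a_n \cong M\ba c_0,\ldots,c_{n-1}/b_{n-1}\ba a_n$; now repeat''), so you are in fact being more careful than the source.

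Two small repairs to your coloop discussion. First, the lemma is stated for an arbitrary matroid $M$, not a binary one, so drop the phrase ``since $M$ is binary'' and the word ``parity''; the orthogonality you actually use, $|C\cap C^*|\neq 1$, holds in every matroid. Second, orthogonality with the triangles alone does not finish the job. Chasing it through, a putative cocircuit $C^*$ of $M$ contained in $\{b_{i-1},b_i,c_0,\ldots,c_{i-1},a_i,\ldots,a_n\}$ with $|C^*\cap\{b_{i-1},b_i\}|=1$ is forced down to $C^*=\{b_{i-1},c_{i-1}\}$ or $C^*=\{a_i,b_i\}$, and both of these meet every $T_j$ in $0$ or $2$ elements, so triangle orthogonality is satisfied. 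What kills them is that each is a proper subset of the cocircuit $D_{i-1}=\{b_{i-1},c_{i-1},a_i,b_i\}$, and cocircuits are incomparable. Add that one sentence and your argument is complete. (Your remark that the parallel side is immediate is correct: $\{a_i,b_i\}$ and $\{b_i,c_i\}$ are proper subsets of the circuit $T_i$, so neither $a_i$ nor $c_i$ can become a loop after contracting $b_i$.)
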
 

\begin{proof}
Evidently, $M\ba c_0,c_1,\ldots,c_n/b_n \cong  M\ba c_0,c_1,\ldots,c_{n-1}/b_n\ba a_n \cong 
M\ba c_0,c_1,\ldots,c_{n-1}/b_{n-1}\ba a_n$. The lemma follows by repeatedly applying this observation.
\end{proof}

\begin{lemma}
\label{deletecs}
Let $T_0,D_0,T_1,D_1,\dots,T_n$ be a string of bowties in an \ifc\ binary matroid $M$, where 
$|E(M)| \ge 8$.  
Then either 
$M\ba c_0,c_1,\dots ,c_n$ is \thc, or it has $a_j$ or $b_j$ in a cocircuit of size at most two for some $j$ in $\{2,3,\dots ,n\}$.  
\end{lemma}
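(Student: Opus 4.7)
I would prove the contrapositive: assuming no $a_j$ or $b_j$ with $j \in \{2, 3, \dots, n\}$ lies in a cocircuit of $M\ba S$ of size at most two (where $S = \{c_0, c_1, \dots, c_n\}$), show that $M\ba S$ is $3$-connected. Because $M$ is $3$-connected with $|E(M)| \ge 8$, $M$ is simple, hence so is $M\ba S$. The task therefore reduces to ruling out cocircuits of size at most two (cosimplicity) and $2$-separations in $M\ba S$.

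The main step is to classify the cocircuits of $M\ba S$ of size at most two. Any such cocircuit has the form $C^* \setminus S$, where $C^*$ is a cocircuit of $M$ chosen so that $C^* \setminus S$ is minimal. Orthogonality gives $|C^* \cap T_i| \in \{0, 2\}$ for each triangle $T_i$, which immediately rules out $|C^* \setminus S| \le 1$ (that would force a $1$- or $2$-cocircuit of $M$). A parity analysis on the $T_i$'s then shows that $C^*$ is either (a) a triad $\{x, y, c_i\}$ of $M$ with $x \in T_i$ and $y \notin \bigcup_k T_k$, or (b) a $4$-cocircuit $\{x, c_i, y, c_j\}$ of $M$ with $x \in T_i$, $y \in T_j$, and $i < j$. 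In case (a), the triangle $T_i$ and the triad $\{x, y, c_i\}$ overlap in $\{x, c_i\}$, producing a $4$-fan in $M$; since $M$ is \ifc\ and $|E(M)| \ge 8$, $M$ admits no $4$-fan, so case (a) is impossible. In case (b), Lemma~\ref{bowwow} applied to the bowtie $(T_i, T_{i+1}, D_i)$ forces $\{x, c_i, y, c_{i+1}\} = D_i$ when $j = i+1$, contradicting $c_{i+1} \ne b_{i+1}$; hence $j \ge i + 2 \ge 2$, and $y \in \{a_j, b_j\}$ with $j \ge 2$ lies in the resulting $2$-cocircuit, contradicting the hypothesis. Consequently, $M\ba S$ is cosimple.

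The remaining task, which is the main obstacle, is to rule out $2$-separations of $M\ba S$ that do not come from small cocircuits. I would handle this by lifting any putative $2$-separation $(X, Y)$ of $M\ba S$ to a $3$-separation of $M$: since $S$ contains no cocircuit of $M$ (by the orthogonality analysis above), we have $r(M\ba S) = r(M)$, and since each $c_i \in \cl_M(\{a_i, b_i\})$, the elements of $S$ can be distributed across $X$ and $Y$ to yield a partition of $E(M)$ that is at most $3$-separating. Internal $4$-connectivity of $M$ then forces one side of this lifted $3$-separation to be a triangle or triad, and the bowtie-string structure restricts the possible triangles and triads enough to exhibit a cocircuit of $M\ba S$ of size at most two meeting $\{a_j, b_j : j \ge 2\}$, again contradicting the hypothesis. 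The hard part is executing this lift-and-extract argument cleanly: one has to track ranks and closures along the bowtie string carefully enough both to guarantee that the lifted partition is actually $3$-separating and to identify which small cocircuit the resulting triangle or triad of $M$ produces back in $M\ba S$.
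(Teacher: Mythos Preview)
Your cosimplicity analysis is correct and, in fact, mirrors the orthogonality case analysis the paper eventually carries out on the lifted cocircuit $C^*$. But the overall architecture is quite different from the paper's, and your $2$-separation step has a gap.

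The paper never separately treats cosimplicity and $2$-separations. Instead it proceeds inductively along the string: $M\ba c_0$ is $3$-connected by a cited lemma, and at the first index $i$ where $M\ba c_0,\dots,c_i$ fails to be $3$-connected, Tutte's Triangle Lemma is invoked on the triangle $T_i$. Since $\{b_{i-1},a_i,b_i\}$ is already a triad of $M\ba c_0,\dots,c_{i-1}$, neither $M\ba c_0,\dots,c_{i-1}\ba a_i$ nor $M\ba c_0,\dots,c_{i-1}\ba b_i$ is $3$-connected, so the lemma forces $c_i$ into a triad of $M\ba c_0,\dots,c_{i-1}$. That triad gives the $2$-cocircuit $\{x,y\}$ directly, and lifting it to $M$ produces exactly your case-(b) cocircuit $\{x,c_i,y,c_j\}$ with $j<i$. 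The point is that Tutte's Triangle Lemma packages the entire ``no $2$-separation'' argument into the single inductive step; no global lift is needed.

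Your lift-and-extract plan, by contrast, does not finish the way you describe. Suppose you succeed in distributing the $c_i$ to obtain a $3$-separating partition $(X',Y')$ of $M$ with, say, $|Y'|=3$. If $Y'$ is a triad of $M$, then since $M$ is internally $4$-connected it avoids every $T_i$, hence avoids $S$; so $Y'$ is a triad of $M\ba S$, which is a $3$-cocircuit, not a cocircuit of size at most two---no contradiction to your hypothesis arises. If $Y'$ is a triangle, there is no mechanism at all for producing a small cocircuit of $M\ba S$. In other words, the ``extract a small cocircuit'' step does not follow from having a small side in the lifted separation. A correct version of your argument would instead use the chained triads $\{b_{i-1},a_i,b_i\}$ to move every pair $\{a_i,b_i\}$ to one side of the original $2$-separation, show (using that $M\ba S$ is simple and cosimple binary) that the other side then still has at least four elements, and conclude that the lifted partition is a $(4,3)$-violator of $M$---contradicting internal $4$-connectivity directly. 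That works, but it is a different contradiction from the one you outlined, and carrying it out is essentially as much work as the paper's inductive approach while being less transparent.
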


\begin{proof}
As $|E(M)| \ge 8$ and $M$ is \ifc\ having $c_0$   in a triangle, 
\cite[Lemma 2.5]{cmochain} 
 implies that $M\ba c_0$ is \thc.  Now if $M\ba c_0,c_1,\dots ,c_{n}$ is  \thc, then the lemma holds. Hence we may assume there is an element $i$ in $\{1,2,\ldots,n\}$ such that 
 $M\ba c_0,c_1,\dots ,c_{i}$ is not \thc\  but   $M\ba c_0,c_1,\dots ,c_{j}$ is \thc\ for all 
 $j$ in $\{0,1,\dots, i-1\}$.  
Then Tutte's Triangle Lemma~\cite{wtt} (or see \cite[Lemma 8.7.7]{oxrox}) implies that either $M\ba c_0,c_1,\dots ,c_{i-1}\ba x$ is \thc\ for some $x$ in $\{a_i,b_i\}$, or $c_{i}$ is in a triad of $M\ba c_0,c_1,\dots ,c_{i-1}$.  
The former does not occur because  $M\ba c_0,c_1,\dots ,c_{i-1}$ has $\{b_{i-1},a_i,b_{i}\}$ as a cocircuit. Thus $c_{i}$ is in a triad of $M\ba c_0,c_1,\dots ,c_{i-1}$.  By \ort, this triad meets $\{a_i,b_i\}$. Then $M\ba c_0,c_1,\ldots,c_i$ has a cocircuit $\{x,y\}$ where $x \in \{a_i,b_i\}$ but $y \not \in \{a_i,b_i\}$. Thus  $M$ has a cocircuit $C^*$ such that 
$\{c_i,x,y\} \subseteq C^* \subseteq \{y,a_i,b_i,c_i,c_{i-1},\dots ,c_0\}$.  
We know that $C^*$ contains at least four elements, since it meets a triangle in $M$.  
Suppose $c_i\neq a_0$. Then, by \ort\ between $C^*$ and  each of $T_0,T_1,\dots ,T_{i-1}$, and $T_i$, we deduce  that $C^*=\{x,c_i,y,c_j\}$ for some $j\neq i$, where  $y\in\{a_j,b_j\}$. Now  Lemma~\ref{bowwow} implies that $\{i,j\}\neq \{0,1\}$ so the lemma holds when $c_i \neq a_0$. We may now assume that $c_i = a_0$. Then   
 $i=n$.  Moreover, $n \ge 2$, otherwise the $4$-element set $D_0$ is both a circuit and a cocircuit of $M$; a contradiction. 
Orthogonality between $C^*$ and each of $T_0,T_1,\dots ,T_{i-1}$, and $T_i$ implies that $C^*=\{x,y,c_n,c_0\}$.  
Then $M\ba c_0,c_1,\dots ,c_n$ has $a_n$ or $b_n$ in a cocircuit of size at most two and again the lemma holds.  
\end{proof}

The following modifies the argument used to prove \cite[Lemma 11.4]{cmochain}.

\begin{lemma}
\label{ring} 
Let $M$ be an \ifc\ matroid having at least ten elements. Suppose that  $M$ has 
 $T_0,D_0,T_1,D_1,\ldots,T_n$ as a  string of bowties where $T_0$ and $T_n$ are disjoint.  Assume  that $(T_n,T_{n+1},D)$ is a bowtie of $M$ for some $4$-cocircuit $D\neq D_{n-1}$ where $\{c_n,a_{n+1},b_{n+1}\}\subseteq D$. 
If $T_{n+1}$ meets $T_i$ for some $i \le n$, then 
 \begin{itemize}
 \item[(i)]  $T_{n+1}  =  T_j$ for some $j$ with $0\le j \le {n-2}$; or
 \item[(ii)] $T_{n+1} \cap (T_0 \cup T_1 \cup \cdots \cup T_n) = T_{n+1} \cap T_0 = \{a_0\} = \{c_{n+1}\}$. 
 \end{itemize}
Furthermore, if $\{x,c_n,a_0,b_0\}$ is a cocircuit for some $x $ in $\{a_n,b_n\}$, then (i)  holds. 
 \end{lemma}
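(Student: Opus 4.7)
The plan is to use orthogonality between the triangle $T_{n+1}$ and the cocircuits of the bowtie string to constrain how $T_{n+1}$ overlaps the earlier triangles, then to apply Lemma~\ref{bowwow} to force contradictions in the borderline cases. Since $(T_n,T_{n+1},D)$ is a bowtie, $T_{n+1}\cap T_n=\emptyset$, so any nonempty overlap $T_{n+1}\cap T_i$ with $i\leq n$ must have $i\leq n-1$. I will split into two cases according to whether the overlap uses an element of $\{a_{n+1},b_{n+1}\}$ or only $c_{n+1}$.

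In the first case, suppose $a_{n+1}$ or $b_{n+1}$ lies in some $T_j$ with $j\leq n-1$. Since $T_j$ is disjoint from $T_n$, we have $T_j\cap\{x,c_n\}=\emptyset$, so orthogonality of $T_j$ with $D=\{x,c_n,a_{n+1},b_{n+1}\}$ forces $\{a_{n+1},b_{n+1}\}\subseteq T_j$. Two distinct triangles in a $3$-connected binary matroid cannot share two elements, so $T_{n+1}=T_j$. I will then exclude $j=n-1$ by computing $D\btu D_{n-1}$ for the three sub-possibilities of which $2$-subset of $T_{n-1}$ is $\{a_{n+1},b_{n+1}\}$: in one sub-case $D\btu D_{n-1}$ has only two elements, which is impossible in a $3$-connected matroid with at least $10$ elements; in the remaining two sub-cases it is a $4$-cocircuit distinct from $D_{n-1}$ that meets both $T_{n-1}$ and $T_n$, violating Lemma~\ref{bowwow}. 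Hence $j\leq n-2$, giving (i).

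In the second case, neither $a_{n+1}$ nor $b_{n+1}$ lies in any $T_j$ with $j\leq n$, so the hypothesis forces $c_{n+1}\in T_i$ for some $i\leq n-1$. Because $\{a_{n+1},b_{n+1}\}$ is disjoint from $T_j\cup T_{j+1}$ for every $j$ with $0\leq j\leq n-1$, orthogonality of $T_{n+1}$ with $D_j$ collapses to $\{c_{n+1}\}\cap D_j$, which must be empty by evenness. Setting $j=i$ yields $c_{n+1}=a_i$; if $i\geq 1$, setting $j=i-1$ then contradicts $a_i\in D_{i-1}$. Hence $i=0$ and $c_{n+1}=a_0$, which gives (ii).

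The main obstacle is the three-subcase analysis ruling out $j=n-1$ in Case~1; everything else is a direct application of orthogonality. For the \emph{furthermore} clause, assume $\{x,c_n,a_0,b_0\}$ is a cocircuit and suppose for contradiction that (ii) holds; then $T_{n+1}=\{a_{n+1},b_{n+1},a_0\}$ with $\{a_{n+1},b_{n+1}\}$ disjoint from $T_0$. Orthogonality of $T_{n+1}$ with $\{x,c_n,a_0,b_0\}$ already contributes $a_0$, while $\{a_{n+1},b_{n+1}\}$ is disjoint from $\{x,c_n\}\subseteq T_n$; evenness then forces $b_0\in\{a_{n+1},b_{n+1}\}\subseteq T_{n+1}$, so $b_0\in T_{n+1}\cap T_0$, contradicting (ii). Thus Case~1 applies and (i) holds.
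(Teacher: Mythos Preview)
Your proof is correct and follows essentially the same orthogonality-plus-Lemma~\ref{bowwow} strategy as the paper, though the case split differs slightly: the paper picks the \emph{largest} index $j$ with $T_{n+1}\cap T_j\neq\emptyset$ and argues locally at $j$, whereas you split globally on whether $\{a_{n+1},b_{n+1}\}$ hits any $T_j$ at all. Both routes are fine.

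One simplification you missed: once you have $T_{n+1}=T_{n-1}$ in Case~1, there is no need for the three-subcase symmetric-difference computation. The cocircuit $D$ itself is a $4$-cocircuit contained in $T_n\cup T_{n+1}=T_n\cup T_{n-1}$, meeting each of $T_{n-1}$ and $T_n$ in two elements, and $D\neq D_{n-1}$ by hypothesis; this already contradicts Lemma~\ref{bowwow} directly. The paper uses exactly this one-line observation, avoiding the subcases entirely.
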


\begin{proof}  Let $j$ be the greatest integer, $i$, in $\{0,1,\ldots,n\}$ such that $T_{n+1} \cap T_i \neq \emptyset$. We show first that (i) or (ii) holds. Since $(T_n,T_{n+1},D)$ is a bowtie, $j \le n-1$. 
If $T_j$ meets $\{a_{n+1},b_{n+1}\}$, then, by \ort, $T_j$ contains $\{a_{n+1},b_{n+1}\}$, so $T_j = T_{n+1}$. Also, if $T_{n+1}$ meets $\{b_j,c_j\}$, then $T_{n+1}$ contains $\{b_j,c_j\}$, so $T_j = T_{n+1}$. Thus either
\begin{itemize}
\item[(a)] $T_j = T_{n+1}$; or 
\item[(b)]  $T_j \cap T_{n+1} = \{a_j\} = \{c_{n+1}\}$.
\end{itemize}

If (a) holds,   then Lemma~\ref{bowwow} implies that $j\leq n-2$, so (i) holds.  
Hence we may assume that (b) holds. Then $j > 0$, otherwise (ii) holds.  Thus $T_{n+1}$ meets $D_{j-1}$, so $T_{n+1}$ meets 
$\{b_{j-1},c_{j-1}\}$. Hence $a_{n+1}$ or $b_{n+1}$ is in $T_{j-1}$, that is, $D$ meets $T_{j-1}$. But $T_n \cap T_{j-1} = \emptyset$, so $\{a_{n+1},b_{n+1}\} \subseteq T_{j-1}$. Thus $T_{n+1} = T_{j-1}$. Hence $T_{n+1} \cap T_j = \emptyset$; a contradiction. We conclude that (i) or (ii) holds. 

Finally,  suppose that $\{x,c_n,a_0,b_0\}$ is a cocircuit for some $x\in\{a_n,b_n\}$ but that (i) does not hold. Then (ii) holds so \ort\ implies that $T_{n+1}$ meets $T_n$; a \cn.  
\end{proof}

\begin{lemma}
\label{btring}
Let $M$ be an \ifc\ binary matroid containing at least thirteen elements.  
Suppose that $M$ has a ring of bowties $(T_0,D_0,T_1,D_1,\dots, T_k, D_k)$.
Then 
 \begin{itemize}
 \item[(i)] $M\ba c_0,c_1,\dots ,c_k$ is \ifc; or
\item[(ii)] $M\ba c_0,c_1,\dots, c_k$ is \sfc\ and every $4$-fan of it has the form $(1,2,3,4)$ where   $\{1,2,3\}$ is disjoint from $T_0\cup T_1\cup\dots\cup T_k$, and $M$ has a cocircuit $\{2,3,4,c_i\}$ for some $i$ in $\{0,1,\dots ,k\}$ where $4\in\{a_i,b_i\}$; or 
\item[(iii)]  $M\ba c_0,c_1,\dots ,c_k$ has a $1$- or $2$-element cocircuit that meets $\{a_2,b_2,a_3,b_3,\dots ,a_k,b_k\}$.  
\end{itemize}
\end{lemma}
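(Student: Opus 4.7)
The plan is to set $M' = M \ba c_0, c_1, \dots, c_k$ and first apply Lemma~\ref{deletecs} to the bowtie string $T_0, D_0, T_1, \dots, T_k$ underlying the ring. This yields that either $M'$ is $3$-connected, or $M'$ has a $1$- or $2$-element cocircuit meeting $\{a_j, b_j\}$ for some $j \in \{2, \dots, k\}$; the latter gives outcome (iii). Henceforth assume $M'$ is $3$-connected. If $M'$ is \ifc, then (i) holds, so assume $M'$ has a $3$-separation $(X, Y)$ with $|X|, |Y| \ge 4$. My goal is to establish (ii) by showing every such $3$-separation is sequential and every $4$-fan has the prescribed form.

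For each $i \in \{0, 1, \dots, k\}$ (with indices taken modulo $k+1$), the triangle $T_i$ gives $c_i \in \cl_M(\{a_i, b_i\})$, and the cocircuit $D_i$, disjoint from $\{c_j : j \neq i\}$, restricts to a triad $\{b_i, a_{i+1}, b_{i+1}\}$ of $M'$. I would lift $(X, Y)$ to a partition $(\tilde X, \tilde Y)$ of $E(M)$ by assigning each $c_i$ to whichever side contains both $a_i$ and $b_i$. A rank computation using $c_i \in \cl_M(\{a_i, b_i\})$ gives $\lambda_M(\tilde X) = \lambda_{M'}(X) \le 2$, so if every pair $\{a_i, b_i\}$ lies on a single side, $(\tilde X, \tilde Y)$ is a $3$-separation of $M$ with both sides of size at least four, contradicting the \ifc-ness of $M$. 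Hence some pair $\{a_i, b_i\}$ must be split, and I would use the adjacent triads $\{b_{i-1}, a_i, b_i\}$ and $\{b_i, a_{i+1}, b_{i+1}\}$ of $M'$, together with their binary symmetric differences such as $\{b_{i-1}, a_i, a_{i+1}, b_{i+1}\}$, to repeatedly move elements across $(X, Y)$ via $\cl_{M'}$ and $\cls_{M'}$, ultimately showing that $\fcl_{M'}(X) = E(M')$ or $\fcl_{M'}(Y) = E(M')$, i.e., that $(X, Y)$ is sequential.

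For the prescribed form of a $4$-fan $(1, 2, 3, 4)$ of $M'$, observe that $\{1, 2, 3\}$ is a triangle of $M$ (since deletion preserves triangles), and that the triad $\{2, 3, 4\}$ of $M'$ lifts to a cocircuit $C^* = \{2, 3, 4\} \cup S$ of $M$ with $S \subseteq \{c_0, \dots, c_k\}$. Orthogonality between $C^*$ and each $T_i$ forces $|C^* \cap T_i|$ to be even. If $S = \emptyset$, then $\{2, 3, 4\}$ is a triad of $M$ and $(1, 2, 3, 4)$ is a $4$-fan of $M$, contradicting $M$ being \ifc\ with $|E(M)| \ge 13$. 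Otherwise, each $c_i \in S$ forces exactly one element of $\{a_i, b_i\}$ to lie in $\{2, 3, 4\}$; Lemma~\ref{bowwow} and a counting argument using $|\{2, 3, 4\}| = 3$ then yield $S = \{c_i\}$ for a single $i$ with $4 \in \{a_i, b_i\}$. Disjointness of $\{1, 2, 3\}$ from $T_0 \cup \dots \cup T_k$ follows from orthogonality of $\{1, 2, 3\}$ with the triads of $M'$, together with the fact that two distinct triangles in a binary matroid share at most one element.

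The main obstacle is proving sequentiality of every non-trivial $3$-separation of $M'$, as the wrap-around ring structure provides many interlocking triads and cocircuits in $M'$; the analysis must treat the seam between indices $0$ and $k$ symmetrically with the interior indices, and ruling out configurations in which $\{1, 2, 3\}$ shares a single element with some $T_j$ requires delicate orthogonality arguments that likely need separate treatment in small cases such as $k = 2$.
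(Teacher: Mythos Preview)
Your proposal follows essentially the same approach as the paper: invoke Lemma~\ref{deletecs} to handle (iii) or obtain $3$-connectivity of $M'=M\ba c_0,\dots,c_k$, then argue sequentiality by lifting a bad $3$-separation of $M'$ to one of $M$ once all pairs $\{a_i,b_i\}$ lie on one side, and finally analyse the cocircuit $C^*$ underlying a $4$-fan via orthogonality.

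Two remarks on the details. First, the cleaner way to run your sequentiality argument is its contrapositive, which is what the paper does: start with a \emph{non-sequential} $3$-separation $(X,Y)$, first arrange $\{a_0,b_0,b_k\}$ on one side, and then use the triads $\{b_{i-1},a_i,b_i\}$ to unsplit the pair $\{a_i,b_i\}$ with smallest split index (moving an element via $\cls_{M'}$ preserves non-sequentiality). Iterating, all pairs end up on one side and your lifting argument yields the contradiction. Trying to directly exhibit $\fcl_{M'}(X)=E(M')$ from the existence of a split pair is awkward because moving one element can split a neighbouring pair; the contrapositive organisation sidesteps this.

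Second, your invocation of Lemma~\ref{bowwow} in the $4$-fan analysis is a red herring: that lemma concerns $4$-cocircuits meeting both triangles of a bowtie, whereas $C^*=\{2,3,4\}\cup S$ may have five or six elements, so it does not give $|S|=1$. The paper instead first shows that $\{2,3\}$ avoids $\{a_i,b_i\}$ by chasing orthogonality of the triangle $\{1,2,3\}$ with the chain of $M'$-triads $\{b_{i-1},a_i,b_i\},\{b_{i-2},a_{i-1},b_{i-1}\},\ldots$, until either the ring wraps around at $k=2$ (giving the $\lambda$-contradiction you anticipate) or the triangle is forced to contain four elements. Only then does $4\in\{a_i,b_i\}$ follow, and hence $|S|=1$ and $C^*=\{2,3,4,c_i\}$. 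So the orthogonality-with-triads argument you plan to use for disjointness of $\{1,2,3\}$ from $T_0\cup\cdots\cup T_k$ is in fact doing double duty: it is also what forces $|S|=1$, and you should rely on it there rather than on Lemma~\ref{bowwow}.
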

\begin{proof}
Assume that (iii) does not hold.  
Then Lemma~\ref{deletecs} implies that $M\ba c_0,c_1,\dots, c_i$ is \thc\ for all $i$ in $\{0,1,\dots, k\}$.  Let $M' = M\ba c_0,c_1,\dots, c_k$. We shall show next that 

\begin{sublemma}
\label{newnu0}
$M'$ is \sfc.
\end{sublemma}

Assume that this fails. First we show that 

\begin{sublemma}
\label{newnu}
$M'$ has a non-sequential $3$-separation $(X,Y)$ such that, for each $i$ in $\{0,1,\dots,k\}$, the pair $\{a_i,b_i\}$ is contained in $X$ or $Y$.
\end{sublemma}

Certainly $M'$ has a non-sequential $3$-separation $(X,Y)$ in which $\{a_0,b_0,b_k\}$ is contained in $X$ or $Y$. Take the smallest index, $i$, such that $\{a_i,b_i\}$ meets both $X$ and $Y$. We may assume that $a_i\in X$ and $b_i \in Y$. Now $\{a_{i-1},b_{i-1}\}$ is contained in $X$ or $Y$. Then $\{b_{i-1},a_i\} \subseteq X$ or $\{b_{i-1},b_i\} \subseteq Y$. 
Thus $b_i \in \cls_{M'}(X)$ or $a_i \in \cls_{M'}(Y)$, respectively. Hence 
$(X \cup b_i, Y- b_i)$ or $(X- a_i, Y \cup a_i)$ is a non-sequential $3$-separation of $M'$ in which $\{a_i,b_i\}$ is contained in one side. By repeating this process, we see that \ref{newnu} holds.

By \ref{newnu}, each $c_i$ is in the closure of $X$ or $Y$, so $M$ has a non-sequential $3$-separation; a \cn. We conclude that \ref{newnu0} holds.

We may assume that $M'$ is not \ifc\ otherwise the lemma holds. To complete the proof, we show the following.

\begin{sublemma}
\label{newnu2}
If $(1,2,3,4)$ is a $4$-fan of $M'$, then    $\{1,2,3\}$ is disjoint from $T_0\cup T_1\cup\dots\cup T_k$, and $M$ has  $\{2,3,4,c_i\}$ as a cocircuit for some $i$ in $\{0,1,\dots ,k\}$ where $4\in\{a_i,b_i\}$.
\end{sublemma}

Evidently $M$ has $\{1,2,3\}$ as a triangle and has a cocircuit $C^*$ such that 
$\{2,3,4\} \subsetneqq C^* \subseteq \{2,3,4,c_0,c_1,\dots,c_k\}$.
Suppose $c_i \in C^*$. Then $\{2,3,4\}$ meets $\{a_i,b_i\}$. We shall show next that $\{2,3\}$ avoids $\{a_i,b_i\}$. Assume the contrary. Then the cocircuit $\{b_{i-1},a_i,b_i\}$ in $M'$ implies that $\{1,2,3\}$ contains $\{b_{i-1},a_i\}$ or $\{b_{i-1},b_i\}$ where all subscripts are interpreted modulo $k+1$. The cocircuit $\{b_{i-2},a_{i-1},b_{i-1}\}$ implies that $\{1,2,3\}$ is 
$\{b_{i-2},b_{i-1},a_{i}\}$ or $\{b_{i-2},b_{i-1},b_{i}\}$. Orthogonality with the cocircuit 
$\{b_{i-3},a_{i-2},b_{i-2}\}$ implies that $b_{i-3} = b_i$, that is, $k = 2$. In that case, since 
$\{b_{i-2},b_{i-1},b_{i}\}$ is a triangle, we see that $\lambda(T_{i-2} \cup T_{i-1} \cup T_i) \le 2$. This contradicts the fact that $M$ is \ifc. We conclude that $\{2,3\} \cap \{a_i,b_i\} = \emptyset$. It follows that $4 \in \{a_i,b_i\}$. Hence $C^*_i = \{2,3,4,c_i\}$. Thus, by orthogonality, $\{1,2,3\}$ avoids $T_0 \cup T_1 \cup \dots T_k$, so \ref{newnu2} holds. Hence so does the lemma.
\end{proof}

When we trim the bowtie ring in Figure~\ref{btringfig}, we delete all of the dashed oblique edges. The next lemma shows that we obtain an isomorphic matroid by deleting, instead, all of the solid oblique edges. 

\begin{lemma}
\label{ringletsarecute}
Let $(T_0,D_0,T_1,D_1,\ldots,T_n,D_n)$ be a ring of bowties in an \ifc\ binary matroid $M$.
Then 
\begin{itemize}
\item[(i)] $\{b_0,b_1,\ldots,b_n\}$ is either a circuit or an independent set of $M$; and 
\item[(ii)] $M\ba c_0,c_1,\ldots,c_n \cong M\ba a_0,a_1,\ldots,a_n.$
\end{itemize}
\end{lemma}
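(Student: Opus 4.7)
For part (i), the plan is to apply binary orthogonality. Suppose $B := \{b_0, b_1, \ldots, b_n\}$ is dependent and let $C$ be a circuit of $M$ with $C \subseteq B$. Assume for contradiction that $C$ is a proper subset of $B$. Reading indices cyclically modulo $n+1$, since $C$ is nonempty and properly contained in $B$, there is an index $i$ with $b_i \notin C$ and $b_{i+1} \in C$. The cocircuit $D_i = \{b_i, c_i, a_{i+1}, b_{i+1}\}$ meets $B$ only in $\{b_i, b_{i+1}\}$, while $c_i$ and $a_{i+1}$ lie outside $B$ and hence outside $C$. Thus $C \cap D_i = \{b_{i+1}\}$, of odd size $1$, which contradicts the parity condition that circuits and cocircuits of a binary matroid meet in an even number of elements. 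Therefore $C = B$, so $B$ is a circuit, proving (i).

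For part (ii), the natural candidate for the isomorphism is the bijection $\varphi \colon E(M) \ba \{c_0, \ldots, c_n\} \to E(M) \ba \{a_0, \ldots, a_n\}$ defined by $\varphi(a_i) = c_i$, $\varphi(b_i) = b_i$, and $\varphi(x) = x$ for every other element $x$. To verify that $\varphi$ is a matroid isomorphism, I would pass to the $\mathbb{F}_2$-cycle space of $M$. Using the triangle relation $c_i = a_i + b_i$, one checks that for any cycle $C$ of $M$ with $C \cap \{c_0, \ldots, c_n\} = \emptyset$,
\[ v(\varphi(C)) = \sum_{i\,:\, a_i \in C} v(b_i), \]
where $v(\cdot)$ denotes the column-sum in a representation of $M$. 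So $\varphi(C)$ is a cycle of $M$ precisely when the subset $\{b_i : a_i \in C\}$ of $B$ is itself a cycle of $M$, and part (i) classifies such subsets as either $\emptyset$ or, when $B$ is a circuit, all of $B$.

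The remaining task is therefore to show that for every circuit $C$ of $M \ba c_0, \ldots, c_n$, the intersection $C \cap \{a_0, \ldots, a_n\}$ is either empty or, in the case that $B$ is a circuit, equals $\{a_0, \ldots, a_n\}$. The binary orthogonality of $C$ with each $D_i$ yields the recursion $\alpha_{i+1} = \beta_i + \beta_{i+1}$ on the $\mathbb{F}_2$-indicators $\alpha_i := [a_i \in C]$ and $\beta_i := [b_i \in C]$, which constrains the $a$-pattern along the ring in terms of the $b$-pattern. Combining this recursion with part (i) and the internal $4$-connectivity of $M$ should deliver the required classification. The main obstacle I anticipate is ruling out "intermediate" $a$-patterns in the case where $B$ is independent; here I expect to need additional cocircuits of $M$ beyond the $D_i$'s, invoking internal $4$-connectivity to control them, or else to refine $\varphi$ so as to account for the extra freedom in the independent case.
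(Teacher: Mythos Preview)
Your proof of (i) is correct and is essentially the paper's argument.

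For (ii), the bijection you chose is the wrong one, and the obstacle you flag at the end is fatal to it rather than a technicality.  Circuits $C$ of $M\backslash c_0,\dots,c_n$ meeting $\{a_0,\dots,a_n\}$ in a nonempty proper subset certainly exist: for instance, a circuit with $C\cap(A\cup B)=\{a_j,b_j,a_{j+1}\}$ for a single $j$ is consistent with orthogonality against every $D_i$, and in the paper's $K_{10}$-based graphic example such circuits occur (take the $4$-cycle $a_j,b_j,a_{j+1}$ together with the $K_{10}$-edge $w_jw_{j+1}$).  For such a $C$ your computation gives $v(\varphi(C))=v(b_j)+v(b_{j+1})\neq 0$, so your $\varphi$ does not send cycles to cycles.

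The fix is exactly the refinement you hint at: replace the identity on the $b_i$'s by the cyclic shift $\varphi(b_i)=b_{i+1}$, keeping $\varphi(a_i)=c_i$ and $\varphi(x)=x$ otherwise.  Then, writing $\alpha_i=[a_i\in C]$ and $\beta_i=[b_i\in C]$, your column-sum calculation becomes
\[
v(\varphi(C))=\sum_i(\alpha_i+\beta_{i-1}+\beta_i)\,v(b_i),
\]
and orthogonality of $C$ with $D_{i-1}=\{b_{i-1},c_{i-1},a_i,b_i\}$ gives $\alpha_i+\beta_{i-1}+\beta_i=0$ for every $i$, so $v(\varphi(C))=0$ with no case analysis and no appeal to internal $4$-connectivity or to part (i).  The inverse direction is symmetric.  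The paper proves that this same shifted $\varphi$ is an isomorphism by a different route, passing through the contracted minors $M\backslash c_0,\dots,c_n/b_j$ via Lemma~\ref{stringswitch} and then reassembling via a cocircuit argument; once your map is corrected, your $\mathbb{F}_2$-linear approach is more direct than the paper's.
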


\begin{proof}
If $\{b_0,b_1,\ldots,b_n\}$ is dependent, then it contains a circuit, $C$. By \ort\ between $C$ and the cocircuit $D_i$, we see that  if $b_i \in C$ for some $i$, then $b_{i+1} \in C$. Hence $C = \{b_0,b_1,\ldots,b_n\}$, and (i) holds. 

To prove (ii), let $E = E(M)$ and, for each $j$ in $\{0,1,\ldots,n\}$,  define the function 
$\varphi_j: E-\{c_0,c_1,\ldots,c_n,b_j\} \to E-\{a_0,a_1,\ldots,a_n,b_{j+1}\}$ by 
\begin{equation*}
\varphi_j(x) = 
\begin{cases} 
c_i & \text{if $x= a_i$ and $i \in \{0,1,\ldots,n\}$};\\
b_{k+1} & \text{if $x= b_k$ and $k \neq j$; and}\\
x & \text{otherwise.}
\end{cases} 
\end{equation*}
We show next that 

\begin{sublemma}
\label{phij}
$\varphi_j$ is an isomorphism between 
$M\ba c_0,c_1,\ldots,c_n/b_j$ and $M\ba a_0,a_1,\ldots,a_n/b_{j+1}$.
\end{sublemma}

By the symmetry of the ring of bowties, it suffices to prove this when $j  = n$. Here we will exploit the isomorphisms noted in Lemma~\ref{stringswitch}. For each $t$ in $\{1,2,\ldots,n\}$, the isomorphism between 
$M\ba c_0,c_1,\dots,c_{t-1},c_t/b_t\ba a_{t+1},\dots,a_n$ and 
$M\ba c_0,c_1,\dots,c_{t-1},a_t/b_{t-1}\ba a_{t+1},\dots,a_n$ can be achieved by the mapping $\psi_t$ that takes $a_t$ to $c_t$  and takes $b_{t-1}$ to $b_t$ while fixing every other element.  In addition, let $\omega_0$ be the isomorphism between  
$M\ba c_0/b_0\ba a_{1},\dots,a_n$ and 
$M\ba a_0/b_0\ba a_{1},\dots,a_n$ obtained by mapping $a_0$ to $c_0$ and fixing every other element. 
The composition $\omega_0 \circ \psi_1 \circ \psi_2 \circ \dots \circ \psi_n$ equals $\varphi_n$. Hence \ref{phij} holds. 

Now define
$\varphi: E-\{c_0,c_1,\ldots,c_n\} \to E-\{a_0,a_1,\ldots,a_n\}$ by 
\begin{equation*}
\varphi(x) = 
\begin{cases} 
c_i & \text{if $x= a_i$ and $i \in \{0,1,\ldots,n\}$};\\
b_{i+1} & \text{if $x= b_i$ and $i \in \{0,1,\ldots,n\}$; and}\\
x & \text{otherwise.}
\end{cases} 
\end{equation*}
Observe that, for all $j$ in $\{0,1,\dots,n\}$, 

\begin{sublemma}
\label{phiphi}
$\varphi(y) = \varphi_j(y)$ for all $y$ in $E-\{c_0,c_1,\ldots,c_n\} - b_j$. 
\end{sublemma}

To complete the proof of (ii), we shall show that 

\begin{sublemma}
\label{phiphi3}
$\varphi$ is an isomorphism between 
$M\ba c_0,c_1,\dots,c_n$ and $M\ba a_0,a_1,\dots,a_n$.
\end{sublemma}

By \ref{phij} and \ref{phiphi}, for all $j$ in $\{0,1,\dots,n\}$, the function $\varphi$ induces an isomorphism between 
$M\ba c_0,c_1,\dots,c_n/b_j$ and $M\ba a_0,a_1,\dots,a_n/b_{j+1}$. To establish 
\ref{phiphi3}, we suffices to prove the following.

\begin{itemize}
\item[(a)] If $X$ is a cocircuit of $M\ba c_0,c_1,\dots,c_n$, then $\varphi(X)$ contains a cocircuit of $M\ba a_0,a_1,\dots,a_n$.
\item[(b)] If $Y$ is a cocircuit of $M\ba a_0,a_1,\dots,a_n$, then $\varphi^{-1}(Y)$ contains a cocircuit of $M\ba c_0,c_1,\dots,c_n$.
\end{itemize}

We will show (a); a symmetric argument establishes (b). Suppose the cocircuit $X$ of 
$M\ba c_0,c_1,\dots,c_n$ avoids $b_s$ for some $s$. Then
$X$ is a cocircuit of $M\ba c_0,c_1,\dots,c_n/b_s$. Thus $\varphi(X)$ is a cocircuit of $M\ba a_0,a_1,\dots,a_n/b_{s+1}$ and so is a cocircuit of $M\ba a_0,a_1,\dots,a_n$. Then (a) holds unless $X$  contains $\{b_0,b_1,\ldots,b_n\}$. Consider the exceptional case. 
Since $M\ba c_0,c_1,\dots,c_n$ has $\{b_0,a_1,b_1\}$ as a disjoint union of cocircuits and $\{b_0,b_1\} \subseteq X$, it follows that $a_1 \not\in X$. 
Thus $X \btu \{b_0,a_1,b_1\}$ contains a cocircuit $D$ of $M\ba c_0,c_1,\dots,c_n$ that contains $a_1$ but avoids $\{b_0,b_1\}$. Hence $D$ is a cocircuit of $M\ba c_0,c_1,\dots,c_n/b_0$ that avoids $b_1$. 
Thus $\varphi(D)$ is  a cocircuit of $M\ba a_0,a_1,\dots,a_n/b_1$ that contains $c_1$ and avoids $b_2$. Hence $\varphi(D)$ is  a cocircuit of $M\ba a_0,a_1,\dots,a_n$ that contains $c_1$ and avoids $\{b_1,b_2\}$. But $\{b_1,c_1,b_2\}$ is a disjoint union of cocircuits of $M\ba a_0,a_1,\dots,a_n$, so $\varphi(D) \btu \{b_1,c_1,b_2\}$ contains a cocircuit of 
$M\ba a_0,a_1,\dots,a_n$ that avoids $c_1$ and so is contained in $\varphi(X)$. We conclude that (a) holds. Hence \ref{phiphi3} holds and the lemma is proved.
\end{proof}


We conclude this section with another property of strings of bowties that we will use often.

\begin{lemma}
\label{stringybark}
Let $M$ be a  binary matroid and  $N$ be an \ifc\ binary matroid having at least seven elements. 
Let $T_0,D_0,T_1,D_1,\ldots,T_n$ be a string of bowties in $M$.
Suppose $M\ba c_0,c_1$ has an $N$-minor but $M\ba c_0,c_1/b_1$ does not. Then $M\ba c_0,c_1,\dots ,c_n$ has an $N$-minor, but 
$M\ba c_0,c_1,\dots ,c_i/b_i$ has no $N$-minor for all $i$ in $\{1,2,\dots, n\}$, and    $M\ba c_0,c_1,\dots ,c_j/a_j$ has no $N$-minor for all $j$ in $\{2,3,\dots,n\}$. 
\end{lemma}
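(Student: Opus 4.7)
The plan is to proceed by induction on $i$, simultaneously establishing for each $i \in \{1, 2, \ldots, n\}$ the three statements: (a) $M \ba c_0, c_1, \ldots, c_i$ has an $N$-minor; (b) $M \ba c_0, c_1, \ldots, c_i / b_i$ has no $N$-minor; and, when $i \ge 2$, (c) $M \ba c_0, c_1, \ldots, c_i / a_i$ has no $N$-minor. The base case $i = 1$ is immediate: (a) and (b) are hypothesized, and (c) is vacuous.

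For the inductive step, fix $i$ with $1 \le i < n$ and assume (a)--(c) hold at $i$. I would first show that $(c_{i+1}, b_{i+1}, a_{i+1}, b_i)$ is a $4$-fan of $M' := M \ba c_0, c_1, \ldots, c_i$. The triangle $T_{i+1}$ survives untouched, and $\{b_i, a_{i+1}, b_{i+1}\} = D_i - c_i$ is a disjoint union of cocircuits of $M'$. If some proper nonempty subset were itself a cocircuit of $M'$, it would lift to a cocircuit $\hat{C}^*$ of $M$ with $\hat{C}^* \subseteq \{b_i, a_{i+1}, b_{i+1}\} \cup \{c_0, c_1, \ldots, c_i\}$ meeting $\{b_i, a_{i+1}, b_{i+1}\}$ in at most two elements. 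Repeated \ort\ with $T_0, T_1, \ldots, T_{i+1}$ forces every $c_j$ out of $\hat{C}^*$ (since $c_j \in \hat{C}^*$ would require $\{a_j, b_j\} \cap \hat{C}^* \neq \emptyset$), leaving $\hat{C}^*$ as a subset of $\{b_i, a_{i+1}, b_{i+1}\}$ of size at most two, contradicting the $3$-connectivity of $M$. Hence $\{b_i, a_{i+1}, b_{i+1}\}$ is a triad of $M'$. Since $M'$ has an $N$-minor by (a) at $i$, Lemma~\ref{2.2} applied to this $4$-fan gives that either $M \ba c_0, \ldots, c_{i+1}$ or $M \ba c_0, \ldots, c_i / b_i$ has an $N$-minor; (b) at $i$ rules out the latter, so (a) holds at $i + 1$.

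To establish (b) at $i + 1$, I would apply Lemma~\ref{stringswitch} to the sub-string $T_0, D_0, T_1, D_1, \ldots, T_{i+1}$ with $k = i + 1$, yielding
\[ M \ba c_0, c_1, \ldots, c_{i+1} / b_{i+1} \;\cong\; M \ba c_0, c_1, \ldots, c_i / b_i \ba a_{i+1}. \]
An $N$-minor on the left would produce one of $M \ba c_0, \ldots, c_i / b_i$, contradicting (b) at $i$. For (c) at $i + 1$, observe that $T_{i+1}$ remains a triangle in $M \ba c_0, \ldots, c_i$, so $\{a_{i+1}, b_{i+1}\}$ is a parallel pair in $M \ba c_0, \ldots, c_{i+1}$; contracting one element makes the other a loop, so the two contractions differ only by a loop, and since $N$ is loopless, having an $N$-minor is invariant under this choice. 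Hence (b) at $i + 1$ yields (c) at $i + 1$, completing the induction. The only non-routine point is the orthogonality verification that $\{b_i, a_{i+1}, b_{i+1}\}$ is a genuine triad of $M'$ rather than a proper superset of a smaller cocircuit; everything else is a bookkeeping consequence of Lemma~\ref{2.2} and Lemma~\ref{stringswitch}.
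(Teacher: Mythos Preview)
Your inductive framework and the use of Lemma~\ref{2.2} for~(a) and Lemma~\ref{stringswitch} for~(b) are correct and match the paper. The gap is in your argument for~(c). You assert that because $T_{i+1}=\{a_{i+1},b_{i+1},c_{i+1}\}$ is a triangle in $M\ba c_0,\ldots,c_i$, the pair $\{a_{i+1},b_{i+1}\}$ becomes parallel in $M\ba c_0,\ldots,c_{i+1}$. This is false: \emph{deleting} an element of a triangle leaves the other two independent; it is \emph{contraction} that creates a parallel pair. Hence there is no direct symmetry in $M\ba c_0,\ldots,c_{i+1}$ between contracting $a_{i+1}$ and contracting $b_{i+1}$, and (b) at $i+1$ does not by itself give (c) at $i+1$.

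The correct reduction, as in the paper, goes back to~(b) at index~$i$ and uses both a triangle and a cocircuit. In $M\ba c_0,\ldots,c_i$ the triangle $T_{i+1}$ makes $\{b_{i+1},c_{i+1}\}$ parallel after contracting $a_{i+1}$, so
\[
M\ba c_0,\ldots,c_{i+1}/a_{i+1}\;\cong\;M\ba c_0,\ldots,c_i,b_{i+1}/a_{i+1}.
\]
Then in $M\ba c_0,\ldots,c_i,b_{i+1}$ the cocircuit $D_i$ has lost $c_i$ and $b_{i+1}$, leaving $\{b_i,a_{i+1}\}$ in series, so
\[
M\ba c_0,\ldots,c_i,b_{i+1}/a_{i+1}\;\cong\;M\ba c_0,\ldots,c_i,b_{i+1}/b_i,
\]
a minor of $M\ba c_0,\ldots,c_i/b_i$; an $N$-minor here contradicts~(b) at~$i$. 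A smaller point: your triad verification appeals to ``the $3$-connectivity of $M$'', but the lemma assumes only that $M$ is binary; and in any case your \ort\ argument does not exclude $c_i\in\hat C^{*}$, since $b_i$ is available to pair with it.
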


\begin{proof} We may assume that $n \ge 2$ otherwise there is nothing to prove. For  $i$ in $\{1,2,\ldots,n\}$, it follows by Lemma~\ref{stringswitch} that 
$M\ba c_0,c_1,\ldots,c_i/b_i \cong M\ba c_0, c_1/b_1\ba a_2,\ldots,a_i$. As $M\ba c_0,c_1/b_1$ has no $N$-minor, we deduce that $M\ba c_0,c_1,\ldots,c_i/b_i$ has no $N$-minor. 
If $M\ba c_0,c_1,\dots ,c_j/a_j$ has an $N$-minor for some $j\in\{2,3,\dots ,n\}$, then so do $M\ba c_0,c_1,\dots ,c_{j-1},b_j/a_j$ and $M\ba c_0,c_1,\dots ,c_{j-1},b_j/b_{j-1}$; a \cn. 
Thus the second part of the lemma holds. For the first part, 
suppose that  $M\ba c_0,c_1,\dots ,c_k$ has no $N$-minor for some $k$ in $\{2,3,\dots, n\}$ but that $M\ba c_0,c_1,\dots ,c_{k-1}$ does have an $N$-minor.  
As $M\ba c_0,c_1,\dots ,c_{k-1}$ has $(c_k,b_k,a_k,b_{k-1})$ as a $4$-fan, we know by Lemma~\ref{2.2}  that $M\ba c_0,c_1,\dots ,c_{k-1}/b_{k-1}$ has an $N$-minor; a \cn. We conclude that $M\ba c_0,c_1,\dots ,c_n$ has an $N$-minor. 
\end{proof}

\section{Results for ladder segments}
\label{ladderseg}

A string of bowties may be part of a quartic ladder segment within a binary matroid.  
In this section, we consider the ramifications of such an occurrence.  

\begin{figure}[tb]
\center
\includegraphics[scale=1.]{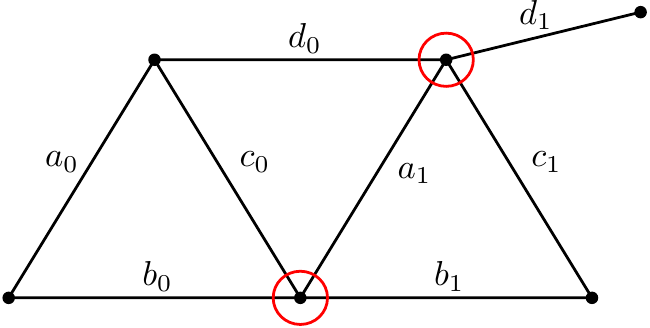}
\caption{}
\label{onehorned}
\end{figure}

\begin{lemma}
\label{minidrossrsv}
Assume that $M$ is an \ifc\ binary matroid that  contains the configuration shown in Figure~\ref{onehorned} and has at least thirteen elements. 
Then all of the elements in the figure are distinct, $M\ba c_0,c_1$ is \sfc, and $a_1$ is in no triangle of $M\ba c_0,c_1$.  Moreover, one of the following holds. 
\begin{itemize}
\item[(i)] $M\ba c_0,c_1$ is \ifc; 
\item[(ii)] $\{d_0,d_1\}$ is in a triangle of $M$; 
\item[(iii)] $\{b_0,b_1\}$ is in a triangle of $M$; 
\item[(iv)] $M$ has a triangle $\{\alpha, \beta, a_0\}$ and has $\{\beta,a_0,d_0,c_0\}$ or $\{\beta,a_0,a_1,c_0,c_1\}$ as a cocircuit, where $\alpha$ and $\beta$ are elements not shown in the figure, and $M\ba c_0,c_1$ is \ffsc\ with every $4$-fan of it having 
$\alpha$ as its guts or $b_1$ as its coguts; or
\item[(v)] $M\ba c_0,c_1$ is \ffsc\ and $b_1$ is the coguts element of all its $4$-fans.
\end{itemize}
\end{lemma}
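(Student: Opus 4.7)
The argument proceeds in three phases: verifying the basic properties (distinctness of labels, $3$-connectedness of $M\ba c_0,c_1$, and the absence of triangles through $a_1$), upgrading to sequential $4$-connectedness, and finally a case analysis of possible $4$-fans of $M\ba c_0,c_1$.

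First, I would verify that all elements shown in Figure~\ref{onehorned} are distinct. Any coincidence between two labels would combine with the circuits and cocircuits visible in the configuration to produce either a circuit or cocircuit of size at most three meeting the bowtie $(T_0,T_1,D_0)$ in a forbidden way, or would yield a proper $3$-separating set of size at most five; both contradict internal $4$-connectivity in view of $|E(M)|\ge 13$. Since $c_0$ lies in the triangle $T_0$ and $|E(M)|\ge 8$, \cite[Lemma 2.5]{cmochain} gives that $M\ba c_0$ is $3$-connected; applying Lemma~\ref{deletecs} to the length-$2$ bowtie string $T_0,D_0,T_1$ then yields that $M\ba c_0,c_1$ is also $3$-connected, because the alternative conclusion of that lemma requires an index $j\in\{2,\ldots,n\}$, which is vacuous here.

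Second, I would prove sequential $4$-connectedness of $M\ba c_0,c_1$ by the same movement argument used in the proof of Lemma~\ref{btring}. Assume $(X,Y)$ is a non-sequential $3$-separation of $M\ba c_0,c_1$. Using $D_0$, one shows that each of the pairs $\{a_0,b_0\}$ and $\{a_1,b_1\}$ can be forced onto one side by moving an element via $\cl$ or $\cls$; then $c_0\in\cl(\{a_0,b_0\})$ and $c_1\in\cl(\{a_1,b_1\})$ let one reinsert $c_0$ and $c_1$ to obtain a non-sequential $3$-separation of $M$, contradicting that $M$ is internally $4$-connected. The same technique handles the claim about $a_1$: if $\{a_1,x,y\}$ were a triangle of $M\ba c_0,c_1$, orthogonality with $D_0$ forces $\{x,y\}$ to meet $\{b_0,b_1\}$, and a short argument using Lemma~\ref{bowwow} either produces a triangle on $\{b_0,b_1\}$ (which contradicts the assumption we are establishing about $M\ba c_0,c_1$ being $3$-connected unless it is already absorbed into outcome (iii)) or violates internal $4$-connectivity of $M$.

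Third, I would assume $M\ba c_0,c_1$ is not internally $4$-connected and analyze an arbitrary $4$-fan $(s_1,s_2,s_3,s_4)$. The triad $\{s_2,s_3,s_4\}$ of $M\ba c_0,c_1$ lifts to a cocircuit $\{s_2,s_3,s_4\}\cup C^\star$ of $M$ with $C^\star\subseteq\{c_0,c_1\}$. Orthogonality between the triangle $\{s_1,s_2,s_3\}$ and the cocircuits of the configuration, together with orthogonality between this cocircuit and the triangles $T_0,T_1$ and the triangle furnished by the horn, narrow down which of $\{a_0,b_0,a_1,b_1,d_0,d_1\}$ can appear in the fan. A new triangle containing $\{d_0,d_1\}$ gives (ii); one containing $\{b_0,b_1\}$ gives (iii); a $4$-fan whose triangle contains $a_0$ and whose triad yields a cocircuit of the form $\{\beta,a_0,d_0,c_0\}$ or $\{\beta,a_0,a_1,c_0,c_1\}$ gives (iv) after using the established distinctness and the absence of triangles through $a_1$; and what remains forces $b_1$ to be the coguts element, giving (v). The absence of triangles through $a_1$ in $M\ba c_0,c_1$ is used to exclude $a_1$ as a guts element of any $4$-fan.

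The main obstacle is the third phase: systematically eliminating the possibilities for $C^\star\in\{\emptyset,\{c_0\},\{c_1\},\{c_0,c_1\}\}$ and for the positions of $s_1,s_2,s_3,s_4$ relative to the labelled elements, and showing that each surviving case either is absorbed into one of (ii)--(v) or gives a direct contradiction via Lemma~\ref{bowwow} and orthogonality. The careful bookkeeping needed here, rather than any single deep step, is what makes the proof lengthy.
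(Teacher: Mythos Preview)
Your three-phase plan matches the paper's proof almost exactly: distinctness, sequential $4$-connectedness, then a case split on a $4$-fan $(s_1,s_2,s_3,s_4)$ according to which of $c_0,c_1$ lie in the lifted cocircuit. The paper carries out precisely this analysis, with orthogonality against $T_0,T_1,\{c_0,d_0,a_1\}$ and the two displayed cocircuits doing the work.

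There is one genuine gap in your outline, namely the argument that $a_1$ lies in no triangle of $M\ba c_0,c_1$. This is an \emph{unconditional} conclusion of the lemma, so it cannot be ``absorbed into outcome (iii)'' as you suggest. Your use of Lemma~\ref{bowwow} is off-target here, and the conclusion ``produces a triangle on $\{b_0,b_1\}$'' does not arise. What you are missing is the second cocircuit $\{d_0,a_1,c_1,d_1\}$ visible in Figure~\ref{onehorned}: in $M\ba c_0,c_1$ both $\{b_0,a_1,b_1\}$ and $\{d_0,a_1,d_1\}$ are triads, so a triangle through $a_1$ must meet each. The combinations $\{a_1,b_1,\cdot\}$ and $\{a_1,d_0,\cdot\}$ force a parallel pair with $c_1$ or $c_0$, leaving only $\{b_0,a_1,d_1\}$; but then $d_1\in\cl(\{a_1,b_0\})$ drops the rank of $T_0\cup T_1\cup\{d_0,d_1\}$ to at most $4$, giving $\lambda(T_0\cup T_1\cup\{d_0,d_1\})\le 2$, which contradicts internal $4$-connectivity since $|E(M)|\ge 13$. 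Once you repair this step the rest of your plan goes through as written.
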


\begin{proof} 
Recall that $T_i = \{a_i,b_i,c_i\}$ for each $i$ in $\{0,1\}$. First we show that 
\begin{sublemma}
\label{eightels} $|T_0 \cup T_1 \cup \{d_0,d_1\}| = 8$.
\end{sublemma}
Evidently $|T_0 \cup T_1| = 6$ otherwise $a_0 = c_1$ and $\lambda(T_0 \cup T_1) \le 2$; a \cn. If $\{d_0,d_1\}$ meets $T_1$, then $M$ has a $4$-fan or has a triangle contained in a cocircuit. Each possibility gives a \cn. Finally, if 
$\{d_0,d_1\}$ meets $T_0$, then, by orthogonality, $\{d_0,d_1\} \subseteq T_0$, so $\lambda(T_0 \cup T_1) \le 2$; a \cn.  Thus \ref{eightels} holds.

To see that $a_1$ is in no triangle of $M\ba c_0,c_1$, we observe that such a triangle would have to be 
 $\{b_0,a_1,d_1\}$. Then $\lambda(T_0\cup T_1\cup\{d_0,d_1\}) \le 2$; a \cn. 
 
 Next we show that 
 
 \begin{sublemma}
\label{issfc} $M\ba c_0,c_1$ is \sfc.
\end{sublemma}

Let $(U,V)$ be a \ns\ \ths\ of $M\ba c_0,c_1$. Then we may assume that $\{a_1,d_0,d_1\} \subseteq U$ and $U$ is fully closed in $M\ba c_0,c_1$. If $a_0,b_0$, or $b_1$ is in $U$, then $\{a_0,b_0,b_1\}\subseteq U$ and $(U \cup \{c_0,c_1\},V)$ is a \ns\ \ths\ of $M$; a \cn. Thus $\{a_0,b_0,b_1\} \subseteq V$. Then $a_1\in\cl _{M\ba c_0,c_1}^*(V)$, and $(U-a_1,V\cup a_1 \cup c_0\cup c_1)$ is a \ns\ \ths\ of $M$; a \cn. Thus \ref{issfc} holds. 

Now assume that (i) does not hold. 
Let $(\al,\be,\ga,\de)$ be a $4$-fan in $M\ba c_0,c_1$. Then $M$ has a cocircuit $C^*$ such that 
$\{\be,\ga, \de\} \subsetneqq C^* \subseteq \{\be,\ga,\de,c_0,c_1\}$. 

Suppose that $c_0 \in C^*$. Then $C^*$ meets each of $\{a_1,d_0\}$ and $\{a_0,b_0\}$ in exactly one element. If $d_0 \in \{\be,\ga\}$, then, by \ort, $d_1 \in \{\al,\be,\ga\}$ so (ii) holds. Hence we may assume that $\de \in \{a_1,d_0\}$ since $a_1$ is in no triangle of $M\ba c_0,c_1$. Moreover, without loss of generality, $\ga\in\{a_0,b_0\}$. 

Suppose $b_0 =\ga$. Then \ort\ implies that $\{b_0,b_1\}$ or $\{b_0,a_1\}$ is contained in a triangle. In the former case, (iii) holds; the latter does not arise since $M\ba c_0,c_1$ has no triangle containing $a_1$.

We may now assume that $a_0 = \ga$. It follows that $M$ has a triangle containing $a_0$, and $C^*$ is one of $\{\be,a_0,a_1,c_0\}, \{\be,a_0,a_1,c_0,c_1\}, \{\be,a_0,d_0,c_0\}$, or 
$\{\be,a_0,d_0,c_0,c_1\}$. If $C^*$ is $\{\be,a_0,a_1,c_0\}$ or $\{\be,a_0,d_0,c_0,c_1\}$, then, by \ort, $\beta =b_1$, so $\lambda(T_0\cup T_1\cup d_0) \le 2$; a \cn. Thus $C^*$ is $\{\be,a_0,a_1,c_0,c_1\}$ or $\{\be,a_0,d_0,c_0\}$. 
Therefore (iv) holds provided $\al$ and $\be$ are new elements and every \ftv\ of $M\ba c_0,c_1$ is a $4$-fan with $\alpha$ as its guts or $b_1$ as its coguts. 

If $\be$ is an existing element, then $\lambda(T_0\cup T_1\cup\{d_0,d_1\}) \le 2$; a \cn. Suppose $\al$ is an existing element. Then $\beta \in\cl (T_0\cup T_1\cup \{d_0,d_1\})$, so $\lambda (T_0\cup T_1\cup\{d_0,d_1,\beta\})\leq 2$; a \cn.  Thus $\al$ and $\be$ are new elements.  
Let $(u,v,w,x)$ be a $4$-fan in $M\ba c_0,c_1$ where $u\neq \al$.  
Since $a_1$ is in no triangle of $M\ba c_0,c_1$, we know that if $\{u,v,w\}$ meets $\{d_0,d_1\}$ or $\{b_0,b_1\}$, then \ort\ implies that (ii) or (iii) holds, respectively.  
Hence we may assume that every triangle in $M\ba c_0,c_1$ avoids $\{a_1,d_0,d_1,b_0,b_1\}$.  
We know that $M$ has a cocircuit $D^*$ such that $\{v,w,x\}\subsetneqq D^*\subseteq\{v,w,x,c_0,c_1\}$.
  
Suppose $c_0\in D^*$. Then \ort\ implies that $\{v,w,x\}$ meets $\{a_0,b_0\}$ and $\{a_1,d_0\}$, so $x\in\{a_1,d_0\}$ and, without loss of generality, $w=a_0$.  
Then \ort\ between the triangle $\{\alpha, \beta, a_0\}$ and the cocircuit $D^*$ implies that $v\in\{\al,\be\}$. Thus $\{u,v,w\} = \{\alpha,\beta,a_0\}$  so $v=\al$ and $u=\be$. Hence the cocircuits $C^*$ and $D^*$ imply that  $\lambda (T_0\cup T_1\cup\{\al,\be,d_0\})\leq 2$; a \cn.  
We deduce that $c_0\notin D^*$, so $D^*=\{v,w,x,c_1\}$. Thus Lemma~\ref{bowwow} implies that $\{v,w,x\}$ avoids $T_0$.  
Orthogonality implies that $\{v,w,x\}$ meets $\{a_1,b_1\}$, so $x\in\{a_1,b_1\}$.  
If $x=a_1$, then \ort\ implies that $\{v,w\}$ meets $\{a_0,b_0,d_0\}$, a \cn.  
We conclude that every $4$-fan of $M\ba c_0,c_1$ has $\al$ as its guts or $b_1$ as its coguts, so (iv) holds provided $M\ba c_0,c_1$ has no $5$-fans and no $5$-cofans.  
If $M\ba c_0,c_1$ has $(1,2,3,4,5)$ as a $5$-fan, then, up to reversing the order of the fan, $1=\al$ and $2=b_1$, so $b_1$ is in a triangle of $M\ba c_0,c_1$; a \cn.  
Suppose then that $(0,1,2,3,4)$ is a $5$-cofan of $M\ba c_0,c_1$.  
Then, up to reversing the order, $0=b_1$ and $1=\al$, so \ort\ implies that $2\in\{\be ,a_0\}$.  Thus $M$ has a cocircuit containing $\{b_1,\al\}$ and contained in  
$\{b_1,\al,\be,a_0,c_0,c_1\}$. Using this cocircuit together with the cocircuits $C^*$ and $\{b_0,c_0,a_1,b_1\}$, we see that  $\lambda (T_0\cup T_1\cup\{\al,\be,d_0\})\leq 2$; a \cn.  
We deduce that (iv) holds.  

We may now assume that $c_0 \not \in C^*$. Then $c_1 \in C^*$, so $C^* = \{\be,\ga,\de,c_1\}$. By \ort, $a_1$ or $b_1$ is in $C^*$. Suppose $a_1 \in C^*$. Then, as $a_1$ is not in a triangle of $M\ba c_0,c_1$, we have $a_1 = \de$, so $C^* = \{\be,\ga,a_1,c_1\}$. Orthogonality with $\{c_0,d_0,a_1\}$ implies that $d_0$ is in $\{\be,\ga\}$, so $C^* = \{d_0,d_1,a_1,c_1\}$. Hence $\{d_0,d_1\}$ is contained in a triangle and (ii) holds. We may now assume  that $a_1 \not \in C^*$. Thus $b_1 \in C^*$.   If $b_1 \in \{\be, \ga\}$, 
then it follows that $\{b_0,b_1\}$ is in a triangle so (iii) holds. Hence we may assume that $b_1 = \de$. Then $C^* = \{\be,\ga,b_1,c_1\}$. Thus $b_1$ is the coguts element of the $4$-fan 
$(\al,\be,\ga,\de)$,  so $M\ba c_0,c_1$ is \ffsc\ and (v) holds. 
\end{proof}

\begin{figure}[tb]
\center
\includegraphics[scale=1.]{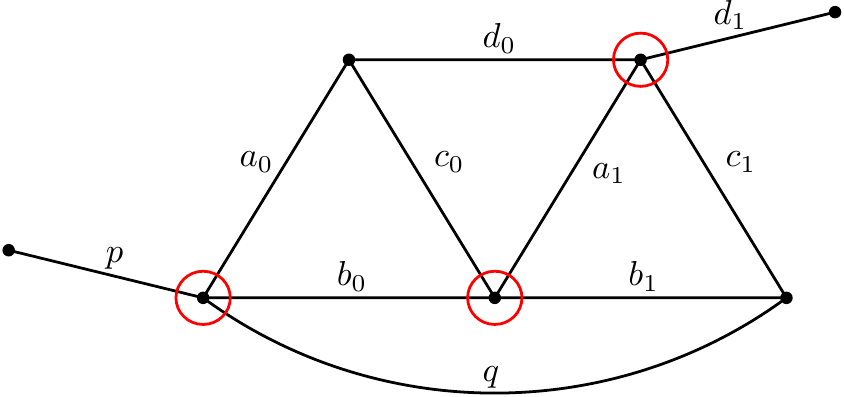}
\caption{}
\label{onehornedplus}
\end{figure}

\begin{lemma}
\label{minidross2} 
Let $M$ be an \ifc\ binary matroid. 
Assume that $M$ contains the configuration shown in Figure~\ref{onehornedplus} and that $|E(M)| \ge 13$.  
Then the elements in the figure are distinct except that $p$ may be $d_1$. Moreover,  one of the following holds.
\begin{itemize}
\item[(i)] $M\ba c_0,c_1,q$ is \ifc; or
\item[(ii)] $\{d_0,d_1\}$ is in a triangle of $M$; or
\item[(iii)] $M$ has a triangle $\{s_1,s_2,s_3\}$ and a cocircuit $\{q,c_1,b_1,s_2,s_3\}$ 
where $\{s_1,s_2,s_3\}$ avoids $\{b_0,c_0,q,a_1,b_1,c_1\}$; or 
\item[(iv)]  $M$ has a triangle containing $\{a_0,p\}$ and some element that is not shown in the configuration; or
\item[(v)] $M$ has a $4$-cocircuit that contains $\{q,b_1,c_1\}$.  
\end{itemize}
\end{lemma}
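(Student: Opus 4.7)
The plan is to follow the template laid down in Lemma~\ref{minidrossrsv}, now tracking the extra element $q$ throughout, and to reduce the problem to analyzing which of $c_0,c_1,q$ sit in the cocircuit $C^*$ of $M$ that certifies a $4$-fan in $M\ba c_0,c_1,q$.

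First I would verify distinctness of the elements shown, with the noted exception $p=d_1$. Orthogonality between the known cocircuits (the bowtie cocircuit $\{b_0,c_0,a_1,b_1\}$, the cocircuit through $\{c_0,d_0,a_1\}$, and the cocircuit featuring $q,b_1,c_1$) and the triangles $T_0,T_1,\{a_0,p,\cdot\}$ collapses any forbidden coincidence to either a triangle-triad pair or a $6$-element set with $\lambda\le 2$, both ruled out since $M$ is internally $4$-connected with $|E(M)|\ge 13$. The case $p=d_1$ must be kept because the configuration survives that identification.

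Next I would show that $M\ba c_0,c_1,q$ is sequentially $4$-connected. By Lemma~\ref{minidrossrsv}, $M\ba c_0,c_1$ is sequentially $4$-connected, so any non-sequential $3$-separation $(U,V)$ of $M\ba c_0,c_1,q$ would lift to a non-sequential $3$-separation of $M\ba c_0,c_1$ once $q$ is placed, via the cocircuit through $\{q,b_1,c_1\}$, into the side containing $\{b_1,c_0,c_1\}$; from there one closes up $c_0,c_1$ in the standard way to produce a non-sequential $3$-separation of $M$, a contradiction. The absence of a $1$- or $2$-element cocircuit in $M\ba c_0,c_1,q$ follows similarly, using orthogonality of any would-be small cocircuit with $T_0\cup T_1$ and with the triangles through $a_0$ and through $c_0,d_0,a_1$; any failure is pushed to outcome (ii) or (v).

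The core of the proof is the analysis of $4$-fans $(\alpha,\beta,\gamma,\delta)$ of $M\ba c_0,c_1,q$, assuming none of (ii)--(v) holds. Each such fan is certified by a cocircuit $C^*$ of $M$ with $\{\beta,\gamma,\delta\}\subsetneq C^*\subseteq\{\beta,\gamma,\delta,c_0,c_1,q\}$. I would split on which of $c_0,c_1,q$ lie in $C^*$. If $c_0\in C^*$, orthogonality with the triangles of the configuration (as in the proof of Lemma~\ref{minidrossrsv}) forces either $\{d_0,d_1\}$ into a triangle (outcome (ii)), or a triangle-plus-cocircuit structure on $a_0$ of the shape demanded by (iv), or $\lambda\le 2$ on a recognizable $6$- or $7$-set, a contradiction. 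If $c_1\in C^*$, then by orthogonality with $T_1$ and with the cocircuit $\{q,b_1,c_1,\ldots\}$, $C^*$ either passes through $b_1$ and $q$, yielding outcome (v) or outcome (iii) after symmetric-differencing with the given $q$-cocircuit, or $C^*$ contains $\{d_0,d_1\}$, giving (ii). If $q\in C^*$, orthogonality with the triangle $\{a_1,b_1,c_1\}$ forces $C^*$ to meet $T_1\setminus c_1$, and symmetric difference with the given cocircuit through $\{q,b_1,c_1\}$ produces a cocircuit that either certifies (iii) (by showing the appropriate $5$-cocircuit exists with a new triangle on its smaller side) or duplicates the (v)-cocircuit. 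Ruling out $5$-fans and $5$-cofans uses the same symmetric-difference moves to reduce to one of the $4$-fan cases above.

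The main obstacle will be the careful bookkeeping needed in the $q\in C^*$ subcase, where the new cocircuit interacts with both the bowtie cocircuit $\{b_0,c_0,a_1,b_1\}$ and the existing $q$-cocircuit; I expect that several symmetric differences must be taken in sequence, and at each step one must re-verify orthogonality against every triangle of the configuration to separate the branch leading to outcome (iii) from the branch leading to outcome (iv) versus (v). Provided that analysis is done exhaustively, the five listed outcomes are the only survivors, and the lemma follows.
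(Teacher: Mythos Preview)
Your proposal rests on a misreading of the configuration in Figure~\ref{onehornedplus}. You repeatedly refer to ``the cocircuit through $\{q,b_1,c_1\}$'' and ``the given cocircuit through $\{q,b_1,c_1\}$'' as if such a cocircuit is part of the hypothesis. It is not. In the configuration, the triangle containing $q$ is $\{b_0,b_1,q\}$ and the cocircuit containing $q$ is $\{a_0,p,b_0,q\}$. A $4$-cocircuit containing $\{q,b_1,c_1\}$ is precisely outcome~(v); it cannot be assumed. Consequently, your lifting argument for sequential $4$-connectedness is wrong (the paper instead places $q$ into the side containing the triad $\{b_0,a_1,b_1\}$ via $q\in\cl(\{b_0,b_1\})$), and your symmetric-difference manoeuvres in the $q\in C^*$ branch have nothing to work against.

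A second gap is that you do not genuinely establish $3$-connectedness of $M\ba c_0,c_1,q$. In the paper this is a real step: one uses Bixby's Lemma (since $\si(M\ba c_0,c_1/q)$ is not $3$-connected) and then analyzes a potential triad of $M\ba c_0,c_1$ containing $q$; that analysis is exactly what produces outcome~(v) when $3$-connectedness fails. Your sketch ``follows similarly, using orthogonality\ldots'' does not capture this.

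Finally, the paper's case split for the $4$-fan analysis is organized differently from yours: rather than branching on which of $c_0,c_1,q$ lie in $C^*$, it first proves that none of $a_1,b_0,b_1$ is in the fan's triangle, then branches on whether $a_1\in C^*$, and if not, on whether $C^*$ contains or avoids $\{b_1,c_1\}$. Your decomposition could in principle be made to work, but only after you correct which cocircuit through $q$ is actually available.
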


\begin{proof} First note that Lemma~\ref{minidrossrsv} implies that the elements in Figure~\ref{onehorned} are distinct.  
Orthogonality between $\{b_0,b_1,q\}$ and $\{d_0,a_1,c_1,d_1\}$ implies that $q\notin \{d_0,a_1,c_1,d_1\}$. Moreover,  $q \neq c_0$.  
If $q=a_0$, then $c_0=b_1$; a \cn.  
Thus $q\notin T_0\cup T_1\cup \{d_0,d_1\}$, and so \ort\ implies that $p\notin T_0\cup T_1\cup \{d_0,q\}$.  

Next we show that 

\begin{sublemma}
\label{minidrosssub}
$M\ba c_0,c_1,q$ is \thc\ or (v) holds.  
\end{sublemma}

By Lemma~\ref{deletecs}, $M\ba c_0,c_1$ is \thc. Since $\si(M\ba c_0,c_1/q)$ has a $2$-element cocircuit, it is not \thc. Thus, by Bixby's Lemma \cite{bixby}, $\co(M\ba c_0,c_1,q)$ is \thc. Suppose that $M\ba c_0,c_1,q$ is not \thc. Then $M\ba c_0,c_1$ has a triad $C^*$ containing $q$. By \ort, $C^*$ meets $\{b_0,b_1\}$ and $\{b_1,a_1,d_0,a_0\}$. Thus $b_1 \in C^*$ otherwise $\lambda(T_0 \cup T_1 \cup \{q,d_0\}) \le 2$; a \cn.

Now $M$ has a cocircuit $D^*$ such that $C^*\subsetneqq D^* \subseteq C^* \cup \{c_0,c_1\}$. Assume $c_0 \in D^*$. Then, by \ort, $C^*$ is $\{q,b_1,a_0\}$ or $\{q,b_1,b_0\}$. Thus $\lambda(T_0 \cup T_1 \cup \{q,d_0\}) \le 2$; a \cn. We conclude that $c_0 \not \in D^*$, so $D^* = C^* \cup c_1$ and (v) holds. Thus \ref{minidrosssub} holds. 

Now we show that 

\begin{sublemma}
\label{getiv}
if $\{a_0,p\}$ is contained in a triangle of $M$, then (iv) holds.  
\end{sublemma}

Assume that $\{a_0,p\}$ is contained in a triangle whose third element is already in the configuration. By \ort, this third element is not $d_1$, so $p \in \cl(T_0 \cup T_1 \cup \{d_0,q\})$. Thus 
$\lambda(T_0 \cup T_1 \cup \{d_0,q,p\}) \le 2$. This is a \cn\ as $|E(M)| \ge 13$. Hence \ref{getiv} holds.

Next, suppose that $(U,V)$ is a \ns\ \ths\ of $M\ba c_0,c_1,q$. Then we may assume that $\{b_0,b_1,a_1\}\subseteq U$. Thus $(U \cup q,V)$ is a \ns\ \ths\ of $M\ba c_0,c_1$; a \cn\ to Lemma~\ref{minidrossrsv}.
We deduce that $M\ba c_0,c_1,q$ is \sfc. 

Now let $(s_1,s_2,s_3,s_4)$ be a $4$-fan of $M \ba c_0,c_1,q$. Then $M$ has a cocircuit $C^*$ such that 
$\{s_2,s_3,s_4\} \subsetneqq C^* \subseteq \{s_2,s_3,s_4,c_0,c_1,q\}$, and $M$ has $\{s_1,s_2,s_3\}$ as a triangle. 

By Lemma~\ref{minidrossrsv}, $a_1$ is not in a triangle of $M\ba c_0,c_1,q$. It follows that 
\begin{sublemma}
\label{triavoids}
none of $a_1$, $b_0$, or $b_1$ is in $\{s_1,s_2,s_3\}$.  
\end{sublemma}

Suppose that $a_1 \in C^*$. Then $a_1 = s_4$. Thus $q \not \in C^*$ otherwise, by \ort, $\{s_2,s_3\}$ meets $\{b_0,b_1\}$. Moreover, by \ort\ again, $c_1 \in C^*$ and exactly one of $d_0$ and $c_0$ is in $C^*$. If $d_0 \in C^*$, then $\{d_0,d_1\} \subseteq \{s_1,s_2,s_3\}$, so (ii) holds. Hence we may assume that $c_0 \in C^*$. Thus $C^* = \{s_2,s_3,a_1,c_0,c_1\}$. As $b_0 \not \in \{s_1,s_2,s_3\}$, we may assume, by \ort\ and symmetry, that 
$a_0 = s_3$. By \ort, $p \in \{s_1,s_2\}$, so $\{a_0,p\}$ is contained in a triangle.  Thus, by \ref{getiv}, (iv) holds.

We may now assume that $a_1 \not \in C^*$. Then $C^*$ contains $\{b_1,c_1\}$ or avoids $\{b_1,c_1\}$. Consider  the first case. Then~\ref{triavoids} implies that $b_1 = s_4$ and $b_0 \not \in C^*$.  Then $q \in C^*$. Moreover, $\{a_0,c_0\} \subseteq C^*$ or $\{a_0,c_0\}$ avoids $C^*$. Suppose $\{a_0,c_0\} \subseteq C^*$. Then $C^* = \{s_2,s_3,b_1,c_1,c_0,q\}$, where $a_0 \in \{s_2,s_3\}$. Orthogonality implies that $d_0\in\{s_2,s_3\}$, so $\lambda(T_0\cup T_1\cup\{d_0,q\}) \le 2$; a \cn. Now suppose $C^*$ avoids $\{a_0,c_0\}$. Then $C^* = \{s_2,s_3,b_1,q,c_1\}$, so (iii) holds.

Finally, assume that $C^*$ avoids $\{b_1,c_1\}$. Suppose first that $q \not \in C^*$. Then, as  $C^*$ meets $\{c_0,c_1,q\}$, we deduce that  $C^* = \{s_2,s_3,s_4,c_0\}$. 
By~\ref{triavoids} and \ort\ with the circuit $\{b_0,b_1,q\}$, we deduce that  $b_0 \not \in C^*$. Hence, by \ort\ with the triangle $\{a_0,b_0,c_0\}$, we see that $a_0 \in C^*$. Hence $d_0 \in C^*$. Then $d_0 = s_4$ otherwise $\{d_0,d_1\}$ is contained in a triangle and (ii) holds. Thus $a_0 \in \{s_2,s_3\}$, so $p \in \{s_1,s_2,s_3\}$ and~\ref{getiv} implies that (iv) holds.  

It remains to consider when $C^*$ avoids $\{b_1,c_1\}$ but contains $q$.  
Then $b_0 \in C^*$ so~\ref{triavoids} implies that $b_0 = s_4$. If $c_0 \in C^*$, then $d_0 \in \{s_1,s_2,s_3\}$, so $\{d_0,d_1\} \subseteq \{s_1,s_2,s_3\}$, and the lemma holds. Thus we may assume that $c_0 \not \in C^*$. Then $C^* = \{s_2,s_3,b_0,q\}$, so $a_0 \in \{s_2,s_3\}$. As $M$ has  $\{a_0,p,b_0,q\}$ as a cocircuit, we must have that $C^* = \{a_0,p,b_0,q_0\}$. Thus $M$ has a triangle containing $\{a_0,p\}$ and~\ref{getiv} implies that (iv) holds. 
\end{proof}

\begin{figure}
\center
\includegraphics{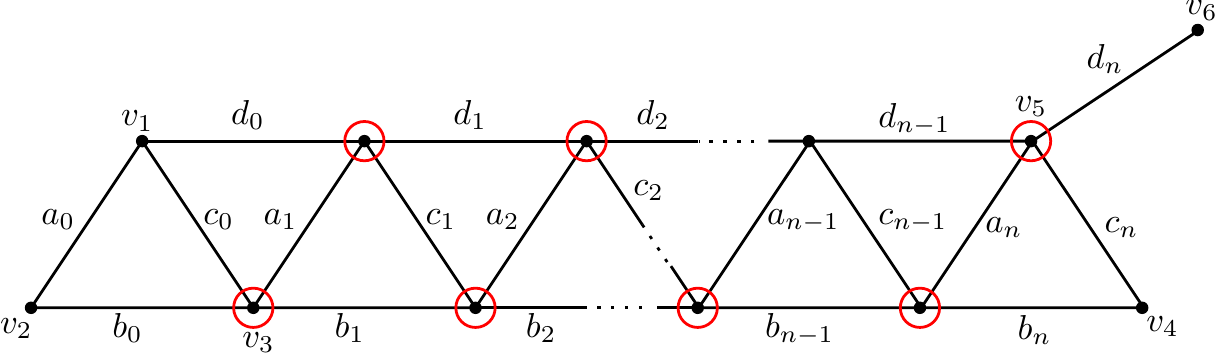}
\caption{ $\{d_{n-2},a_{n-1},c_{n-1},d_{n-1}\}$ or $\{d_{n-2},a_{n-1},c_{n-1},a_n,c_n\}$ is a cocircuit.}
\label{drossfigi}
\end{figure}

The next two lemmas concern the configuration shown in Figure~\ref{drossfigi}. 

\begin{lemma}
\label{trywhere}
Let $M$ be an \ifc\ binary matroid that has at least thirteen elements and contains the configuration shown in Figure~\ref{drossfigi} where $n\ge 2$,  all of the elements shown are distinct, and, in addition to the cocircuits shown, exactly one of 
$\{d_{n-2},a_{n-1},c_{n-1},d_{n-1}\}$ or $\{d_{n-2},a_{n-1},c_{n-1},a_n,c_n\}$ is a cocircuit. Then a triangle  $T$ of $M\ba c_0,c_1,\ldots,c_n$ that meets $\{a_0,b_0,d_0,a_1,b_1,d_1,\ldots,a_n,b_n,d_n\}$ does so in $\{a_0\}, \{d_{n-1},d_n\}$, or $\{a_0,d_{n-1},d_n\}$. 
\end{lemma}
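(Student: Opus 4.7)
The plan is to analyze a triangle $T$ of $M$ that is disjoint from $\{c_0, c_1, \ldots, c_n\}$ and meets $S := \{a_i, b_i, d_i : 0 \le i \le n\}$, using \ort\ between $T$ and each cocircuit of $M$ present in Figure~\ref{drossfigi}: the bowtie cocircuits $D_i = \{b_i, c_i, a_{i+1}, b_{i+1}\}$ for $0 \le i \le n-1$, the ``ladder'' cocircuits (each involving some $d_j$'s together with some $a_j, c_j$), and the special cocircuit, which is either $\{d_{n-2}, a_{n-1}, c_{n-1}, d_{n-1}\}$ or $\{d_{n-2}, a_{n-1}, c_{n-1}, a_n, c_n\}$. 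The central observation is that, since $T$ avoids every $c_j$, \ort\ with each cocircuit $C^*$ forces $|T \cap (C^* \setminus \{c_0, \ldots, c_n\})| \in \{0, 2\}$. In particular, $|T \cap \{b_i, a_{i+1}, b_{i+1}\}| \in \{0,2\}$ for each $i$ in $\{0,1,\ldots,n-1\}$.

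The case analysis then proceeds on the elements of $S$ that lie in $T$. For each interior index $1 \le i \le n-1$ with $b_i \in T$, applying the observation to both $D_{i-1}$ and $D_i$ pins the remaining two elements of $T$ into $\{b_{i-1}, a_i, a_{i+1}, b_{i+1}\}$; a short subcase argument, using a ladder cocircuit or a further $D_j$, yields a \cn\ in each subcase. The boundary cases $b_0 \in T$ and $b_n \in T$ are handled analogously, with $D_0$ or $D_{n-1}$ paired with a neighboring ladder cocircuit or with the special cocircuit. If $a_i \in T$ with $1 \le i \le n$, then \ort\ with $D_{i-1}$ forces $T$ to meet $\{b_{i-1}, b_i\}$, reducing to a case already excluded. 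If $d_i \in T$ with $0 \le i \le n-2$, then \ort\ with an adjoining ladder cocircuit and some $D_j$ yields a \cn.

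The elements $a_0$, $d_{n-1}$, and $d_n$ remain unexcluded: $a_0$ lies in no $D_j$ and in no ladder cocircuit, so it propagates to nothing; the special cocircuit is the only one meeting $\{d_{n-1}, d_n\}$ non-trivially, and \ort\ with it forces $T$ to contain both $d_{n-1}$ and $d_n$ whenever $T$ meets $\{d_{n-1}, d_n\}$. This leaves precisely $T \cap S \in \{\{a_0\}, \{d_{n-1}, d_n\}, \{a_0, d_{n-1}, d_n\}\}$, as claimed. The main obstacle will be the bookkeeping at the right end of the configuration, because the special cocircuit takes two forms and each produces different consequences when $T$ meets one of $a_{n-1}, b_{n-1}, a_n, b_n$, or $d_{n-1}$; in the larger form $\{d_{n-2}, a_{n-1}, c_{n-1}, a_n, c_n\}$ one must remember that both $c_{n-1}$ and $c_n$ are deleted, so the constraint becomes $|T \cap \{d_{n-2}, a_{n-1}, a_n\}| \in \{0, 2\}$. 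A secondary difficulty is the small case $n = 2$, where interior and boundary indices coincide and the propagation arguments collapse; it should be dispatched directly by enumerating the available cocircuits.
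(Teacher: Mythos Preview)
Your outline follows the same overall route as the paper's proof: use \ort\ between $T$ and the displayed cocircuits (the $D_i$'s, the ladder cocircuits, and the special cocircuit) to pin down $T\cap S$. For $n\ge 3$ this works essentially as you describe; in each of your subcases a neighbouring $D_j$ or ladder cocircuit is available to finish the argument.

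The gap is at $n=2$, and your proposed fix for it is wrong. You say the small case ``should be dispatched directly by enumerating the available cocircuits,'' but it cannot. When $n=2$ there are only four cocircuits in play ($D_0$, $D_1$, $\{d_1,a_2,c_2,d_2\}$, and the special one), and two candidate triangles survive all \ort\ checks with them:
\begin{itemize}
\item $T=\{b_0,b_1,b_2\}$ meets $D_0-c_0$ and $D_1-c_1$ each in two elements and meets neither the special cocircuit (in either form) nor $\{d_1,a_2,d_2\}$;
\item when the special cocircuit is $\{d_0,a_1,c_1,d_1\}$, the set $T=\{d_0,d_1,d_2\}$ meets $\{d_0,a_1,d_1\}$ and $\{d_1,a_2,d_2\}$ each in two elements and avoids $D_0-c_0$ and $D_1-c_1$.
\end{itemize}
Neither is eliminated by \ort\ alone. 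The paper dispatches these (and a few analogous subcases) not by \ort\ but by a connectivity count: for instance, if $T=\{b_0,b_1,b_2\}$ then one checks $\lambda(\{b_0,c_0,d_0\}\cup T_1\cup\{d_1,a_2,b_2\})\le 2$, contradicting $|E(M)|\ge 13$; and if $T=\{d_0,d_1,d_2\}$ then $\lambda(T_0\cup T_1\cup\{d_0,d_1,d_2\})\le 2$. You need to invoke internal $4$-connectivity explicitly at these points; pure \ort\ with the listed cocircuits is not enough.
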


\begin{proof} 
Assume first that $T$ meets $\{a_i,b_i\}$ for some $i$ with $1 \le i \le n-1$. Then, as $\{b_{i-1},a_i,b_i\}$ is a cocircuit of $M\ba c_0,c_1,\ldots,c_n$ and $T$ does not contain $\{a_i,b_i\}$, we see, by \ort, that $T$ contains $\{b_{i-1},b_i\}$ or $\{b_{i-1},a_i\}$.  

Suppose $\{d_{i-1},a_i,c_i,d_i\}$ is a cocircuit.  Orthogonality implies that $T$ is $\{b_{i-1},b_i,a_{i+1}\},\{b_{i-1},b_i,b_{i+1}\}$, or $\{b_{i-1},a_i,d_i\}$.  
If $T=\{b_{i-1},b_i,b_{i+1}\}$, then $\lambda (\{b_{i-1},c_{i-1},d_{i-1},a_i,b_i,c_i,d_i,a_{i+1},b_{i+1}\})\leq 2$, so $|E(M)|\leq 12$; a \cn.
If $T=\{b_{i-1},a_i,d_i\}$, then $\lambda (\{b_{i-1},c_{i-1},d_{i-1},a_i,b_i,c_i,d_i\})\leq 2$; a \cn.  Thus $T=\{b_{i-1},b_i,a_{i+1}\}$. But  $\{d_{i},a_{i+1},c_{i+1},d_{i+1}\}$ or $\{d_i,a_{i+1},c_{i+1},a_{i+2},c_{i+2}\}$ is a cocircuit, and so \ort\ with $T$ gives a \cn.  
We conclude that $\{d_{i-1},a_{i},c_i,d_i\}$ is not a cocircuit. Hence $i = n-1$ and 
$\{d_{n-2},a_{n-1},c_{n-1},a_{n},c_{n}\}$ is a cocircuit.  
As $T$ contains $\{b_{n-2}, b_{n-1}\}$ or $\{b_{n-2},a_{n-1}\}$, \ort\ now implies that $T$ is $\{b_{n-2},b_{n-1},b_n\}$.  Then $\lambda (\{b_{n-2},c_{n-2},d_{n-2}\}\cup T_{n-1}\cup T_n)\leq 2$, so $|E(M)|\leq 12$; a \cn.  We conclude that $T$ avoids $\{a_i,b_i\}$ for all $i$ with $1 \le i \le n-1$. It follows that $b_0 \not \in T$ otherwise $\{a_1,b_1\}$ meets $T$. Moreover, $T$ avoids $\{a_n,b_n\}$ otherwise $T$ contains $b_{n-1}$; a \cn. We conclude that 

\begin{sublemma} 
\label{trywhere2}
 $T \cap \{a_0,b_0,a_1,b_1,\ldots,a_n,b_n\} \subseteq \{a_0\}$.
\end{sublemma}
 
Next we note the following. 

\begin{sublemma} 
\label{trywhere3}
If $d_0 \in T$, then $d_1 \in T$.
\end{sublemma}

This follows by \ort\ and \ref{trywhere2} since $T$ must meet $\{a_1,c_1,d_1\}$ or $\{a_1,c_1,a_2,c_2\}$.

\begin{sublemma} 
\label{trywhere4}
If $d_i \in T$ for some $i$ with $1 \le i \le n$, then $i \in \{n-1,n\}$ and $\{d_{n-1},d_n\} \subseteq T$.
\end{sublemma}

To see this, observe that, by \ort, $T$ meets $\{d_{i-1},a_i\}$ if $i \neq n-1$, and $T$ meets $\{a_n,d_n\}$ if $i = n-1$. Thus $\{d_{i-1},d_i\} \subseteq T$ if $i \neq n-1$, and $\{d_{n-1},d_n\} \subseteq T$ if $i = n-1$. Assume $i \le n-2$. Then $\{d_{i-1},d_i\} \subseteq T$ and, by \ort, either $d_{i+1} \in T$ and  $\{d_i, a_{i+1},c_{i+1},d_{i+1}\}$ is a cocircuit, or $i = n-2$ and 
$\{d_{n-2}, a_{n-1},c_{n-1},a_n,c_n\}$ is a cocircuit. In the latter case, 
$\{a_{n-1},c_{n-1},a_n,c_n\}$ meets $T$; a \cn\ to \ref{trywhere2}. Hence $d_{i+1} \in T$, so 
$T = \{d_{i-1},d_i, d_{i+1}\}$. But orthogonality implies that either $T$ meets 
$\{a_{i+2},c_{i+2},d_{i+2}\}$; or $i+1 = n-2$ and $T$ meets $\{a_{n-1},c_{n-1},a_n,c_n\}$. Both possibilities yield a \cn\ since all the elements in the figure are distinct. We conclude that $i > n-2$, so \ref{trywhere4} holds. 

By combining \ref{trywhere3} and \ref{trywhere4}, we deduce that the lemma holds unless $n = 2$ and $T = \{d_0,d_1,d_2\}$. In the exceptional case, 
$\lambda(T_0 \cup T_1 \cup \{d_0,d_1,d_2\}) \le 2$; a \cn.
\end{proof}



\begin{figure}
\center
\includegraphics{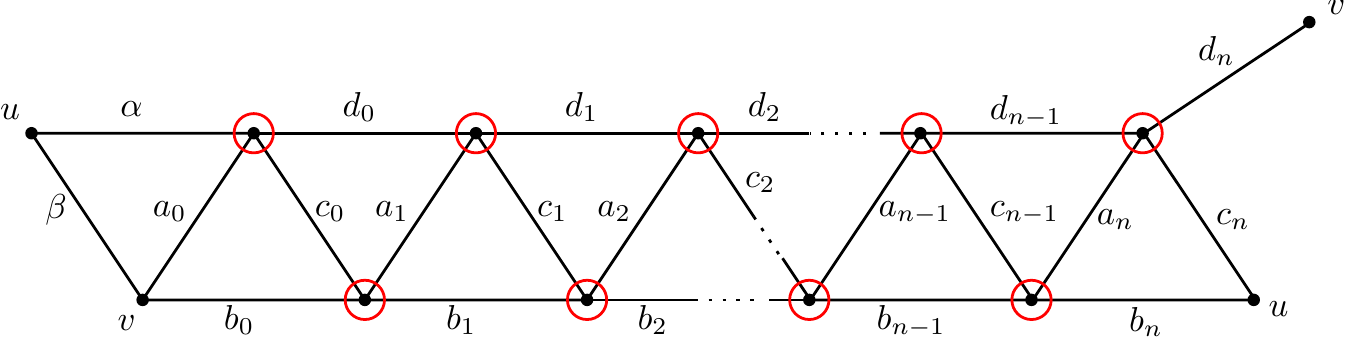}
\caption{A quartic M\"obius ladder.}
\label{drossfigii}
\end{figure}

\begin{lemma}
\label{drossdistinct}
Let $M$ be an \ifc\ binary matroid such that $|E(M)|\geq 13$.  
Suppose that 
 $M$ contains the structure in Figure~\ref{drossfigi}, where   $T_0,D_0,T_1,D_1,\dots ,T_n$ is a string of bowties and, when $n \ge 2$, either $\{d_{n-2},a_{n-1},c_{n-1},d_{n-1}\}$ or $\{d_{n-2},a_{n-1},c_{n-1},a_n,c_n\}$ is a cocircuit. Then 
\begin{itemize}
\item[(i)] all of the elements shown in Figure~\ref{drossfigi} are distinct; or
\item[(ii)] $(a_0,b_0) = (c_n,d_n)$ but all the other elements in the figure are distinct,  $M$ has both $\{d_{n-2},a_{n-1},c_{n-1},d_{n-1}\}$ and $\{b_n,a_0,c_0,d_0\}$ as cocircuits and either 
	\begin{itemize} 
	\item[(a)] all of the elements of  $M$ are shown in Figure~\ref{drossfigi},  and $M$ is the cycle matroid of the quartic M\"obius ladder that is obtained from the structure in Figure~\ref{drossfigi} by identifying the vertices $v_1,v_2$, and $v_3$ with the vertices  $v_4,v_5$, and $v_6$, respectively;  or 
	\item[(b)] $M$ has exactly one element, $\gamma$, that is not shown in Figure~\ref{drossfigi}, and $M$ is the matroid for which $M\ba \gamma$ is  a wheel whose spokes, in cyclic order, are $c_0,a_1,c_1,a_2,c_2,\dots,a_n,c_n$ such that the fundamental circuit of $\gamma$  with respect to the basis, $B$, consisting of this set of spokes is $B \cup \gamma$; and the rim of $M\ba \gamma$, in cyclic order, is 
	$d_0,b_1,d_1,b_2,d_2,\dots,b_n,d_n$; 
	\end{itemize} or 
 \item[(iii)] $M$ has $\{d_{n-2},a_{n-1},c_{n-1},a_n,c_n\}$  as a cocircuit, $(a_0,b_0) = (d_{n-1},d_n)$ but all the other elements in Figure~\ref{drossfigi} are distinct, and $M$ has at most one element that is absent from that figure. 
\end{itemize}
\end{lemma}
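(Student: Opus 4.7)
The plan is to show that all displayed elements are distinct unless one of two specific wraparound patterns occurs, and then to identify the matroids that realize those wraparounds. By the definition of a bowtie string, the elements $a_0,b_0,c_0,\ldots,a_n,b_n,c_n$ are already forced to be distinct except for the permitted possibility $a_0=c_n$, so the substance of the lemma is to control the $d_i$'s and to pin down exactly how any wraparound can occur.

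The first step is to rule out any coincidence of a $d_i$ with another displayed element other than via the two allowed wraparounds. Given the cocircuit $\{d_{i-1},a_i,c_i,d_i\}$ (or its terminal variant $\{d_{n-2},a_{n-1},c_{n-1},a_n,c_n\}$) and the triangles along the $d$-rail, orthogonality with each $T_j$ and each $D_j$ forces any spurious identification to propagate along the ladder. At each step, such a propagation either produces a $3$-separating set $X$ with $|X|\geq 4$ and $|E(M)\setminus X|\geq 4$, contradicting internal $4$-connectivity since $|E(M)|\geq 13$, or else closes the ladder into precisely one of the two wraparounds $(a_0,b_0)=(c_n,d_n)$ or $(a_0,b_0)=(d_{n-1},d_n)$. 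This step parallels the orthogonality arguments of Lemmas~\ref{minidrossrsv} and~\ref{trywhere} and does most of the work.

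The second step analyzes the two wraparounds. For case (ii), $(a_0,b_0)=(c_n,d_n)$ combined with orthogonality between $T_0$ and the triangles and cocircuits adjacent to $d_n$ forces a new wrap cocircuit $\{b_n,a_0,c_0,d_0\}$, and the matroid $M$ then has at most one additional element beyond those already displayed. If no element is additional, $|E(M)|=4n+2$ and the listed circuits and cocircuits match those of the cycle matroid of the quartic M\"obius ladder obtained by the vertex identifications in (ii)(a); this amounts to a finite graphic check. If exactly one additional element $\gamma$ is present, \ifc-ness forces $\gamma$ to lie in the closure of the spokes $c_0,a_1,c_1,\ldots,a_n,c_n$ with fundamental circuit equal to the entire set of spokes together with $\gamma$, yielding the wheel-plus-element description in (ii)(b). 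Case (iii) is handled analogously, with the terminal cocircuit $\{d_{n-2},a_{n-1},c_{n-1},a_n,c_n\}$ and identification $(a_0,b_0)=(d_{n-1},d_n)$; the same closure/connectivity bound yields at most one undisplayed element.

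The main obstacle is the case bookkeeping in the first step, since each candidate identification $d_i=x$ launches its own orthogonality cascade, and the different cascades must be organized uniformly so that only the two listed wraparounds survive rather than spawning a proliferation of subcases. A secondary difficulty is the structural identification in (ii)(a), where one must verify from the displayed combinatorial data alone that $M$ is the cycle matroid of the described quartic M\"obius ladder; this reduces to matching the circuit and cocircuit spaces using binary representability together with a rank count against $|E(M)|=4n+2$.
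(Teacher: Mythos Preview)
Your outline is correct and follows essentially the same approach as the paper: orthogonality cascades restrict coincidences among the $d_i$ to the two wraparound patterns, and a rank/connectivity count then bounds the undisplayed elements and identifies the matroid in each wraparound. The paper organizes the case analysis by first splitting on whether $a_0=c_n$ (the only coincidence permitted in a bowtie string), which cleanly separates outcome (ii) from outcome (iii), and in (ii)(b) it reaches the wheel description by showing $\gamma$ lies in no triad so that $M\backslash\gamma$ is $3$-connected with every element in both a triangle and a triad; but the overall method is the same as yours.
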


\begin{proof}
First we note that, by Lemma~\ref{minidrossrsv}, if $n = 1$, then the elements in Figure~\ref{drossfigi} are distinct. Thus 

\begin{sublemma}
\label{bign}
$n \ge 2$ or (i) holds.
\end{sublemma}

As $T_0,D_0,T_1,D_1,\dots ,T_n$ is a bowtie string, we know that the elements in $T_0\cup T_1\cup\dots\cup T_n$ are all distinct except that $a_0$ may be $c_n$.  
We will begin by treating the case when  $a_0 \neq c_n$, first showing the 
following. 
\begin{sublemma}
\label{newcocct7}
When $a_0 \neq c_n$, if some $d_i$ is in $T_0\cup T_1\cup \dots \cup T_n$, then $\{d_i,a_{i+1},c_{i+1},a_{i+2},c_{i+2}\}$ is not a cocircuit.
\end{sublemma} 

Suppose $\{d_i,a_{i+1},c_{i+1},a_{i+2},c_{i+2}\}$ is a cocircuit. Then, since $M$ is binary, \ort\ implies that $d_i\in\{a_{i+1},c_{i+1},a_{i+2},c_{i+2}\}$; a \cn\ to Lemma~\ref{bowwow}.  
Thus \ref{newcocct7} holds. 

We now show:
\begin{sublemma}
\label{newtoo}
When $a_0 \neq c_n$, either $\{d_0,d_1,\dots ,d_n\}$ avoids $T_0\cup T_1\cup \dots \cup T_n$; or $M$ has at most one element that is not shown in Figure~\ref{drossfigi}, $M$ has $\{d_{n-2},a_{n-1},c_{n-1},a_n,c_n\}$  as a cocircuit, $(a_0,b_0) = (d_{n-1},d_n)$, and $\{d_0,d_1,\ldots,d_{n-2}\}$ avoids $T_0\cup T_1\cup \dots \cup T_n$. 
\end{sublemma}
 
This is certainly true if $n = 1$. Thus we may assume that $n \ge 2$. Suppose that some $d_i$ is in   $T_0\cup T_1\cup \dots \cup T_n$,  choosing the least such $i$. Then $d_i \in T_j$, say.
 If $\{d_{i-1},a_i,c_i,d_i\}$ is a cocircuit, then \ort\ implies that $T_j$ meets $\{d_{i-1},a_i,c_i\}$. We deduce that   
 either $i=0$; or $i=n-1$ and $\{d_{n-2},a_{n-1},c_{n-1},d_{n-1}\}$ is not a cocircuit.  
It follows using \ref{newcocct7} that, in both cases, $\{d_i,a_{i+1},c_{i+1},d_{i+1}\}$ is a cocircuit. Now \ort\ implies that either $d_{i+1}\in T_j$, or $j = i+1$.  The latter implies that  $T_j$ is contained in a cocircuit. Hence the former holds. 
Then \ref{newcocct7} implies that $\{d_{i+1},a_{i+2},c_{i+2},a_{i+3},c_{i+3}\}$ is not a 
cocircuit. Thus if $i=0$, then $\{d_{i+1},a_{i+2},c_{i+2},d_{i+2}\}$ is a cocircuit and \ort\ implies that $T_j=\{d_0,d_{1},d_{2}\}$. Hence $\lambda (T_1\cup T_2\cup \{d_0,d_1,d_2\})\leq 2$; a \cn.  We deduce that $i \neq 0$, so 
 $i=n-1$ and $\{d_{n-2},a_{n-1},c_{n-1},d_{n-1}\}$ is not a cocircuit. 
 Thus $\{d_{n-2},a_{n-1},c_{n-1},a_n,c_n\}$ is a cocircuit. 
 Moreover, since $\{d_i,d_{i+1}\} \subseteq T_j$, we see that 
$\{d_{n-1},d_n\}\subseteq T_j$.  
 Thus $\{d_0,d_1,\dots ,d_n\}$ avoids $T_0 \cup T_1 \cup \cdots \cup T_{j-1}$. 
 Now $j \le n-2$ otherwise $\lambda(T_{n-1} \cup T_n \cup \{d_{n-1},d_n\}) \le 2$; a \cn. 
 
 Let $Z = T_j \cup T_{j+1} \cup \cdots \cup T_n \cup \{d_j,d_{j+1},\ldots,d_n\}$. Since $\{d_{n-1},d_n\} \subseteq T_j$, we deduce that $Z$ is spanned by 
 $\{d_n\} \cup \{a_k,b_k: j+1 \le k \le n\}$. Thus $r(Z) \le 2(n-j) + 1$. Since $Z$ contains at least $2(n-j)$ cocircuits of $M$ none a symmetric difference of the others, we deduce that $\lambda(Z) \le 1$. Thus $Z$ contains at least $|E(M)| - 1$ elements, so $j = 0$. 
Furthermore, we know that $T_0$ is the only triangle in the bowtie string to meet $\{d_0,d_1,\dots ,d_n\}$ and, by the minimality of $i$, it follows that $\{d_0,d_1,\dots,d_{n-2}\}$ avoids $T_0\cup T_1\cup \dots \cup T_n$.  
By \ort\ between the triangle $\{c_{n-1},d_{n-1},a_n\}$ and the cocircuit $D_0$, we deduce that $d_{n-1}=a_0$.  
Since $T_0$ also contains $d_n$, we see that $d_n \in \{b_0,c_0\}$. 
If $d_n=c_0$, then \ort\ between the triangle $\{c_0,d_0,a_1\}$ and the cocircuit $\{d_{n-1},a_n,c_n,d_n\}$ implies that $d_0\in\{a_n,c_n\}$; a \cn.  
Thus  $d_n=b_0$ and  \ref{newtoo} holds. 

Next, we complete the proof of the lemma when $a_0 \neq c_n$ by proving the following.

\begin{sublemma}
\label{newtutu}
When $a_0 \neq c_n$, if $\{d_0,d_1,\dots ,d_n\}$ avoids $T_0\cup T_1\cup \dots \cup T_n$, then 
 the elements in $\{d_0,d_{1},\dots ,d_n\}$ are distinct.  
\end{sublemma}

Assume that this is not so, and choose $j$ to be the maximum member of $\{0,1,\ldots,n\}$ such that $d_j \in \{d_0,d_{1},\dots ,d_n\} - \{d_j\}$. 
Then $d_j = d_i$ for some $i$ with $0\leq i <j\leq n$.  
By \ort\ with the triangle $\{c_i,d_i,a_{i+1}\}$ and the maximality of $j$, we see that neither  $\{d_j,a_{j+1},c_{j+1},a_{j+2},c_{j+2}\}$ nor $\{d_j,a_{j+1},c_{j+1},d_{j+1}\}$ is a cocircuit.  
Hence $j=n$, and  $\{d_{j-1},a_j,c_j,d_j\}$ is a cocircuit.  Then $i \neq j-1$. 
But  \ort\ implies that $\{c_i,a_{i+1}\}$ meets $\{d_{j-1},a_j,c_j\}$; a \cn.  Hence \ref{newtutu} holds.

We may now assume that $a_0=c_n$. Moreover, by \ref{bign}, we may assume that $n \ge 2$. 
Next we show  the following. 

\begin{sublemma}
\label{tootiny}
The elements in $T_1\cup T_2\cup \dots \cup T_n\cup\{d_1,d_2,\dots ,d_n\}$ are  distinct.
\end{sublemma}

Clearly $a_1 \neq c_n$. Thus, by applying Lemma~\ref{minidrossrsv}, \ref{newtoo},  and \ref{newtutu} to the structure we get from Figure~\ref{drossfigi} by deleting $T_0 \cup d_0$, we deduce that 
the elements of $T_1\cup T_2\cup \dots \cup T_n\cup\{d_1,d_2,\dots ,d_n\}$ are distinct unless $a_1=d_{n-1}$. In the exceptional case,  
 \ort\ between $\{c_{n-1},a_1,a_n\}$ and $D_0$ implies that $\{c_{n-1},a_n\}$ meets 
 $\{b_0,c_0,b_1\}$; a \cn.  Thus \ref{tootiny} holds. 
 
Since $a_0=c_n$, \ort\ between $T_0$ and $\{d_{n-1},a_n,c_n,d_n\}$ implies that $\{b_0,c_0\}$ meets $\{d_{n-1},d_n\}$.  
If $d_{n-1}\in\{b_0,c_0\}$, then \ort\ between $\{c_{n-1},d_{n-1},a_n\}$ and $D_0$ implies that $\{c_{n-1},a_n\}$ meets $D_0$; a \cn.  
Therefore $d_n\in\{b_0,c_0\}$.  

\begin{sublemma}
\label{tootiny2}
$d_n = b_0$.
\end{sublemma}

Assume that this fails. Then $d_n= c_0$.  
Now \ort\ between $\{c_0,d_0,a_1\}$ and $\{d_{n-1},a_n,c_n,d_n\}$ implies, using \ref{tootiny},  that $d_0\in\{d_{n-1},a_n\}$.  
If $d_0=a_n$, then \ort\ between $\{c_0,d_0,a_1\}$ and $D_{n-1}$ gives a \cn. 
Thus $d_0=d_{n-1}$ so $n > 2$ otherwise $\lambda(T_1 \cup T_2  \cup \{d_1,d_2\}) \le 2$; a \cn.  Hence $\{d_0,a_1,c_1,d_1\}$ is a cocircuit. By \ref{tootiny}, this cocircuit meets the triangle $\{d_{n-1},c_{n-1},a_n\}$ in a single element; a \cn.   
We   conclude that \ref{tootiny2} holds.

If $\{d_{n-2},a_{n-1},c_{n-1},a_n,a_0\}$ is a cocircuit, then \ort\ with $T_0$ implies that $c_0=d_{n-2}$, and so $\lambda (T_{n-1}\cup T_n \cup  \{d_{n-1},b_0,c_0\})\leq 2$; a \cn.  
Thus $\{d_{n-2},a_{n-1},c_{n-1},d_{n-1}\}$ is a cocircuit.  
By \ort\ between $D_0$ and the triangles in Figure~\ref{drossfigi}, we can easily  check  that $c_0\notin T_1\cup T_2\cup\dots\cup T_n\cup\{d_1,d_2,\dots ,d_n\}$.  
Certainly $c_0\neq d_0$.  
Now, by \ort\ between $\{c_0,d_0,a_1\}$ and each of the indicated cocircuits in Figure~\ref{drossfigi} as well as $\{d_{n-2},a_{n-1},c_{n-1},d_{n-1}\}$, we deduce that $d_0\notin T_1\cup T_2\cup\dots \cup T_n\cup \{d_1,d_2,\dots ,d_n\}$.  Moreover, by taking the symmetric difference of this same set of cocircuits, we get the set $\{a_0,c_0,d_0,b_n\}$, so this set must also be a cocircuit of $M$.

Letting $Z = T_1\cup T_2\cup\dots\cup T_n\cup\{d_1,d_2,\dots ,d_n\} \cup \{c_0,d_0\}$, we see that $|Z| = 4n+2$. Moreover, we can easily check that $Z$ is spanned by 
$\{a_1,b_1,a_2,b_2,\ldots,a_n,b_n,d_n\}$ in $M$ and by 
$\{d_0,d_1,\ldots,d_n,b_1,b_2,\ldots,b_n,c_n\}$ in $M^*$. 
Thus $r(Z) \le 2n+1$ and $r^*(Z) \le 2n+2$.   

Suppose that $r(Z)\leq 2n$. Then $\lambda (Z) =0$  and $Z = E(M)$. Thus the elements in Figure~\ref{drossfigi} are all of the elements in $M$, where $(a_0,b_0)=(c_n,d_n)$.
Moreover,  $r(M) = 2n$ and $r^*(M) = 2n+2$. 
Since $\{a_1,b_1,a_2,b_2,\ldots,a_n,b_n,d_n\}$ spans $M$, this set must contain a circuit $C$. By \ort\ with the known $4$-cocircuits, including $\{a_0,c_0,d_0,b_n\}$, we deduce that $C$ avoids $\{a_1,a_2,\ldots,a_{n-1},b_n\}$. Hence $C \subseteq \{b_1,b_2,\ldots,b_{n-1},a_n,d_n\}$. Again \ort\ with the known $4$-cocircuits implies that $C =  \{b_1,b_2,\ldots,b_{n-1},a_n,d_n\}$. Hence $\{a_1,b_1,a_2,b_2,\ldots,a_n,b_n\}$ is a basis of $M$. 
Since $M$ is binary, 
we deduce that $M$ must be the  cycle matroid of the quartic M\"obius ladder that is obtained from Figure~\ref{drossfigi} by identifying the vertices $v_1,v_2,$ and $v_3$ with $v_4,v_5,$ and $v_6$, respectively; that is, (ii)(a) holds.  

We may now assume that  $r(Z)=2n+1$. Then $\lambda(Z) \le 1$. Suppose $\lambda(Z) = 0$.  Then $E(M)  = Z$ and $r^*(M) = 2n+1$. Hence $\{d_0,d_1,\ldots,d_n,b_1,b_2,\ldots,b_n,c_n\}$ contains a cocircuit $C^*$.  By \ort\ with the triangles in $M$, we deduce that $C^*$ avoids $\{d_0,b_1,d_1,b_2,\ldots,b_{n-1},d_{n-1}\}$. Hence $C^* \subseteq \{d_n,b_n,c_n\}$. This contradicts the fact that $M$ is \ifc. It follows that $\lambda(Z) = 1$, so $E(M) - Z$ has a unique element, say $\gamma$.  Now $\cl(T_1 \cup T_2 \cup \dots \cup T_n)$ has rank $2n$ and avoids $\{b_0,c_0,d_0,\gamma\}$. Hence the last set is a cocircuit of $M$. In addition, by symmetry, $\{b_i,c_i,d_i,\gamma\}$ and $\{d_{i-1},a_i,b_i,\gamma\}$ are cocircuits of $M$ for all $i$ in $\{1,2,\ldots,n\}$. It follows that every element of $M\ba \gamma$ is in both a triangle and a triad of that matroid. But $\gamma$ itself cannot be in a triad of $M$ since every element of $E(M) - \gamma$ is in a triangle of $M\ba \gamma$. Thus $M\ba \gamma$ is \thc\ and so  is a wheel. The rank of this wheel is $\tfrac{1}{2}(|E(M)| - 1) = 2n+1$. From the set of triangles of $M\ba \gamma$, we see that the spokes of this wheel, in cyclic order are $c_0,a_1,c_1,a_2,c_2,\dots,a_n,c_n$. These spokes form a basis, $B$, of $M$. From the $4$-cocircuits of $M$ containing $\gamma$, we see that  the fundamental circuit of $\gamma$ with respect to $B$ is $B \cup \gamma$. Finally, the cyclic order on the spokes determines that on the rim, namely, 	$d_0,b_1,d_1,b_2,d_2,\dots,b_n,d_n$. Hence (ii)(b) holds.
\end{proof}

\begin{lemma}
\label{dross} 
Let $M$ be an \ifc\ binary matroid that has at least thirteen elements. Assume that $M$ 
contains the configuration shown in Figure~\ref{drossfigi} where $n\ge 2$,  all of the elements shown are distinct, and, in addition to the cocircuits shown, exactly one of 
$\{d_{n-2},d_{n-1},a_{n-1},c_{n-1}\}$ or $\{d_{n-2},a_{n-1},c_{n-1},a_n,c_n\}$ is a cocircuit. Then either 
\begin{itemize}
\item[(i)] $M\ba c_0,c_1,\ldots, c_n$ is \ifc; or 
\item[(ii)] $M$ has a triangle containing $\{d_{n-1},d_n\}$, the matroid $M\ba c_n$ is not \ffsc, and $M$ has $\{d_{n-2},a_{n-1},c_{n-1},a_n,c_n\}$ as a cocircuit; or
\item[(iii)] $M\ba c_0,c_1,\ldots, c_n$ is \ffsc\ but not \ifc,  and   one side of 
every \ftv\ of $M\ba c_0,c_1,\ldots, c_n$ is a $4$-fan 
$F=(u_1,u_2,u_3,u_4)$ where either $u_4 = d_0$ and $a_0 \in \{u_2,u_3\}$, and 
$F$ is a $4$-fan of $M\ba c_0$; or
$u_4 = b_n$ and 
$F$ is a $4$-fan of $M\ba c_n$; or
 \item[(iv)] 
$M$ is the cycle matroid of a quartic M\"obius ladder labelled as in Figure~\ref{drossfigii} where the two vertices labelled $v$ are identified and the two vertices labelled $u$ are identified.
\end{itemize}
\end{lemma}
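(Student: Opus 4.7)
The plan is to show that $M' := M\ba c_0, c_1, \ldots, c_n$ is \thc\ and \sfc, and then to analyze its $4$-fans using Lemma~\ref{trywhere}. For $3$-connectivity, Lemma~\ref{deletecs} gives that either $M'$ is \thc\ or it has a $1$- or $2$-element cocircuit meeting $\{a_j, b_j\}$ for some $j$ in $\{2, \ldots, n\}$; in the latter case, combining this small cocircuit with the $4$-cocircuits visible in Figure~\ref{drossfigi} via orthogonality against the triangles shown there forces enough rigidity that $M$ must be the quartic M\"obius ladder of Figure~\ref{drossfigii}, giving outcome (iv), by rank and corank estimates of the same flavor as those used in Lemma~\ref{drossdistinct}. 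For sequential $4$-connectivity, I would mimic the argument from Lemma~\ref{btring}: any \ns\ \ths\ $(U, V)$ of $M'$ lifts to a \ns\ \ths\ of $M$ by moving each $c_i$ into whichever side already contains $\{a_i, b_i\}$, contradicting internal $4$-connectivity of $M$.

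Assuming $M'$ is not \ifc, I would then analyze an arbitrary $4$-fan $(u_1, u_2, u_3, u_4)$ of $M'$. The triangle $\{u_1, u_2, u_3\}$ is a triangle of $M$, and the triad $\{u_2, u_3, u_4\}$ of $M'$ extends to a cocircuit $C^*$ of $M$ contained in $\{u_2, u_3, u_4, c_0, \ldots, c_n\}$. Lemma~\ref{trywhere} restricts how $\{u_1, u_2, u_3\}$ can meet the elements of Figure~\ref{drossfigi} other than the $c_i$, leaving four subcases: empty intersection, $\{a_0\}$, $\{d_{n-1}, d_n\}$, or $\{a_0, d_{n-1}, d_n\}$. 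When the triangle contains $\{d_{n-1}, d_n\}$, orthogonality with the chosen distinguished cocircuit of Figure~\ref{drossfigi} rules out $\{d_{n-2}, d_{n-1}, a_{n-1}, c_{n-1}\}$ and forces $\{d_{n-2}, a_{n-1}, c_{n-1}, a_n, c_n\}$; I would then exhibit a \ns\ \ths\ of $M\ba c_n$ using this new triangle together with the $5$-cocircuit to show $M\ba c_n$ is not \ffsc, yielding outcome (ii). When $\{u_1,u_2,u_3\}$ meets the labeled elements only in $\{a_0\}$, say at $u_3 = a_0$, repeated orthogonality of $C^*$ with $T_0$ and with $\{c_0, d_0, a_1\}$ shows that either $u_4 = b_0$, making $(u_1, u_2, a_0, b_0)$ a $4$-fan of $M$ and contradicting the fact that $M$ is \ifc, or $u_4 = d_0$ with $C^* = \{u_2, a_0, d_0, c_0\}$, matching the $d_0$ form of outcome (iii). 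A symmetric analysis at the other end of the bowtie string gives the $b_n$ form.

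The remaining case, in which $\{u_1, u_2, u_3\}$ avoids every labeled element of Figure~\ref{drossfigi}, is where I expect the bulk of the work. Here binary orthogonality of $C^*$ with each $T_i$ and with each triangle $\{c_i, d_i, a_{i+1}\}$ propagates the set $C^* \cap \{c_0, \ldots, c_n\}$ through the bowtie string; the resulting cocircuit, together with the new triangle $\{u_1, u_2, u_3\}$ and the configuration already in $M$, places sharp bounds on the rank and corank of the spanning set they determine. A direct uniqueness argument in the binary setting, in the spirit of the endgame of Lemma~\ref{drossdistinct}, then identifies $M$ with the cycle matroid of Figure~\ref{drossfigii}, giving outcome (iv). The main obstacle is controlling this orthogonality propagation tightly enough to reach the M\"obius ladder rather than a looser structure, and verifying that no further element can exist outside the configuration once the new triangle $\{u_1, u_2, u_3\}$ has been included.
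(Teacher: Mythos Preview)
Your overall plan for the first three steps---$3$-connectivity, sequential $4$-connectivity, and analysis of $4$-fans via Lemma~\ref{trywhere}---is sound and matches the paper. But there are two genuine errors in how you allocate the outcomes, and together they mean you never actually reach the M\"obius ladder.

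First, the exceptional case of Lemma~\ref{deletecs} does not lead to~(iv). When $M\ba c_0,\ldots,c_n$ has a small cocircuit meeting $\{a_j,b_j\}$, lifting to $M$ gives $C^* = \{x,y,c_j,c_k\}$ with $x\in\{a_j,b_j\}$, $y\in\{a_k,b_k\}$, $j<k$. Orthogonality with the triangle $\{c_j,d_j,a_{j+1}\}$ forces $x=a_{j+1}=a_k$, hence $k=j+1=n$; then orthogonality with $\{c_{j-1},d_{j-1},a_j\}$ forces $y=b_j$, whence $C^*\btu D_j=\{b_k,c_k\}$ is a cocircuit of $M$, a contradiction. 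So this branch simply closes.

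Second, your ``remaining case''---a $4$-fan whose triangle avoids all labelled elements---does not lead to~(iv) either; it is absorbed into~(iii). Once you know $|C^*\cap\{c_0,\ldots,c_n\}|=1$ (which follows from orthogonality with each $T_i$ and Lemma~\ref{trywhere}), the unique $c_i$ in $C^*$ must be $c_n$: for $0<i<n$ you get $u_4\in\{a_i,b_i\}\cap\{d_i,a_{i+1}\}=\emptyset$, and for $i=0$ the triangle is forced to contain $a_0$. With $c_i=c_n$ and the triangle avoiding labelled elements you get $u_4=b_n$, which is precisely the second form of~(iii).

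The route to~(iv) that you are missing is through $5$-cofans, not $4$-fans. After the $4$-fan analysis you know $M\ba c_0,\ldots,c_n$ is \sfc\ and every $4$-fan has one of the two shapes in~(iii). If $M\ba c_0,\ldots,c_n$ is nonetheless not \ffsc, it must carry a $5$-fan or $5$-cofan; $5$-fans are impossible since the two $4$-fan shapes cannot overlap that way. A $5$-cofan is then forced to be $(d_0,a_0,\alpha,\beta,b_n)$ with $\alpha,\beta$ new, giving the triangle $\{\alpha,\beta,a_0\}$ and cocircuits $\{d_0,a_0,\alpha,c_0\}$, $\{\alpha,\beta,b_n,c_n\}$. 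It is \emph{this} extra structure---the $5$-cofan tying the two ends of the ladder together---that drives the rank-and-corank count and pins $M$ down as the quartic M\"obius ladder. Your proposal never generates $\alpha$ and $\beta$, so it has no mechanism to reach~(iv).
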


\begin{proof} 
It follows easily from Lemma~\ref{trywhere} that if $M\ba c_0,c_1,\ldots,c_n$ has a triangle meeting $\{d_{n-1},d_n\}$, then \ort\ implies that $\{d_{n-2},a_{n-1},c_{n-1},d_{n-1}\}$ is not a cocircuit, so $\{d_{n-2},a_{n-1},c_{n-1},a_n,c_n\}$ is a cocircuit and (ii) holds since $M\ba c_n$ has a $5$-fan. Thus we shall assume that $M$ has no triangle meeting $\{d_{n-1},d_n\}$. Hence a triangle of $M\ba c_0,c_1,\ldots ,c_n$ that meets 
$\{a_0,b_0,d_0,a_1,b_1,d_1,\ldots,a_n,b_n,d_n\}$ does so in $\{a_0\}$.

Let $S = \{c_0,c_1,\ldots,c_n\}$. 
Lemma~\ref{deletecs} implies that either $M\ba S$ is \thc, or $M\ba S$ has $a_i$ or $b_i$ in a $1$- or $2$-element cocircuit for some $i$ in $\{2,3,\dots ,n\}$.  
Suppose the latter.  
Then $\{x,y\}$ is a cocircuit of $M\ba S$ for $x\in\{a_i,b_i\}$, so $M$ has a cocircuit $C^*$ such that $\{x,y\}\subseteq C^*\subseteq \{x,y\}\cup S$.  
By \ort, if $c_j \in C^*$,  then $C^*$ meets $\{a_j,b_j\}$. Hence $C^*$ contains at most two elements in $S$.  
Furthermore, if $C^*$ contains only one element in $S$, then $C^*$ is a triad that meets a triangle of $M$; a \cn.  
Thus $C^*=\{x,y,c_j,c_k\}$ for some $k\neq j$.  
Then \ort\ between $C^*$ and the triangles $T_j$ and $T_k$ implies, without loss of generality, that $y\in\{a_j,b_j\}$, that $i=k$, and that $j<k$.  
Orthogonality with the triangle $\{c_j,d_j,a_{j+1}\}$ implies that $a_{j+1}=a_k=x$.  
If $\{c_k,d_k,a_{k+1}\}$ is a triangle, then it contains exactly one element of $C^*$; a \cn.  
Thus $j+1=n$.  
Orthogonality with $\{c_{j-1},d_{j-1},a_{j}\}$ implies that $a_j\notin C^*$. Hence $b_j\in C^*$, and $C^*\btu \{b_j,c_j,a_k,b_k\}$, which equals $\{b_k,c_k\}$, is a cocircuit in $M$; a \cn.  
We conclude that $M\ba S$ is \thc.  

Next we show that 

\begin{sublemma}
\label{mdelssfc}
$M\ba S$ is \sfc. 
\end{sublemma}

Let $(U,V)$ be a \ns\ $3$-separation of $M\ba S$. Then we may assume that $\{b_0,a_1,b_1\} \subseteq U$. If $\{a_0,d_0,d_1,a_2\}$ meets $U$, then $\fcl_{M\ba S}(U) \supseteq \{a_0,d_0,d_1,a_2\}$ and it is straightforward to check that $\fcl_{M\ba S}(U)$ contains $\{a_0,b_0,d_0,a_1,b_1,d_1,\ldots,a_n,b_n,d_n\}$. Hence $(\fcl _{M\ba S}(U) \cup S,V-\fcl (U))$ is a \ns\ $3$-separation of $M$; a \cn. Thus we may assume that $\{a_0,d_0,d_1,a_2\} \subseteq V$. Then $\fcl_{M\ba S}(V)$ contains $\{a_0,b_0,d_0,a_1,b_1,d_1,\ldots,a_n,b_n,d_n\}$, so $(U-\fcl (V), \fcl (V) \cup S)$ is a \ns\ $3$-separation of $M$; a \cn.  Hence \ref{mdelssfc} holds.

Next we show the following. 

\begin{sublemma}
\label{mdelsfan2}
Each $4$-fan of $M\ba S$  is either  a fan of $M\ba c_0$ and has $d_0$ as its coguts element and $a_0$ as an interior element, or is a fan of $M\ba c_n$ and has $b_n$ as its coguts element.
\end{sublemma}

Let $(u_1,u_2,u_3,u_4)$ be a $4$-fan in $M\ba S$. Then $M$ has a cocircuit $C^*$ such that 
$\{u_2,u_3,u_4\} \subsetneqq C^* \subseteq \{u_2,u_3,u_4\} \cup S$. We now  show that 

\begin{sublemma}
\label{mdelsfan}
$|S \cap C^*| = 1$. 
\end{sublemma}

Assume that $|S \cap C^*| \ge 2$. Then $c_i \in C^*$ for some $i > 0$. Hence $C^*$ meets $\{a_i,b_i\}$, and 
 it follows, by Lemma~\ref{trywhere}, that $u_4 \in \{a_i,b_i\}$. Thus $c_i$ is the unique element of $S - c_0$ that is in $C^*$, so $c_0 \in C^*$. Therefore, by \ort,
 $\{u_2,u_3\}$ meets both $\{a_0,b_0\}$ and $\{d_0,a_1\}$; a \cn\ to  Lemma~\ref{trywhere}.  
We conclude that \ref{mdelsfan} holds.

Using \ref{mdelsfan}, suppose first that $c_0 \in C^*$. Then $C^* = \{u_2,u_3,u_4,c_0\}$ and $u_4 \in \{d_0,a_1\}$. Moreover, $a_0 \in \{u_2,u_3\}$. If $u_4 = a_1$, then $b_1 \in \{u_2,u_3\}$; a \cn. Thus $u_4 = d_0$. We deduce that the $4$-fan $(u_1,u_2,u_3,u_4)$ has $d_0$ as its coguts element and has $a_0$ as an interior element.

Next suppose that $c_n \in C^*$. Then $u_4 \in \{a_n,b_n\}$. If $u_4 = a_n$, then $\{u_2,u_3\}$ meets $\{d_{n-1},c_{n-1}\}$. Thus $d_{n-1} \in \{u_2,u_3\}$ so, by \ort, $\{d_{n-1},d_n\} \subseteq \{u_1,u_2,u_3\}$; a \cn. Thus $u_4 = b_n$.

Finally, suppose that $c_i \in C^*$ for some $i$ with $0 < i < n$. Then $u_4 \in \{a_i,b_i\}\cap \{d_i,a_{i+1}\}$; a \cn. 
Thus \ref{mdelsfan2} holds.  

We shall now assume that $M\ba S$ is not \ffsc, otherwise (i) or (iii) holds.  Next we show the following. 

\begin{sublemma}
\label{mdelsfan3}
If  $(U,V)$ is a \ffsv\ of $M\ba S$, then $U$ or $V$ is a $5$-cofan of the form $(d_0,a_0,\alpha,\beta,b_n)$. Moreover, 
$\{\al,\be\} \cap \{a_i,b_i,c_i,d_i: 0 \le i \le n\} = \emptyset.$
\end{sublemma}

To see this, first suppose that $M\ba S$ has a $5$-fan $(w_1,w_2,w_3,w_4,w_5)$. Then, by \ref{mdelsfan2}, we may assume that $(w_2,w_4) = (b_n,d_0)$. Thus $M\ba S$ has a triangle containing $d_0$; a \cn\ to Lemma~\ref{trywhere}. Therefore $M$ has no $5$-fans. 

Next let $(w_1,w_2,w_3,w_4,w_5)$ be a $5$-cofan in $M\ba S$. Then, by~\ref{mdelsfan2}, we may assume that 
$(w_1,w_5) = (d_0,b_n)$ and $a_0 \in \{w_2,w_3\}$, and $\{d_0,w_2,w_3,c_0\}$ and $\{w_3,w_4,b_n,c_n\}$ are cocircuits of $M$.  
If $a_0 = w_3$, then $M$ has $\{a_0,w_4,b_n,c_n\}$ as a cocircuit and has $\{a_0,b_0,c_0\}$ as a circuit. Using Lemma~\ref{trywhere}, we see that  $w_4 \not \in \{b_0,c_0\}$. Thus we have a \cn\ to \ort. We deduce that $a_0 = w_2$. 

Now, by \cite[Lemma 2.2(iv)]{cmoIV}, if the first sentence of \ref{mdelsfan3} is false, then the 
$5$-cofan $(w_1,w_2,w_3,w_4,w_5)$ has an element in its coguts or its guts. 
It is clear that the $5$-cofan has no element in its coguts since adjoining such an element to the $5$-cofan gives 
 a $6$-element set that contains three elements that are ends of $5$-cofans. Yet each such end must be in $\{d_0,b_n\}$.  
Now assume that  $(w_1,w_2,w_3,w_4,w_5)$  has an element $w_6$ in its guts. Then $M\ba S$ has $\{d_0,w_3,b_n,w_6\}$ as a circuit $C$. By \ort, $C$ meets $\{d_1,a_1\}$ and $\{b_{n-1},a_n\}$ so $w_3 \in \{d_1,a_1,b_{n-1},a_n\}$ and the triangle $\{w_2,w_3,w_4\}$ gives a \cn\ to Lemma~\ref{trywhere}. 

We conclude that if $M\ba S$ has a \ffsv, then it is the $5$-cofan $(d_0,a_0,w_3,w_4,b_n)$. Writing $\alpha$ and $\beta$ for $w_3$ and $w_4$, respectively, we get that the first sentence of \ref{mdelsfan3} holds. Moreover, 
 $M$ has $\{\alpha,\beta,a_0\}$ as a circuit. 


Since $\{\alpha, \beta,a_0\}$ is a triangle of $M\ba S$, Lemma~\ref{trywhere} implies that either $\{\alpha, \beta\}$ avoids $\{a_i,b_i,c_i,d_i: 0 \le i \le n\}$ or $\{\alpha, \beta\} = \{d_{n-1},d_n\}$. The latter contradicts our assumption. We deduce that the former must hold. Hence \ref{mdelsfan3} holds.

We now know that $M$ has $\{\alpha,\beta,a_0\}$ as a circuit 
  and has $\{b_n,c_n,\alpha,\beta\}$ and $\{d_0,c_0,a_0,\alpha\}$ as cocircuits. Next we 
aim to show that (iv) of Lemma~\ref{dross} holds. Let $Y= \{\beta,c_0,\ldots,c_n,a_0,a_1,\ldots,a_n\}$ and $Z = Y \cup \{\alpha,d_0,\ldots,d_{n-1}\} \cup \{b_0,b_1,\ldots,b_n\}$. 
Evidently $Z$ is spanned by $Y$. Thus $r(Z) \le 2n+3$. Since we know of $2n+1$ cocircuits that are contained in $Z$, none of which is the symmetric difference of any others, we deduce that 

\begin{sublemma}
\label{banddp} 
$\lambda(Z) \le 2$. 
\end{sublemma}

Next we show that 

\begin{sublemma}
\label{bandd0} 
$M$ has cocircuits that are contained in $Z \cup d_n$ and meet $Y$ in each of $\{a_0,c_0\}, \{c_0,a_1\}, \{a_1,c_1\},\ldots, \{a_{n-1},c_{n-1}\},\{c_{n-1},a_n\}, \{a_n,c_n\}$, and $\{c_n,\be\}$. Thus a circuit of $M$ whose intersection with $Z \cup d_n$ is a non-empty subset of $Y$ must contain $Y$.
\end{sublemma}

This is immediate for all the indicated pairs except $\{a_{n-1},c_{n-1}\}$. It is also true for the last  pair unless $\{d_{n-2},a_{n-1},c_{n-1},d_{n-1}\}$ is not a cocircuit of $M$. In the exceptional case, 
$\{d_{n-2},a_{n-1},c_{n-1},a_n,c_n\}$ is a cocircuit. Since $\{d_{n-1},a_n,c_n,d_n\}$ is also a cocircuit, the symmetric difference of the last two sets, which equals 
$\{d_{n-2},d_{n-1},a_{n-1},c_{n-1},d_n\}$, is also a cocircuit. In this case, we again get that $M$ has a cocircuit meeting $Y$ in $\{a_{n-1},c_{n-1}\}$. Hence \ref{bandd0} holds.

Now consider the sets $\{b_0,b_1,\ldots,b_n,\beta\}$ and $\{\alpha,d_0,d_1,\ldots,d_{n-1},c_n\}$, which we denote by $B$ and $D$, respectively. Next we show the following. 

\begin{sublemma}
\label{bandd} 
Either both $B$ and $D$ are circuits of $M|(B \cup D)$; or $B \cup D$ is a circuit of $M|(B \cup D)$. 
\end{sublemma}

Since $B \cup D$ is the symmetric difference of a set of triangles of $M$,  it is a disjoint union of circuits.  Assume that $B\cup D$ is not a circuit. For each $i$ in $\{0,1,\ldots,n-1\}$, since $M$ has a cocircuit that meets 
$B \cup D$ in $\{b_i,b_{i+1}\}$, it follows that a circuit of $M|(B\cup D)$ that meets $B- \beta$ must contain $B-\beta$. Similarly, the cocircuits of $M$ shown in Figure~\ref{drossfigii} imply that a circuit of $M|(B\cup D)$ that meets $\{\alpha,d_0,\ldots,d_{n-2}\}$   must contain $\{\alpha,d_0,d_1,\ldots,d_{n-2}\}$. Moreover, $M$ has a cocircuit that meets $B \cup D$ in $\{d_{n-1},c_n\}$ and has a cocircuit that meets $B\cup D$ in either $\{d_{n-2},d_{n-1}\}$ or $\{d_{n-2},c_n\}$. Hence every circuit of $M|(B\cup D)$ that meets $\{\alpha,d_0,d_1,\ldots,d_{n-2}\}$   must contain $D$. Let $C$ be a circuit of $M|(B\cup D)$ that contains $D$. If \ref{bandd} fails, then $\beta \in C$ and $B - \be$ is a circuit of $M$. But the last circuit contradicts \ort\ with  the cocircuit $\{b_n,c_n,\al,\be\}$. Hence \ref{bandd} holds.

Next we show that 

\begin{sublemma}
\label{bandd2} 
$B \cup D$ is not a circuit of $M$. 
\end{sublemma}

Assume that $B \cup D$ is  a circuit of $M$. Then, as $|B \cup D| = 2n+4$, we deduce that  $r(Z) = 2n+3$ so $Y$ is a basis of $Z$. Suppose first that $\lambda(Z) = 2$. 
Then $|E(M)-Z|\in\{2,3\}$.  Suppose $ E(M)-Z = \{x_1,x_2\}$. Then $Y$ is a basis of $M$. By \ref{bandd0}, the fundamental circuits $C(x_1,Y)$ and $C(x_2,Y)$ are  $\{x_1\}\cup Y$ and $\{x_2\}\cup Y$, respectively.  
Thus  $M$ has $\{x_1,x_2\}$ as a circuit; a \cn.  
It follows that $|E(M) - Z| = 3$ so $E(M) - Z$ is a triangle or a triad containing $d_n$.   Since $M$ has a cocircuit that contains $d_n$ and is contained in $Z \cup d_n$, it follows that $E(M) - Z$ is a triad, say $\{d_n,x,y\}$. Now $\{d_n,a_0,c_0,a_1,c_1,\ldots,a_n,c_n\}$ spans a hyperplane of $M$ whose complementary cocircuit, $C^*$, is contained in $\{\beta,\alpha,x,y\}$. As $M$ has no $4$-fans, it follows that $C^* = \{\beta,\alpha,x,y\}$. But the symmetric difference of $C^*$ and $\{d_n,x,y\}$ is $\{\beta,\alpha,d_n\}$; a \cn. 

We may now assume that $\lambda(Z) \le 1$. Since $E(M) - Z$ contains $d_n$, it follows that  $E(M) - Z = \{d_n\}$ 
and $Y$ is a basis of $M$. Then $Y-c_n$ spans a hyperplane of $M$ whose complementary cocircuit is contained in $\{b_n,c_n,d_n\}$. Thus $M$ has a $4$-fan; a \cn. We conclude that \ref{bandd2} holds.

By \ref{bandd} and \ref{bandd2}, both $B$ and $D$ are circuits of $M$, so $r(Z) \le 2n+2$. Thus $\lambda(Z) \le 1$, 
so $E(M) - Z = \{d_n\}$. Since $E(M) - Z$ contains $d_n$, it follows that $\lambda(Z) = 1$ and $r(Z) = 2n+2 = r(M)$. Now $Y$ spans $M$ and has $r(M) + 3$ elements, so it contains a circuit. 
By \ref{bandd0}, $Y$ is a circuit of $M$. Thus $Y - \beta$ is a basis of $M$. 

To complete the proof that $M$ is the cycle matroid of the quartic M\"{o}bius ladder labelled as in Figure~\ref{drossfigii}, we first observe that both matroids have $Y - \beta$ as a basis. Since both matroids are binary, it suffices to show that they have the same fundamental circuits with respect to this basis. Evidently the fundamental circuits of each of 
$\be,b_0,b_1,\ldots,b_n,d_0,d_1,\ldots,d_{n-1}$ are the same. Moreover, the cocircuit $\{\al,\be,b_n,c_n\}$ implies that the fundamental circuit $C_M(\al, Y - \be)$ must contain $c_n$ and so, by \ref{bandd0}, $C_M(\al, Y - \be)$ contains $a_n,c_{n-1},a_{n-1},\ldots,a_1,c_0$. Since $M$ is binary, $C_M(\al,Y -\be)$ does not contain $a_0$ and so is $\al \cup (Y - \{\be,a_0\})$, which is also a circuit in the cycle matroid of the quartic M\"{o}bius ladder.

Finally, let $C' = C_M(d_n,Y- \be)$.  By \ort, exactly one of $a_n$ and $c_n$ is in $C'$. As $\beta \not \in C'$, it follows by \ort\ that $c_n \not \in C'$. Thus $a_n \in C'$, so $c_{n-1} \in C'$.  
Suppose that $a_{n-1} \not \in C'$. Then, by \ort, none of $c_{n-2},a_{n-2},c_{n-3},a_{n-3},\ldots,c_0,a_0$ is in $C'$, so $C' = \{d_n,a_n,c_{n-1}\}$. This is a \cn\ as $M$ has $\{d_{n-1},a_n,c_{n-1}\}$ as a circuit. We deduce that 
$a_{n-1}  \in C'$, so all of $c_{n-2},a_{n-2},c_{n-3},a_{n-3},\ldots,c_0,a_0$ are in $C'$. Thus $C' = 
\{a_0,c_0,a_1,c_1,\ldots,a_{n-1},c_{n-1},a_n,d_n\}$. 
It now follows that $M$ is indeed the cycle matroid of the quartic M\"{o}bius ladder in Figure~\ref{drossfigii}. 
This completes the proof of the lemma. 
\end{proof}

\section{A quick wrap}
\label{qwrap}

In this section, we deal with a situation when a short string of bowties wraps around on itself as in Figure~\ref{snake0}. Observe that  $T_0,D_0,T_1,D_1,T_2$ is not contained in a ring of bowties since $\{b_2,c_2,a_0,c_0\}$ is properly contained in a cocircuit, while $\{a_2,c_2,a_0,b_0\}$ is not a cocircuit otherwise we obtain the \cn\ that $\lambda(T_0 \cup T_1 \cup T_2) \le 2$. Lemma~\ref{killthesnake} deals with this situation but the argument is long and technical. The following preliminary lemma will be used several times in this  proof. 

\begin{figure}[htb]
\center
\includegraphics{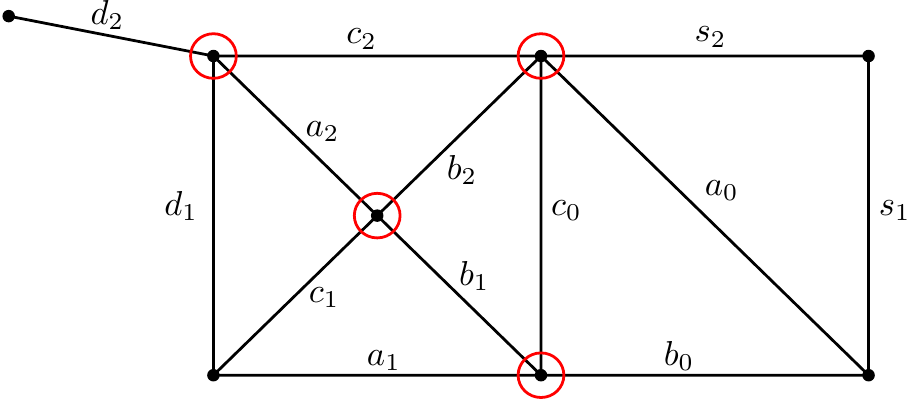}
\caption{}
\label{snake0}
\end{figure}

\begin{lemma}
\label{deletechainsfc}
Let $M$ be a binary \ifc\ matroid having at least thirteen elements and let $N$ be an \ifc\ proper minor of $M$ that has at least seven elements.  
Suppose $M$ has a rotor chain $((a_0,b_0,c_0),(a_1,b_1,c_1),\dots,(a_n,b_n,c_n))$ 
and that $M\ba c_0,c_1$ has an $N$-minor.  
Then either $M\ba c_0,c_1,\dots ,c_n$ is \sfc\ with an $N$-minor,  or $M$ has a minor $M'$ such that $M'$ is \ifc\ with an $N$-minor and $1\leq |E(M)|-|E(M')|\leq 3$.  
\end{lemma}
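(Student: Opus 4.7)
The plan is to exploit the rotor chain's additional triangles $\{c_i,b_{i+1},a_{i+2}\}$ via Lemma~\ref{rotorwin}, and the propagation of $N$-minors along bowtie strings via Lemma~\ref{stringybark}. The crucial first dichotomy is whether $M\ba c_0,c_1/b_1$ has an $N$-minor.

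Suppose first that $M\ba c_0,c_1/b_1$ has an $N$-minor; then so does $M/b_1$. The initial quasi rotor of the chain is $(T_0,T_1,T_2,D_0,D_1,\{c_0,b_1,a_2\})$ with $b_1$ as its central element, so Lemma~\ref{rotorwin} applies and yields that one of $M\ba a_0$, $M\ba c_2$, $M\ba a_0/b_0$, $M\ba c_2/b_2$, or $M\ba c_0,a_1/b_1$ is \ifc\ with an $N$-minor; each differs from $M$ by at most three elements, so the second conclusion of the lemma holds.

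Assume henceforth that $M\ba c_0,c_1/b_1$ has no $N$-minor. Then Lemma~\ref{stringybark}, applied to the underlying bowtie string $T_0,D_0,T_1,D_1,\dots,T_n$, yields that $M\ba c_0,c_1,\dots,c_n$ has an $N$-minor, together with the side conditions that $M\ba c_0,\dots,c_i/b_i$ and $M\ba c_0,\dots,c_j/a_j$ have no $N$-minor for $i\ge 1$ and $j\ge 2$. It remains to prove $M\ba c_0,c_1,\dots,c_n$ is \sfc, or else to produce the small \ifc\ minor. Lemma~\ref{deletecs} supplies the dichotomy: the deletion is \thc, or some $\{a_j,b_j\}$ with $j\ge 2$ meets a $1$- or $2$-element cocircuit of it.

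In the latter case, the small cocircuit lifts to a cocircuit $C^*$ of $M$ with $\{x,y\}\subseteq C^*\subseteq\{x,y,c_0,\dots,c_n\}$ and $x\in\{a_j,b_j\}$. Orthogonality with each $T_k$ forces $C^*=\{x,y,c_j,c_k\}$ for some $k\neq j$ with $y\in\{a_k,b_k\}$, and Lemma~\ref{bowwow} forces $|k-j|\ge 2$. Orthogonality with the connecting triangles $\{c_i,b_{i+1},a_{i+2}\}$ at $i\in\{j,k\}$, whenever they exist, then pins down, up to symmetry, that $(j,k)\in\{(n-3,n-1),(n-2,n)\}$. In each such configuration, $C^*$ together with $D_{n-1}$ (and $D_{n-2}$ in the first configuration) produces a new bowtie between $T_{n-2}$ and $T_n$, or between $T_{n-3}$ and $T_{n-1}$, embedded in an augmented quasi rotor. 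Invoking Theorem~\ref{rotor} on this quasi rotor and Lemma~\ref{2.2} to track the $N$-minor then supplies an \ifc\ minor of $M$ with an $N$-minor differing from $M$ in at most three elements.

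Finally, suppose $M\ba c_0,c_1,\dots,c_n$ is \thc. For each $i$ with $1\le i\le n$, the set $\{b_{i-1},a_i,b_i\}$, obtained from the $4$-cocircuit $D_{i-1}$ by removal of $c_{i-1}$, is a triad of $M\ba c_0,c_1,\dots,c_n$. Given a hypothetical \ns\ \ths\ $(X,Y)$ of this deletion, one iteratively uses these triads to move elements in coclosure until each $\{a_i,b_i\}$ lies entirely on one side, and then, via the consecutive triads, until the entire set $\{a_0,b_0,\dots,a_n,b_n\}$ is contained in one side, say $X$. Since $c_i\in\cl_M(\{a_i,b_i\})\subseteq\cl_M(X)$ for every $i$, the partition $(X\cup\{c_0,\dots,c_n\},Y)$ is a \ns\ \ths\ of $M$, contradicting internal $4$-connectivity. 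The main obstacle is the non-\thc\ branch, where the cocircuit structure of $M$ must be pinned down exactly via a delicate case analysis involving the rotor chain's connecting triangles before the small \ifc\ minor can be extracted.
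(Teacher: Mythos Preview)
Your overall scaffolding matches the paper exactly: split on whether $M\ba c_0,c_1/b_1$ has an $N$-minor; if so, apply Lemma~\ref{rotorwin}; if not, apply Lemma~\ref{stringybark} to get $N\preceq M\ba c_0,\dots,c_n$ together with the non-contractibility of each $a_i,b_i$, then Lemma~\ref{deletecs}, then the \sfc\ argument. So the architecture is right.

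The genuine gap is in the branch where Lemma~\ref{deletecs} gives $a_j$ or $b_j$ in a cocircuit of size at most two in $M\ba c_0,\dots,c_n$ for some $j\ge 2$. You launch into an orthogonality chase to pin the lifted cocircuit down to $\{x,y,c_j,c_k\}$ with $(j,k)\in\{(n-3,n-1),(n-2,n)\}$ and then wave at Theorem~\ref{rotor}. That argument is both unnecessary and incomplete: the constraints you list are not yet enough (further orthogonality with $\{c_{j-1},b_j,a_{j+1}\}$ and $\{c_{j-2},b_{j-1},a_j\}$ forces bounds on $n$ as well as on $x$), and the final sentence does not explain which quasi rotor you feed to Theorem~\ref{rotor}, nor how the $N$-minor is tracked through it. The paper handles this entire branch in one line. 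Since $N$ is \ifc\ with at least seven elements, it has no series pairs or coloops; hence if $M\ba c_0,\dots,c_n$ has an $N$-minor and has $a_j$ (respectively $b_j$) in a cocircuit of size at most two, then $M\ba c_0,\dots,c_n/a_j$ (respectively $/b_j$) still has an $N$-minor. But Lemma~\ref{stringybark} already told you these contractions have no $N$-minor. That is the contradiction; no structural analysis of $C^*$ is needed.

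A minor remark on the \sfc\ paragraph: ``move elements until each $\{a_i,b_i\}$ lies on one side, then cascade via consecutive triads'' is a little loose for a string (as opposed to a ring), because the triads $\{b_{i-1},a_i,b_i\}$ link $b_{i-1}$ to $\{a_i,b_i\}$ but say nothing about $a_{i-1}$. The paper instead fixes the triad $\{b_0,a_1,b_1\}\subseteq X$ and branches on whether any of $a_0,a_2,b_2$ lies in $X$; in either case the full closure of one side swallows all $a_i,b_i$, and adding back $\{c_0,\dots,c_n\}$ gives a \ns\ \ths\ of $M$. Your version can be made to work, but as written it needs a sentence explaining how the cascade actually propagates.
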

\begin{proof}
Let $S=\{c_0,c_1,\dots ,c_n\}$.  
We may assume that $M/b_1$ has no $N$-minor, otherwise, by Lemma~\ref{rotorwin},   the lemma holds.  
Thus $M\ba c_0,c_1/b_1$ has no $N$-minor, but $M\ba c_0,c_1$ has an $N$-minor.  
By Lemma~\ref{stringybark}, we know that $M\ba c_0,c_1,\dots ,c_i/b_i$ has no $N$-minor for all $i\in\{1,2,\dots ,n\}$, that $M\ba c_0,c_1,\dots ,c_j/a_j$ has no $N$-minor for all $j\in\{2,3,\dots, n\}$, and that 
$M\ba S$ has an $N$-minor.  
Lemma~\ref{deletecs} implies that either $M\ba S$ is \thc,  or $M\ba S$ has $a_i$ or $b_i$ in a cocircuit of size at most two for some $i$ in $\{2,3,\dots ,n\}$.  
The latter implies that $M\ba S/a_i$ or $M\ba S/b_i$ has an $N$-minor; a \cn.  
Hence  $M\ba S$ is \thc.



Let $(X,Y)$ be a \ns\ \ths\ of $M\ba S$.  
Without loss of generality, we may assume that the triad $\{b_0,a_1,b_1\}\subseteq X$.  
If $a_0,a_2$, or $b_2$ is in $X$, then all of them are in the full closure of $X$, and we may assume that $\{a_0,a_2,b_2\}\subseteq X$.  
Then the full closure of $X$ in $M\ba S$ contains $\{a_0,b_0,a_1,b_1,\dots ,a_n,b_n\}$, and we see that 
$(\fcl_{M\ba S} (X)\cup S,Y-\fcl_{M\ba S} (X))$ is a \ns\ \ths\ of $M$; a \cn.  
We may now assume that $\{a_0,a_2,b_2\}\subseteq Y$.  
Then the full closure of $Y$ in $M\ba S$ contains $b_0,a_1$, and $b_1$,  and we obtain the same \cn\ as before. 
\end{proof}

Beginning with the next lemma and for the rest of the paper, we shall start abbreviating how we refer to the following three outcomes in the main theorem.

\begin{itemize}
\item[(i)] $M$ has a proper minor $M'$ such that $|E(M)|-|E(M')|\leq 3$ and $M'$ is \ifc\ with an $N$-minor; 
\item[(ii)] $M$ contains an open rotor chain, a ladder structure, or a ring of bowties that can be trimmed to obtain an \ifc\ matroid with an $N$-minor; 
\item[(iii)] $M$ contains an enhanced quartic ladder from which an \ifc\ minor of $M$ with an $N$-minor can be obtained by an enhanced-ladder move.
\end{itemize}
When (i) or (iii)  holds, we say, respectively, that $M$ has a {\it quick win} or an {\it enhanced-ladder win}.  
When trimming an open rotor chain, a ladder structure, or a ring of bowties in $M$ produces an \ifc\ matroid with an $N$-minor,   we say, respectively, that $M$ has an {\it open-rotor-chain win}, a {\it ladder win}, or a {\it bowtie-ring win}.  

\begin{lemma}
\label{killthesnake}
Let $M$ and $N$ be   \ifc\ binary matroids with   $|E(M)|\geq 13$ and $|E(N)|\geq 7$. Assume that  $M$ contains the structure in Figure~\ref{snake0} and that $M\ba c_0,c_1,c_2,s_1$ has an $N$-minor.  
Then 
\begin{itemize}
\item[(i)] $M$ has a quick win; or 
\item[(ii)] $\{d_1,d_2\}$ is contained in a triangle of $M$; or
\item[(iii)] $\{b_0,a_1\}$ is contained in a triangle of $M$; or
\item[(iv)] $M$ has an open-rotor-chain win or a ladder win; or 
\item[(v)] $M$ has an enhanced-ladder win.
\end{itemize}
\end{lemma}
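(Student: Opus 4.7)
The plan is to argue by elimination: assume that neither (i) nor (ii) nor (iii) holds, and deduce that (iv) or (v) must hold. Throughout, I would use the standing convention that $T_i=\{a_i,b_i,c_i\}$ and $D_i=\{b_i,c_i,a_{i+1},b_{i+1}\}$.

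First I would set up the deletion side. Applying Lemma~\ref{deletechainsfc} (or rather, adapting its argument, since we have only a bowtie string rather than a full rotor chain) to $T_0,D_0,T_1,D_1,T_2$, I would try to show that either $M$ has a quick win, or $M\ba c_0,c_1,c_2$ is \sfc\ with an $N$-minor. This uses Lemma~\ref{deletecs} to rule out small cocircuits in $M\ba c_0,c_1,c_2$, and Lemma~\ref{stringybark} together with the hypothesis that $M\ba c_0,c_1,c_2,s_1\succeq N$ to prevent contraction moves from killing the $N$-minor. In particular, Lemma~\ref{stringybark} forces $M\ba c_0,c_1,c_2\succeq N$ and blocks contracting any $b_i$ or $a_i$ for $i\in\{1,2\}$.

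Next I would pass to $M\ba c_0,c_1,c_2,s_1$. Since $s_1$ sits in the ``wrap'' cocircuit $\{b_2,c_2,a_0,c_0,s_1\}$ (and possibly also in triangles with the $d_i$'s, playing a role analogous to the $d_0$ in Figure~\ref{onehorned}), I would apply Lemma~\ref{minidrossrsv} (and Lemma~\ref{minidross2} when needed) to the local configuration $(T_1,T_2,D_1)$ together with $s_1$ and the $d_i$'s, and separately to $(T_2,T_0,\cdot)$ via the wrap cocircuit. Because (ii) forbids $\{d_1,d_2\}$ being in a triangle and (iii) forbids $\{b_0,a_1\}$ being in a triangle, the triangle-producing outcomes of Lemma~\ref{minidrossrsv} are ruled out in each application, leaving only outcomes that either produce internal $4$-connectivity (giving (i) with at most three elements removed once we account for parallel/series simplification) or produce a highly constrained $4$-fan in $M\ba c_0,c_1,c_2,s_1$.

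The main obstacle is then a careful case analysis of any \ftv\ of $M\ba c_0,c_1,c_2$ and $M\ba c_0,c_1,c_2,s_1$. Each such $4$-fan $(u_1,u_2,u_3,u_4)$ comes from a cocircuit $C^*$ of $M$ with $\{u_2,u_3,u_4\}\subsetneq C^*\subseteq\{u_2,u_3,u_4,c_0,c_1,c_2,s_1\}$, and orthogonality with the triangles $T_0,T_1,T_2$ and $\{c_i,d_i,a_{i+1}\}$, $\{b_i,b_{i+1},d_{i+1}\}$ pins down $u_4$ (and typically $u_3$) to lie in the existing configuration. I would show that the only configurations consistent with excluding (ii) and (iii) and with not yielding a quick win are those in which the $4$-fans iteratively extend the bowtie string into either (a) a right-maximal rotor chain that can be trimmed, giving an open-rotor-chain or ladder win; (b) a bowtie ring, handled by Lemma~\ref{btring}; or (c) precisely one of the enhanced quartic ladder configurations of Figures~\ref{bonesaws0} and~\ref{caterpillarwhole0}, giving an enhanced-ladder win. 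The hardest step is verifying that the wraparound cocircuit containing $s_1$ cannot persist indefinitely under iterated extension without forcing one of these three structured outcomes; this is essentially a finite geometric check using orthogonality, Lemma~\ref{bowwow}, and Lemma~\ref{ring} to prevent accidental coincidences of elements as the chain grows.
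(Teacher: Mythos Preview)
Your general framework---assume (i)--(iii) fail and derive (iv) or (v) via $4$-fan analysis---is right, but you are missing the central structural move of the argument, and without it the case analysis you sketch cannot terminate.

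The key idea is not to analyse $4$-fans of $M\ba c_0,c_1,c_2,s_1$ in place, but to \emph{grow a new bowtie string} in a different direction. The paper relabels $(s_2,a_0,s_1)$ as $(t_0,u_0,v_0)$, observes that $T_1,D_1,T_2,\{b_2,c_2,u_0,t_0\},\{t_0,u_0,v_0\}$ is a bowtie string in $M\ba c_0$, and then takes $k$ maximal so that this extends to $\{t_0,u_0,v_0\},\{t_0,v_0,t_1,u_1\},\ldots,\{t_k,u_k,v_k\}$ with all elements distinct from the existing configuration. The entire proof is then an analysis of what happens at the far end $\{t_k,u_k,v_k\}$: one applies Lemma~\ref{6.3rsv} there (not at $T_1$ or $T_2$), and the argument bifurcates on whether $\{t_{k-1},t_k\}$ lies in a triangle of $M$. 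In the ``no'' branch one iteratively applies Lemma~\ref{dross} to build the ladder structure of Figure~\ref{bonesaws0}; in the ``yes'' branch one finds a rotor chain running backwards through the $t_i,u_i,v_i$'s, then applies Lemma~\ref{minidross2} to reach Figure~\ref{caterpillarwhole0}. Your proposal never identifies this extension-and-bifurcation, so your ``finite geometric check'' in the last paragraph has no mechanism to terminate.

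Two smaller points. First, your wrap cocircuit $\{b_2,c_2,a_0,c_0,s_1\}$ is not the one in the configuration: the relevant $5$-cocircuit is $\{c_0,b_2,c_2,a_0,s_2\}$ (i.e.\ it contains $s_2$, not $s_1$), and $\{b_2,c_2,a_0,s_2\}$ is a $4$-cocircuit of $M\ba c_0$. Second, bowtie rings and Lemma~\ref{btring} play no role here; the snake configuration does not wrap into a ring, and outcome (b) of your trichotomy does not arise. Finally, note that Lemma~\ref{stringybark} needs $M\ba c_0,c_1/b_1$ to have no $N$-minor as a \emph{hypothesis}; you must first establish this via Lemma~\ref{rotorwin} applied to the quasi rotor $(T_0,T_1,T_2,D_0,D_1,\{c_0,b_1,b_2\})$, not deduce it from Lemma~\ref{stringybark} itself.
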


\begin{proof}
We assume that none of (i)--(v) holds.   We show first that

\begin{sublemma}
\label{dist12} all the elements in Figure~\ref{snake0} are distinct.
\end{sublemma}

With $T_i = \{a_i,b_i,c_i\}$ for all $i$, we see that 
 $\lambda (T_{0}\cup T_1\cup T_{2})\leq 3$. Since $|E(M)| \ge 13$, we have that $|T_0 \cup T_1 \cup T_2| = 9$ otherwise   
  $\lambda (T_{0}\cup T_1\cup T_{2})\leq 2$. 
  Clearly $s_2 \not\in T_0 \cup T_1 \cup T_2$ otherwise $\lambda(T_0 \cup T_1 \cup T_2) \le 2$. Thus $s_1 \neq a_0$, so $s_1 \not\in T_0 \cup s_2$. Moreover, $s_1 \not\in \{b_2,c_2\}$ otherwise a triangle is contained in a cocircuit. By \ort\ between the triangle $\{s_1,s_2,a_0\}$ and the cocircuits $\{a_1,b_1,b_0,c_0\}$ and $\{b_1,c_1,a_2,b_2\}$, we see that $s_1 \not \in \{a_1,b_1\}$ and $s_1 \not \in \{a_2,b_2\}$. Thus $s_1 \not \in T_0 \cup T_1 \cup T_2$. Hence $\{s_1,s_2\}$ avoids $T_0 \cup T_1 \cup T_2$. If $\{d_1,d_2\}$ meets $T_0 \cup T_1 \cup T_2$, then, since $M$ is binary, $\{d_1,d_2\}$ is contained in $T_0$ or $T_1$ and again we obtain the \cn\ that $\lambda (T_{0}\cup T_1\cup T_{2})\leq 2$. Finally, if $\{d_1,d_2\}$ meets $\{s_1,s_2\}$, then, by \ort\ between the cocircuit $\{d_1,a_2,c_2,d_2\}$ and the triangle $\{a_0,s_1,s_2\}$, we deduce that $\{d_1,d_2\} = \{s_1,s_2\}$. Thus $\{d_1,d_2\}$ is contained in a triangle; a \cn. We conclude that \ref{dist12} holds.


\begin{figure}[htb]
\center
\includegraphics[scale=0.8]{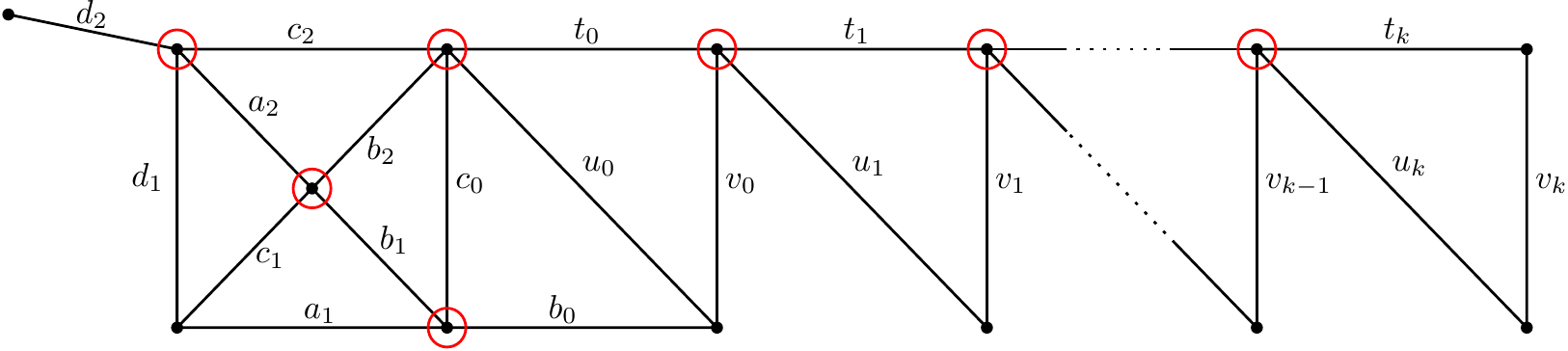}
\caption{The elements are distinct.}
\label{bonesaw0}
\end{figure}

We relabel $(s_2,a_{0},s_1)$ as $(t_0,u_0,v_0)$.  Then $T_0$ becomes $\{u_0,b_0,c_0\}$. 
Take $k$ to be maximal such that $\{t_0,u_0,v_0\},\{t_0,v_0,t_1,u_1\},\{t_1,u_1,v_1\},\linebreak \{t_1,v_1,t_2,u_2\},\dots ,\{t_k,u_k,v_k\}$ is a  string of bowties and all of the elements in Figure~\ref{bonesaw0} are distinct.  As $v_0 = s_1$, it follows by assumption that 
$M\ba c_{0},c_1,c_{2},v_0$ has an $N$-minor.  

We now show that 

\begin{sublemma}
\label{hasN}
$M\ba c_{0},c_1,c_{2},v_0,v_1,\dots ,v_i/t_i$ has no $N$-minor for every $i$ in $\{0,1,\dots ,k\}$ and
$M\ba c_{0},c_1,c_{2},v_0,v_1,\dots ,v_k$ has an $N$-minor.  
\end{sublemma}

Observe that $M$ has $(T_0,T_1,T_2,D_0,D_1,\{c_0,b_1,b_2\})$ as a quasi rotor. Thus, by 
Lemma~\ref{rotorwin}, 
$M/b_2$ has no $N$-minor.  
Note that $T_1,D_1,T_2,\{b_2,c_2,u_0,t_0\},\{u_0,t_0,v_0\},\{t_0,v_0,u_1,t_1\},\{u_1,t_1,v_1\},
\{t_1,v_1,u_2,t_2\},\dots ,\linebreak 
\{u_k,t_k,v_k\}$ is a string of bowties in $M\ba c_0$, and $M\ba c_0\ba c_1,c_2$ has an $N$-minor but $M\ba c_0\ba c_1,c_2/b_2$ has no $N$-minor.  
Lemma~\ref{stringybark} implies that~\ref{hasN} holds.

Next we show that 
\begin{sublemma}
\label{sfcgotit}
$M\ba c_{0},c_1,c_{2},v_0,\dots ,v_i$ is \sfc\ for all $i$ in $\{0,1,\dots ,k\}$.  
\end{sublemma}

Lemma~\ref{deletechainsfc} implies that $M\ba c_{0},c_1,c_{2}$ is \sfc, so it is \thc. By Tutte's Triangle Lemma \cite{wtt} (or see \cite[Lemma 8.7.7]{oxrox}), $M\ba c_{0},c_1,c_{2},v_0$ is \thc\ unless $M\ba c_{0},c_1,c_{2}$ has $v_0$ in a triad with an element $x$ in $\{t_0,u_0\}$.  
In the exceptional case, $M\ba c_0,c_1,c_2,v_0/x$ has an $N$-minor, so, by~\ref{hasN}, we know that $x=u_0$.  
Then $M\ba c_0,c_1,c_2,t_0/u_0$ and $M\ba c_0,c_1,c_2,t_0/b_2$ also have $N$-minors, and Lemma~\ref{rotorwin} gives a \cn.  
We conclude that $M\ba c_{0},c_1,c_{2},v_0$ is \thc.  

Suppose $(X,Y)$ is a \ns\ \ths\ of $M\ba c_{0},c_1,c_{2},v_0$.  
Without loss of generality, the triad $\{b_{2},t_0,u_0\}$ is contained in $X$. Hence $(X\cup v_0,Y)$ is a \ns\ \ths\ of $M\ba c_{0},c_1,c_{2}$; a \cn.  
We conclude that $M\ba c_{0},c_1,c_{2},v_0$ is \sfc, so \ref{sfcgotit} holds for $i = 0$. 

Now, for some $i$ in $\{1,2,\dots, k\}$, suppose  that $M\ba c_{0},c_1,c_{2},v_0,\dots ,v_j$ is \sfc\ for all $j<i$ but that $M\ba c_{0},c_1,c_{2},v_0,\dots ,v_i$ is not \thc. Then, by Tutte's Triangle Lemma again, 
  $M\ba c_{0},c_1,c_{2},v_0,\dots ,v_{i-1}$ has $v_i$ in a triad with $u_i$ or $t_i$. By~\ref{hasN}, this triad contains $u_i$, and $M\ba c_{0},c_1,c_{2},v_0,\dots ,v_i/u_i$ has an $N$-minor.  
Now $M\ba c_{0},c_1,c_{2},v_0,\dots ,v_i/u_i\cong M\ba c_{0},c_1,c_{2},v_0,\dots ,v_{i-1}/u_i\ba t_i\cong M\ba c_{0},c_1,c_{2},v_0,\dots ,v_{i-1},t_i/t_{i-1}$, so we obtain a \cn\ to~\ref{hasN}.  
We conclude  that $M\ba c_{0},c_1,c_{2},v_0,\dots ,v_i$ is \thc.  
Now suppose that $(X,Y)$ is a \ns\ \ths\ of $M\ba c_{0},c_1,c_{2},v_0,\dots ,v_i$.  
Without loss of generality, the triad $\{t_{i-1},u_i,t_i\}$ is contained in $X$, and $(X\cup v_i,Y)$ is a \ns\ \ths\ of $M\ba c_{0},c_1,c_{2},v_0,\dots ,v_{i-1}$; a \cn.  
We conclude, by induction, that \ref{sfcgotit} holds.

We show next that 
\begin{sublemma}
\label{notbt}
$M$ has no triangle $\{t_{k+1},u_{k+1},v_{k+1}\}$ such that $(\{t_k,u_k,v_k\},\{t_{k+1},u_{k+1},v_{k+1}\},\{z,v_k,t_{k+1},u_{k+1}\})$ is a bowtie in $M$ for some $z$ in $\{u_k,t_k\}$.  
\end{sublemma}

Suppose instead that $M$ has such a triangle $T=\{t_{k+1},u_{k+1},v_{k+1}\}$. Clearly, $T$ avoids $\{t_k,u_k,v_k\}$. 
By the choice of $k$, it follows that $T$ must contain some element in Figure~\ref{bonesaw0}.

As a step towards proving \ref{notbt},  we show next that 
\begin{sublemma}
\label{notbt2}
$T$ avoids $T_0\cup T_1\cup T_2\cup \{d_1,d_2,t_0\}$.
 \end{sublemma}
 
Suppose that $T$ meets $T_0\cup T_1\cup T_2\cup \{d_1,d_2,t_0\}$. Since the last set is a union of vertex cocircuits in Figure~\ref{bonesaw0}, $T$ must contain at least two elements of $T_0\cup T_1\cup T_2\cup \{d_1,d_2,t_0\}$. 
Orthogonality  with these vertex cocircuits implies that $T$ is contained in $T_0\cup T_1\cup T_2\cup \{d_1,d_2,t_0,v_0\}$;  otherwise $T$ contains $\{d_1,d_2\}$ or $\{b_0,a_1\}$, a \cn. 
Thus one easily checks that either $T$ is one of the  triangles shown in Figure~\ref{bonesaw0}, or $T$ meets $\{t_0,u_0\}$.

Assume that the first possibility does not hold. 
Since $T \subseteq T_0 \cup T_1 \cup T_2 \cup \{d_1,d_2,t_0,v_0\}$, we see that  $t_0 \not \in T$, otherwise, by \ort, $k = 0$ and $T$ meets $T_k$; a \cn. If $u_0 \in T$, then orthogonality implies that $T$ is $\{u_0,c_2,d_2\}$ or $\{u_0,b_2,c_1\}$. The latter possibility is excluded because $\{u_0,b_2,c_1,a_1,b_0\}$ is a circuit. 
If $T=\{u_0,c_2,d_2\}$, then the cocircuit $\{z,v_k,t_{k+1},u_{k+1}\}$ meets either $T_2$ or $T_0$ in a single element; a \cn. We conclude that $T$ is one of the triangles shown in Figure~\ref{bonesaw0}.

Suppose that $T$ meets $\{b_1,c_1,a_2,b_2\}$. Then $\{b_1,c_1,a_2,b_2\}$ meets the cocircuit $\{z,v_k,t_{k+1},u_{k+1}\}$ in a subset of $\{t_{k+1},u_{k+1}\}$. But each of $b_1, c_1, a_2$, and $b_2$ is in two triangles, and orthogonality implies that a second element of each of these triangles must also be in $\{t_{k+1},u_{k+1}\}$; a \cn. We conclude that $T$ avoids $\{b_1,c_1,a_2,b_2\}$, so $T$ is $\{u_0,b_0,c_0\}$ or $\{u_0,t_0,v_0\}$. But $T$ avoids $T_k$ so $k > 0$. If $\{u_0,c_0\}$ meets the cocircuit $\{z,v_k,t_{k+1},u_{k+1}\}$, then, because each of $u_0$ and $c_0$ is in two triangles, we again obtain the contradiction that $\{t_{k+1},u_{k+1}\}$ contains at least three elements. Thus $\{u_0,c_0\}$ avoids $\{z,v_k,t_{k+1},u_{k+1}\}$ so $T = \{u_0,t_0,v_0\}$ and $\{t_{k+1},u_{k+1}\} = \{t_0,v_0\}$. Hence $\{z,v_k,t_{k+1},u_{k+1}\} = \{z,v_k,t_0,v_0\}$, so  $M\ba c_0,c_1,c_2,v_0,v_1,\dots ,v_k$ has $\{t_0,z\}$ as a  cocircuit. Thus $M\ba c_0,c_1,c_2,v_0/t_0$ has an $N$-minor; a \cn\ to~\ref{hasN}. We conclude that  \ref{notbt2} holds. 

We now know that $T$ meets $\{v_0,t_1,u_1,v_1,\dots ,t_{k-1},u_{k-1},v_{k-1}\}$. 
If $k = 0$, then $v_0 \in \{t_1,u_1,v_1\}$; a \cn.  
Hence $k \ge 1$. Thus, by Lemma~\ref{ring}, either $T = \{u_i,t_i,v_i\}$ for some $i$ with $0 \le i \le k-2$, or $T$ meets $\{u_0,t_0,v_0\}$ in $\{u_0\}$. By \ref{notbt2}, the latter does not occur and so the former occurs with $i \ge 1$. 
Hence  $k\geq 3$.  
If $v_i\in\{t_{k+1},u_{k+1}\}$, then $M\ba c_0,c_1,c_2,v_0,v_1,\dots ,v_k$ has $z$ in a $2$-element cocircuit, so $M\ba c_0,c_1,c_2,v_0,v_1,\dots ,v_k/z$ has an $N$-minor. 
Let $y$ be the element in $\{t_k,u_k\}$ that is not $z$. 
Then $M\ba c_0,c_1,c_2,v_0,v_1,\dots ,v_k/z\cong M\ba c_0,c_1,c_2,v_0,v_1,\dots ,v_{k-1},y/z\cong M\ba c_0,c_1,c_2,v_0,v_1,\dots ,v_{k-1},y/t_{k-1}$; a \cn\ to~\ref{hasN}. 
We may assume then that $\{t_i,u_i\}=\{t_{k+1},u_{k+1}\}$. It follows that $\{z,v_k,t_{k+1},u_{k+1}\}\btu \{t_{i-1},v_{i-1},t_i,u_i\}$, which equals $\{z,v_k,t_{i-1},v_{i-1}\}$, is a cocircuit of $M$. Thus $M\ba c_0,c_1,c_2,v_0,v_1,\dots ,v_k/t_{i-1}$ has an $N$-minor; a \cn\ to~\ref{hasN}. 
We conclude that \ref{notbt} holds. 

Next we  show that 
\begin{sublemma}
\label{deletev0}
$M\ba v_0$ is \sfc.  
\end{sublemma}
By~\cite[Lemma~3.1]{cmoV},  $M\ba v_0$ is \thc.  
Suppose $(U,V)$ is a \ns\ \ths\ of $M\ba v_0$.  
By~\cite[Lemma~3.3]{cmoV}, we may assume that $\{u_0,b_0,c_0\} \cup T_1\cup b_2 \subseteq U$. It follows that we may assume that $\{u_0,b_0,c_0\} \cup T_1\cup b_2 \cup a_2 \cup c_2 \cup t_0\subseteq U$. Then 
 $(U \cup v_0,V)$ is a \ns\ \ths\ of $M$; a \cn.  
We conclude that \ref{deletev0} holds.

We now show that 
\begin{sublemma}
\label{kge1}
$k \ge 1$.  
\end{sublemma}

Suppose $k=0$.  
We know by~\ref{sfcgotit} that $M\ba c_{0},c_1,c_{2},v_0$ is \sfc. 
We show next that 
\begin{sublemma}
\label{0120ifc} 
$M\ba c_0,c_1,c_2,v_0$ is not  \ifc.  
\end{sublemma}

Assume the contrary. By~\ref{deletev0}, $M\ba v_0$ is \sfc.  
Since (i) does not hold, $M\ba v_0$ has a $4$-fan, say $(\al,\be,\ga,\de)$. 
Then $\{\be,\ga,\de,v_0\}$ is a cocircuit so \ort\ implies that $\{\be,\ga,\de\}$ meets $\{t_0,u_0\}$.  Observe that $c_0\notin \{\be,\ga,\de\}$,   otherwise we get a \cn\ to \ort\ with one of the circuits $\{c_0,b_1,b_2\}$ or  $\{c_0,a_1,d_1,c_2\}$. 

Suppose  $u_0\in\{\be,\ga,\de\}$. Then, since $c_0 \not \in \{\be,\ga,\de\}$,  \ort\ implies that  $b_0\in\{\be,\ga,\de\}$. Thus we may relabel $\{\be,\ga,\de,v_0\}$ as $\{b_0,u_0,v_0,w_0\}$, and $M$ contains the  configuration   in Figure~\ref{bonesaws0}(a).  
Orthogonality between the cocircuit $\{b_0,u_0,v_0,w_0\}$, and the circuits in Figure~\ref{bonesaw0} implies that the elements are distinct except that $d_2$ may be $w_0$, so (v) holds;  a \cn.  Hence $u_0\not \in\{\be,\ga,\de\}$.

Since  $\{t_0,u_0\}$ meets $\{\be,\ga,\de\}$, we  may now assume that $t_0\in\{\be,\ga,\de\}$.  
Then~\ref{notbt} implies that $t_0\neq \de$. Hence, by symmetry, we may assume that $t_0 = \gamma$.  
By \ort\ between $\{\al,\be,t_0\}$ and the vertex cocircuits in Figure~\ref{bonesaw0}, we deduce that $\{\al,\be\}$ is 
$\{c_0,a_1\},\{b_2,c_1\},\{c_2,d_1\}$, or $\{c_2,d_2\}$.  
By \ort\ between the cocircuit $\{\be,t_0,\de,v_0\}$ and the triangles in Figure~\ref{bonesaw0} other than $\{t_0,v_0,u_0\}$, we deduce that if $\{\be,\de\}$ meets one of these triangles, then 
$\{\be,\de\}$ is contained in that triangle. But $\{\beta, \delta\}$ avoids $\{c_0,u_0\}$ and cannot meet $\{a_2,b_2,b_1,c_1\}$. 
We conclude that $\{\be,\de\}$
avoids $T_0\cup T_1\cup T_2\cup d_1$. Thus $(\al,\be) =(c_2,d_2)$ and 
$\de \not\in T_0 \cup T_1 \cup T_2 \cup \{d_1,d_2,t_0,v_0\}$.  
Then $\lambda (T_0\cup T_1\cup T_2\cup \{d_1,d_2,t_0\})\leq 2$.  
Now $T_0\cup T_1\cup T_2\cup \{d_1,d_2,t_0\}$ contains twelve elements, and avoids the two-element set $\{\de ,v_0\}$.  Thus, as $M$ is \ifc, we deduce that $|E(M)| \in \{14,15\}$ and 
$\lambda (T_0\cup T_1\cup T_2\cup \{d_1,d_2,t_0\})=2$. Hence  $r(T_0\cup T_1\cup T_2\cup \{d_1,d_2,t_0\})=6$.  Thus $r(M) = 6$ and $|E(M)| = 14$ otherwise $M$ has a triad containing $\{\de,v_0\}$, which gives the \cn\ that $M$ has a $4$-fan since $v_0$ is in a triangle.  
It follows that $\{b_2,u_0,b_0,a_1,d_1,d_2\}$ is a basis $B$ of $M$.  Now consider the fundamental circuits $C(v_0 ,B)$ and $C(\de,B)$. 
By using \ort\ between these circuits and the   cocircuit $\{v_0,d_2,t_0,\delta\}$ as well as the vertex cocircuits in Figure~\ref{bonesaw0}, we deduce that  $C(v_0 ,B)$ and $C(\de,B)$ are $\{v_0,a_1,b_0,d_1,d_2\}$ and $\{\de ,a_1,b_0,d_1,d_2\}$.  
The symmetric difference of $C(v_0 ,B)$ and $C(\de,B)$ is   $\{v_0,\de\}$, which must contain a circuit of $M$; a \cn. We conclude that  \ref{0120ifc} holds.

By \ref{sfcgotit} and \ref{0120ifc}, we deduce that  $M\ba c_{0},c_1,c_{2},v_0$ has a $4$-fan 
$(y_1,y_2,y_3,y_4)$.  
Suppose $\{y_1,y_2,y_3\}$ meets an element in Figure~\ref{bonesaw0} where $k=0$.  
Orthogonality and the fact that $M$ contains no parallel pairs implies that either $\{y_1,y_2,y_3\}$ contains $\{d_1,d_{2}\}$, and (ii) holds; or $\{y_1,y_2,y_3\}$ contains $\{b_{0},a_1\}$, and (iii) holds.  
Since neither (ii) nor (iii) holds by assumption, we deduce that $\{y_1,y_2,y_3\}$ avoids the elements in Figure~\ref{bonesaw0}.  
Now $M$ has a cocircuit $C^*$ such that $\{y_2,y_3,y_4\}\subsetneqq C^*\subseteq \{y_2,y_3,y_4,c_{0},c_1,c_{2},v_0\}$.  
Orthogonality implies that $C^*$ avoids $\{c_{0},c_1\}$, since each of these elements is in two triangles in the figure but $y_4$ is the only element of $C^*$ that can be in such a triangle.  
If $c_{2}\in C^*$, then $\{y_2,y_3,y_4\}$ meets $\{a_2,b_2\}$ and $\{a_1,d_1,c_1\}$; a \cn.  
Thus $C^*=\{v_0,y_2,y_3,y_4\}$ and $y_4\in\{t_0,u_0\}$.  
Hence $(\{t_0,u_0,v_0\},\{y_1,y_2,y_3\},\{x,v_0,y_2,y_3\})$ is a bowtie in $M$ for some $x$ in $\{t_0,u_0\}$; a \cn\ to~\ref{notbt}.  
We conclude that \ref{kge1} holds.  

\begin{figure}[htb]
\center
\includegraphics[scale=0.8]{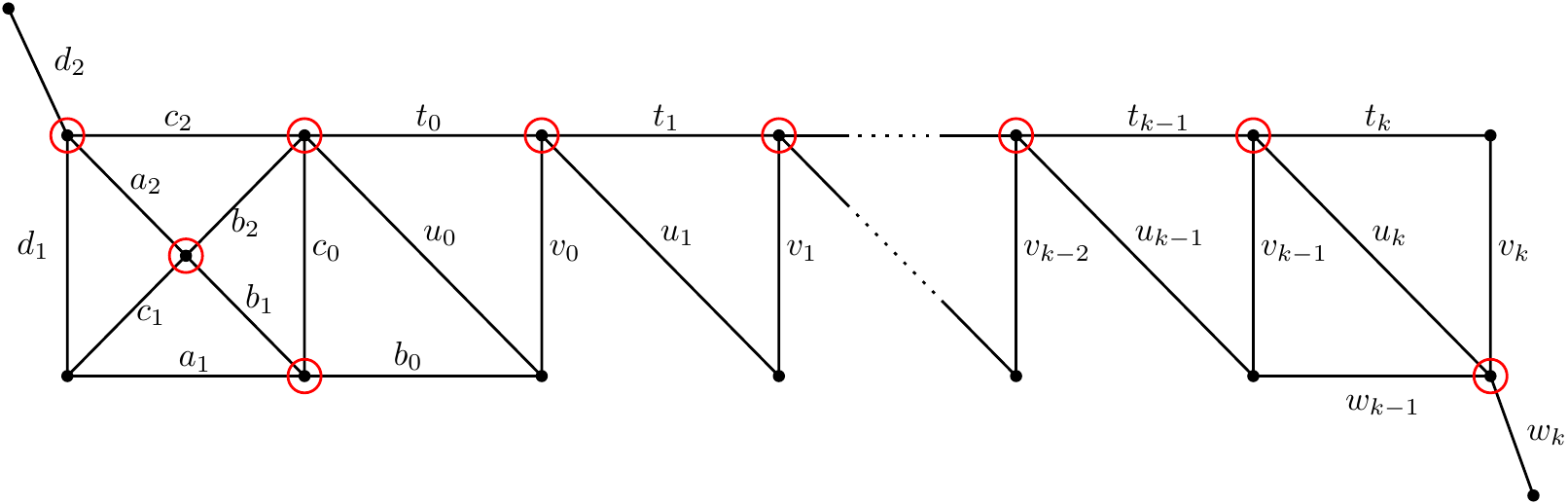}
\caption{$k\geq 1$}
\label{bonesawcap0}
\end{figure}

Next we show that 
\begin{sublemma}
\label{bonesawtime}
after possibly interchanging the labels on $t_k$ and $u_k$, the matroid $M$ contains the configuration in  Figure~\ref{bonesawcap0}, where $k \ge 1$.
\end{sublemma}

Consider $M\ba v_k$.  It certainly has an $N$-minor so it is not \ifc. Thus, 
by~\ref{notbt}, Lemma~\ref{6.3rsv} implies that $M\ba v_k$ is \ffsc\ and that either 
$M\ba v_k$ has a $4$-fan $(e,f,w_{k-1},w_k)$ where $\{w_{k-1},w_k\}$ avoids $\{u_{k-1},v_{k-1},t_{k-1},u_k,v_k,t_k\}$, while $e \in \{t_{k-1},v_{k-1}\}$ and $f \in \{t_k,u_k\}$; or $M$ has a triangle $\{u_{k-1},p,q\}$ and a cocircuit $\{a,v_k,p,q\}$ where $a \in \{u_k,t_k\}$, and $\{p,q\}$ avoids $\{u_{k-1},v_{k-1},t_{k-1},u_k,v_k,t_k\}$.  
 The latter implies that $(\{t_k,u_k,v_k\},\{u_{k-1},p,q\},\{a,p,q,v_k\})$ is a bowtie, a \cn\ to~\ref{notbt}.  
Thus we may assume that the former holds. 
Suppose $e = t_{k-1}$. Then \ort\ between the triangle  $\{w_{k-1},t_{k-1},f\}$ and the vertex cocircuit containing $\{t_{k-1}, u_{k-1}\}$ implies that $w_{k-1}$ is in a triangle $T'$ in Figure~\ref{bonesaw0} that is disjoint from $\{u_{k-1},v_{k-1},t_{k-1},u_k,v_k,t_k\}$. By \ort\ between $T'$ and the cocircuit $\{v_k,f,w_{k-1},w_k\}$ of $M$, we deduce that $T'$ also contains $w_k$, so we get a \cn\ to \ref{notbt}. 
We conclude that $e \neq t_{k-1}$. 
Thus $e=v_{k-1}$, so $M$ contains the configuration shown in Figure~\ref{bonesawcap0}, that is, \ref{bonesawtime} holds.   

As the elements in Figure~\ref{bonesaw0} are all distinct, the elements in Figure~\ref{bonesawcap0} are also distinct unless  $w_{k-1}$ or $w_k$ is equal to another element. Suppose $w_{k-1}$ is an element in Figure~\ref{bonesaw0}.  
As every element in that figure except $v_k$ is in a vertex cocircuit, $w_{k-1}$ is in such a cocircuit.  By \ort, $\{v_{k-1},u_k\}$ contains an element in this cocircuit. Hence $w_{k-1}\in\{t_{k-1},v_{k-1},u_{k},t_k\}$; a \cn.  
Clearly $w_k\neq w_{k-1}$.  
Suppose $w_k$ is an element in Figure~\ref{bonesaw0}.  
Then \ort\ implies that $w_k$ is not in a circuit in Figure~\ref{bonesaw0}, so $w_k=d_{2}$.  
We conclude that the following holds.
\begin{sublemma}
\label{bonesawcapdistinct}
The elements in Figure~\ref{bonesawcap0} are distinct except  that $w_k$ may equal $d_{2}$.  
\end{sublemma}

Next we observe that 

\begin{sublemma}
\label{newsky}
$M$ has no triangle containing $\{u_0,b_2\}$ or $\{u_0,c_2\}$.  
\end{sublemma}

Suppose $M$ has a triangle $T'$ containing $\{u_0,b_2\}$. Then, by \ort\ with the cocircuit $\{b_2,b_1,c_1,a_2\}$, we deduce that $T' = \{u_0,b_2,c_1\}$. Thus $\{u_0,b_0\} \subseteq \cl(T_1 \cup T_2)$, so $\lambda(T_0 \cup T_1 \cup T_2) \le 2$; a \cn. Hence $M$ has no triangle containing $\{u_0,b_2\}$.

Now assume $M$ has a triangle $T''$ containing $\{u_0,c_2\}$. Then, by \ort\ with the cocircuit $\{c_2,a_2,d_1,d_2\}$, we deduce that $T''$ is $\{u_0,c_2,d_1\}$ or   $\{u_0,c_2,d_2\}$. In the first case, we again get the \cn\ that $\{u_0,b_0\} \subseteq \cl(T_1 \cup T_2)$. In the second case, $\lambda(T_0 \cup T_1 \cup T_2 \cup \{d_1,d_2\}) \le 2$. This is  a \cn\ because $|E(M)| \ge 16$ since $k \ge 1$ and the elements in Figure~\ref{bonesaw0} are distinct. Thus \ref{newsky} holds.

The following will be useful not only in the proof of the subsequent assertion but also later in the proof of 
Lemma~\ref{killthesnake}.

\begin{sublemma}
\label{isffscz}
If  $(y_1,y_2,y_3,y_4)$ is a $4$-fan in $M\ba v_0$, then either $\{y_1,y_2,y_3\} = T_0$, or $y_4 \in \{t_0,u_0\}$. 
\end{sublemma}

Assume that this fails. As $\{y_2,y_3,y_4,v_0\}$ is a cocircuit, $t_0$ or $u_0$ is in $\{y_2,y_3,y_4\}$.  
By \ort\ between $\{y_1,y_2,y_3\}$ and 
the cocircuits displayed in Figure~\ref{bonesawcap0}, we see that $t_0 \not\in \{y_2,y_3\}$. Thus $u_0 \in \{y_2,y_3\}$. Then \ort\ between $\{y_1,y_2,y_3\}$ and the   cocircuits in Figure~\ref{bonesawcap0} implies that $\{y_1,y_2,y_3\}$  contains $\{u_0,b_2\}$ or $\{u_0,c_2\}$; a \cn\ to \ref{newsky}. We conclude that \ref{isffscz} holds.


We will now show that 
\begin{sublemma}
\label{isffsc}
$M\ba v_0$ is \ffsc.  
\end{sublemma}

By~\ref{deletev0}, $M\ba v_0$ is \sfc.  
Assume that \ref{isffsc} fails. Then $M\ba v_0$ has $(x_1,x_2,x_3,x_4,x_5)$ as a $5$-fan or a $5$-cofan.  Now $\{x_1,x_2,x_3,x_4,x_5\}$ contains at most one element of $\{t_0,u_0\}$ otherwise $v_0$ is in the closure of $\{x_1,x_2,x_3,x_4,x_5\}$ and so 
$\{x_1,x_2,x_3,x_4,x_5,v_0\}$ is $3$-separating in $M$; a \cn. Suppose that $(x_1,x_2,x_3,x_4,x_5)$ is a $5$-fan. 
Then $(x_1,x_2,x_3,x_4)$ and $(x_5,x_4,x_3,x_2)$ are $4$-fans. By \ref{isffscz}, we may assume that 
$T_0 = \{x_1,x_2,x_3\}$ and $x_2 \in \{t_0,u_0\}$. 
As $T_0 = \{u_0,b_0,c_0\}$, we deduce that 
 $x_2 = u_0$,  so $\{x_1,x_3\}=\{b_0,c_0\}$. Now $M$ has $\{x_2,x_3,x_4,v_0\}$ as a cocircuit.  
 If $c_0=x_3$, then, by \ort, $x_4\in\{b_1,b_2\}$. Hence the cocircuit $\{u_0,c_0,x_4,v_0\}$ meets the circuit $\{b_1,b_2,a_1,d_1,c_2\}$ in a single element; a \cn.  
Thus $(x_1,x_3)=(c_0,b_0)$. 
By \ort\ between the triangle $\{b_0,x_4,x_5\}$ and the cocircuits $D_0$ and $D_1$, we deduce that this   triangle also contains $a_1$.   
This contradicts the assumption that (ii) does not hold.

We may now assume that $(x_1,x_2,x_3,x_4,x_5)$ is a $5$-cofan. Then the $4$-fans $(x_2,x_3,x_4,x_5)$  and $(x_4,x_3,x_2,x_1)$ imply, by   \ref{isffscz}, that $T_0 = \{x_2,x_3,x_4\}$. Since both $\{x_3,x_2,x_1,v_0\}$ and $\{x_3,x_4,x_5,v_0\}$ are cocircuits of $M$, it follows that 
 $\{c_{0},v_0\}$ is contained in one of these $4$-cocircuit of $M$. But this cocircuit must meet each of  $\{b_1,b_{2}\},\{a_1,d_1,c_{2}\}$, and $\{u_0,t_0\}$; a \cn.  
 We conclude that~\ref{isffsc} holds.

\begin{figure}[htb]
\center
\includegraphics[scale=0.72]{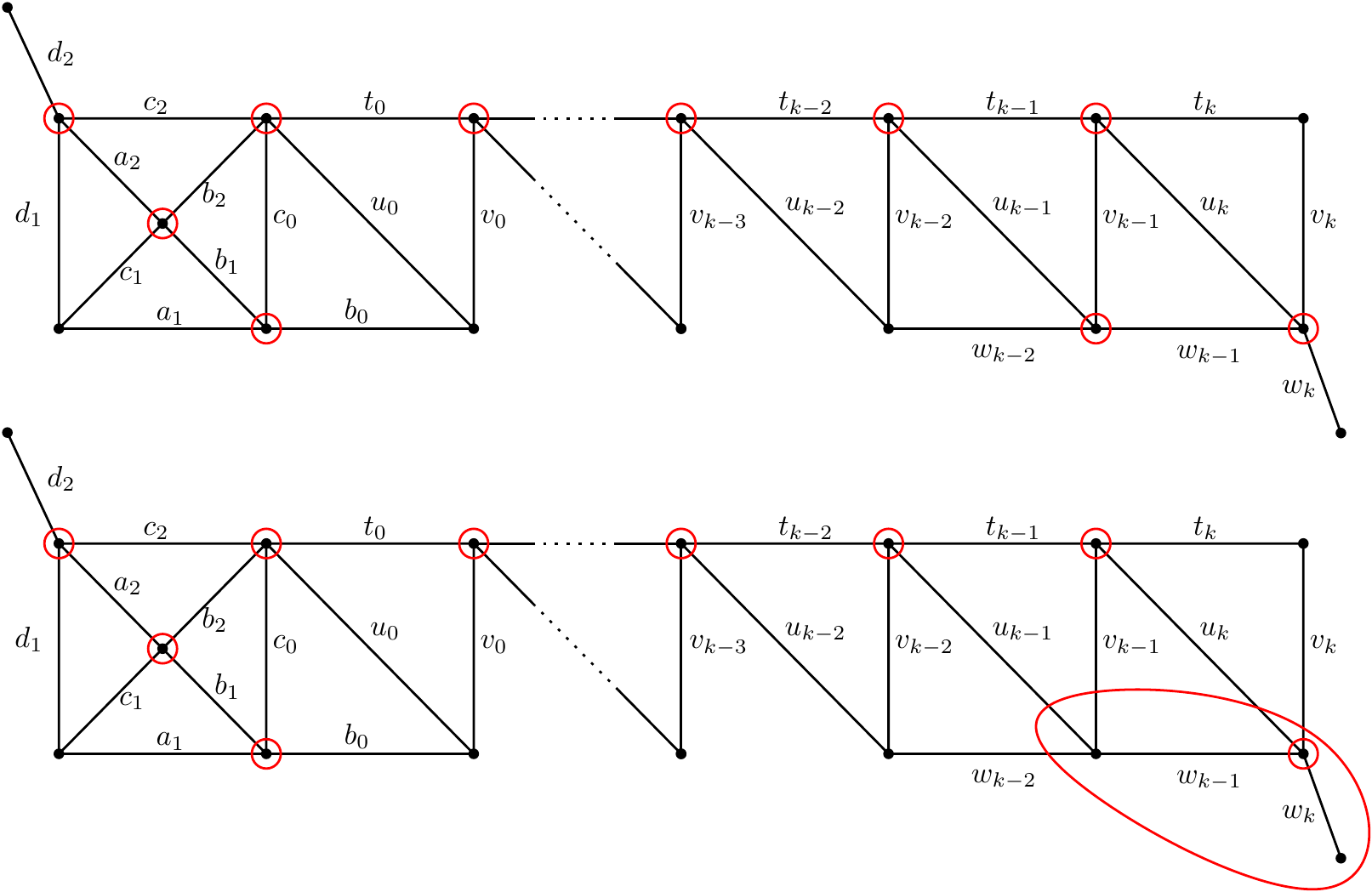}
\caption{$k\geq 2$}
\label{bonesawtails}
\end{figure}

Next we show that 
\begin{sublemma}
\label{nocoguts}
$M\ba v_{k-1},v_k$ has no $4$-fan having $t_k$ as its coguts element. 
\end{sublemma}

Assume that $M\ba v_{k-1},v_k$ has $(z_1,z_2,z_3,t_k)$ as a $4$-fan.     Then $M$ has one of $\{z_2,z_3,t_k,v_{k}\}, 
\{z_2,z_3,t_k,v_{k-1}\},$ or $\{z_2,z_3,t_k,v_{k-1},v_{k}\}$ as a cocircuit. The first possibility is excluded by \ref{notbt}. The second and third possibilities imply, by \ort, that $\{z_2,z_3\}$ meets both $\{t_{k-1},u_{k-1}\}$ and $\{u_k,w_{k-1}\}$. Then \ort\ using the triangle $\{z_1,z_2,z_3\}$ gives a \cn. Hence \ref{nocoguts} holds.

The next assertion will take some time to prove.
\begin{sublemma}
\label{tsintriangle}
$\{t_{k-1},t_k\}$ is contained in a triangle of $M$.  
\end{sublemma}

Suppose that $\{t_{k-1},t_k\}$ is not contained in a triangle of $M$.  
We can now apply Lemma~\ref{minidrossrsv} to the configuration induced by $\{t_{k-1},u_{k-1},v_{k-1},t_k,u_k,v_k,w_{k-1},w_k\}$ where $(v_{k-1},v_k)$ corresponds to $(c_0,c_1)$ in the lemma. Then neither (i) nor (iii) of  Lemma~\ref{minidrossrsv} holds. Moreover, by \ref{notbt}, (ii) of the lemma also does not hold. 

Next we eliminate the possibility that (v) of Lemma~\ref{minidrossrsv} holds by showing that $M\ba v_{k-1},v_k$ has no $4$-fan with $t_k$ as its coguts element. Assume instead that 
  $M\ba v_{k-1},v_k$   has such a $4$-fan, $(y_1,y_2,y_3,t_k)$.  
Then $M$ has a cocircuit $C^*$ such that $\{y_2,y_3,t_k\}\subsetneqq C^*\subseteq \{y_2,y_3,t_k,v_{k-1},v_k\}$.  
By~\ref{notbt}, we know that $v_{k-1}\in C^*$.  
Furthermore, Lemma~\ref{bowwow} implies that $v_k\in C^*$ unless $\{y_2,y_3,t_k,v_{k-1}\} = \{t_{k-1},u_k,t_k,v_{k-1}\}$. In the exceptional case, by \ort, $\{y_1,y_2,y_3\} = \{t_{k-1},u_k,w_k\}$, so $\lambda(\{t_{k-1},u_{k-1},v_{k-1},t_k,u_k,v_k,w_{k-1},w_k\}) \le 2$; a \cn. We deduce that 
 $C^*=\{y_2,y_3,t_k,v_{k-1},v_k\}$.  As $M$ is binary and $C^*$ contains three elements of the circuit $\{t_k,v_{k-1},v_k,w_{k-1}\}$, it follows that $w_{k-1} \in C^*$.
Orthogonality  between the triangle $\{y_1,y_2,y_3\}$ and the cocircuit $\{w_{k-1},u_k,v_k,w_k\}$ implies that $\{y_1,y_2,y_3\}$ meets $\{u_k,w_k\}$.  
As $\{v_{k-1},w_{k-1},u_{k}\}$ is a triangle, we deduce that $\{y_1,y_2,y_3\}$ does not contain $\{w_{k-1},u_k\}$, so $\{w_{k-1},w_k\}\subseteq \{y_1,y_2,y_3\}$; a \cn\ to~\ref{notbt}.  We conclude that $M\ba v_{k-1},v_k$ has no $4$-fan with $t_k$ as its coguts element, so (v) of Lemma~\ref{minidrossrsv} does not hold.

We now know that (iv) of Lemma~\ref{minidrossrsv} holds. Combining this with \ref{nocoguts}, we see that 

\begin{sublemma}
\label{fourex}
$M$ has elements  $\al$ and $\be$ not in 
$\{t_{k-1},u_{k-1},v_{k-1},t_k,u_k,v_k,w_{k-1},w_k\}$ such that   $\{\alpha ,\be, u_{k-1}\}$ is a triangle,  $\{\be ,u_{k-1},v_{k-1},w_{k-1}\}$ or $\{\be ,u_{k-1},v_{k-1},u_k,v_k\}$ is a 
cocircuit, $D^*$, and every \ftv\ of $M\ba v_{k-1},v_k$ is a $4$-fan with $\al$ as its guts. 
\end{sublemma}

We show next that 

\begin{sublemma}
\label{kge2}
$k \ge 2$.
\end{sublemma}

Suppose  that $k = 1$. By \ort\ between $\{\al,\be,u_0\}$ and the cocircuit 
$\{c_0,b_2,c_2,t_0,u_0\}$, we deduce that $\{\al,\be\}$ meets $\{c_0,b_2,c_2\}$. But, by \ref{newsky}, $\{\al,\be\}$ avoids $\{b_2,c_2\}$. Thus $\{\al,\be,u_0\} = \{c_0,b_0,u_0\}$. Orthogonality between $D^*$ and the triangle $\{b_2,b_1,c_0\}$ implies that $\beta \neq c_0$. Thus $(\al,\be) = (c_0,b_0)$. 

To complete the proof of \ref{kge2}, we will show that (ii) holds thereby obtaining  a \cn.  
By \ref{hasN} and \ref{sfcgotit}, 
 $M\ba c_0,c_1,c_2,v_0,v_1$ has an $N$-minor and is \sfc.  
As  (v) does not hold, $M\ba c_0,c_1,c_2,v_0,v_1$ is not \ifc, so it has  a $4$-fan 
$(z_1,z_2,z_3,z_4)$. Thus 
 $M$ has a cocircuit $D$ such that $\{z_2,z_3,z_4\}\subsetneqq D\subseteq \{z_2,z_3,z_4,c_0,c_1,c_2,v_0,v_1\}$.  
   
Suppose $(z_1,z_2,z_3,z_4)$ is a $4$-fan of $M\ba v_0,v_1$. Then, by \ref{fourex}, since $(\al,\be) = (c_0,b_0)$, we get the \cn\  that $z_1 = c_0$.   Hence $(z_1,z_2,z_3,z_4)$ is not a $4$-fan of $M\ba v_0,v_1$, so 
  $D$ meets $\{c_0,c_1,c_2\}$.  
Then \ort\ between $D$ and $T_0,T_1$, and $T_2$ implies that $\{z_2,z_3,z_4\}$ meets one of these triangles, specifically it meets $\{u_0,b_0,a_1,b_1,a_2,b_2\}$.  
Suppose $\{z_2,z_3\}$ meets the last set. Then \ort\ with the cocircuits shown in 
Figure~\ref{bonesawcap0} implies that $\{z_1,z_2,z_3\}$ is $\{b_0,w_0,a_1\}$, a \cn\ to \ort\ with $\{w_0,u_1,v_1,w_1\}$.  
Thus $z_4\in\{u_0,b_0,a_1,b_1,a_2,b_2\}$. But \ort\ with the circuits $\{u_0,b_0,b_1,b_2\}$ and $\{a_1,b_1,d_1,a_2\}$ in $M\ba c_0,c_1,c_2,v_0,v_1$ implies that $\{z_2,z_3\}$ also meets one of these circuits.  As $\{z_2,z_3\}$ avoids $\{u_0,b_0,a_1,b_1,a_2,b_2\}$, we deduce that 
$d_1\in\{z_2,z_3\}$.  
Since $\{z_1,z_2,z_3\}\neq \{c_1,d_1,a_2\}$, \ort\ between $\{z_1,z_2,z_3\}$ and the cocircuit $\{d_1,a_2,d_2\}$ in $M\ba c_0,c_1,c_2,v_0,v_1$ implies that $d_2\in\{z_1,z_2,z_3\}$. Hence (ii) holds. This \cn\ completes the proof of \ref{kge2}.  

Now we show that 
\begin{sublemma}
\label{hasatail}
$M$ contains one of the configurations shown in Figure~\ref{bonesawtails} where  the elements in each part are distinct except that $d_2$ may equal $w_k$.   
\end{sublemma}

As $M$ has $\{\al,\be,u_{k-1}\}$ as a triangle, by \ort, $\{\al,\be\}$ meets $\{t_{k-2},v_{k-2}\}$ in a single element. Orthogonality between $D^*$ and the triangle $\{t_{k-2},u_{k-2},v_{k-2}\}$ implies that $\be$ avoids this triangle. Thus   $\al \in \{t_{k-2},v_{k-2}\}$.

Suppose  $\al = t_{k-2}$. We know that $M$ has a $4$- or $5$-cocircuit containing $\{t_{k-2},u_{k-2}\}$. Thus $\be$ also meets this cocircuit, so $\be$ is in a triangle that violates \ort\ with $D^*$. Hence 
 $\al = v_{k-2}$. Relabelling $\be$ as $w_{k-2}$, we get that 
 $M$ contains one of the configurations in Figure~\ref{bonesaws0}. Moreover, by \ort, $\be$ is not an existing element of Figure~\ref{bonesawcap0}. Thus \ref{hasatail} holds. 
 
Next we show that 

\begin{sublemma} 
\label{bonesawdistinct}
$M$ contains one of the configurations shown in Figure~\ref{bonesaws0} where all the elements in the figure are distinct except that $d_2$ may equal $w_k$.
\end{sublemma}

To prove this, 
 we shall apply Lemma~\ref{dross} to the configuration in $M$ induced by $\{t_{k-2},u_{k-2},v_{k-2},t_{k-1},u_{k-1},v_{k-1},t_k,u_k,v_k,w_{k-2},w_{k-1},w_k\}$. 
By \ref{notbt}, $M$ is not a quartic M\"{o}bius ladder and $\{w_{k-1},w_k\}$ is not contained in a triangle of $M$, so neither (iv) nor (ii) of Lemma~\ref{dross} holds. Also (i) of that lemma does not hold as $N \preceq M\ba v_{k-2},v_{k-1},v_k$, and $M$ has no ladder win. 
Thus (iii) of   Lemma~\ref{dross}  holds.  
But, by \ref{nocoguts}, $M\ba v_k$ does not have a $4$-fan that avoids $v_{k-1}$ but has $t_k$ as its coguts element.  Thus 
$M\ba v_{k-2},v_{k-1},v_k$ has a $4$-fan $(y_1,y_2,u_{k-2},w_{k-2})$ that is also a $4$-fan of $M\ba v_{k-2}$. Then $\{y_2,u_{k-2},w_{k-2},v_{k-2}\}$ is a cocircuit of $M$.  
Suppose first that $k = 2$. Then \ort\ between the last cocircuit and the circuit $\{u_0,b_0,c_0\}$ implies that $y_2 \in \{c_0,b_0\}$. Moreover, \ort\ between the specified cocircuit and the circuit $\{c_0,b_1,b_2\}$ implies that $y_2 \neq c_0$, so $y_2 = b_0$. Thus, when $k = 2$, we conclude using \ref{hasatail} that \ref{bonesawdistinct} holds.

Now suppose that $k \ge 3$. By \ort\ between the cocircuit $\{y_2,u_{k-2},w_{k-2},v_{k-2}\}$ and the triangle $\{u_{k-3},v_{k-3},t_{k-3}\}$, we deduce that $y_2 \not\in \{v_{k-3},t_{k-3}\}$. 
 Moreover, by Lemma~\ref{trywhere}, $\{y_1,y_2\}$ avoids 
$\{t_{k-2},u_{k-2},\linebreak v_{k-2},t_{k-1},u_{k-1},v_{k-1}, t_k,u_k,v_k,w_{k-2}, w_{k-1},w_k\}$. By \ort\ between the triangle $\{y_1,y_2,u_{k-2}\}$ and the cocircuit $\{u_{k-2},t_{k-2},v_{k-3},t_{k-3}\}$, we deduce that $y_1 \in \{v_{k-3},t_{k-3}\}$. Suppose $y_1 = t_{k-3}$. 
Then the triangle $\{t_{k-3},y_2,u_{k-2}\}$ gives a \cn\ to \ort\ unless $k \ge 4$ and $y_2 = v_{k-4}$. In the exceptional case, we get a \cn\ to \ort\ between the triangle 
$\{t_{k-4},u_{k-4},v_{k-4}\}$ and the cocircuit $\{y_2,u_{k-2},w_{k-2},v_{k-2}\}$. 
 We deduce that $y_1 = v_{k-3}$. We now relabel $y_2$ as $w_{k-3}$ noting that, by \ort, 
it differs from the existing elements in Figure~\ref{bonesawtails}. 

We now  see that we have a configuration of the same form as the one to which we just applied Lemma~\ref{dross}, this new configuration being induced by 
$\{t_{i},u_{i},v_{i},w_{i}: k-3 \le i \le k\}$.  We apply Lemma~\ref{dross} again and repeat this process until we find that $M$ contains  one of the configurations shown in Figure~\ref{bonesaws0} where all of the elements $w_{k-3}, w_{k-4},\ldots,w_0$ added to Figure~\ref{bonesawtails} are distinct and differ from the existing elements in the figure. We conclude that \ref{bonesawdistinct} holds.

By~\ref{sfcgotit} and \ref{hasN}, we know that $M\ba c_{0},c_1,c_{2},v_0,v_1,\dots ,v_k$ is \sfc\ with an $N$-minor.  
If $M\ba c_{0},c_1,c_{2},v_0,v_1,\dots ,v_k$ is \ifc, then (v) of the lemma holds; a \cn. Thus we may assume that 
$M\ba c_{0},c_1,c_{2},v_0,v_1,\dots ,v_k$ has a $4$-fan $(y_1,y_2,y_3,y_4)$.

Suppose first that $(y_1,y_2,y_3,y_4)$ is a $4$-fan of $M\ba v_0,v_1,\dots ,v_k$. Then it follows by Lemma~\ref{dross} and symmetry, either $(y_3,y_4) = (u_0,w_0)$ and $(y_1,y_2,y_3,y_4)$ is a $4$-fan of $M\ba v_0$; or 
$y_4 = t_k$ and $(y_1,y_2,y_3,y_4)$ is a $4$-fan of $M\ba v_k$. In the latter case, $M$ has $\{y_2,y_3,t_k,v_k\}$ as a cocircuit, so $M$ has $(\{t_k,u_k,v_k\}, \{y_1,y_2,y_3\}, \{y_2,y_3,t_k,v_k\})$ as a bowtie, a \cn\ to \ref{notbt}. 
In the latter case, by \ref{isffscz}, $\{y_1,y_2,y_3\} = \{c_0,b_0,u_0\}$, so $(y_1,y_2,y_3,y_4)$ is not a $4$-fan of $M\ba c_0,c_1,c_2,v_0,v_1,\dots ,v_k$; a \cn. 
 We conclude that $(y_1,y_2,y_3,y_4)$ is not a $4$-fan of $M\ba v_0,v_1,\dots ,v_k$.
 
Next we observe that 

\begin{sublemma} 
\label{distinction}
$\{y_1,y_2,y_3\}$ avoids $\{b_0,a_1,b_1,a_2,b_2,d_1,u_0\}$. 
\end{sublemma}

To see this, we observe that $b_0 \not\in \{y_1,y_2,y_3\}$ otherwise, by \ort\ with the vertex cocircuits  in $M\ba c_{0},c_1,c_{2},v_0,v_1,\dots ,v_k$ shown in  Figure~\ref{bonesaws0},  we get the contradiction that $\{y_1,y_2,y_3\}$ has at least four elements. Next, \ort\ with the cocircuit $\{a_1,b_1,b_0\}$ implies that $\{a_1,b_1\}$ avoids $\{y_1,y_2,y_3\}$. Similarly, \ort\ with the cocircuit $\{b_1,b_2,a_2\}$ implies that $\{a_2,b_2\}$ avoids $\{y_1,y_2,y_3\}$. Moreover, as (ii) of the lemma does not hold, $d_1$ avoids $\{y_1,y_2,y_3\}$. Finally, if $u_0 \in \{y_1,y_2,y_3\}$, then \ort\ with the cocircuit $\{b_2,u_0,t_0\}$ implies that $\{y_1,y_2,y_3\} = \{u_0,t_0,v_0\}$; a \cn. Hence \ref{distinction} holds.

Now $M$ has a cocircuit $C^*$ such that $\{y_2,y_3,y_4\}\subsetneqq C^*\subseteq \{y_2,y_3,y_4,\linebreak c_{0},c_1,c_{2},v_0,v_1,\dots ,v_k\}$.  
As $(y_1,y_2,y_3,y_4)$ is not a $4$-fan in $M\ba v_0,v_1,\dots ,v_k$, it follows that $C^*$ meets $\{c_{0},c_1,c_{2}\}$.  
Suppose $c_{0}$ is in $C^*$. Then, by \ort, $\{y_2,y_3,y_4\}$ meets $\{b_{1},b_{2}\}$ and $\{u_0,b_0\}$, so $\{y_2,y_3\}$ meets $\{b_0,b_1,b_2,u_0\}$; a \cn\ to \ref{distinction}.  
Likewise, if $c_{1}$ is in $C^*$, then $\{y_2,y_3,y_4\}$ meets  $\{a_{1},b_{1}\}$ and $\{d_1,a_{2}\}$, so  $\{y_2,y_3\}$ meets $\{a_1,b_1,a_2,d_1\}$; 
a \cn.  
Thus $c_{2}\in C^*$ and $\{y_2,y_3,y_4\}$ meets $\{a_2,b_2\}$ and $\{u_0,b_0,a_1,d_1\}$. This final \cn\ to \ref{distinction} completes the proof of \ref{tsintriangle}.  

\begin{figure}[htb]
\center
\includegraphics[scale=0.8]{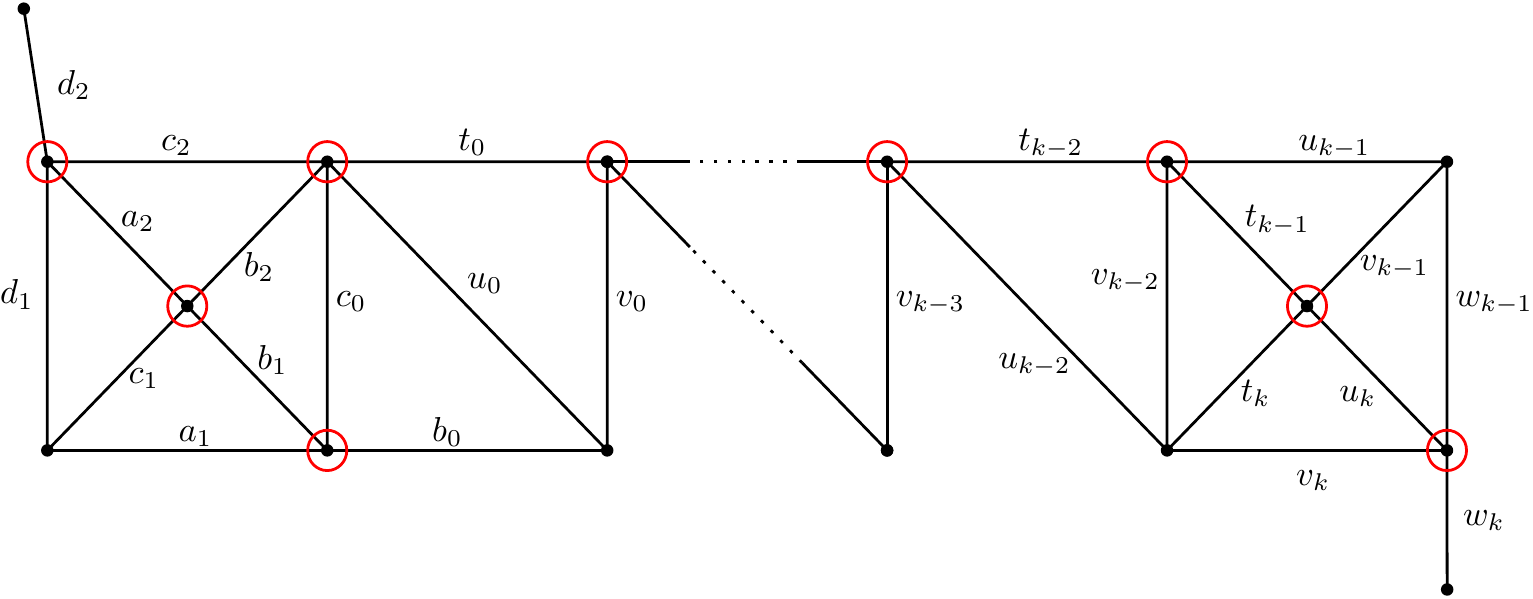}
\caption{$k\geq 2$}
\label{caterpillarfeet0}
\end{figure}


Although \ref{kge2} showed that $k \ge 2$, that proof was embedded in the proof-by-contradiction of \ref{tsintriangle}. Temporarily, all we know is that, by \ref{kge1}, $k \ge 1$. 
By~\ref{tsintriangle}, we may assume that we have the configuration in Figure~\ref{bonesawcap0} and that $\{t_{k-1},t_k\}$ is contained in a triangle of $M$.  
Then $M\ba v_{k-1}$ is not \ffsc, so~\ref{isffsc} implies that $k\geq 2$ and, by \ort, we have the configuration shown in Figure~\ref{caterpillarfeet0}.  
Now~\ref{bonesawcapdistinct} implies that the elements in Figure~\ref{caterpillarfeet0} are all distinct except that possibly $w_k = d_2$.  

\begin{figure}[htb]
\center
\includegraphics[scale=0.68]{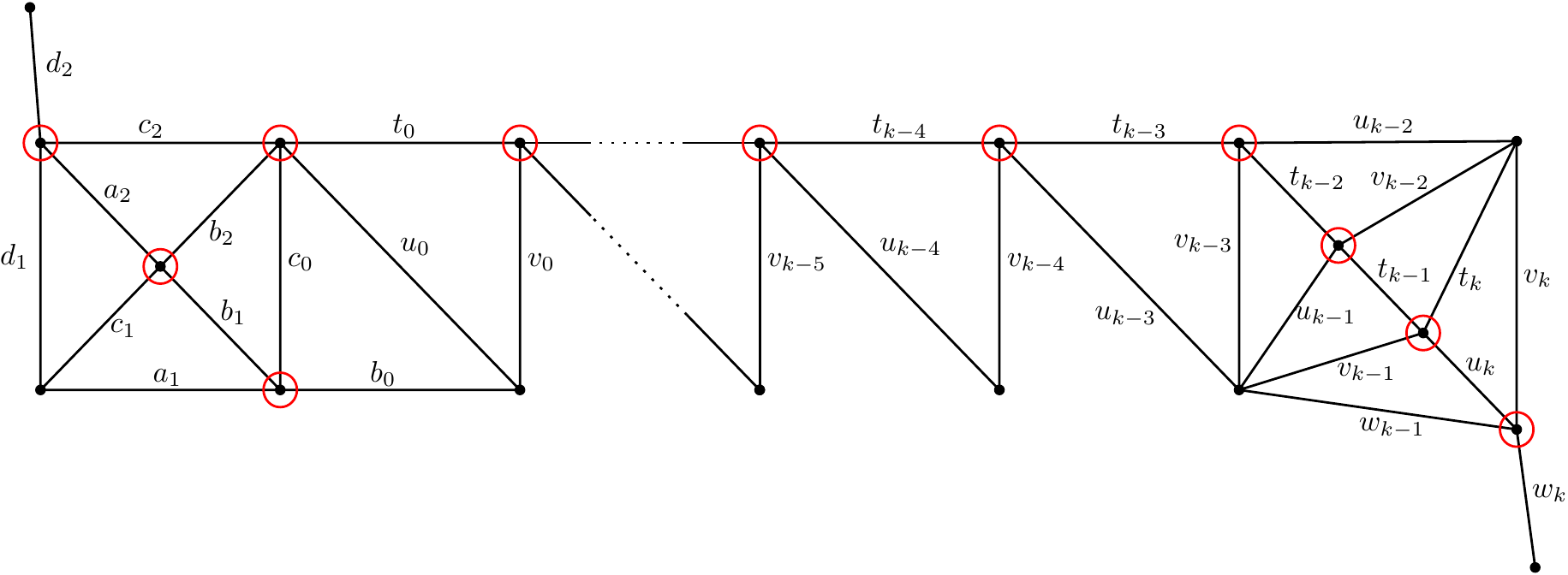}
\caption{The structure that arises if $\{t_{k-2},u_{k-1}\}$ is contained in a triangle.}
\label{caterpillartwirl0}
\end{figure}

Next we show that 

\begin{sublemma}
\label{not22}
$\{t_{k-2},u_{k-1}\}$ is not contained in a triangle of $M$.  
\end{sublemma}

Suppose that $\{t_{k-2},u_{k-1}\}$ is   contained in a triangle. 
Then  \ort\ implies that $k\geq 3$ and that 
this triangle contains $v_{k-3}$, so $M$ contains the configuration shown in Figure~\ref{caterpillartwirl0}.  
This configuration is contained in a right-maximal rotor chain of the form $((v_{k},u_{k},t_{k}),(v_{k-1},t_{k-1},u_{k-1}),(v_{k-2},t_{k-2},u_{k-2}),\dots ,(v_\ell,t_\ell,u_\ell))$, where $\ell \le k - 2$. Moreover, $\ell \ge 0$ since \ort\ implies that $\{t_0,u_1\}$ is not in a triangle of $M$.  
Lemma~\ref{deletechainsfc} implies that $M\ba v_\ell, v_{\ell +1},\dots v_k$ is \sfc.  
As $M\ba v_\ell, v_{\ell +1},\dots v_k$ has an $N$-minor, it is not \ifc\ otherwise $M$ has an  open-rotor-chain win; a \cn. 
Let $(y_1,y_2,y_3,y_4)$ be a $4$-fan in $M\ba v_\ell, v_{\ell +1},\dots v_k$. 

Suppose $\{y_1,y_2,y_3\}$ meets $\{t_\ell,u_{\ell+1},t_{\ell+1},\dots ,u_{k},t_{k},w_{k-1},w_{k}\}$.  
Every element in the last set is in a triad of $M\ba v_\ell,v_{\ell+1},\dots ,v_{k}$.  
Orthogonality implies that $\{y_1,y_2,y_3\}$ contains $\{w_{k-1},w_{k}\}$ or $\{t_\ell,u_{\ell+1}\}$.  
The former gives a \cn\ to~\ref{notbt}, so we assume the latter.  
Then $M\ba v_\ell$ is not \ffsc, so, by \ref{isffsc},  $\ell\geq 1$.  
Orthogonality implies that $\{y_1,y_2,y_3\}$ is $\{v_{\ell -1},t_\ell ,u_{\ell +1}\}$.  
By the maximality of the rotor chain, we know that $((v_{k},u_{k},t_k),(v_{k-1},t_{k-1},u_{k-1}),\dots,(v_\ell,t_\ell,u_\ell),(v_{\ell-1},t_{\ell-1},u_{\ell-1}))$ is not a rotor chain.  
Hence these elements are not distinct; a \cn\ to~\ref{bonesawcapdistinct}.  
We conclude that $\{y_1,y_2,y_3\}$ avoids $\{t_\ell,u_{\ell+1},t_{\ell+1},\dots ,u_{k},t_{k},w_{k-1},w_{k}\}$.  

Now $M$ has a cocircuit $C^*$ with  $\{y_2,y_3,y_4\}\subsetneqq C^*\subseteq \{y_2,y_3,y_4,v_\ell,v_{\ell+1},\dots ,v_{k}\}$.  
For each $i$ in $\{\ell + 1, \ell +2,\ldots,k-1\}$, the element $v_i$ is in two circuits whose other elements are contained in $\{u_{\ell+1},t_{\ell+1},\dots ,u_{k},t_{k},w_{k-1}\}$. Thus, by \ort, $v_i \not\in C^*$. Hence 
$C^* \subseteq  \{y_2,y_3,y_4,v_\ell,v_{k}\}$. 

Suppose $v_{\ell} \in C^*$. Then, by \ort, $u_{\ell} \in \{y_2,y_3\}$ and either $\ell \le k-3$ and $y_4 \in \{t_{\ell + 1},u_{\ell + 2}\}$, or $\ell = k-2$ and $y_4 \in \{t_{k-1},t_k\}$. But 
$y_4 \notin \{t_{\ell + 1},u_{\ell + 2}\}$ otherwise, by \ort, $u_{\ell+1}$ or $t_{\ell+2}$ is in 
$\{y_2,y_3\}$; a \cn. Thus $\ell = k-2$. Now 
$y_4 \neq t_{k-1}$ otherwise, by \ort, $t_k \in \{y_2,y_3\}$; a \cn. We conclude that $y_4 = t_k$, so, as $u_k \notin \{y_2,y_3\}$, we deduce that $v_k \in C^*$. Now, without loss of generality, $u_{k-2} = y_3$. The triangle $\{y_1,y_2,u_{k-2}\}$ meets the cocircuit 
$\{t_{k-3},v_{k-3},t_{k-2},u_{k-2}\}$ so $\{y_1,y_2\}$ meets $\{t_{k-3},v_{k-3},t_{k-2}\}$. But 
$t_{k-2} \not\in \{y_1,y_2\}$ and, using \ort, we see that $t_{k-3} \not\in \{y_1,y_2\}$. Thus $v_{k-3} \in \{y_1,y_2\}$. By \ort\ between $\{t_{k-3},u_{k-3},v_{k-3}\}$ and $C^*$, we deduce that $v_{k-3} \neq y_2$. Thus $v_{k-3} =y_1$. Now let $Z = \{y_2,v_{k-3},w_{k-1}\} \cup \{u_i,t_i,v_i: k-2 \le i \le k\}$. 
Then  one easily checks that $\lambda(Z) \le 2$ and $|E- Z| \ge 4$. This \cn\ implies that $v_{\ell} \not\in C^*$.

We now conclude that $C^*=\{y_2,y_3,y_4,v_{k}\}$.  Then, by \ort, $C^*$ meets $\{t_k,u_k\}$. But $\{y_1,y_2,y_3\}$ avoids the last set, so $y_4 \in \{t_k,u_k\}$. Then \ort\ implies that  $\{w_{k-1},t_{k-1}\}$ meets $\{y_2,y_3\}$; a \cn.  
We conclude that  \ref{not22} holds.

\begin{figure}[htb]
\center
\includegraphics[scale=0.7]{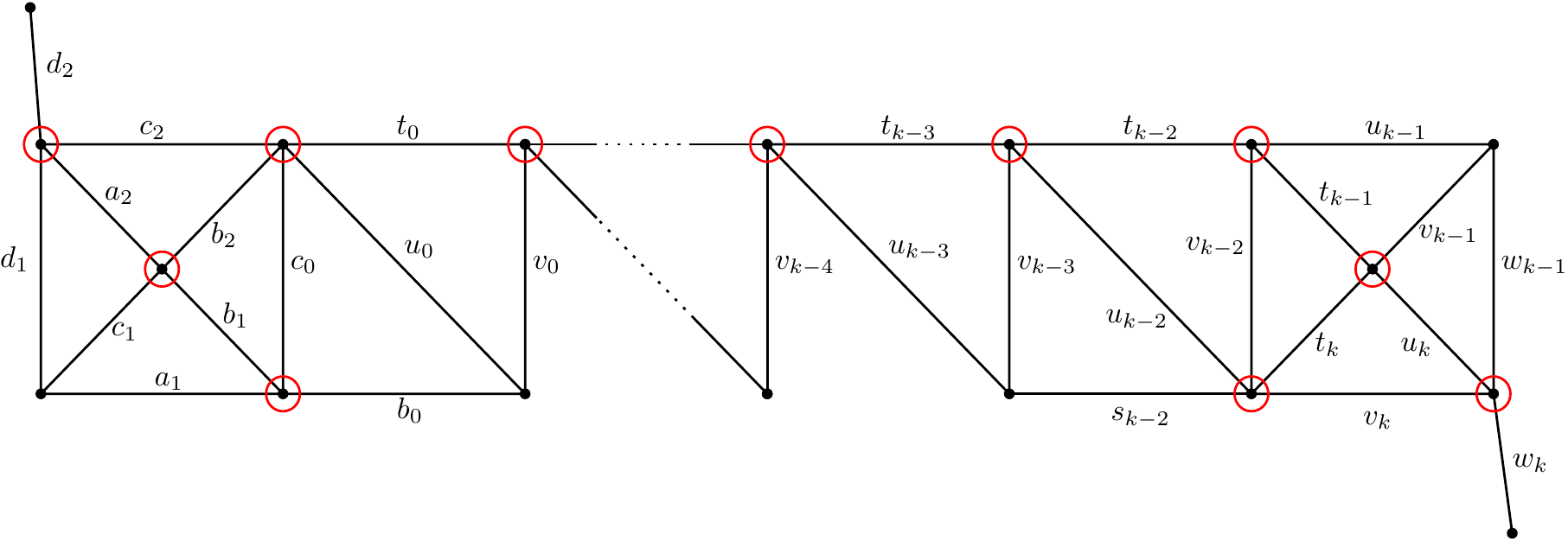}
\caption{$k\geq 2$}
\label{caterpillartail0}
\end{figure}

We now know that $M$ contains the configuration in Figure~\ref{caterpillarfeet0} but not that in Figure~\ref{caterpillartwirl0}.  We apply 
 Lemma~\ref{minidross2} to the structure in Figure~\ref{caterpillarfeet0} induced by 
 $\{t_{k-2},v_{k-2},t_{k-1},u_{k-1},v_{k-1},t_k,u_k,v_k,w_{k-1},w_k\}.$ 
Part (i) of that lemma does not hold  by assumption, and parts (ii) and (iv) do not hold by~\ref{notbt} and~\ref{not22}.  
If $M$ has a $4$-cocircuit  containing $\{v_{k-2},t_k,v_k\}$, then \ort\ implies that the fourth element of this cocircuit is in $\{t_{k-2},u_{k-2}\}$, so $\lambda (\{t_{k-2},u_{k-2},v_{k-2},t_{k-1},u_{k-1},v_{k-1},t_k,u_k,v_k\})\leq 2$; a \cn.  
We deduce that part (v) of Lemma~\ref{minidross2} does not hold. Thus part (iv) of that lemma holds, that is,  $M$ has a triangle $\{y_1,y_2,y_3\}$ such that $\{v_{k-2},t_k,v_k,y_2,y_3\}$ is a cocircuit.  
Orthogonality with the triangle $\{t_{k-2},u_{k-2},v_{k-2}\}$ implies that $\{y_2,y_3\}$ meets $\{t_{k-2},u_{k-2}\}$. But~\ref{not22} and orthogonality with the cocircuits containing $t_{k-2}$ imply that $u_{k-2}\in\{y_2,y_3\}$.  
Then \ort\ implies, without loss of generality, that $(y_1,y_2,y_3)$ is either $(c_0,b_0,u_0)$ when $k = 2$, or is     
$(v_{k-3},s_{k-2},u_{k-2})$  for some element $s_{k-2}$ when $k \ge 3$.  
We now have the configuration shown in Figure~\ref{caterpillartail0}.  

It may be that $M$ has an element $s_{k-3}$ such that $\{v_{k-4},u_{k-3},s_{k-3}\}$ is a triangle and $\{s_{k-3},u_{k-3},v_{k-3},s_{k-2}\}$ is a cocircuit.  
If there is no such triangle and cocircuit, then let $\ell= k-2$; otherwise 
let $\ell$ be the smallest positive integer such that 
$(\{u_{k-2},s_{k-2},v_{k-3}\}, \{s_{k-2},v_{k-3},u_{k-3},s_{k-3}\},\{u_{k-3},s_{k-3},v_{k-4}\},\ldots,\{u_{\ell},s_{\ell},v_{\ell -1}\})$ is a string of bowties. Hence $M$ contains  the structure shown in Figure~\ref{caterend}. 

Next we show the following.

\begin{sublemma}
\label{alldistinct}
The elements in Figure~\ref{bonesawcap0} and in $\{s_\ell ,s_{\ell +1},\dots ,s_{k-2}\}$ are all distinct, with the possible exception that $w_k$ may be the same as $d_{2}$.  
\end{sublemma}

By the definition of a string of bowties, the elements in $\{s_\ell,s_{\ell +1},\dots ,s_{k-2}\}$ are distinct. Moreover, as each of these elements is in a triangle, it follows by \ort\ and the fact that $M$ is binary that none of these elements is  in Figure~\ref{bonesawcap0}.
We conclude that \ref{alldistinct} holds. 

\begin{figure}[htb]
\center
\includegraphics[scale=0.8]{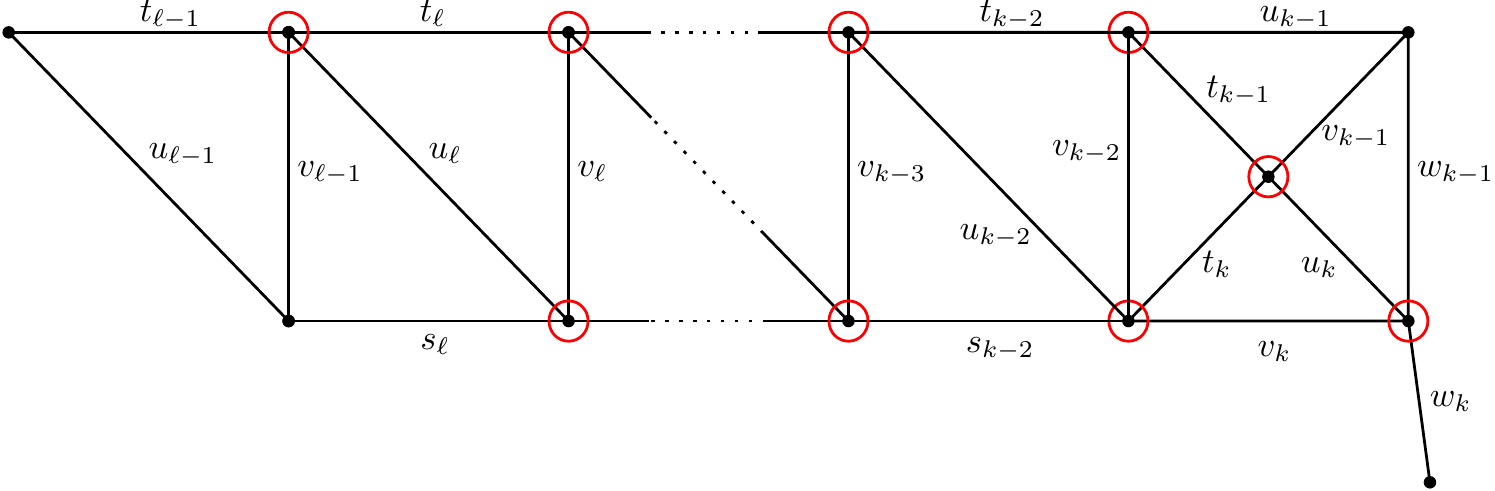}
\caption{$k\geq 2$ and $1\leq\ell\leq k-2$}
\label{caterend}
\end{figure}

We show next that 
\begin{sublemma}
\label{vssfc}
$M\ba v_{\ell -1} ,v_{\ell},\dots ,v_k$ is   \sfc.  
\end{sublemma}

Assume that this is false. 
By \ref{sfcgotit}, $M\ba c_{0},c_1,c_{2},v_0,v_1,\dots ,v_k$ is \sfc. Thus  $M\ba v_{j} ,v_{j+1},\dots ,v_k$ is \thc\ for all $j$ in $\{0,1,\ldots,k\}$.  
Lemma~\ref{deletechainsfc} implies that $M\ba v_{k-2},v_{k-1},v_k$ is \sfc.  
Let $i$ be the largest integer in $\{0,1,\ldots,k-2\}$ such that $M\ba v_i,v_{i+1},\dots ,v_k$ is not \sfc.  
Clearly $i\geq \ell -1$.  
Let $(X,Y)$ be a \ns\ \ths\ in $M\ba v_i ,v_{i +1},\dots ,v_k$.  
Suppose $i\geq 1$. Then, without loss of generality, the triangle $\{t_{i-1},u_{i-1},v_{i-1}\}\subseteq X$.  
If $t_i$ or $u_i$ is in $X$, then we may assume that both are in $X$, and then $(X\cup v_i,Y)$ is a \ns\ \ths\ of $M\ba v_{i+1},v_{i+2},\dots ,v_k$; a \cn.  
Thus $\{t_i,u_i\}\subseteq Y$, and $(X,Y\cup v_i)$ is a \ns\ \ths\ in $M\ba v_{i+1},v_{i+2},\dots ,v_k$; a \cn.  
We conclude that $i = 0$, so $\ell = 1$. 
  Without loss of generality, $T_2\subseteq X$.  Recall that, because of relabelling, $T_0$ is now $\{u_0,b_0,c_0\}$. 
If $T_0\subseteq X$, then $t_0\in \cl _{M\ba v_0,v_1,\dots ,v_k}^*(X)$, so $(X\cup t_0\cup v_0,Y-t_0)$ is a \ns\ \ths\ of $M\ba v_1,v_2,\dots ,v_k$; a \cn.  
Thus we may assume that $T_0\subseteq Y$.  
If $b_1\in X$, then we can assume that $c_1$ and $a_1$ are in $X$. Then we can move $c_0,s_0$, and $u_0$ into $X$; a \cn.  
Thus $b_1\in Y$, and we can move $b_2,a_1,c_1,a_2,c_2$, and $t_0$ into $Y$ and add $v_0$ to again get the \cn\ that  $M\ba v_1,v_2,\dots ,v_k$ has a \ns\ \ths. We conclude that \ref{vssfc} holds.

Now suppose $M\ba v_{\ell -1} ,v_{\ell},\dots ,v_k$ is \ifc. 
Consider the structure in Figure~\ref{caterend}.  
By removing $u_{\ell -1}$, we see that the structure is, after a rotation, the same as that in Figure~\ref{bonesaws0}(a), and deleting $\{v_{\ell -1} ,v_{\ell},\dots ,v_k\}$ is an enhanced ladder win. Hence (v) holds; a \cn.  

We  now know that $M\ba v_{\ell -1} ,v_{\ell},\dots ,v_k$ is not \ifc, so it has a  $4$-fan $(y_1,y_2,y_3,y_4)$. 
Using \ort\   together with~\ref{notbt} and~\ref{not22}, we get that  $\{y_1,y_2,y_3\}$ avoids the elements in Figure~\ref{caterend} with the possible exception of $u_{\ell -1}$. The matroid 
$M$ has a cocircuit $C^*$ such that $\{y_2,y_3,y_4\}\subsetneqq C^*\subseteq \{y_2,y_3,y_4,v_{\ell - 1},v_\ell ,v_{\ell +1},\dots ,v_k\}$. 
 We show next that 

\begin{sublemma}
\label{234ell} $C^* = \{y_2,y_3,y_4,v_{\ell -1}\}$ and $y_4 = s_{\ell}$, while $u_{\ell -1}\in \{y_2,y_3\}$. 
\end{sublemma}

For each $i$ in $\{\ell,\ell +1,\ldots,k-1\}$, the element $v_i$ is in  
 two triangles in Figure~\ref{caterend}, neither of which meets $u_{\ell -1}$ or any other $v_j$. Thus if $v_i \in C^*$, 
 then $\{y_2,y_3,y_4,v_i\}$ meets each of these triangles in two elements. Hence $\{y_2,y_3\}$ contains an element in Figure~\ref{caterend} other than  $u_{\ell -1}$; a \cn.  
Moreover, $v_k \not \in C^*$ otherwise  
  \ort\ implies that $\{y_2,y_3,y_4\}$ meets both $\{t_k,u_k\}$ and $\{u_{k-2},t_{k-2},u_{k-1},w_{k-1}\}$; a \cn.  We conclude that 
$C^* = \{y_2,y_3,y_4,v_{\ell -1}\}$. Then \ort\ with $\{v_{\ell -1},s_\ell,u_\ell \}$ implies that $y_4\in\{s_\ell ,u_\ell\}$ since $\{y_2,y_3\}$ avoids $\{s_\ell ,u_\ell\}$. As $\{y_2,y_3\}$ also avoids $\{t_\ell ,u_\ell ,v_\ell\}$, we deduce that $y_4=s_\ell$.  
Orthogonality between $C^*$ and $\{t_{\ell -1},u_{\ell -1},v_{\ell -1}\}$ implies that $u_{\ell -1}\in \{y_2,y_3\}$. Thus 
\ref{234ell} holds. 

Without loss of generality, $(y_1,y_2,y_3,y_4)=(y_1,y_2,u_{\ell -1},s_\ell)$.
If $\ell >1$, then \ort\ between $\{y_1,y_2,y_3\}$ and   the vertex cocircuits in Figure~\ref{bonesawcap0} implies that $y_1=v_{\ell -2}$.  
This means that we can extend the string of bowties 
$(\{u_{k-2},s_{k-2},v_{k-3}\}, \{s_{k-2},v_{k-3},u_{k-3},s_{k-3}\},\{u_{k-3},s_{k-3},v_{k-4}\},\ldots,\{u_{\ell},s_{\ell},v_{\ell -1}\})$, which  
contradicts our choice of $\ell$. Hence $\ell =1$ and \ort\ implies that $(y_1,y_2,u_0,s_1)=(c_{0},y_2,u_0,s_1)$. Thus we have the structure shown in Figure~\ref{caterpillarwhole0}, where $y_2$ is $s_0$, which is equal to $b_0$ relabelled.  

We will complete the proof of Lemma~\ref{killthesnake} by showing that $M\ba c_{0},c_1,c_{2},v_0,\dots ,v_k$ is \ifc. Since, by \ref{hasN}, the last matroid has an $N$-minor, this will establish that part  (v) of the lemma holds; a \cn.  
By \ref{alldistinct}, all of the elements in Figure~\ref{caterpillarwhole0} are distinct with the possible exception that $d_{2}$ may be $w_k$.  
We know, by~\ref{sfcgotit}, that $M\ba c_{0},c_1,c_{2},v_0,\dots ,v_k$ is \sfc.  
Let $(y_1,y_2,y_3,y_4)$ be a $4$-fan in this matroid.  
Suppose $\{y_1,y_2,y_3\}$ meets the elements in Figure~\ref{caterpillarwhole0}.  
The vertex cocircuits imply that $\{y_1,y_2,y_3\}$ contains $\{d_1,d_{2}\}$ or $\{w_{k-1},w_k\}$.  
If the former occurs, then part (ii) of the lemma holds; a \cn.  
The latter gives a \cn\ to~\ref{notbt}.  
Thus $\{y_1,y_2,y_3\}$ avoids the elements in Figure~\ref{caterpillarwhole0}.  
Now $M$ has a cocircuit $C^*$ such that $\{y_2,y_3,y_4\}\subsetneqq C^*\subseteq \{y_2,y_3,y_4,c_{0},c_1,c_{2},v_0,\dots ,v_k\}$.  
Each element in $\{c_{0},c_1,v_0,v_1,\dots ,v_{k-1}\}$ is in two triangles in Figure~\ref{caterpillarwhole0}. Thus  
$C^*$ avoids the last set otherwise \ort\ implies that $\{y_2,y_3\}$ contains an element of Figure~\ref{caterpillarwhole0}; a \cn. Moreover, $v_k \not \in C^*$ otherwise \ort\ implies that $\{y_2,y_3,y_4\}$ meets both $\{t_k,u_k\}$ and $\{u_{k-2},t_{k-2},u_{k-1},w_{k-1}\}$; a \cn.  By symmetry, $c_2 \not \in C^*$. Hence $C^* = \{y_2,y_3,y_4\}$. This \cn\ completes the proof that $M\ba c_{0},c_1,c_{2},v_0,\dots ,v_k$ is \ifc\ thereby finishing the proof of Lemma~\ref{killthesnake}. 
\end{proof}

\section{More on strings of bowties}
\label{killingfield}

When we deal with a string of bowties that does not wrap around on itself, two situations that arise frequently are shown in Figure~\ref{badguy}.  The next lemma shows that, when one of these situations arises, the main theorem holds. This is another technical lemma although its proof is not as long as that of the  preceding lemma. We continue to follow the practice of using $T_i$ to denote the triangle $\{a_i,b_i,c_i\}$.

\begin{figure}[htb]
\centering
\includegraphics[scale=0.72]{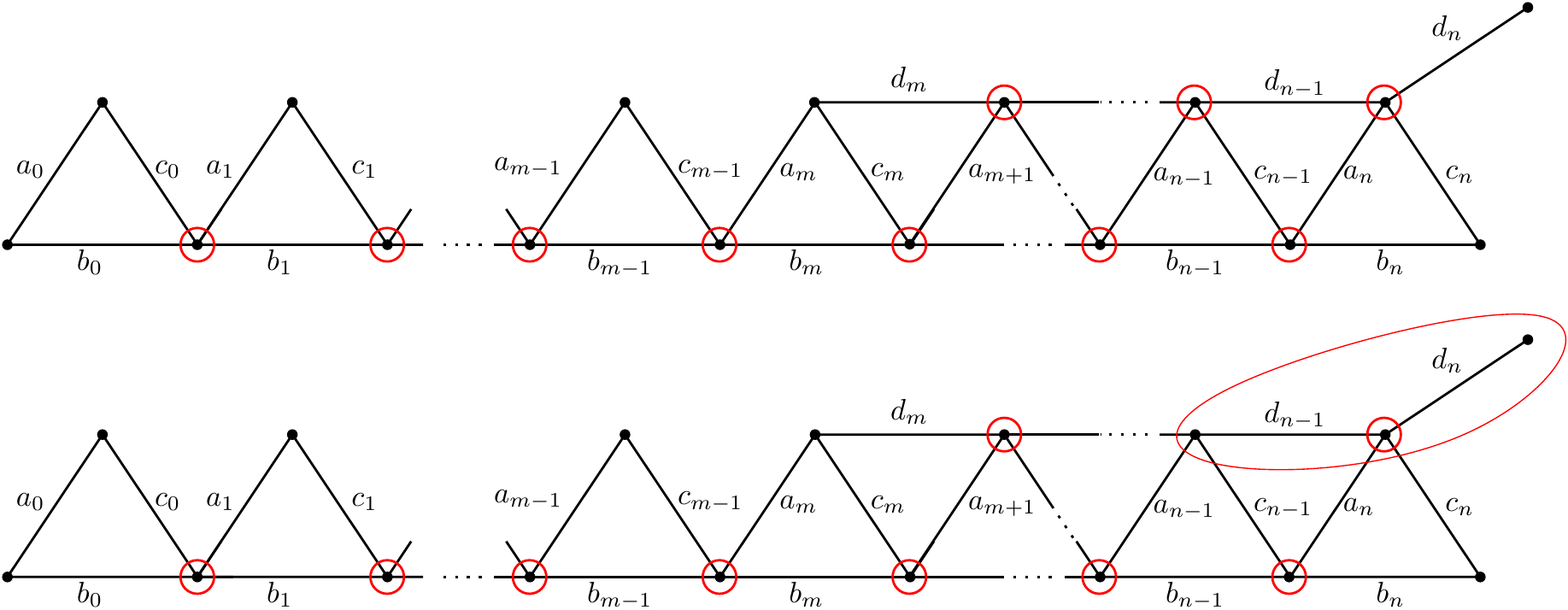}
\caption{Here $0< m < n$.}
\label{badguy}
\end{figure}

\begin{lemma}
\label{killbadguy}
Let $M$ and $N$ be \ifc\ binary matroids with $|E(M)|\geq 13$ and $|E(N)|\geq 7$.  
Let $M$ have a right-maximal bowtie string $T_0,D_0,T_1,D_1,\dots ,T_n$ and suppose that this string is contained in  one of the structures shown in Figure~\ref{badguy}, where $0<m<n$.  
Suppose that $M\ba c_0$ is \ffsc, that $M\ba c_0,c_1,\dots ,c_n$ has an $N$-minor, and that $M\ba c_0,c_1/b_1$ has no $N$-minor.  
Suppose that $M$ has no bowtie of the form $(T_n,T_{n+1},\{x,c_n,a_{n+1},b_{n+1}\})$, where $x\in\{a_n,b_n\}$, and $M$ has no element $d_{m-1}$ such that $\{c_{m-1},d_{m-1},a_m\}$ is a triangle and either $\{d_{m-1},a_m,c_m,d_m\}$ is a cocircuit, or $m+1=n$ and $\{d_{m-1},a_m,c_m,a_{m+1},c_{m+1}\}$ is a cocircuit.  
Then 
\begin{itemize}
\item[(i)] $M$ has a quick win; or
\item[(ii)] $M$ has an open-rotor-chain win or a ladder win; or 
\item[(iii)] $M$ has an enhanced-ladder win.
\end{itemize}  
\end{lemma}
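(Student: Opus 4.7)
The plan is to begin by using Lemma~\ref{stringybark} to ensure $M\ba c_0,c_1,\dots,c_n$ has an $N$-minor while $M\ba c_0,\dots,c_i/b_i$ and $M\ba c_0,\dots,c_j/a_j$ have no $N$-minor for the relevant $i,j$. Then I would use Lemma~\ref{deletecs} to show that $M\ba c_0,c_1,\dots,c_n$ is 3-connected: if instead some $a_j$ or $b_j$ (with $j\geq 2$) is in a 1- or 2-element cocircuit, contracting this element would yield an $N$-minor, contradicting what Lemma~\ref{stringybark} provides. Then, by an argument modeled on the proof of Lemma~\ref{deletechainsfc}, show that $M\ba c_0,c_1,\dots,c_n$ is \sfc: any non-sequential 3-separation $(X,Y)$ would force, by moving elements into one side via the triads $\{b_{i-1},a_i,b_i\}$, all triangles $T_0,\dots,T_n$ to one side, whence adding back the $c_i$'s produces a non-sequential 3-separation of $M$, a contradiction.

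With \sfc\ established, the goal is to rule out \ftv s of $M\ba c_0,\dots,c_n$ apart from those accounted for by one of the three desired outcomes. Let $(y_1,y_2,y_3,y_4)$ be a $4$-fan of $M\ba c_0,\dots,c_n$ and take the cocircuit $C^*\supseteq\{y_2,y_3,y_4\}$ in $M$ contained in $\{y_2,y_3,y_4,c_0,\dots,c_n\}$. Just as in the corresponding parts of the proofs of Lemmas~\ref{btring} and~\ref{killthesnake}, orthogonality with the triangles $T_i$ and $\{c_i,d_i,a_{i+1}\}$-type triangles and with the cocircuits inside the configuration forces $|C^*\cap\{c_0,\dots,c_n\}|\leq 2$, and Lemma~\ref{bowwow} forbids $C^*$ from meeting two consecutive $c_i,c_{i+1}$. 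Orthogonality with the circuits of the configuration in Figure~\ref{badguy} then pins $\{y_2,y_3,y_4\}$ down to one of a few locations: either $y_4\in\{a_n,b_n\}$ with $c_n\in C^*$; or $y_1$ lies in the extra triangle attached at position $m$ with $c_m\in C^*$; or $y_4$ is the ``guts'' of an extended triangle on the $a_0$-end, with $c_0\in C^*$.

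The first case, combined with the right-maximality hypothesis (no bowtie $(T_n,T_{n+1},\{x,c_n,a_{n+1},b_{n+1}\})$), together with Lemma~\ref{6.3rsv} applied to $(T_{n-1},T_n,D_{n-1})$, forces $M\ba c_n$ to be \ifc\ or produces a 4-fan of $M\ba c_n$ leading to a quick win via Lemma~\ref{2.2}. The second case, using the hypothesis that there is no element $d_{m-1}$ producing a cocircuit of the specified form, prevents an extension to a quartic ladder at $m$ in the sense of Lemma~\ref{dross}, and I would apply Lemmas~\ref{minidross2} and~\ref{minidrossrsv} at position $m$ to either force the attached triangle into the bowtie structure (producing either a string that can be trimmed to a ladder structure, a configuration matching Figure~\ref{bonesaws0} or Figure~\ref{caterpillarwhole0} yielding an enhanced-ladder win, or a ring that closes up for a bowtie-ring win). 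The third case is handled dually. In each subcase, the trimming operation $M\ba c_0,c_1,\dots,c_n$ (possibly augmented with deletions of dashed elements on the attachment) is shown to be \ifc\ with an $N$-minor.

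The main obstacle I expect is in the middle case: once the extra attachment at position $m$ forbids the quartic ladder extension, the attached triangle could interact with the bowtie string in several ways (new bowtie nearby, or further triangles spawning toward either end of the string), and I need a careful inductive or maximality argument on these attachments to show they always extend to one of the named highly structured objects. This mirrors the delicate argument in the proof of Lemma~\ref{killthesnake} where assertions \ref{notbt}--\ref{bonesawdistinct} are used to pull new elements $s_i$, $w_i$, $t_i$, $v_i$ out of successive applications of Lemmas~\ref{6.3rsv} and~\ref{minidrossrsv}, and the right-maximality of the bowtie string together with the hypothesis on $d_{m-1}$ is repeatedly invoked to close off unwanted branches.
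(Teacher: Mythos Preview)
Your plan diverges from the paper's proof in a way that leaves a genuine gap. You aim to show directly that $M\ba c_0,c_1,\dots,c_n$ is \sfc\ and then classify its $4$-fans, hoping the cases close up into one of the named structures. The paper does not do this, and it is hard to see how your route would finish: even if $M\ba c_0,\dots,c_n$ were \ifc, trimming the full string is not one of the declared wins, since the configuration in Figure~\ref{badguy} is only a \emph{partial} ladder (the rungs $\{c_i,d_i,a_{i+1}\}$ exist only for $i\ge m$, not from $i=0$). So your endgame is unclear from the start.

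The paper's proof hinges on a reduction you do not mention: it first applies Lemma~\ref{dross} to the ladder segment $T_m,D_m,\dots,T_n$ alone (with $S=\{c_m,\dots,c_n\}$) and uses the hypotheses to force $n=m+1$. This collapse to a three-triangle picture is the decisive step. With $n=m+1$, Lemma~\ref{minidrossrsv} is applied at $(T_m,T_{m+1})$ to produce the triangle $\{b_m,b_{m+1},c_{m-1}\}$; then, after ruling out $\{b_{m-1},a_m\}$ in a triangle via a rotor-chain argument and Lemma~\ref{deletechainsfc}, Lemma~\ref{minidross2} is applied to the configuration $T_{m-1}\cup T_m\cup T_{m+1}\cup\{d_m,d_{m+1}\}$ to yield the $5$-cocircuit $\{c_{m-1},c_{m+1},b_{m+1},s_2,a_{m-1}\}$ and the triangle $\{s_1,s_2,a_{m-1}\}$ of Figure~\ref{snake}. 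At that point, after verifying $N\preceq M\ba c_{m-1},c_m,c_{m+1},s_1$, the proof invokes Lemma~\ref{killthesnake}, which carries all the weight of building the enhanced-ladder or ladder structure. Your sketch gestures at Lemmas~\ref{minidrossrsv} and~\ref{minidross2} but without the $n=m+1$ reduction there is no way to get into the three-triangle setup those lemmas require, and your ``handled dually'' for the $a_0$-end has no counterpart in the actual argument.
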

\begin{proof}
We assume that neither (i) nor (ii) holds.  
We begin by showing that 
\begin{sublemma}
\label{cndn}
the elements in each part of Figure~\ref{badguy} are distinct except that $(a_0,b_0)$ may be $(c_n,d_n)$.  
\end{sublemma}

As $T_0,D_0,T_1,D_1,\dots ,T_n$ is a bowtie string, we know that the elements in $T_0\cup T_1\cup\dots\cup T_n$ are all distinct except that $a_0$ may be $c_n$.  

To help prove \ref{cndn}, next we show the following. 

\begin{sublemma}
\label{cndncont}
The elements of  $T_m\cup T_{m+1}\cup\dots \cup T_n\cup\{d_m,d_{m+1},\dots ,d_n\}$ are distinct.
\end{sublemma}

Assume that this fails. Then, by 
 Lemma~\ref{drossdistinct},  $(a_m,b_m)$ is $(c_n,d_n)$ or $(d_{n-1},d_n)$. The first possibility implies that $m= 0$; a \cn.  The second possibility contradicts the hypothesis precluding any bowtie of the form $(T_n,T_{n+1},\{x,c_n,a_{n+1},b_{n+1}\})$ for $x$ in $\{a_n,b_n\}$. We conclude that \ref{cndncont} holds.
 

Next we establish the following.

\begin{sublemma}
\label{cndncont2}
If  $d_j\in T_i$ for some $i$ in $\{0,1,\dots ,m-1\}$ and some $j$ in $\{m,m+1,\dots ,n-1\}$, then $d_j=a_0$, so $a_0\neq c_n$ and $\{d_m,d_{m+1},\dots ,d_{n-1}\}$ avoids $\{b_0,c_0\}\cup T_1\cup T_2\cup\dots\cup T_n$.
\end{sublemma}

Suppose $d_j$ meets one of $D_0, D_1,\dots,D_{m-1}$. Then \ort\ implies that $\{c_j,a_{j+1}\}$ meets the same cocircuit; a \cn.  
Thus $d_j=a_0$. Hence, by \ref{cndncont}, $a_0\neq c_n$, and \ref{cndncont2} follows.

As the next step towards showing that \ref{cndn} holds, we show that 
\begin{sublemma}
\label{dees}
$\{d_m,d_{m+1},\dots,d_{n-1}\}$ avoids $T_0 \cup T_1 \cup \dots \cup T_n$.
\end{sublemma}

Assume that this fails. Then, by \ref{cndncont2}, $d_j = a_0$ for some $j$ in $\{m,m+1,\dots ,n-1\}$, and $a_0 \neq c_n$.  Now $\{d_j,a_{j+1},c_{j+1},d_{j+1}\}$ or $\{d_j,a_{j+1},c_{j+1},a_{j+2},c_{j+2}\}$ is a cocircuit $D^*$. By \ort\ between $D^*$ and $T_0$, we deduce that $D^* = \{d_j,a_{j+1},c_{j+1},d_{j+1}\}$ and $d_{j+1} \in T_0$. If $j < n-1$, then, by \ref{cndncont2}, $d_{j+1} = a_0$. This contradicts the fact that $d_{j} = a_0$. Hence $j = n-1$, so $d_n \in \{b_0,c_0\}$. Therefore $(T_n,T_0,\{a_n,c_n,a_0,d_n\})$ is a bowtie; a \cn. Thus \ref{dees} holds.


To complete the proof of \ref{cndn}, we need to consider the possibility that  
 $d_n\in T_i$ for some $i$ in $\{0,1,\dots ,m-1\}$.  
In that case, by \ort\ between $T_i$ and $\{d_{n-1},a_n,c_n,d_n\}$ using  the fact that  $d_{n-1}$ avoids $T_i$, we deduce that $\{a_n,c_n\}$ meets $T_i$. Thus $i=0$ and $a_0=c_n$.  
Therefore $d_n\in\{b_0,c_0\}$.  
If $d_n=c_0$, then $M\ba c_0,c_1,\dots ,c_n$ has $\{d_{n-1},a_n\}$ as a cocircuit. Hence $M\ba c_0,c_1,\dots ,c_n/a_n$ has an $N$-minor.  But, since $M\ba c_0,c_1/b_1$ has no $N$-minor, this yields a \cn\ to Lemma~\ref{stringybark}. 
We deduce that $d_n = b_0$, so $(a_0,b_0) = (c_n,d_n)$. 
We conclude that  \ref{cndn} holds.

Next we show that 
\begin{sublemma}
\label{nogo}
$M$ has no triangle $\{\al,\be,a_m\}$ such that $\{\be,a_m,c_m,d_m\}$ or $\{\be,a_m,c_m,a_{m+1},c_{m+1}\}$ is a cocircuit.  
\end{sublemma}
 
Assume, instead, that $M$ does have such a triangle and a cocircuit, calling the latter $D^*$.  
Suppose $\{\alpha,\be\}$ meets $\{b_m,c_m\}$. Then $\{\alpha,\be\} = \{b_m,c_m\}$ and so $\beta = c_m$ otherwise the cocircuit $D^*$ contains a triangle. But then $T_m \cup d_m$ or $T_m \cup T_{m+1}$ is $3$-separating in $M$; a \cn. We conclude that $\{\al,\be\}$ avoids $\{b_m,c_m\}$. Orthogonality between $\{\al,\be,a_m\}$ and the cocircuit $\{b_{m-1},c_{m-1},a_m,b_m\}$ implies that $\{\alpha,\be\}$ meets $\{b_{m-1},c_{m-1}\}$. 

Suppose $\be\in\{b_{m-1},c_{m-1}\}$. Then, using \ref{cndn}  and \ort\ between $T_{m-1}$ and  $D^*$, we see that 
$D^* = \{\be,a_m,c_m,a_{m+1},c_{m+1}\}$, that $m+1 = n$, and that $c_{m+1} = a_{m-1}$. Thus $m = 1$ and $n = 2$. Moreover, by \ref{cndn}, $d_2 = b_0$. 
Thus $T_1\cup T_2\cup\{d_1,d_2,c_0\}$ contains $T_0$ and is $3$-separating; a \cn.  
Therefore $\be\notin\{b_{m-1},c_{m-1}\}$.  
We deduce that  $\al\in\{b_{m-1},c_{m-1}\}$. 
 
Suppose $\al =c_{m-1}$. Then it is not difficult to check, by taking $\be = d_{m-1}$, that we violate the hypotheses of the lemma unless $m-1 < n$ and $\{\be,a_m,c_m,a_{m+1},c_{m+1}\}$ is a cocircuit. In the exceptional case, the triangle $\{c_{m+1},d_{m+1},a_{m+2}\}$ implies that $\be \in \{d_{m+1},a_{m+2}\}$. But \ort\ using the triangle $\{\al,\be,a_m\}$ and the cocircuits $\{d_m,a_{m+1},c_{m+1},d_{m+1}\}$ and $D_{m+1}$ gives a \cn. Hence
 $\al \neq c_{m-1}$, so  
 $\alpha =b_{m-1}$. Thus $(\be,b_{m-1},a_m,b_m,c_m)$ is a $5$-fan in $M\ba c_{m-1}$, so $m-1>0$.  
Then \ort\ between $\{b_{m-1},\be,a_m\}$ and the cocircuit $\{b_{m-2},c_{m-2},a_{m-1},b_{m-1}\}$ implies that $\be\in\{b_{m-2},c_{m-2}\}$. By \ort\ between $T_{m-2}$ and $D^*$, we see that $T_{m-2}$ meets $\{a_m,c_m,d_m\}$ or $\{a_m,c_m,a_{m+1},c_{m+1}\}$. In the first case, by \ref{cndn},  $m = 2$ and $(a_0,b_0) = (c_2,d_2)$, so $T_0 \subseteq D^*$; a \cn. We conclude that $T_{m-2}$ meets   $\{a_m,c_m,a_{m+1},c_{m+1}\}$. Then, by \ref{cndn} again, we deduce that $m - 2 = 0$ and $a_0 = c_3$. Hence $b_0 = d_3$, and 
$T_2\cup T_3\cup \{d_2,d_3,c_0\}$ contains $T_0$ and $D^*$. Thus $\lambda (T_2\cup T_3\cup \{d_2,d_3,c_0\})\leq 2$; a \cn.  
We conclude that \ref{nogo} holds.

Let $S=\{ c_m,c_{m+1},\dots ,c_n\}$.  
Next we observe that 
\begin{sublemma}
\label{dandd}
$M\ba c_n$ does not have a $4$-fan, and $\{d_{n-1},d_n\}$ is not contained in a triangle of 
$M$.  
\end{sublemma}

To see this, we note that the first assertion is an immediate consequence of the assumption that $M$ has no bowtie of the form $(\{a_n,b_n,c_n\},\{a_{n+1},b_{n+1},c_{n+1}\},\{x,c_n, a_{n+1},b_{n+1}\})$, where $x\in\{a_n,b_n\}$. The same assumption also gives the second assertion for it implies that if $\{d_{n-1},d_n\}$ is contained in a triangle, then that triangle meets $T_n$, so $\lambda(T_n \cup \{d_{n-1},d_n\}) \le 2$; a \cn.

Next we note that 

\begin{sublemma}
\label{noteit}
$n=m+1$. 
\end{sublemma}

Assume that $n > m+1$. Then we apply Lemma~\ref{dross}. 
Part (i) of that lemma does not hold otherwise $M$ has a ladder win and
part (ii) of this lemma holds; a \cn. By \ref{dandd} and the hypothesis forbidding a certain bowtie, neither part (ii) nor part (iv) of Lemma~\ref{dross} holds.  Hence part (iii) of Lemma~\ref{dross}  holds; that is, $M\ba S$ is \ffsc\ and has a $4$-fan that is a $4$-fan of $M\ba c_n$ or of $M\ba c_m$. The first possibility is excluded by \ref{dandd}. Hence we may assume that $M\ba S$ has a $4$-fan $(x_1,x_2,x_3,x_4)$ that is a $4$-fan of 
$M\ba c_m$.  
Then $M$ has $\{x_2,x_3,x_4,c_m\}$ as a cocircuit and, by \ort, $\{x_2,x_3,x_4\}$ meets both $\{a_m,b_m\}$ and $\{d_m,a_{m+1}\}$. 
Thus $\{x_2,x_3\}$ meets $\{a_m,b_m,d_m,a_{m+1}\}$.  
Suppose $d_m\in\{x_1,x_2,x_3\}$.  
Lemma~\ref{trywhere} implies that $m = n-1$; a \cn.  
Thus $d_m\notin \{x_1,x_2,x_3\}$.  
Furthermore, by Lemma~\ref{trywhere}, we know that $\{a_{m+1},b_m\}$ avoids $\{x_2,x_3\}$. Thus   $a_{m}\in\{x_2,x_3\}$.  
Hence $x_4\in\{d_m,a_{m+1}\}$ and, without loss of generality, $a_m=x_3$.  
If $x_4=d_m$, then we obtain a \cn\ to~\ref{nogo}. Thus $x_4=a_{m+1}$.  
Then \ort\ between $\{x_2,a_m,a_{m+1},c_m\}$ and $T_{m+1}$ implies that $x_2=b_{m+1}$; a \cn\ to Lemma~\ref{trywhere}.  
We conclude that~\ref{noteit} holds.  

We now show that 

\begin{sublemma}
\label{noteit2}
$M$ has   $\{b_m,b_{m+1},c_{m-1}\}$ as a triangle. 
\end{sublemma}

To prove this, we apply Lemma~\ref{minidrossrsv} using a similar argument to that given to prove \ref{noteit}. The hypotheses of the current lemma,  the assumption that (i) of the current lemma does not hold, and \ref{dandd} and \ref{nogo} imply that  either $M$ has a triangle containing  $\{b_m,b_{m+1}\}$,
or $M\ba c_m,c_{m+1}$ has a $4$-fan of the form $(u_1,u_2,u_3,b_{m+1})$ and $M$ has a cocircuit $C^*$ with 
$\{u_2,u_3,c_m,b_{m+1}\}\subseteq C^* \subseteq \{u_2,u_3,c_m,b_{m+1},c_{m+1}\}$. 
Suppose the latter.  
Then \ort\ implies that $\{u_2,u_3\}$ meets $\{a_m,b_m\}$ and   $\{d_m,a_{m+1}\}$, so $\lambda (T_m\cup T_{m+1}\cup d_m)\leq 2$; a \cn.  
We deduce that $\{b_m,b_{m+1}\}$ is contained in a triangle of $M$.  
Let $x$ be the third element of this triangle. 
Then \ort\ implies that $x\in\{b_{m-1},c_{m-1}\}$.  
Suppose $x=b_{m-1}$.  
Then $(b_{m+1},b_{m-1},b_m,a_m,c_m)$ is a $5$-fan in $M\ba c_{m-1}$, so $m-1>0$.  
Then $|\{b_{m-1},b_m,b_{m+1}\}\cap\{b_{m-2},c_{m-2},a_{m-1},b_{m-1}\}|=1$; a \cn\ to \ort.  
We conclude that $x=c_{m-1}$, so \ref{noteit2} holds.

\begin{figure}[htb]
\center
\includegraphics[scale=0.8]{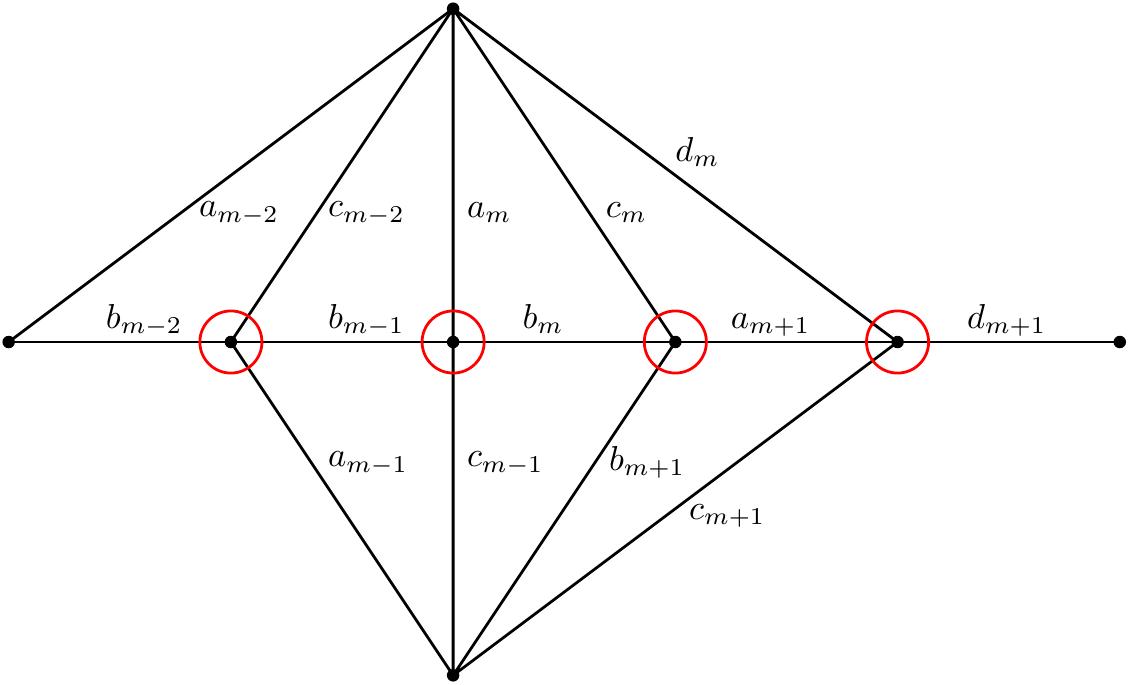}
\caption{}
\label{rotor_graph}
\end{figure}

Next we show that

\begin{sublemma}
\label{not2}
$\{b_{m-1},a_m\}$ is not contained in a triangle of $M$.  
\end{sublemma}

Suppose that $\{b_{m-1},a_m\}$ is contained in a triangle of $M$.  
Then $\{b_m,c_m\}$ is in a $5$-fan in $M\ba c_{m-1}$ with the triangle containing $\{b_{m-1},a_m\}$. 
As $M\ba c_0$ is \ffsc, we deduce that 
$c_{m-1}$ is not $c_0$, so $m>1$.  
Then \ort\ with the cocircuit $\{b_{m-2},c_{m-2},a_{m-1},b_{m-1}\}$ implies that the triangle containing $\{b_{m-1},a_m\}$ contains $c_{m-2}$ or $b_{m-2}$.  
If $\{b_{m-2},b_{m-1},a_m\}$ is a triangle, then $(a_m,b_{m-2},b_{m-1},a_{m-1},c_{m-1})$ is a $5$-fan in $M\ba c_{m-2}$, so $m-2>0$. Thus  $|\{b_{m-3},c_{m-3},a_{m-2},b_{m-2}\}\cap\{b_{m-2},b_{m-1},a_m\}|=1$; a \cn\ to \ort.  
We deduce that $\{b_{m-2},b_{m-1},a_m\}$ is not a triangle, so $\{c_{m-2},b_{m-1},a_m\}$ is a triangle.  
Therefore $M$ contains the configuration shown in Figure~\ref{rotor_graph}.  

From this configuration, we see that 
$((c_{m+1},a_{m+1},b_{m+1}),(c_m,b_m,a_m),\linebreak (c_{m-1},b_{m-1},a_{m-1}),(c_{m-2},b_{m-2},a_{m-2}))$ is a rotor chain in $M$. Extend this to a right-maximal rotor chain  $((c_{m+1},a_{m+1},b_{m+1}),(c_m,b_m,a_m),\dots,(c_k,b_k,a_k))$. Then $k \le m-2$. Moreover, since $M\ba c_0$ is \ffsc, $k \ge 0$. By  Lemma~\ref{deletechainsfc}, 
 $M\ba c_k,c_{k+1},\dots,c_{m+1}$ is \sfc. The last matroid has an $N$-minor and $n = m+1$.  Since  $M$ does not have an open-rotor chain win, we deduce that  $M\ba c_k,c_{k+1},\dots ,c_{m+1}$ is not \ifc.  
 
We now know that $M\ba c_k,c_{k+1},\dots ,c_{m+1}$ has a $4$-fan 
$(y_1,y_2,y_3,y_4)$.  
Suppose first $\{y_1,y_2,y_3\}$ meets $\{b_k,a_{k+1},b_{k+1},\dots ,a_{m+1},b_{m+1},d_m,d_{m+1}\}$.  
Every element in the last set is in a triad of $M\ba c_k,c_{k+1},\dots ,c_{m+1}$.  
Orthogonality implies that $\{y_1,y_2,y_3\}$ contains $\{d_m,d_{m+1}\}$ or $\{b_k,a_{k+1}\}$.  
The former is a \cn\ to~\ref{dandd}, so   the latter holds.  
Then $M\ba c_k$ is not \ffsc, so $k>0$.  
By \ort, $M$ has $\{b_k,a_{k+1},b_{k-1}\}$ or $\{b_k,a_{k+1},c_{k-1}\}$ as a triangle. The first possibility implies, by \ort, that $k = 1$ and hence that $M\ba c_0$ has a $5$-fan; a \cn. Thus $M$ has $\{b_k,a_{k+1},c_{k-1}\}$ as a triangle and therefore has 
 $((c_{m+1},a_{m+1},b_{m+1}),(c_m,b_m,a_m),\dots ,(c_{k-1},b_{k-1},a_{k-1}))$ as   a rotor chain; a \cn.  
We conclude that $\{y_1,y_2,y_3\}$ avoids $\{b_k,a_{k+1},b_{k+1},\dots ,a_{m+1},b_{m+1},d_m,d_{m+1}\}$.  

Now $M$ has a cocircuit $C^*$ such that $\{y_2,y_3,y_4\}\subsetneqq C^*\subseteq \{y_2,y_3,y_4,c_k,c_{k+1},\dots ,c_{m+1}\}$.  
Take $i$ in $\{k,k+1,\dots,m+1\}$ such that $c_i\in C^*$.  
Now $c_i$ is in both $T_i$ and another triangle in the rotor chain  unless  $i = {m+1}$.  
Since $\{y_1,y_2,y_3\}$ avoids $\{b_k,a_{k+1},b_{k+1},\dots ,a_{m+1},b_{m+1},d_m,d_{m+1}\}$, \ort\ implies that either $C^*=\{y_2,y_3,y_4,c_k\}$ where $a_k\in\{y_2,y_3\}$ and $y_4\in\{b_{k+1},a_{k+2}\}$; or  $C^*=\{y_2,y_3,y_4,c_{m+1}\}$ and $y_4\in\{a_{m+1},b_{m+1}\}$.  
Suppose the former.  
As $y_4$ is in $\{b_{k+1},a_{k+2}\}$, \ort\ implies that $a_{k+1}$ or $b_{k+2}$ is in $\{y_2,y_3\}$; a \cn.  
We deduce that  $C^*=\{y_2,y_3,y_4,c_{m+1}\}$ and $y_4\in\{a_{m+1},b_{m+1}\}$.  
Then \ort\ implies that $d_m$ or $b_m$ is in $\{y_2,y_3\}$; a \cn.   
We conclude that \ref{not2} holds. 

It is worth noting at this point that 
\begin{sublemma}
\label{distinct}
all of the elements in $T_{m-1}\cup T_m\cup T_{m+1}\cup\{d_m,d_{m+1}\}$ are distinct.
\end{sublemma}
To verify  this, we simply need to check that $(a_{m-1},b_{m-1})\neq (c_{m+1},d_{m+1})$, since then~\ref{cndn} will imply that~\ref{distinct} holds.  
Suppose instead that $(a_{m-1},b_{m-1})= (c_{m+1},d_{m+1})$.  
Then $T_{m-1}$ is contained in $T_m\cup T_{m+1}\cup\{d_m,d_{m+1},c_{m-1}\}$, so the last  set is $3$-separating; a \cn.  
Thus~\ref{distinct} holds.  

\begin{figure}[htb]
\center
\includegraphics{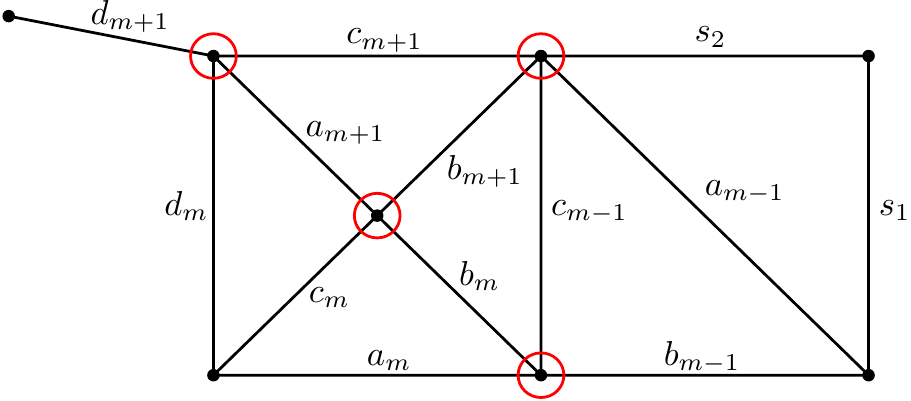}
\caption{}
\label{snake}
\end{figure}

We now apply  Lemma~\ref{minidross2} to the configuration induced by $T_m \cup T_{m+1} \cup \{d_m,d_{m+1},b_{m-1},c_{m-1}\}$ 
noting that \ref{dandd} and \ref{not2} eliminate the possibility that (ii) or (iv) holds; and (i) does not hold otherwise (i) of the current lemma holds. Next we observe that (v) of Lemma~\ref{minidross2} does not hold since if $M$ has a $4$-cocircuit containing 
$\{c_{m-1},b_{m+1},c_{m+1}\}$, then, by \ort, this  cocircuit meets $\{a_{m-1},b_{m-1}\}$ and so we obtain the \cn\ that  $\lambda (T_{m-1}\cup T_m\cup T_{m+1})\leq 2$.  We conclude that part (iii) of   Lemma~\ref{minidross2}  holds, that is, $M$ has a triangle $\{s_1,s_2,s_3\}$ where  $\{c_{m-1},c_{m+1},b_{m+1},s_2,s_3\}$ is a cocircuit, and $\{s_1,s_2,s_3\}$ avoids $\{b_m,c_m,c_{m-1},a_{m+1},b_{m+1},c_{m+1}\}$.  
Orthogonality with $T_{m-1}$ implies that $\{s_2,s_3\}$ meets $\{a_{m-1},b_{m-1}\}$.  
If $b_{m-1}\in\{s_1,s_2,s_3\}$, then, by \ort, it follows that $\{b_{m-1},a_m\}\subseteq \{s_1,s_2,s_3\}$; a \cn\ to~\ref{not2}.  
Thus, without loss of generality, $a_{m-1}=s_3$, so   
$M$ contains  the configuration shown in Figure~\ref{snake}.  

Now $M\ba c_{m+1},c_m,c_{m-1}$ has an $N$-minor and has $(s_1,s_2,a_{m-1},b_{m+1})$ as a $4$-fan. To enable us to apply Lemma~\ref{killthesnake}, we show next that 

\begin{sublemma}
\label{rightdel}
$M\ba c_{m+1},c_m,c_{m-1}\ba s_1$ has an $N$-minor.
\end{sublemma}

Suppose first that $m = 1$. Then, by hypothesis, $M\ba c_m,c_{m-1}/b_{m}$ has no $N$-minor. Thus $M\ba c_{m+1},c_m,c_{m-1}/b_{m+1}$ has no $N$-minor. It follows by Lemma~\ref{2.2} that  
 \ref{rightdel} holds. We may now assume that $m \ge 2$. Then, by \ort\ between the cocircuit $\{b_{m-1},a_{m-1},b_{m-2},c_{m-2}\}$ and the triangle $\{s_1,a_{m-1},s_2\}$, we deduce that $\{s_1,s_2\}$ meets $\{b_{m-2},c_{m-2}\}$. Orthogonality between $T_{m-2}$ and the cocircuit 
$\{c_{m+1},b_{m+1},c_{m-1},a_{m-1},s_2\}$ implies that $s_2 \not\in T_{m-2}$, so $s_1 \in \{b_{m-2},c_{m-2}\}$. Suppose $s_1 = b_{m-2}$. Then $M\ba c_{m-2}$ has a $5$-fan, so $m - 2 > 0$. Then, by \ort, $s_2 \in \{b_{m-3},c_{m-3}\}$ so we have a \cn\ to \ort\ between $T_{m-3}$ and the cocircuit $\{c_{m+1},b_{m+1},c_{m-1},a_{m-1},s_2\}$. We conclude that $s_1 = c_{m-2}$. Then, by assumption, 
$M\ba c_{m+1},c_m,c_{m-1},s_1$ has an $N$-minor, that is, \ref{rightdel} holds.

We can now apply Lemma~\ref{killthesnake} noting that \ref{dandd} and \ref{not2} eliminate the possibility that part  (ii) or (iii) of that lemma holds. Also, by assumption, part (i) 
and part (iv)
of Lemma~\ref{killthesnake} do not hold. Finally, if  part 
(v) of Lemma~\ref{killthesnake}  holds, then part  (iii) of the current lemma holds where, here,  the triple $(c_{m-1},c_m,c_{m+1})$ plays the role of the triple $(c_0,c_1,c_2)$ in 
the configurations  in Figure~\ref{bonesaws0} and Figure~\ref{caterpillarwhole0}, and $M\ba c_{m-1},c_m,c_{m+1},v_0,v_1,\dots, v_k$ is \ifc\ with an $N$-minor. 
\end{proof}

\section{When rotor chains end}
\label{wrce}

In this section, we specify, in Lemma~\ref{betweenbts}, exactly what to expect at the end of a rotor chain. We begin by showing that, when we have a  quasi rotor  of the type whose existence is guaranteed by Lemma~\ref{6.3rsv}, then either outcome (i) of the main theorem occurs, or the quasi rotor can be extended to a right-maximal rotor chain with various desirable properties.

\begin{lemma}
\label{rotorchainends}
Let $(T_0, T_1,T_2,D_0,D_1,\{c_0,b_1,a_2\})$ be a quasi rotor in an \ifc\ binary matroid $M$, where $|E(M)|\geq 13$ and $M\ba c_0$ is \ffsc.  
Suppose $N$ is an \ifc\ proper minor of $M\ba c_0,c_1$ with $|E(N)|\geq 7$.  
Then  one of the following occurs.  
\begin{itemize}
\item[(i)] $M$ has a quick win; or 
\item[(ii)] $M$ has a right-maximal rotor chain $((a_0,b_0,c_0),(a_1,b_1,c_1),\dots ,(a_n,b_n,c_n))$, where $M\ba c_n$ is \ffsc\ and $M\ba c_0,c_1,\dots ,c_n$ has an $N$-minor.  
\end{itemize}
\end{lemma}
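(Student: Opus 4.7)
The plan is to start from the given quasi rotor, extend it to a right-maximal rotor chain, and then verify that the deletion preserves an $N$-minor and that the terminal matroid $M\ba c_n$ is \ffsc\ (or else produce a quick win).

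\textbf{Setting up the chain.} The quasi rotor $(T_0,T_1,T_2,D_0,D_1,\{c_0,b_1,a_2\})$ itself gives a rotor chain $((a_0,b_0,c_0),(a_1,b_1,c_1),(a_2,b_2,c_2))$. I extend it to a right-maximal rotor chain $((a_0,b_0,c_0),\ldots,(a_n,b_n,c_n))$; necessarily $n\ge 2$.

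\textbf{Preserving the $N$-minor.} If $N\preceq M\ba c_0,c_1/b_1$, then in particular $N\preceq M/b_1$, and Lemma~\ref{rotorwin} applied to the given quasi rotor (whose central element is $b_1$) produces an \ifc\ minor of $M$ with an $N$-minor by removing at most three elements; this is outcome~(i). Otherwise, $M\ba c_0,c_1/b_1$ has no $N$-minor, so Lemma~\ref{stringybark}, applied to the bowtie string $T_0,D_0,T_1,D_1,\ldots,T_n$ underlying the rotor chain, gives $N\preceq M\ba c_0,c_1,\ldots,c_n$.

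\textbf{Ensuring $M\ba c_n$ is \ffsc.} Assume for contradiction that $M\ba c_n$ is not \ffsc. Applying Lemma~\ref{6.3rsv} to the bowtie $(T_{n-1},T_n,D_{n-1})$ shows that $T_n$ is the central triangle of a quasi rotor featuring a new triangle $T_{n+1}=\{a_{n+1},b_{n+1},c_{n+1}\}$ disjoint from $T_{n-1}\cup T_n$, a new $4$-cocircuit $D_n'=\{y,c_n,a_{n+1},b_{n+1}\}$ with $y\in\{a_n,b_n\}$, and a quasi rotor triangle $\{x,y,a_{n+1}\}$ with $x\in\{b_{n-1},c_{n-1}\}$. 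I then show $x=c_{n-1}$ and $y=b_n$ by ruling out the three other pairings using the rotor-chain triangle $\{c_{n-2},b_{n-1},a_n\}$ (which, when $n=2$, is the given quasi rotor triangle $\{c_0,b_1,a_2\}$). If $(x,y)=(b_{n-1},a_n)$, then $\{b_{n-1},a_n,a_{n+1}\}$ and $\{c_{n-2},b_{n-1},a_n\}$ share two elements, forcing them to be equal in a simple binary matroid, so $a_{n+1}=c_{n-2}$; then $D_n'\cap T_{n-2}=\{c_{n-2}\}$ violates orthogonality. If $(x,y)=(b_{n-1},b_n)$, the symmetric difference $\{c_{n-2},a_n,b_n,a_{n+1}\}$ is a $4$-circuit whose intersection with $D_{n-2}$ (or $D_0$ when $n=2$) is $\{c_{n-2}\}$, violating orthogonality. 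If $(x,y)=(c_{n-1},a_n)$, the symmetric difference $\{c_{n-2},b_{n-1},c_{n-1},a_{n+1}\}$ is a $4$-circuit meeting $D_n'$ only in $\{a_{n+1}\}$, again violating orthogonality. Hence $(x,y)=(c_{n-1},b_n)$, so the new quasi rotor legitimately extends the rotor chain to $((a_0,b_0,c_0),\ldots,(a_{n+1},b_{n+1},c_{n+1}))$, contradicting right-maximality.

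\textbf{Main obstacle.} The technical heart of the argument is the three-case analysis in the last step: each sub-case needs its own orthogonality or coincidence contradiction, and the base case $n=2$ must be checked separately because there $\{c_{n-2},b_{n-1},a_n\}$ is the given quasi rotor triangle rather than a triangle inherited from an earlier rotor-chain extension.
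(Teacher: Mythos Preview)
Your first two steps are fine and match the paper's approach. The third step, however, has a genuine gap.

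After you establish $(x,y)=(c_{n-1},b_n)$, you assert that ``the new quasi rotor legitimately extends the rotor chain'' and derive a contradiction to right-maximality. But a rotor chain is, by definition, built on a bowtie string, which requires all elements to be distinct except possibly $a_0=c_{n+1}$. Lemma~\ref{6.3rsv} only guarantees that $T_{n+1}$ is disjoint from $T_{n-1}\cup T_n$; it says nothing about $T_{n+1}$ meeting $T_0\cup T_1\cup\dots\cup T_{n-2}$, nor does it rule out the possibility that the existing chain already has $a_0=c_n$. The paper devotes most of its proof of this lemma to exactly these residual cases, and they are not vacuous: in the subcase $a_0=c_n$ with $c_0\in\{d,e\}$, the paper shows that $M\ba c_0,\dots,c_n/b_n$ has an $N$-minor, whence $M/b_1$ has an $N$-minor and Lemma~\ref{rotorwin} yields outcome~(i). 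So this branch produces a quick win rather than a contradiction, and you cannot simply invoke right-maximality.

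There is a secondary issue in your three-case elimination of the other $(x,y)$ pairs. Your orthogonality arguments tacitly assume that $a_{n+1}$ and $b_{n+1}$ are distinct from all elements of $T_0,\dots,T_{n-2}$. For instance, in your case $(x,y)=(c_{n-1},a_n)$ you claim the $4$-circuit $\{c_{n-2},b_{n-1},c_{n-1},a_{n+1}\}$ meets $D_n'$ only in $a_{n+1}$; but if $b_{n+1}=c_{n-2}$ (which is not yet excluded), the intersection is $\{a_{n+1},b_{n+1}\}$ and orthogonality is satisfied. The paper avoids this by arguing directly from the cocircuit $\{y,c_n,d,e\}$ and the rotor-chain triangle $\{c_{n-2},b_{n-1},a_n\}$, forcing $\{d,e\}\subseteq T_{n-2}$ and hence $\lambda(T_{n-2}\cup T_{n-1}\cup T_n)\le 2$.
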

\begin{proof}
Let $((a_0,b_0,c_0),(a_1,b_1,c_1),\dots ,(a_n,b_n,c_n))$ be a right-maximal rotor chain.  Then $n \ge 2$. 
We know that $M\ba c_0,c_1$ has an $N$-minor.  
Suppose that there is an element $i$ in $\{2,3,\ldots,n\}$  such that $M\ba c_0,c_1,\dots ,c_{i-1}$ has an $N$-minor, but $M\ba c_0,c_1,\dots ,c_i$ does not.  
As the first matroid has $(c_i,b_i,a_i,b_{i-1})$ as a $4$-fan, we know that $M/b_{i-1}$ has an $N$-minor. Since $M$ has 
$(T_{i-2},T_{i-1},T_{i},D_{i-2},D_{i-1}, \{c_{i-2},b_{i-1},a_i\})$ as a quasi rotor, 
  Lemma~\ref{rotorwin} implies that (i) holds.  
We may therefore assume that $M\ba c_0,c_1,\dots ,c_n$ has an $N$-minor.  

Suppose $M\ba c_n$ is not \ffsc.  
As $(T_{n-1},T_n,D_{n-1}\})$ is a bowtie, Lemma~\ref{6.3rsv} implies that $T_n$ is the central triangle of a quasi rotor $(T_{n-1},T_n,\{d,e,f\},D_{n-1},\{y,c_n,d,e\},\{x,y,d\})$, where $x\in\{b_{n-1},c_{n-1}\}$ and $y\in\{a_n,b_n\}$.  
Suppose $y=a_n$.  
Then \ort\ between $\{y,c_n,d,e\}$ and the triangle $\{c_{n-2},b_{n-1},a_n\}$ implies that $\{d,e\}$ meets $\{c_{n-2},b_{n-1}\}$.  
As $T_{n-1},T_n$, and $\{d,e,f\}$ are disjoint,  $b_{n-1}\notin\{d,e\}$, so $c_{n-2}\in\{d,e\}$ and \ort\ implies that $\{d,e\}\subseteq T_{n-2}$.  
Then $\lambda (T_{n-2}\cup T_{n-1}\cup T_n)\leq 2$, so $|E(M)|\leq 12$; a \cn.    
We deduce that  $y\neq a_n$, so $y=b_n$.  
If $x=b_{n-1}$, then \ort\ implies that $\{b_{n-2},c_{n-2},a_{n-1}\}$ meets $\{b_n,d\}$, so $d\in\{b_{n-2},c_{n-2}\}$.  
Again $\{d,e\}\subseteq T_{n-2}$; a \cn.  
Therefore $x\neq b_{n-1}$, so $x=c_{n-1}$.  

Now $((a_0,b_0,c_0),(a_1,b_1,c_1),\dots,(a_n,b_n,c_n),(d,e,f))$ is not a rotor chain, so $a_0=c_n$,  or $\{d,e,f\}$ meets $T_0\cup T_1\cup \dots \cup T_n$.  As $(T_{n-1},T_n,\{d,e,f\},D_{n-1},\{b_n,c_n,d,e\},\linebreak \{c_{n-1},b_n,d\})$ is a quasi rotor, $\{d,e,f\}$ avoids $T_{n-1} \cup T_n$. 
Suppose $a_0=c_n$.  
Then \ort\ implies that $\{b_0,c_0\}$ meets $\{d,e\}$.  
As $\{b_0,c_0\}\nsubseteq \{d,e,f\}$, \ort\ with $D_0$ implies that $\{a_1,b_1\}$ meets $\{d,e,f\}$.  
If $b_0\in\{d,e,f\}$, then $M\ba c_0$ has $\{d,e,f\}\cup T_1$ as a $5$-fan; a \cn.  
Thus $c_0\in\{d,e\}$ and $M\ba c_0,c_1,\dots ,c_n/b_n$ has an $N$-minor, so Lemma~\ref{stringswitch} implies that $M/b_1$ has an $N$-minor, and Lemma~\ref{rotorwin} implies that (i) holds.  
We deduce that $a_0\neq c_n$.  
Then $\{d,e,f\}$ meets $T_0\cup T_1\cup \dots \cup T_{n-2}$.  

Suppose $\{d,e,f\}$ meets a cocircuit $D_k$ for some $k$  in $\{0,1,\dots ,n-2\}$.  
Then \ort\ implies that two elements of $\{d,e,f\}$ are in $D_k$.  
Orthogonality with $\{b_n,c_n,d,e\}$ implies that $\{d,e\}$ meets at most one of $T_0,T_1,\ldots,T_n$. Hence $k=0$ and $\{d,e,f\} = T_0$. As $\{c_0,b_1,a_2\}$ is a triangle, $c_0 \not \in \{d,e\}$. Hence $\{a_0,b_0\} = \{d,e\}$, so $c_0 = f$. The triangle $\{c_{n-1},b_n,d\}$ and the cocircuit $\{b_0,c_0,a_1,b_1\}$ imply that $d \neq b_0$. Hence $d = a_0$. 
  The symmetric difference of all of the triangles in the rotor chain, that is, each $T_i$ and each $\{c_j,b_{j+1},a_{j+2}\}$,  is $\{a_0,b_0,a_1,c_{n-1},b_n,c_n\}$, and the symmetric difference of the last set with the  triangle $\{c_{n-1},b_n,a_0\}$ is  $\{c_n,b_0,a_1\}$, which must also be a triangle.   
Hence $(c_n,b_0,a_1,b_1,c_1)$ is a $5$-fan in $M\ba c_0$; a \cn.  

We now know that $\{d,e,f\}$ avoids every cocircuit in the rotor chain.  
Then $\{d,e,f\}$ meets the rotor chain in exactly the element $a_0$, so \ort\ implies that $f=a_0$, and we deduce that $((a_0,b_0,c_0),(a_1,b_1,c_1),\dots ,(a_n,b_n,c_n),(d,e,f))$ is a rotor chain; a \cn\ to our selection of a right-maximal rotor chain.  
\end{proof}

\begin{lemma}
\label{betweenbts}
Let $M$ and $N$ be \ifc\ binary matroids where $|E(M)|\geq 13$ and $|E(N)|\geq 7$.  
Suppose that $M$ does not have a proper minor $M'$ such that $|E(M)|-|E(M')|\leq 3$ and $M'$ is \ifc\ with an $N$-minor. 
Let $M$ have a quasi rotor $(T_0,T_1,T_2,D_0,D_1,\{c_0,b_1,a_2\})$ such that $M\ba c_0$ is \ffsc\ and $M\ba c_0,c_1$ has an $N$-minor.  
Then 
$M$ has a right-maximal rotor chain $((a_0,b_0,c_0),(a_1,b_1,c_1),\dots ,(a_{n},b_{n},c_{n}))$ such that $M\ba c_0,c_1,\dots ,c_n $ is \sfc\ with an $N$-minor, $M/b_i$ has no $N$-minor for all $i$ in $\{1,2,\dots ,n -1\}$, and one of the following occurs. 
\begin{itemize}
\item[(i)] $M\ba c_n$ is \ffsc, $M$ has a triangle $\{a_{n+1},b_{n+1},c_{n+1}\}$ and a $4$-cocircuit 
$\{b_n,c_n,a_{n+1},b_{n+1}\}$ such that $T_0,D_0,T_1,D_1,\dots ,T_n,\linebreak 
\{b_n,c_n,a_{n+1},b_{n+1}\}, \{a_{n+1},b_{n+1},c_{n+1}\}$ is a bowtie string in $M$, and $M\ba c_0,c_1,\dots ,c_{n +1}$ has an $N$-minor; or
\item[(ii)] $M$ has an open-rotor-chain win or a ladder win; or 
\item[(iii)] $M$ has an enhanced-ladder win.
\end{itemize}
\end{lemma}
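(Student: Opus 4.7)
The plan is to apply a sequence of the earlier lemmas in this paper. I begin by invoking Lemma~\ref{rotorchainends} on the given quasi rotor $(T_0, T_1, T_2, D_0, D_1, \{c_0, b_1, a_2\})$. Under the standing assumption that $M$ admits no quick win, outcome (i) of that lemma is ruled out, so outcome (ii) supplies a right-maximal rotor chain $((a_0, b_0, c_0), (a_1, b_1, c_1), \dots, (a_n, b_n, c_n))$ for some $n \ge 2$ in which $M\ba c_n$ is \ffsc\ and $M\ba c_0, c_1, \dots, c_n$ has an $N$-minor.

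Next, I verify the two auxiliary properties of this chain. For each $i$ in $\{1, \dots, n-1\}$, the rotor chain structure supplies the quasi rotor $(T_{i-1}, T_i, T_{i+1}, D_{i-1}, D_i, \{c_{i-1}, b_i, a_{i+1}\})$ with central element $b_i$; were $M/b_i$ to have an $N$-minor, Lemma~\ref{rotorwin} would produce an internally $4$-connected minor $M'$ of $M$ with an $N$-minor for which $|E(M)|-|E(M')| \le 3$, contradicting the assumption. Thus $M/b_i$ has no $N$-minor for each such $i$. Since $M\ba c_0, c_1$ has an $N$-minor, Lemma~\ref{deletechainsfc} applies, and its only two alternatives are that $M$ admits a quick win or that $M\ba c_0, c_1, \dots, c_n$ is sequentially $4$-connected with an $N$-minor; the latter must hold.

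Having produced a rotor chain with all the asserted side-conditions, I split on whether $M\ba c_0, c_1, \dots, c_n$ is internally $4$-connected. If it is, then trimming the open rotor chain obtained from the rotor chain by removing $a_0$ from the structure produces exactly this internally $4$-connected matroid with an $N$-minor, so $M$ has an open-rotor-chain win and outcome (ii) of the lemma holds. Otherwise, I apply Lemma~\ref{6.3rsv} to the bowtie $(T_{n-1}, T_n, D_{n-1})$. Outcome (i) of that lemma would force $M\ba c_n$ to be internally $4$-connected, hence a quick win, so it is excluded. Outcome (ii) furnishes a triangle $\{a_{n+1}, b_{n+1}, c_{n+1}\}$ disjoint from $T_{n-1} \cup T_n$ together with a cocircuit $\{a, c_n, a_{n+1}, b_{n+1}\}$, for some $a \in \{a_n, b_n\}$; in the subcase $a = b_n$, the sequence $T_0, D_0, \dots, T_n, D_n, T_{n+1}$ is a bowtie string (with right-maximality of the rotor chain preventing $\{c_{n-1}, b_n, a_{n+1}\}$ from being a triangle, so this string is not a rotor chain), and Lemma~\ref{stringybark} then promotes the $N$-minor from $M\ba c_0, \dots, c_n$ to $M\ba c_0, \dots, c_{n+1}$, establishing outcome (i) of the lemma.

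The main obstacle is handling the residual configurations: the subcase $a = a_n$ of outcome (ii) of Lemma~\ref{6.3rsv}, outcomes (iii) and (iv) of that lemma, and configurations in which the new triangle $T_{n+1}$ meets some earlier $T_j$ forcing a partial wrap-around. After a short orthogonality-driven analysis of how any such $4$-fan cocircuit in $M\ba c_0, \dots, c_n$ can intersect the triangles $T_i$ and the auxiliary triangles $\{c_i, b_{i+1}, a_{i+2}\}$ of the rotor chain, each of these alternatives either fits the configuration of Figure~\ref{snake0} (triggering Lemma~\ref{killthesnake}) or fits one of the configurations of Figure~\ref{badguy} (triggering Lemma~\ref{killbadguy}). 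In each case the relevant side-hypotheses are precisely what has been installed by the earlier paragraphs: $M\ba c_0$ is \ffsc\ (hypothesis of the current lemma), $M\ba c_0, c_1 / b_1$ has no $N$-minor (from Lemma~\ref{rotorwin}), and the forbidden bowtie/triangle/cocircuit configurations in the hypotheses of Lemmas~\ref{killthesnake} and~\ref{killbadguy} are precisely the configurations whose presence would contradict right-maximality of the rotor chain. Whichever of Lemmas~\ref{killthesnake} and~\ref{killbadguy} is triggered then delivers, in its conclusion, outcome (ii) or (iii) of the present lemma.
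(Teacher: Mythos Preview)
Your first three paragraphs track the paper closely: invoking Lemma~\ref{rotorchainends} to produce the right-maximal rotor chain, Lemma~\ref{rotorwin} to rule out $N$-minors in $M/b_i$, Lemma~\ref{deletechainsfc} for sequential $4$-connectivity, and recognizing that if $M\ba c_0,\dots,c_n$ is internally $4$-connected then (ii) holds. That is all correct.

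The gap is your final paragraph. The claim that the residual configurations ``fit the configuration of Figure~\ref{snake0} or one of the configurations of Figure~\ref{badguy}'' is not substantiated, and in fact Lemma~\ref{killbadguy} is \emph{never} invoked in the paper's proof of this lemma. What actually happens is quite different and not short. The paper analyzes an arbitrary $4$-fan $(y_1,y_2,y_3,y_4)$ of $M\ba c_0,\dots,c_n$ directly, first showing (sublemma~\ref{123avoids}) that no triangle of $M\ba c_0,\dots,c_n$ meets $\{b_0,a_1,b_1,\dots,a_n,b_n\}$, and then classifying the cocircuit $C^*$ with $\{y_2,y_3,y_4\}\subsetneqq C^*\subseteq\{y_2,y_3,y_4\}\cup S$ (sublemma~\ref{cstar}). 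This classification has three outcomes: the first, $C^*=\{y_2,y_3,b_n,c_n\}$, is what actually produces the bowtie string of conclusion~(i) (sublemma~\ref{cstarlet}); the other two force $n\le 3$. Only \emph{after} this does the paper appeal to Lemma~\ref{6.3rsv}, and the point of doing so is not to land in Figure~\ref{badguy} but to force the specific structure of Figure~\ref{schmuckface} (sublemma~\ref{quack}), which is then played off against the remaining \ref{cstar} cases via orthogonality (sublemma~\ref{wbn}) to obtain contradictions. Lemma~\ref{killthesnake} is applied in exactly one subcase (\ref{cstar}(iii)(b) with $n=2$), and even there one must first verify that $M\ba c_0,c_1,c_2,y_1$ has an $N$-minor and that the forbidden triangles in the hypotheses of Lemma~\ref{killthesnake} are absent.

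Separately, even your treatment of the ``good'' subcase of Lemma~\ref{6.3rsv}(ii) with $a=b_n$ is incomplete: that lemma only guarantees $T_{n+1}$ is disjoint from $T_{n-1}\cup T_n$, not from $T_0\cup\dots\cup T_{n-2}$, so asserting that $T_0,D_0,\dots,T_n,D_n,T_{n+1}$ is a bowtie string requires real work. The paper's sublemma~\ref{cstarkers} spends several paragraphs on exactly this, handling the possible collisions of $T_{n+1}$ with earlier triangles in the ring.
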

\begin{proof}
Let  $S= \{c_0,c_1,\dots ,c_n\}$. By Lemma~\ref{rotorchainends},  $M$ has a right-maximal rotor chain $((a_0,b_0,c_0),(a_1,b_1,c_1),\dots ,(a_n,b_n,c_n))$ such that $M\ba c_n$ is \ffsc\ and $M\ba S$ has an $N$-minor.  
Suppose $M/b_i$ has an $N$-minor, for some $i$ in $\{1,2,\dots ,n-1\}$. Then applying Lemma~\ref{rotorwin} 
to the quasi rotor $(T_{i-1},T_i,T_{i+1},D_{i-1},D_i,\{c_{i-1},b_i,a_{i+1}\})$ gives that $M$ has a proper minor $M'$ such that $|E(M)|-|E(M')|\leq 3$ and $M'$ is \ifc\ with an $N$-minor; a \cn.  
Therefore, 
\begin{sublemma}
\label{nocon}
$M\ba S$ has an $N$-minor and $M/b_i$ has no $N$-minor for all $i$ in $\{1,2,\dots ,n-1\}$.  
\end{sublemma}

Lemma~\ref{deletechainsfc} implies that 
\begin{sublemma}
\label{gettingsfc}
$M\ba S$ is \sfc.  
\end{sublemma}

We show next that 
\begin{sublemma}
\label{123avoids}
no triangle in $M\ba S$ meets $\{b_0,a_1,b_1,a_2,b_2,\dots,a_n,b_n\}$.  
\end{sublemma}

Suppose $M\ba S$ has a triangle  that meets $\{b_0,a_1,b_1,a_2,b_2,\dots,a_n,b_n\}$. Assume first that $T$ 
 does not contain $\{b_0,a_1\}$.  
Then \ort\ and the fact that $T$ avoids $S$ implies that $T = \{b_i,b_{i+1},b_{i+2}\}$ for some $i$ in $\{0,1,\ldots,n-2\}$. Then  $\lambda (T_i\cup T_{i+1}\cup T_{i+2})\leq 2$ so $|E(M)|\leq 12$; a \cn.  We may now assume that $T$ contains $\{b_0,a_1\}$. 
Then $T \cup \{b_1,c_1\}$ is a $5$-fan in $M\ba c_0$; a \cn.  
We conclude that \ref{123avoids} holds.

If $M\ba S$ is \ifc, then part (ii) 
of the lemma holds, so we assume not. 
Then $M\ba S$ has a $4$-fan $(y_1,y_2,y_3,y_4)$.  
Thus $M$ has a cocircuit $C^*$ such that $\{y_2,y_3,y_4\}\subsetneqq C^*\subseteq \{y_2,y_3,y_4\}\cup S$.  Next we determine the possibilities for $C^*$. 

\begin{sublemma}
\label{cstar}
One of the following occurs. 
\begin{itemize}
\item[(i)] $C^*=\{y_2,y_3,y_4,c_{n}\}$ and $y_4 =b_n$; or 
\item[(ii)] $n=2$ and $a_1=y_4$ and $C^*=\{y_2,y_3,a_1,c_{1}\}$; or
\item[(iii)] $a_0\in\{y_2,y_3\}$ and 
\begin{itemize}
\item[(a)] $n=2$ and $y_4=b_1$ and $C^*=\{y_2,y_3,b_1,c_0,c_1\}$; or
\item[(b)] $n\leq 3$ and $y_4=a_2$ and $C^*=\{y_2,y_3,a_2,c_0,c_2\}$.  
\end{itemize}
\end{itemize}
\end{sublemma}

If $c_i \in C^*$ for some $i$ in $\{1,2,\dots ,n\}$,   
then \ort\ implies that $\{a_i,b_i\}$ meets $\{y_2,y_3,y_4\}$. Hence, by \ref{123avoids}, $y_4\in\{a_i,b_i\}$.  
Thus $c_i$ is the only element of  $\{c_1,c_2,\dots ,c_n\}$ in $C^*$.  
Also, \ort\ implies that $\{c_i,b_{i+1},a_{i+2}\}$ is not a triangle of $M$. Hence $i \geq n-1$, so $C^*\subseteq \{y_2,y_3,y_4,c_0,c_{n-1},c_n\}$.  

Moving towards obtaining \ref{cstar}, we note next that 
\begin{sublemma}
\label{cstar2}
if $C^*$ avoids $c_0$ but contains $c_{n}$, then   $y_4=b_n$.
\end{sublemma}

To see this, 
note that  \ort\ implies that $y_4\in\{a_n,b_n\}$.  
By \ref{123avoids},  $\{y_2,y_3\}$ avoids $\{c_{n-2},b_{n-1}\}$. Hence, by \ort, $y_4 \neq a_n$, so $y_4=b_n$ and \ref{cstar2} holds.  

Next we suppose that $c_{n-1}\in C^*\subseteq \{y_2,y_3,y_4,c_{n-1},c_n\}$.  
Then $y_4\in\{a_{n-1},b_{n-1}\}$, so \ref{cstar2} implies that $C^* = \{y_2,y_3,y_4,c_{n-1}\}$. 
Orthogonoality between $C^*$ and  the circuit $\{c_0,a_1,c_{n-1},a_n\}$ implies that either $\{y_2,y_3\}$ meets $\{a_1,a_n\}$; or $a_1=a_{n-1}=y_4$.  The first possibility gives a \cn\ to \ref{123avoids}. Hence the second possibility holds and therefore so does \ref{cstar}(ii). 

It remains to consider what happens when 
$c_0\in C^*$. In that case, \ort\ with $T_0$ and $\{c_0,b_1,a_2\}$ implies, using \ref{123avoids}, that $a_0\in\{y_2,y_3\}$ and $y_4\in\{b_1,a_2\}$.  If $y_4 = b_1$, then \ort\ 
  implies that  $C^*=\{y_2,y_3,b_1,c_0,c_1\}$ and $n=2$. On the other hand, if $y_4=a_2$, then $C^*=\{y_2,y_3,a_2,c_0,c_2\}$ and $n\leq 3$.  
We conclude that \ref{cstar} holds. 

Next we show the following.
\begin{sublemma}
\label{cstarlet}
If \ref{cstar}(i) holds, then so does the lemma.
\end{sublemma}

As  $(y_1,y_2,y_3,b_n)$ is a $4$-fan in $M\ba S$, and $M\ba S$ has an $N$-minor, we deduce that $M\ba S/b_n$ or $M\ba S\ba y_1$ has an $N$-minor.  
If $M\ba S/b_n$ has an $N$-minor, then so do $M\ba (S-c_n)/b_n\ba a_n$ and hence $M\ba (S-c_n)\ba a_n/b_{n-1}$; a \cn\ to~\ref{nocon}.  
Thus $M\ba S\ba y_1$ has an $N$-minor.  
Now~\ref{123avoids} and \ort\ imply that the elements in $T_0\cup T_1\cup \dots \cup T_n\cup\{y_1,y_2,y_3\}$ are all distinct except that possibly $y_1=a_0$.  Letting $(y_1,y_2,y_3) = (c_{n+1},b_{n+1},a_{n+1})$, we get that part (i) 
of the lemma holds. Thus \ref{cstarlet} holds.
  
We  may now assume that \ref{cstar}(i) does not hold. Thus \ref{cstar}(ii) or \ref{cstar}(iii) holds, so  $n\leq 3$.  At this point, we seek to apply Lemma~\ref{6.3rsv} to the bowtie $(\{a_{n-1},b_{n-1},c_{n-1}\}, \{a_n,b_n,c_n\},\{b_{n-1},c_{n-1},a_n,b_n\})$ to build more structure onto our configuation. We know that 
 $M\ba c_n$  is \ffsc\ but not \ifc, and clearly $M\ba a_{n-1}$ is not \ifc. Thus neither part (i) nor part (iv) of Lemma~\ref{6.3rsv} holds. Next we eliminate the possibility that part (ii) of 
that lemma  holds.

\begin{sublemma}
\label{cstarkers}
If $M$ has a triangle $\{a_{n+1},b_{n+1},c_{n+1}\}$ that is disjoint from $T_{n-1} \cup T_n$ such that 
 $(T_n,\{a_{n+1},b_{n+1},c_{n+1}\},\{x,c_n,a_{n+1},b_{n+1}\})$ is a bowtie for some    $x$ in $\{a_n,b_n\}$, then part (i) of the lemma holds.
\end{sublemma}

Suppose that $M$ has such a triangle and denote it by $T_{n+1}$. Then $(c_{n+1},b_{n+1},a_{n+1},x)$ is a $4$-fan in  $M\ba c_n$.  
Suppose $x=a_n$. Then \ort\ implies that $\{c_{n-2},b_{n-1}\}$ meets $\{a_{n+1},b_{n+1}\}$.  
If $c_{n-2}\in \{a_{n+1},b_{n+1}\}$, then $M\ba S$ has $a_n$ in a $2$-element cocircuit, so $M\ba S/a_n$ has an $N$-minor. Hence $M\ba (S-c_n)\ba b_n/b_{n-1}$ has an $N$-minor, so $M/b_{n-1}$ has an $N$-minor; a \cn. 
 Thus $b_{n-1}\in\{a_{n+1},b_{n+1}\}$, and   $\{a_n,c_n,a_{n+1},b_{n+1}\}$ meets both $T_{n-1}$ and $T_n$; a \cn\ to Lemma~\ref{bowwow}.  
We conclude that $x\neq a_n$. Thus $x=b_n$  and $\{b_n,c_n,a_{n+1},b_{n+1}\}$ is a cocircuit.  

Suppose that $T_0,D_0,T_1,D_1,\dots ,T_n,D_{n+1},T_{n+1}$ is a bowtie string.  
As $M\ba S$ has $(c_{n+1},b_{n+1},a_{n+1},b_n)$ as a $4$-fan, either 
 $M\ba S\ba c_{n+1}$ has an $N$-minor and (i) holds, as desired, or  $M\ba S/b_{n}$ has an $N$-minor, and Lemma~\ref{stringswitch} implies that $M\ba c_0,c_1/b_1$ has an $N$-minor; a \cn.  
It follows that  $T_0,D_0,T_1,D_1,\dots, T_n,D_{n+1},T_{n+1}$ is not a bowtie string.  
Therefore $T_{n+1}$ meets $\{b_0,c_0\}\cup T_1\cup T_2\cup \dots \cup T_{n-2}$   
since we know that $T_{n+1}$ avoids $T_{n-1} \cup T_n$. 
If $n=2$, then $T_{n+1}$, which is $T_3$, meets $\{b_0,c_0,a_1,b_1\}$, so $T_3 = T_0$ and 
$\lambda(T_0 \cup T_1 \cup T_2) \le 2$; a \cn. Hence we may assume that $n = 3$. Therefore \ref{cstar}(iii)(b) holds, so $a_0 \in \{y_2,y_3\}$ and $C^* = \{y_2,y_3,a_2,c_0,c_2\}$.

Suppose $T_4$ meets $\{b_1,c_1\}$. Since $T_4$ avoids $T_2 \cup T_3$, it follows, by \ort\   with the cocircuit $\{b_1,c_1,a_2,b_2\}$,  that $\{b_1,c_1\} \subseteq T_4$, so $T_1 = T_4$. Then $\lambda(T_1 \cup T_2 \cup T_3) \le 2$; a \cn. Hence $T_4$ avoids $\{b_1,c_1\}$. Thus $T_4$ meets $\{b_0,c_0,a_1,b_1\}$. As $T_4$ avoids $b_1$, it follows that either $T_4 = T_0$, or $T_4$ meets $\{b_0,c_0,a_1,b_1\}$ in  $\{b_0,a_1\}$ or $\{c_0,a_1\}$. Next we shall eliminate each of these possibilities. 

Suppose $T_4 = T_0$. We know that $a_0 \in \{y_2,y_3\}$. If $a_0 \in \{a_4,b_4\}$, then, since $\{y_1,y_2,y_3\}$ is a circuit of $M\ba S$, it follows by \ort\ between $\{y_1,y_2,y_3\}$ and the cocircuit $\{b_3,c_3,a_4,b_4\}$ that $b_3 \in \{y_1,y_2,y_3\}$ otherwise $T_4 = \{y_1,y_2,y_3\}$; a \cn.  Thus $\{y_1,y_2,y_3\}$ contains $\{a_0,b_3\}$ and we get a \cn\ to \ort. We deduce that $a_0 \not \in \{a_4,b_4\}$, so $a_0 = c_4$. Hence $\{b_0,c_0\} = \{a_4,b_4\}$. Therefore $M$ has $\{b_0,c_0,a_1,b_1\}$ and $\{b_0,c_0,b_3,c_3\}$ as cocircuits and so has $\{a_1,b_1,b_3,c_3\}$ as a cocircuit. Then $\lambda(T_1 \cup T_2 \cup T_3) \le 2$; a \cn. We conclude that $T_4 \neq T_0$. 

Now $T_4$ does not meet $\{b_0,c_0,a_1,b_1\}$ in $\{b_0,a_1\}$ otherwise $M\ba c_0$ has $T_4 \cup \{b_1,c_1\}$ as a $5$-fan, which contradicts the fact that $M\ba c_0$ is \ffsc. It remains to consider the case when $T_4$ meets $\{b_0,c_0,a_1,b_1\}$ in $\{c_0,a_1\}$. We know that $\{b_0,c_0,a_1,b_1\}$ and $\{b_3,c_3,a_4,b_4\}$ are cocircuits of $M$. As $\{c_0,a_1\} \subseteq T_4$, we see that $\{c_0,a_1\}$ is $\{a_4,b_4\}, \{a_4,c_4\}$, or $\{b_4,c_4\}$. Moreover, $T_4$ avoids $\{a_0,b_0,b_1,c_1\}$ as $T_4$ is not $T_0$ or $T_1$. For each possibility for $\{c_0,a_1\}$, we take the symmetric difference of the cocircuits $\{b_0,c_0,a_1,b_1\}$ and $\{b_3,c_3,a_4,b_4\}$. These symmetric differences are 
$\{b_0,b_1,b_3,c_3\}$, $\{b_0,b_1,b_3,b_4,c_3,c_4\}$, and $\{a_4,b_0,b_1,b_3,c_3,c_4\}$. The triangles in $M$ imply that each is a cocircuit, $D^*$ since $M$ has no triad meeting a triangle. The first case gives an immediate \cn\ to \ort. For the second and third, \ort\ between $D^*$ and the triangle $\{c_0,b_1,a_2\}$ implies that $c_0 \in D^*$. Thus $(c_0,a_1)$ is $(c_4,a_4)$ or $(c_4,b_4)$, respectively. In each case, since $T_4$ avoids $\{b_1,c_1\}$, \ort\ between $D^*$ and $T_1$ gives a \cn.  This completes the proof of \ref{cstarkers}. 

By \ref{cstarkers} and the remarks preceding it, we may assume that part (iii) of Lemma~\ref{6.3rsv} holds, that is, 
 every \ftv\ of $M\ba c_n$ is a $4$-fan of the form $(u,v,w,x)$ where $u$ and $v$ are in $\{b_{n-1},c_{n-1}\}$ and $\{a_n,b_n\}$, respectively, and $|T_{n-1}\cup T_n\cup \{w,x\}|=8$.  Then $\{v,w,x,c_n\}$ is a cocircuit of $M$. 

Next we show that 
\begin{sublemma} 
\label{quack}
$M$ contains the configuration shown in Figure~\ref{schmuckface}. 
\end{sublemma}

Suppose $u=b_{n-1}$. Then \ort\ implies that $w\in \{b_{n-2},c_{n-2},a_{n-1}\}$. 
But $\{u,v,w\} \neq T_{n-1}$, so $w \neq a_{n-1}$. If $w = b_{n-2}$, then   $\lambda (T_{n-2}\cup T_{n-1}\cup T_n)\leq 2$; a \cn.  Thus $w = c_{n-2}$, so $v = a_n$. Then the cocircuit $\{v,w,x,c_n\}$ is $\{a_n,c_{n-2},x,c_n\}$ so, by \ort, $x \in \{b_{n-2}, a_{n-2}\}$ and again we get the \cn\ that $\lambda (T_{n-2}\cup T_{n-1}\cup T_n)\leq 2$. We conclude that 
 $u=c_{n-1}$.  
If $v=a_n$, then  \ort\ between the triangle $\{c_{n-2},b_{n-1},a_n\}$ and the cocircuit $\{a_n,w,x,c_n\}$  implies that $\{c_{n-2},b_{n-1}\}$ meets $\{w,x\}$.  
Then \ort\ implies that $\{a_n,w,x,c_n\}$ is contained in $T_{n-2}\cup T_{n-1}\cup T_n$. Hence this set is $3$-separating in $M$; a \cn.  
Thus $v=b_n$, so $M$ contains the configuration shown in Figure~\ref{schmuckface}.  

\begin{figure}[h]
\centering
\includegraphics[scale=0.9]{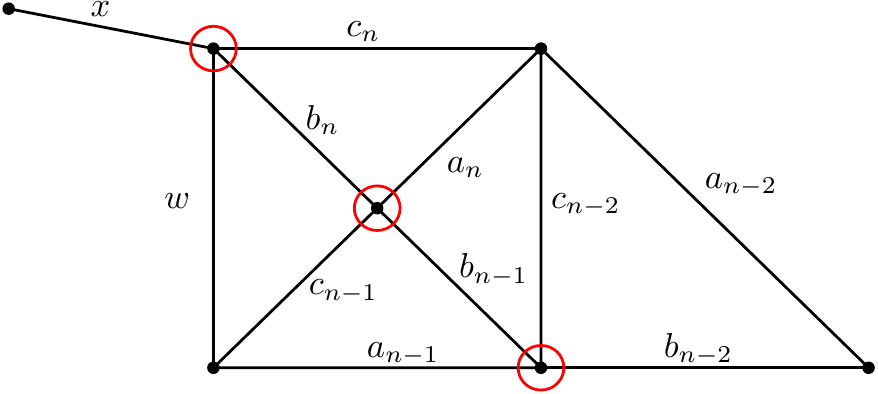}
\caption{}
\label{schmuckface}
\end{figure}

Now recall that \ref{cstar} holds.  Next we show that
\begin{sublemma}
\label{wbn}
$\{w,b_n\}$ avoids $\{y_2,y_3\}$.
\end{sublemma}

By \ref{123avoids}, $b_n \not \in \{y_1,y_2,y_3\}$. Suppose $w \in \{y_2,y_3\}$. Then, by \ort\ between the circuit $\{y_1,y_2,y_3\}$ and the cocircuit $\{w,x,b_n,c_n\}$, we deduce that $\{w,x\} \subseteq \{y_1,y_2,y_3\}$. Thus $M\ba c_n$ has a $5$-fan. This \cn\ establishes that \ref{wbn} holds.

By \ref{cstarlet}, we may assume that \ref{cstar}(i) does not hold. 
Now suppose that \ref{cstar}(iii)  holds.  
Then $n\leq 3$ and, without loss of generality, $a_0=y_3$.  
If \ref{cstar}(iii)(a) holds, then \ort\ between the cocircuit $\{y_2,y_3,b_1,c_0,c_1\}$ and the circuits $\{c_1,b_2,w\}$ and $T_2$ implies that $w\in\{y_2,b_1,c_0\}$.  Since $n = 2$, we see that 
$w\notin \{b_1,c_0\}$, so $w = y_2$; a \cn\ to \ref{wbn}.   
We may now assume that \ref{cstar}(iii)(b) holds.  
If $n=2$, then we have a configuration  of the form shown in Figure~\ref{snake0}.  
Now $M\ba c_0,c_1,c_2$ has an $N$-minor and has $(y_1,y_2,a_0,a_2)$ as a $4$-fan. Thus $M\ba c_0,c_1,c_2/a_2$ or $M\ba c_0,c_1,c_2\ba y_1$ has an $N$-minor. The first case does not arise because $M/b_1$ has no $N$-minor, yet 
$$M\ba c_0,c_1,c_2/a_2 \cong M/a_2\ba b_1,b_2,c_1  \cong M\ba a_2, b_1,b_2,c_1 \cong M/b_1\ba a_2,b_2,c_1.$$
Hence $M\ba c_0,c_1,c_2,y_1$ has an $N$-minor, so we can apply 
Lemma~\ref{killthesnake} to obtain that the lemma holds unless   $\{w,x\}$ or $\{b_0,a_1\}$ is contained in a triangle, $T$.  
In the exceptional cases, either $T\cup \{c_{1},b_2\}$ contains a $5$-fan in $M\ba c_2$, or $T\cup \{b_1,c_1\}$ contains a $5$-fan in $M\ba c_0$, so we get a \cn.  
Thus $n \neq 2$ so $n = 3$.  By \ort\ between $\{w,b_3,c_2\}$ and $\{b_0,c_0,a_1,b_1\}$, we see that $w \neq c_0$. 
Then \ort\ between $\{w,b_3,c_2\}$ and $\{y_2,y_3,a_2,c_0,c_2\}$ implies that $\{w,b_3\}$ meets $\{y_2,y_3\}$; a \cn\ to \ref{wbn}. 
We conclude that \ref{cstar}(iii) does not hold.  

We may now assume that \ref{cstar}(ii) holds,  
that is, $n=2$ and $C^*=\{y_2,y_3,a_1,c_1\}$.  
Since $\{y_1,y_2,y_3\}$ is a triangle and $\{y_2,y_3,a_1,c_1\}$ is a cocircuit,    
we see  that $\{y_2,y_3\}$ avoids $T_1$. Then \ort\ between $\{y_2,y_3,a_1,c_1\}$ and the triangle $\{w,c_{n-1},b_n\}$ implies  that $\{w,b_n\}$ meets $\{y_2,y_3\}$.  This \cn\ to \ref{wbn} completes the proof. 
\end{proof}

\section{Proof of the main theorem}
\label{pomt}

In this section, we complete the proof of Theorem~\ref{killcasek}. We begin by proving a lemma that  treats  the case
in which  a right-maximal bowtie string does not wrap  around into a bowtie 
ring.  One fact that will be used repeatedly in the next proof is that if $T_0, D_0,T_1,D_1,\dots ,T_n$ is a string of bowties, then, relative to this string, there is symmetry between the elements $a_n$ and $b_n$.

\begin{figure}[htb]
\centering
\includegraphics[scale=1.]{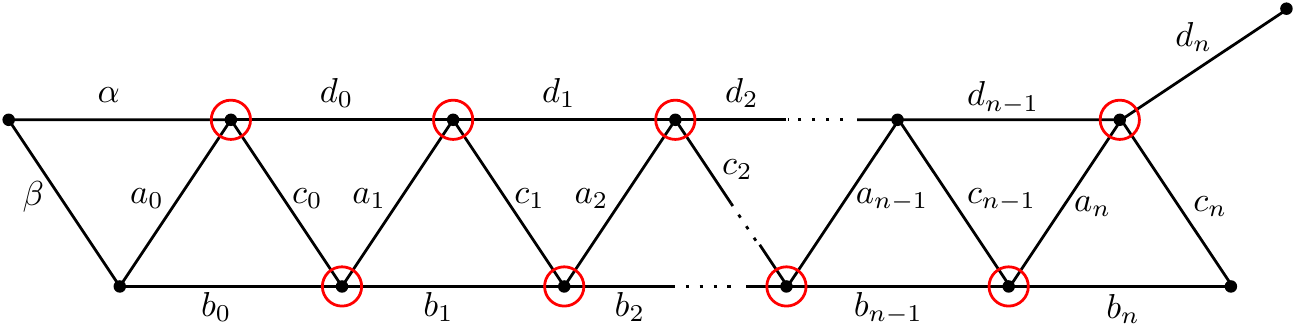}
\caption{$\{d_{n-2},a_{n-1},c_{n-1},d_{n-1}\}$ or $\{d_{n-2},a_{n-1},c_{n-1}, a_n,c_n\}$ is a cocircuit where $d_{-1} = \alpha$ when $n = 1$.}
\label{drossfigiinouv}
\end{figure}

\begin{lemma}
\label{reachtheend}
Let $M$ and $N$ be \ifc\ binary matroids such that  $|E(M)|\geq 13$ and $|E(N)|\geq 7$.  
Suppose that $M$ has a bowtie $(T_0,T_1,D_0)$, such that $M\ba c_0$ is \ffsc, and $M\ba c_0,c_1$ has an $N$-minor.  
Let $T_0, D_0,T_1,D_1,\dots ,T_n$ be a right-maximal string of bowties.  
If $(T_n,T_0,\{x,c_n,a_0,b_0\})$ is not a bowtie for all $x$ in $\{a_n,b_n\}$, then one of the following holds. 
\begin{itemize}
\item[(i)] $M$ has a quick win; or 
\item[(ii)] $M$ has an open-rotor-chain win or a ladder win; or 
\item[(iii)]  $M$ contains the configuration in Figure~\ref{drossfigiinouv}, up to switching the labels of $a_n$ and $b_n$, and $M\ba c_0,c_1,\dots, c_n$ has an $N$-minor; or 
\item[(iv)]  $M\ba c_0,c_1/b_1$ has an $N$-minor; or $n= 1$ and $M\ba c_0,c_1/b_1$ or $M\ba c_0,c_1/a_1$ has an $N$-minor; or
\item[(v)]  $M$ has an enhanced-ladder win.
\end{itemize}
\end{lemma}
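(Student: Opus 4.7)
The plan is to assume that outcomes (i), (ii), (iv), and (v) all fail and to derive that outcome (iii) must hold. Since (iv) fails we have, in particular, that $M\ba c_0,c_1/b_1$ has no $N$-minor (and, when $n=1$, that $M\ba c_0,c_1/a_1$ also has no $N$-minor). Lemma~\ref{stringybark} then tells us that $M\ba c_0,c_1,\dots,c_n$ has an $N$-minor, while $M\ba c_0,\dots,c_i/b_i$ has no $N$-minor for every $i\in\{1,2,\dots,n\}$ and $M\ba c_0,\dots,c_j/a_j$ has no $N$-minor for every $j\in\{2,3,\dots,n\}$.

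Next, I apply Lemma~\ref{6.3rsv} to the bowtie $(T_{n-1},T_n,D_{n-1})$. Since $M\ba c_n$ inherits the $N$-minor, if $M\ba c_n$ is internally $4$-connected, deleting $c_n$ alone gives outcome (i); a contradiction. If instead Lemma~\ref{6.3rsv}(iv) holds, a short argument using Lemma~\ref{2.2} on the $4$-fans available in $M\ba c_0,\dots,c_{n-1}$ transfers an $N$-minor to $M\ba a_{n-1}$, again yielding outcome (i). If Lemma~\ref{6.3rsv}(ii) or the quasi-rotor alternative in Lemma~\ref{6.3rsv} holds, we obtain a triangle $T'$ disjoint from $T_{n-1}\cup T_n$ forming a bowtie with $T_n$ through some $4$-cocircuit containing $c_n$. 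Right-maximality of the bowtie string forces $T'$ to meet some $T_i$ with $i\le n$, and Lemma~\ref{ring} then leaves two possibilities: either $T'=T_j$ for some $j\le n-2$, or $T'\cap T_0=\{a_0\}=\{c'\}$ where $c'$ is the third element of $T'$. In the first case we have a ring of bowties $(T_j,D_j,\dots,T_n,D^*)$, and Lemma~\ref{btring} applied to $M\ba c_j,\ldots,c_n$ gives either a bowtie-ring win (outcome (ii)) or a quick win (outcome (i)), each a contradiction. In the second case, the last assertion of Lemma~\ref{ring} combined with the standing hypothesis that $(T_n,T_0,\{x,c_n,a_0,b_0\})$ is not a bowtie excludes this subcase (the only residual possibility being the degenerate $a_0=c_n$ coincidence, which is ruled out by a direct orthogonality argument against $D_0$ and $T_0$).

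We are therefore reduced to Lemma~\ref{6.3rsv}(iii): every $(4,3)$-violator of $M\ba c_n$ is a $4$-fan $(u,v,w,x)$ with $u\in\{b_{n-1},c_{n-1}\}$, $v\in\{a_n,b_n\}$, and $\{v,w,x,c_n\}$ a cocircuit whose only elements in the existing configuration are $v$ and $c_n$. Orthogonality with $D_{n-1}$ and $T_{n-1}$, together with the assumption that no bowtie extension exists at the right end, forces (after possibly swapping the labels of $a_n$ and $b_n$) that $u=c_{n-1}$ and $v=a_n$, so I may name $w=d_{n-1}$ and $x=d_n$: the triangle $\{c_{n-1},a_n,d_{n-1}\}$ and the cocircuit $\{d_{n-1},a_n,c_n,d_n\}$ are then the rightmost pair of dashes of Figure~\ref{drossfigiinouv}. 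I now propagate this dross structure leftward. Let $m$ be the smallest index in $\{0,1,\dots,n-1\}$ for which triangles $\{c_i,d_i,a_{i+1}\}$ and cocircuits $\{d_i,a_{i+1},c_{i+1},d_{i+1}\}$ (with the alternative $\{d_{n-2},a_{n-1},c_{n-1},a_n,c_n\}$ allowed only at the right end) can be consistently located for every $i\in\{m,\dots,n-1\}$, distinctness being tracked by Lemma~\ref{drossdistinct} at each stage. If $m=0$, relabelling $d_{-1}$ as $\alpha$ yields outcome (iii), contradicting our assumption. Otherwise $m\ge 1$, and the partial configuration is exactly one of those in Figure~\ref{badguy}; by the minimality of $m$ no element $d_{m-1}$ exists producing the forbidden cocircuit, and by our disposal of the bowtie-extension case above $M$ has no bowtie $(T_n,T_{n+1},\{x,c_n,a_{n+1},b_{n+1}\})$. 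Thus every hypothesis of Lemma~\ref{killbadguy} is met, and that lemma delivers a quick win, an open-rotor-chain or ladder win, or an enhanced-ladder win—each contradicting one of our standing assumptions.

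The main obstacle is the left-propagation step: at each stage one must run the appropriate orthogonality and distinctness argument (calling on Lemma~\ref{drossdistinct}) to ensure that the newly introduced $d_{i-1}$ is genuinely new and that the next $5$- or $4$-cocircuit at position $i-1$ is forced; the other substantial work is the bowtie-ring analysis in Paragraph 2, which must handle the possibility that the new cocircuit $D^*$ meets $T_j$ in a non-standard pair of elements, necessitating an application of Lemma~\ref{bowwow} and a brief reduction to the standard ring form before Lemma~\ref{btring} may be applied.
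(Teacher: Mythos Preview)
Your overall architecture---assume (i), (ii), (iv), (v) fail, use Lemma~\ref{stringybark}, apply Lemma~\ref{6.3rsv} at the right end, reduce to the Figure~\ref{badguy} situation, and finish with Lemma~\ref{killbadguy}---matches the paper's. But the argument has a genuine gap in the second paragraph, where you dispose of the case that $T_n$ lies in a further bowtie.

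When Lemma~\ref{ring} yields $T'=T_j$ for some $j\le n-2$, you claim that Lemma~\ref{btring} applied to the ring $(T_j,D_j,\dots,T_n,D^*)$ gives ``a bowtie-ring win (outcome (ii)) or a quick win''. This fails on two counts. First, outcome~(ii) of the present lemma is explicitly only ``an open-rotor-chain win or a ladder win''; a bowtie-ring win is \emph{not} among the permitted conclusions here, so you cannot discharge the case this way. Second, Lemma~\ref{btring} does not reduce to just those two alternatives: its part~(ii) (a $4$-fan with a specific shape) and part~(iii) (a small cocircuit) must each be handled, and you give no argument for them. The paper avoids Lemma~\ref{btring} entirely: once $T'=T_j$ with $j\ge 1$, one checks $\{a_{n+1},b_{n+1}\}=\{a_j,b_j\}$ (since $c_j\notin\{a_{n+1},b_{n+1}\}$ by the $N$-minor constraints), takes the symmetric difference of the new cocircuit with $D_{j-1}$ to obtain the cocircuit $\{b_n,c_n,b_{j-1},c_{j-1}\}$, and concludes that $M\ba c_0,\dots,c_n/b_n$ has an $N$-minor, contradicting Lemma~\ref{stringybark}. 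That is the missing idea.

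Two smaller points. Your treatment of Lemma~\ref{6.3rsv}(iv) is too quick: the paper first observes that $M\ba a_{n-1}$ being internally $4$-connected forces $n=1$ (otherwise $(T_{n-2},T_{n-1},D_{n-2})$ gives a $4$-fan in $M\ba a_{n-1}$), and then runs a specific chain of isomorphisms using the $4$-fan $(a_0,7,8,x)$ in $M\ba c_0,c_1$ to reach $N\preceq M\ba a_0$; your ``short argument using Lemma~\ref{2.2}'' does not capture this. Finally, in your left-propagation step, the conclusion ``if $m=0$ then outcome~(iii) holds'' needs Lemma~\ref{dross}, not just Lemma~\ref{drossdistinct}: having the full ladder segment of Figure~\ref{drossfigi} is not yet Figure~\ref{drossfigiinouv}; you must invoke Lemma~\ref{dross}(iii) to produce the extra $4$-fan $(\alpha,\beta,a_0,d_0)$ in $M\ba c_0$ that supplies the $\alpha,\beta$ at the left end.
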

\begin{proof}
Suppose that none of (i), (iii), or (iv) holds.  Then, by Lemma~\ref{stringybark},  $M\ba c_0,c_1,\dots ,c_n$ has an $N$-minor, but 
$M\ba c_0,c_1,\dots ,c_i/b_i$ has no $N$-minor for all $i$ in $\{1,2,\dots, n\}$, and    $M\ba c_0,c_1,\dots ,c_n/a_n$ has no $N$-minor.

First we show that 

\begin{sublemma} 
\label{nobthere}
$M$ has no bowtie of the form $(T_n,\{a_{n+1},b_{n+1},c_{n+1}\},\{x,c_n,a_{n+1},b_{n+1}\})$, where $x\in\{a_n,b_n\}$.  
\end{sublemma}

Since $T_0, D_0,T_1,D_1,\dots ,T_n$ is a right-maximal string of bowties, we may assume that $n \ge 2$ otherwise \ref{nobthere} certainly holds. By the symmetry between $a_n$ and $b_n$, it suffices to prove that  $(T_n,T_{n+1},D_n)$ is not a bowtie.  Assume the contrary. Observe that $\{a_{n+1},b_{n+1}\}$ avoids $\{c_0,c_1,\ldots,c_n\}$ otherwise 
$M\ba c_0,c_1,\dots ,c_n$ has $b_n$ in a $1$- or $2$-element cocircuit, so $M\ba c_0,c_1,\dots ,c_n/b_n$ has an $N$-minor; a \cn. 

To enable us to apply Lemma~\ref{ring}, we now show that $a_0 \neq c_n$. Assume otherwise. Then, by \ort\ and symmetry, we may assume that $a_{n+1} \in \{b_0,c_0\}$. From the last paragraph, we know that $a_{n+1} \neq c_0$, so $a_{n+1} = b_0$. Now $c_0 \notin T_{n+1}$ otherwise $T_{n+1} = T_0$, so $c_n \in T_{n+1}$; a \cn. 
By \ort\ between $D_0$ and $T_{n+1}$, we see that  $\{a_1,b_1\}$ meets $\{b_{n+1},c_{n+1}\}$. If $b_{n+1} \in \{a_1,b_1\}$, then \ort\ between $T_1$ and the cocircuit $\{b_n,c_n,a_{n+1},b_{n+1}\}$ gives the \cn\ that $T_1$ meets $\{b_n,c_n\}$. Thus $c_{n+1} \in \{a_1,b_1\}$. Then $M\ba c_0$ has $T_{n+1} \cup T_1$ as a $5$-fan; a \cn. We conclude that $a_0 \neq c_n$.

By Lemma~\ref{ring}, $T_{n+1}=T_j$ for some $j$ in $\{0,1,\dots ,n-2\}$ otherwise we contradict the fact that   $T_0, D_0,T_1,D_1,\dots ,T_n$ is a right-maximal string of bowties. 
If $j=0$, then the hypothesis forbidding $(T_n,T_0, \{x,c_n,a_0,b_0\})$ from being a bowtie implies  that $\{a_0,b_0\}\neq \{a_{n+1},b_{n+1}\}$, so $c_0\in\{a_{n+1},b_{n+1}\}$; a \cn. Thus $j$ is in $\{1,2,\dots ,n-2\}$, and $n\geq 3$.  
Since $\{a_{n+1},b_{n+1}\}$ avoids $c_j$,  we see that $\{a_{n+1},b_{n+1}\}=\{a_j,b_j\}$, and $\{b_n,c_n,a_{n+1},b_{n+1}\}\bigtriangleup \{b_{j-1},c_{j-1},a_j,b_j\}$, which  is   $\{b_n,c_n,b_{j-1},c_{j-1}\}$, is a cocircuit. Thus $M\ba c_0,c_1,\dots, c_n/b_n$ has an $N$-minor; a \cn\ to Lemma~\ref{stringybark}. We conclude that \ref{nobthere} holds.

We now  show that 

\begin{sublemma}
\label{newboater} 
up to relabelling $a_n$ and $b_n$, the matroid $M$ has elements $d_{n-1}$ and $d_n$ such that $\{c_{n-1},d_{n-1},a_n\}$ is a triangle and $\{d_{n-1},a_n,c_n,d_n\}$ is a cocircuit.  
\end{sublemma}

We shall apply Lemma~\ref{6.3rsv} to the bowtie $(T_{n-1},T_n,D_{n-1})$. By \ref{nobthere}, $\{a_{n},b_{n},c_{n}\}$ is not the central triangle of a quasi rotor of the form described in Lemma~\ref{6.3rsv}, and part (ii) of that lemma does not hold. Moreover, as $M\ba c_n$ has an $N$-minor, it is not \ifc\ otherwise part (i) of the current lemma holds. We deduce that (iii) or (iv) of Lemma~\ref{6.3rsv}   holds. 

Suppose that part (iv) of Lemma~\ref{6.3rsv}  holds. Then, as $M\ba a_{n-1}$ is \ifc, we must have that $n = 1$ and that $M$ has  a triangle $\{a_0,7,8\}$   and a cocircuit $\{x,c_1,7,8\}$  for some $x\in\{a_1,b_1\}$.  Then, as part (i) of the current lemma does not hold, we deduce that $M\ba a_0$ has no $N$-minor. 
Now $M\ba c_0,c_1$ has $(a_0,7,8,x)$ as a $4$-fan, so $M\ba c_0,c_1,a_0$ or $M\ba c_0,c_1/x$ has an $N$-minor.  
The former gives the \cn\ that $M\ba a_0$ has an $N$-minor. Thus  the latter holds.  
Let $y$ be the element in $\{a_1,b_1\}-x$.  
As $M\ba c_0,c_1/x\cong M\ba c_0,y/x\cong M\ba c_0,y/b_0\cong M/b_0\ba a_0,y$, we deduce  that $M\ba a_0$ has an $N$-minor; a \cn.  We conclude that part (iv) of Lemma~\ref{6.3rsv} does not hold.

Finally, suppose that part (iii) of Lemma~\ref{6.3rsv}   holds. Then, up to relabelling $a_n$ and $b_n$, the matroid $M$ has elements $d_{n-1}$ and $d_n$ that are not in $T_{n-1} \cup T_n$ such that $\{u,d_{n-1},a_n\}$ is a triangle and $\{d_{n-1},a_n,c_n,d_n\}$ is a cocircuit for some $u$ in $\{b_{n-1},c_{n-1}\}$. If $u = c_{n-1}$, then \ref{newboater} holds. Thus we may  suppose that $u = b_{n-1}$. Then $n> 1$ otherwise we obtain the \cn\ that $M\ba c_0$ has a $5$-fan. By \ort\ between the triangle $\{b_{n-1},d_{n-1},a_n\}$ and the cocircuit $D_{n-2}$, we deduce that $d_{n-1} \in \{b_{n-2},c_{n-2}\}$. Then, by \ort\ between $T_{n-2}$ and the cocircuit $\{d_{n-1},a_n,c_n,d_n\}$, we see that either $d_n \in T_{n-2}$, or $T_{n-2}$ meets $\{a_n,c_n\}$. The latter implies that $n =2$ and $a_0 = c_2$. Thus both cases give the  \cn\ that $\lambda(T_{n-2} \cup T_{n-1} \cup T_n) \le 2$. We conclude that \ref{newboater} holds.

Next we show that 
\begin{sublemma}
\label{nge2}
$n \ge 2$.
\end{sublemma}

Suppose that $n = 1$. By \ref{newboater}  $M$ contains the  configuration shown in Figure~\ref{onehorned}.  We now apply 
Lemma~\ref{minidrossrsv}. Since  part (i) of the current lemma does not hold but \ref{nobthere} does, we deduce that neither part (i) nor part (ii) of Lemma~\ref{minidrossrsv} holds. Moreover, part (iii)   of Lemma~\ref{minidrossrsv} does not hold since if $\{b_0,b_1\}$ is contained in a triangle, then we obtain the \cn\ that $M\ba c_0$ has a $5$-fan containing this triangle and $\{a_1,c_1\}$.  If part (iv) of Lemma~\ref{minidrossrsv} holds, then part (iii) of the current lemma holds. It remains to consider the case when part (v) of Lemma~\ref{minidrossrsv} holds, that is, when $M\ba c_0,c_1$ has a $4$-fan of the form $(y_1,y_2,y_3,b_1)$.  
Then $M$ has a cocircuit $C^*$  such that $\{y_2,y_3,b_1\}\subsetneqq C^*\subseteq \{y_2,y_3,b_1,c_0,c_1\}$.  
Then~\ref{nobthere} implies that $c_0\in C^*$, so \ort\ implies that $\{y_2,y_3\}$ meets $\{a_0,b_0\}$ and $\{d_0,a_1\}$. If $\{y_2,y_3\} \neq \{b_0,a_1\}$, then 
 $\lambda (T_0\cup T_1\cup d_0)\leq 2$; a \cn.  Thus $\{y_2,y_3\} = \{b_0,a_1\}$. Hence, by \ort, $y_1 = d_1$, so  $\lambda (T_0\cup T_1\cup d_0\cup d_1)\leq 2$; a \cn.
We conclude that \ref{nge2} holds.

We  show next that 
\begin{sublemma}
\label{signpost}
$M$ does not contain  the ladder segment shown  in Figure~\ref{drossfigi}, nor does $M$ contain the ladder segment in Figure~\ref{drossfigi} after switching the labels on $a_n$ and $b_n$.
\end{sublemma}

By the symmetry between $a_n$ and $b_n$ in the lemma statement, it suffices to show that $M$ does not contain the first of these ladder segments. Assume the contrary. We show first that the elements in this ladder segment are distinct. If not, then Lemma~\ref{drossdistinct} implies that either $\{b_n,c_n,c_0,d_0\}$ is a cocircuit of $M$, or $(a_0,b_0)=(d_{n-1},d_n)$.  
The former implies that $M\ba c_0,c_1,\dots ,c_n$ has $\{b_n,d_0\}$ as a cocircuit, so $M\ba c_0,c_1,\dots ,c_n/b_n$ has an $N$-minor; a \cn.  
The latter implies that $(T_n,T_0,\{a_0,b_0,a_n,c_n\})$ is a bowtie; a \cn\ to~\ref{nobthere}.  
We conclude that the elements in the ladder segment are distinct.  

We now apply Lemma~\ref{dross}. Since $M\ba c_0,c_1,\dots,c_n$ has an $N$-minor, if it is \ifc, then part (ii) of the current lemma holds. Thus we may assume that part (i) of Lemma~\ref{dross} does not hold. Moreover, by \ref{nobthere}, part (ii) of Lemma~\ref{dross} does not hold.
 Also, part (iv) of  Lemma~\ref{dross} does not hold otherwise 
$M$ is the cycle matroid of a quartic M\"obius ladder as shown in Figure~\ref{drossfigii} and we get a \cn\ to \ref{nobthere}. 
We deduce that part (iii) of Lemma~\ref{dross} holds; that is, either  
$M\ba c_0$ has  a $4$-fan of the form $(\al,\be,a_0,d_0)$, or $M\ba c_n$ has  a $4$-fan of the form $(y_1,y_2,y_3,b_n)$  
The latter implies that $(\{a_n,b_n,c_n\},\{y_1,y_2,y_3\},\{y_2,y_3,b_n,c_n\})$ is a bowtie. This gives a \cn\ to~\ref{nobthere}, so the former holds. Hence so does part (iii) of the current lemma; a \cn. Thus \ref{signpost} holds. 
 
By \ref{newboater}, $M$ contains one of the configurations in Figure~\ref{badguy} with $m = n-1$. 
By \ref{signpost},  if we take $m$ to be as small as possible such that $M$ contains one of the 
configurations in Figure~\ref{badguy}, then $m > 0$. Moreover, the hypotheses of 
Lemma~\ref{killbadguy} hold.  It follows, by that lemma, that (ii) or (v) of the current lemma holds.
\end{proof}

Next we note a helpful property of bowtie rings.

\begin{lemma}
\label{lordoftherings}
Let  $(T_0,D_0,T_1,D_1,\dots, T_n,\{b_n,c_n,a_0,b_0\})$ be a ring of bowties in a matroid $M$. Then 
$$M\ba c_{0},c_{1},\dots c_n/a_1 \cong M\ba a_{0},b_{1},a_2,a_3,\dots a_n/b_2.$$
\end{lemma}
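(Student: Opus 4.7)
My plan is to establish the isomorphism as a composition of three standard matroid equivalences, each arising directly from the triangle/cocircuit structure of the bowtie ring.

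First, I would apply the isomorphism $\varphi$ constructed in the proof of Lemma~\ref{ringletsarecute}(ii): the bijection $\varphi: E(M) - \{c_0, \ldots, c_n\} \to E(M) - \{a_0, \ldots, a_n\}$ given by $\varphi(a_i) = c_i$, $\varphi(b_i) = b_{i+1 \bmod (n+1)}$, and identity on all other elements, is a matroid isomorphism between $M\ba c_0, \ldots, c_n$ and $M\ba a_0, \ldots, a_n$. Since $\varphi(a_1) = c_1$, applying $\varphi$ to the contraction gives
$$M\ba c_0, c_1, \ldots, c_n / a_1 \cong M\ba a_0, a_1, \ldots, a_n / c_1.$$

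Second, because $T_1 = \{a_1, b_1, c_1\}$ is a triangle of $M$, the pair $\{a_1, b_1\}$ is parallel in $M/c_1$. The standard swap isomorphism for parallel pairs, combined with commutation with the further deletions of $a_0, a_2, \ldots, a_n$, would yield
$$M\ba a_0, a_1, \ldots, a_n / c_1 \cong M\ba a_0, b_1, a_2, a_3, \ldots, a_n / c_1$$
via the transposition $a_1 \leftrightarrow b_1$ (identity on all other elements).

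Third, the cocircuit $D_1 = \{b_1, c_1, a_2, b_2\}$ of $M$ restricts to $\{c_1, b_2\}$ in the ground set of $M\ba a_0, b_1, a_2, \ldots, a_n$, so $\{c_1, b_2\}$ is a disjoint union of cocircuits of that restriction. The dual of the parallel-pair swap (the fact that a $2$-cocircuit $\{x, y\}$ in a matroid $N$ yields $N/x \cong N/y$) then gives
$$M\ba a_0, b_1, a_2, \ldots, a_n / c_1 \cong M\ba a_0, b_1, a_2, \ldots, a_n / b_2$$
via the transposition $c_1 \leftrightarrow b_2$, and this is precisely the right-hand side. Composing the three isomorphisms completes the proof. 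There is no substantial obstacle; the only point requiring minor care is the degenerate case in step three where $\{c_1, b_2\}$ is not a minimal cocircuit, which forces both $c_1$ and $b_2$ to be coloops of the restriction; the transposition still supplies an isomorphism in that case, since contracting either coloop merely deletes it, so the argument goes through unchanged.
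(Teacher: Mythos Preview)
Your Steps 2 and 3 are correct, and your handling of the degenerate case in Step 3 is right: if $c_1$ were a coloop of $M\ba a_0,b_1,a_2,\ldots,a_n$ but $b_2$ were not, any circuit through $b_2$ avoiding the deleted set would meet the cocircuit $D_1$ in a subset of $\{c_1,b_2\}$ and hence, by orthogonality (valid in every matroid), in both elements, putting $c_1$ in that circuit and contradicting its being a coloop. So either $\{c_1,b_2\}$ is a genuine series pair or both elements are coloops, and the swap goes through either way.

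The one gap is in Step 1. Lemma~\ref{ringletsarecute} is stated and proved only for \emph{internally $4$-connected binary} matroids---the argument for \ref{phiphi3} uses that a cocircuit restricted under deletion is a disjoint union of cocircuits, which relies on the binary cocycle-space structure---whereas Lemma~\ref{lordoftherings} is stated for an arbitrary matroid. So invoking $\varphi$ proves the result only under that extra hypothesis. This is harmless for the paper's applications (Lemmas~\ref{ibhaz} and~\ref{hazwaste} already sit in the internally $4$-connected binary setting), but it does not establish Lemma~\ref{lordoftherings} at the stated level of generality.

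The paper's own proof avoids this by staying entirely at the level of elementary parallel and series swaps arising from the triangles $T_i$ and the $4$-cocircuits $D_i$, walking around the ring one step at a time; each such swap is valid in an arbitrary matroid by exactly the orthogonality argument you used in Step 3. Your route is shorter and more conceptual---one global relabelling plus two local swaps instead of roughly $2n$ local swaps---but it buys that brevity by leaning on a lemma whose proof, as written, needs the binary hypothesis.
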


\begin{proof}
We have
\begin{align*}
M\ba c_{0},c_{1},\dots c_n/a_1 &\cong M\ba c_{0},b_{1},c_2,c_3,\dots c_n/a_1\\
&\cong M\ba c_{0},b_{1},c_2,c_3,\dots c_n/b_0\\
&\cong M\ba a_{0},b_{1},c_2,c_3,\dots c_n/b_0\\
&\cong M\ba a_{0},b_{1},c_2,c_3,\dots c_n/b_n\\
&\cong M\ba a_{0},b_{1},c_2,c_3,\dots c_{n-1},a_n/b_{n-1}\\
&\hspace{.2 cm} \vdots \\
&\cong M\ba a_{0},b_{1},a_2,a_3,\dots a_n/b_2.
\end{align*}
\end{proof}

We showed in Lemma~\ref{btring} that, when we have a bowtie ring, we may obtain one of the structures shown in Figure~\ref{ibis_n_haz}. 
The next two lemmas deal with these two structures.

\begin{lemma}
\label{ibhaz}
Let $M$ and $N$ be \ifc\ binary matroids such that  $|E(M)|\geq 13$ and $|E(N)|\geq 7$.   
Suppose that $M$ has a  bowtie ring $(T_0,D_0,T_1,D_1,\dots, T_n,\{b_n,c_n,a_0,b_0\})$,   that $M\ba c_0$ is \ffsc, that $M\ba c_0,c_1$ has  an $N$-minor, and that $M\ba c_0,c_1/b_1$ does not have  an $N$-minor.  
Let  $M$ have a bowtie $(T_j,S_1,C_0)$  for some $j$ in $\{1,2,\dots, n\}$ where $C_0$ is $\{z,c_j,e_1,f_1\}$ for some $z$ in $\{a_j,b_j\}$ and $S_1$ is a triangle $\{e_1,f_1,g_1\}$ that avoids $T_0\cup T_1\cup \dots \cup T_n$.  Let $T_0,D_0,T_1,D_1,\dots ,T_j,C_0,S_1,C_1,\dots ,S_\ell$ be a right-maximal bowtie string where, for all $i$ in $\{2,3,\dots, \ell\}$, the set $S_i$ is a triangle, $\{e_i,f_i,g_i\}$, and $C_{i-1}$ is a cocircuit, $\{f_{i-1},g_{i-1},e_{i},f_{i}\}$. 
If   $M$ has   $\{y,g_{\ell},a_0,b_0\}$ as a cocircuit for some $y$ in $\{e_{\ell},f_{\ell}\}$, then 
 $j =1$ and $z=a_j$, and both of the matroids $M\ba c_{0},c_{1},\dots c_n/a_1$ and $M\ba a_{0},b_{1},a_2,a_3,\dots a_n/b_2$ have $N$-minors.  
\end{lemma}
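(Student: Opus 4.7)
The plan is to use the closing cocircuit together with the given bowtie string to build a second bowtie ring $\tilde R$ whose triangles are $T_0,T_1,\ldots,T_j,S_1,\ldots,S_\ell$, and then exploit the interplay between $\tilde R$ and the original bowtie ring. I would first verify that the sequence
$T_0,D_0,T_1,\ldots,T_j,C_0,S_1,C_1,\ldots,S_\ell,\{y,g_\ell,a_0,b_0\}$
is indeed a bowtie ring, with distinctness coming from the right-maximal bowtie string hypothesis and the condition $S_1\cap(T_0\cup\cdots\cup T_n)=\emptyset$, together with orthogonality checks that the subsequent $S_i$ cannot coincide with elements already present.

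Next I would force $z=a_j$. Suppose $z=b_j$; then $D_j$ and $C_0$ share the pair $\{b_j,c_j\}$, so $D_j\btu C_0=\{a_{j+1},b_{j+1},e_1,f_1\}$ is a disjoint union of cocircuits of $M$, and by $3$-connectedness it must be a single $4$-cocircuit, producing a bowtie $(T_{j+1},S_1,\{a_{j+1},b_{j+1},e_1,f_1\})$. Since the middle of $T_1$ in the new bowtie string is still $b_1$, Lemma~\ref{stringybark} applies to the new string and yields that $M':=M\ba c_0,\ldots,c_j,g_1,\ldots,g_\ell$ has an $N$-minor while $M'/y$ does not; the extra bowtie together with the closing cocircuit then produces a $4$-fan in $M'$ whose Lemma~\ref{2.2} reduction is forced to be $M'/y$, contradicting the prohibition. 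An analogous argument forces $j=1$ when $z=a_j$: Lemma~\ref{stringybark} again gives that $M'$ has an $N$-minor while $M'/y$ does not, the closing cocircuit becomes the triad $\{y,a_0,b_0\}$ in $M'$, its symmetric difference with $D_n=\{b_n,c_n,a_0,b_0\}$ is (via orthogonality) the triad $\{y,b_n,c_n\}$, and together with $T_n$ this yields the $4$-fan $(a_n,b_n,c_n,y)$; Lemma~\ref{2.2} forces $M'\ba a_n$ to have an $N$-minor, and iterating this back through $T_{n-1},T_{n-2},\ldots,T_{j+1}$ (each step producing a new $4$-fan from the triad $\{b_{i-1},a_i,b_i\}$ and the triangle $T_{i-1}$) eventually collides with one of the contraction prohibitions of Lemma~\ref{stringybark} applied to the original bowtie string, giving the desired contradiction.

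Finally, with $j=1$ and $z=a_1$ established, the two matroids in the conclusion are isomorphic by Lemma~\ref{lordoftherings} applied to the original ring, so it suffices to show one has an $N$-minor. In $M\ba c_0,c_1$, the triangle $S_1$ together with the triad $\{a_1,e_1,f_1\}$ (obtained from $C_0$ by deleting $c_1$) forms a $4$-fan $(g_1,e_1,f_1,a_1)$. Lemma~\ref{2.2} gives that $M\ba c_0,c_1/a_1$ or $M\ba c_0,c_1,g_1$ has an $N$-minor; if only the latter held, then the back-iteration argument of the previous paragraph applied to the new bowtie string would force $M\ba c_0,c_1,\ldots,c_n/a_1$ to have no $N$-minor, and since this is a minor of $M\ba c_0,c_1/a_1$, the latter would also have no $N$-minor---a contradiction. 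Hence $M\ba c_0,c_1/a_1$ has an $N$-minor, and by iterating Lemma~\ref{2.2} along the $4$-fans $(c_i,b_i,a_i,b_{i-1})$ in $M\ba c_0,\ldots,c_{i-1}/a_1$ for $i=2,\ldots,n$ (using Lemma~\ref{stringybark}'s prohibition on $M\ba c_0,\ldots,c_{i-1}/b_{i-1}$ to rule out the contraction alternative at each step), one obtains the required $N$-minor of $M\ba c_0,c_1,\ldots,c_n/a_1$. The hardest step will be the back-iteration in the $j\ge2$ case: each level requires that the symmetric difference of two cocircuits is a single cocircuit (verified via careful orthogonality arguments), and the resulting $4$-fan reduction must land on a contraction explicitly prohibited by Lemma~\ref{stringybark}.
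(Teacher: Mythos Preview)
Your plan diverges from the paper's proof and contains a genuine gap. The back-iteration through $T_n,T_{n-1},\ldots,T_{j+1}$ in $M'=M\ba c_0,\ldots,c_j,g_1,\ldots,g_\ell$ tacitly assumes that these triangles (and the cocircuits linking them) survive intact after the deletions. They need not. Right-maximality of the string $T_0,\ldots,T_j,S_1,\ldots,S_\ell$ forces $S_\ell$ to meet $T_n$: if $S_\ell$ avoided $T_n$, then $\{y,g_\ell,a_0,b_0\}\btu\{b_n,c_n,a_0,b_0\}=\{y,g_\ell,b_n,c_n\}$ would be a $4$-cocircuit and you could append $T_n$ to the string. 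More generally the paper shows that for the least $m$ with $S_m$ meeting $T_0\cup\cdots\cup T_n$ one has $S_m=T_p$ for some $p$ in $\{j+1,\ldots,n\}$. Hence $g_m\in T_p$, so $T_p$ is missing an element in $M'$ and your chain of $4$-fans breaks at that point. Your ``orthogonality checks that the subsequent $S_i$ cannot coincide with elements already present'' only rules out collisions with elements of the string itself, not with $T_{j+1},\ldots,T_n$.

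The paper exploits precisely this forced overlap. Having shown $S_m=T_p$, it sets $X=\{c_0,\ldots,c_n\}\setminus\{c_{j-1},c_j\}$ and applies Lemma~\ref{stringybark} to the string $T_{j-1},D_{j-1},T_j,C_0,S_1,\ldots,S_{m-1}$ inside $M\ba X$. The alternative in which $N\preceq M\ba c_0,\ldots,c_n,g_1,\ldots,g_{m-1}$ but not after contracting $f_{m-1}$ is killed because $S_m=T_p$ makes $C_{m-1}$ (or $C_{m-1}\btu D_{p-1}$) produce a cocircuit of size at most two containing $f_{m-1}$ in that matroid. The remaining alternative gives $N\preceq M\ba c_0,\ldots,c_n/z$ directly; Lemma~\ref{stringybark} on the original ring then forces $z\neq b_j$ and $j\not\ge 2$, so $j=1$ and $z=a_1$, and Lemma~\ref{lordoftherings} finishes. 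This is shorter and bypasses any iterated $4$-fan bookkeeping. Two further loose ends in your outline: the $z=b_j$ paragraph never identifies a $4$-fan whose forced reduction is $M'/y$ (the ``extra bowtie'' and the closing cocircuit live at opposite ends of the string), and the case $j=n$ is not handled (the paper disposes of it first, using that $S_\ell$ avoids $T_n$ when $j=n$).
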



\begin{proof} Assume that the lemma fails. By symmetry, we may suppose that  $M$ has  $\{f_{\ell},g_{\ell},a_0,b_0\}$ as a 
cocircuit.  Now either this cocircuit equals $\{b_n,c_n,a_0,b_0\}$, or the symmetric difference of these two cocircuits is 
$\{f_\ell,g_\ell,b_n,c_n\}$ and the last set is a cocircuit of $M$. 
By 
Lemma~\ref{stringybark}, $M\ba c_0,c_1,\dots ,c_n$ has an $N$-minor but $M\ba c_0,c_1,\dots, c_i/b_i$ has no $N$-minor for all $i$ in $\{1,2,\dots ,n\}$, and $M\ba c_0,c_1,\dots, c_h/b_h$ has no $N$-minor for all $h$ in $\{2,3,\dots ,n\}$. We  show first that 

\begin{sublemma}
\label{jlen-1}
$j \leq n-1$.
\end{sublemma} 

Suppose $j = n$. By Lemma~\ref{stringybark}, $M\ba c_0,c_1,\dots ,c_n,g_1,\dots,g_{\ell}$ has an $N$-minor. 
Now  $S_{\ell}$ avoids $T_n$, so $\{f_\ell,g_\ell,b_n,c_n\}$ is a cocircuit of $M$. Thus  $M\ba c_0,c_1,\dots ,c_n,g_1,\dots,g_{\ell}$ has 
$b_n$ in a cocircuit of size at most two, so $N\preceq M\ba c_0,c_1,\dots ,c_n/b_n$; a \cn. Hence \ref{jlen-1} holds. 

Next we show that 

\begin{sublemma}
\label{bowtieintn}
$S_{\ell}$ meets $T_n$. 
\end{sublemma} 

Assume that \ref{bowtieintn} fails. 
Then $\{f_\ell,g_\ell,b_n,c_n\}$  is a cocircuit of $M$, and we can adjoin this cocircuit and $T_n$ to the end of the bowtie string 
$T_0,D_0,T_1,D_1,\dots ,T_j,C_0,S_1,C_1,\dots ,S_\ell$ to give a \cn. Thus \ref{bowtieintn} holds.  

Now choose $m$ to be the least integer such that   $S_m$ meets $T_0 \cup T_1 \cup \dots \cup T_n$.   Since $S_1 \cup S_2 \cup \dots \cup S_{\ell}$ avoids $T_0 \cup T_1 \cup \dots \cup T_j$, we see that $S_m$ meets $T_p$ for some $p$  in $\{j+1,j+2,\dots,n\}$.
We show next that  

\begin{sublemma}
\label{smtp}
$S_m = T_p$.
\end{sublemma}

By hypothesis, $m > 1$. 
First observe that if $\{e_m,f_m\}$ meets $T_p$, then \ref{smtp} follows by \ort\ between $T_p$ and $C_{m-1}$. We may now assume that $g_m \in T_p$. Then \ort\ between $S_m$ and one of the cocircuits $D_{p-1}$ and $D_p$ implies that  $\{e_m,f_m\}$ meets $T_{p-1} \cup T_p \cup T_{p+1}$ where we interpret the subscripts on   $D_i$ and $T_i$ modulo $n+1$. Then 
$\{e_m,f_m\}$ meets $T_{p-1}, T_p$, or $T_{p+1}$. Thus, by   the first part of the argument, we see that $S_m \in \{T_{p-1},T_p,T_{p+1}\}$. As $g_m \in T_p$, we deduce that $S_m = T_p$, so \ref{smtp} holds. 

Define $X =   \{c_0,c_1,\dots,c_n\} - \{c_{j-1},c_j\}$. Clearly $T_{j-1},D_{j-1},T_j, C_0,\linebreak S_1,\dots, C_{m-1},S_{m-1}$ is a bowtie string in  $M\ba X$. Moreover, 
 $N\preceq M\ba X\ba c_{j-1},c_j.$  Applying Lemma~\ref{stringybark} 
gives that 
 either 
\begin{itemize}
\item[(a)] $N\preceq M\ba X\ba c_{j-1},c_j/z$;  or 
\item[(b)] $N\preceq M\ba X\ba c_{j-1},c_j,g_1,\dots,g_{m-1}$ and 
$$N\not\preceq M\ba X\ba c_{j-1},c_j,g_1,\dots,g_{m-1}/f_{m-1}.$$
\end{itemize}

Consider the second case. Then  $M\ba c_0,c_1,\dots,c_n,g_1,g_2,\dots, g_{m-1}$ has an $N$-minor but does not have a cocircuit  containing $f_{m-1}$ and having at most  two elements. As $S_m = T_p$, either $c_p \in \{e_m,f_m\}$, or $\{e_m,f_m\} = \{a_p,b_p\}$. Thus either $C_{m-1}$ or $C_{m-1} \btu D_{p-1}$ contains a $1$- or $2$-element cocircuit   of 
 $M\ba c_0,c_1,\dots,c_n,g_1,g_2,\dots, g_{m-1}$ containing $f_{m-1}$. Hence (b) does not hold.
 
We now know that (a) holds. Then we get a \cn\ unless $j = 1$ and $z= a_j$. In the exceptional case, the lemma follows using Lemma~\ref{lordoftherings}. 
\end{proof}

\begin{lemma}
\label{hazwaste}
Let $M$ and $N$ be \ifc\ binary matroids such that  $|E(M)|\geq 13$ and $|E(N)|\geq 7$.   
Suppose that $M$ has a  bowtie  ring $(T_0,D_0,T_1,D_1,\dots, T_n,\{b_n,c_n,a_0,b_0\})$, that $M\ba c_0$ is \ffsc, and that $M\ba c_0,c_1$ has an $N$-minor.  
Suppose $M$ has a bowtie $(\{a_j,b_j,c_j\},\{e_1,f_1,g_1\},\{z,c_j,e_1,f_1\})$  for some $j$ in $\{1,2,\dots, n\}$ and some $z$ in $\{a_j,b_j\}$, where $\{e_1,f_1,g_1\}$ avoids $T_0\cup T_1\cup \dots \cup T_n$.  
Then 
\begin{itemize}
\item[(i)] $M$ has a quick win; or 
\item[(ii)] $M$ has an open-rotor-chain win, a bowtie-ring win, or a ladder win; or 
\item[(iii)] $M$ has an enhanced-ladder win; or
\item[(iv)] $M\ba c_0,c_1/b_1$ has an $N$-minor; or 
\item[(v)] $j =1$ and $z=a_j$, and both of the matroids $M\ba c_{0},c_{1},\dots c_n/a_1$ and $M\ba a_{0},b_{1},a_2,a_3,\dots a_n/b_2$ have $N$-minors.    
\end{itemize}
\end{lemma}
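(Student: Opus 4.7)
The plan is to extend the bowtie $(T_j,S_1,C_0)$ into a right-maximal bowtie string that begins with $T_0,D_0,T_1$ and then applies either Lemma~\ref{ibhaz} or Lemma~\ref{reachtheend}, depending on whether the extension wraps back onto the ring via the edge $a_0,b_0$. I would first assume, for contradiction, that none of (i)--(v) holds; in particular, (iv) fails, so $M\ba c_0,c_1/b_1$ has no $N$-minor. Writing $S_i=\{e_i,f_i,g_i\}$ and $C_i=\{f_i,g_i,e_{i+1},f_{i+1}\}$, I would choose $\ell\ge 1$ maximal so that
\[
\mathcal{B}\,=\,T_0,D_0,T_1,D_1,\dots ,T_j,C_0,S_1,C_1,S_2,\dots ,C_{\ell-1},S_\ell
\]
is a bowtie string of $M$. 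Because $\{e_1,f_1,g_1\}$ avoids $T_0\cup\cdots\cup T_n$ by hypothesis and $j\ge 1$, the string $\mathcal{B}$ has length at least three, so its ``$c_0,c_1$'' coincide with those of the ring and Lemma~\ref{reachtheend}'s hypotheses on $M\ba c_0$ and $M\ba c_0,c_1$ are inherited from the hypotheses of the current lemma.

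I would then split into two cases according to whether $\mathcal{B}$ closes back against $T_0$. In the closing case, some $\{y,g_\ell,a_0,b_0\}$ with $y\in\{e_\ell,f_\ell\}$ is a cocircuit of $M$. Then all hypotheses of Lemma~\ref{ibhaz} are in force (the ring, the $\ffsc$-ness of $M\ba c_0$, the $N$-minor of $M\ba c_0,c_1$, the non-$N$-minor of $M\ba c_0,c_1/b_1$, the right-maximal string $\mathcal{B}$, and the closing cocircuit), and its conclusion is precisely outcome (v) of the present lemma.

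In the non-closing case, $(S_\ell,T_0,\{y,g_\ell,a_0,b_0\})$ is not a bowtie for any $y\in\{e_\ell,f_\ell\}$, so Lemma~\ref{reachtheend} applies to $\mathcal{B}$. Outcome (i) of that lemma gives our (i); outcome (ii) gives our (ii); outcome (v) gives our (iii); outcome (iv) is excluded since the length of $\mathcal{B}$ is at least three, forcing the first alternative, which directly contradicts our assumption that (iv) of the current lemma fails. The only outcome requiring further work is outcome (iii) of Lemma~\ref{reachtheend}, producing the configuration of Figure~\ref{drossfigiinouv} at the tail of $\mathcal{B}$ together with an $N$-minor of $M\ba c_0,c_1,\dots,c_j,g_1,\dots,g_\ell$.

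I expect this last subcase to be the main obstacle. Here I would argue that the tail ladder segment from Figure~\ref{drossfigiinouv} combines with the remaining bowtie-ring data (the cocircuits $D_j,D_{j+1},\dots,D_{n-1}$, the triangles $T_{j+1},\dots,T_n$, and the closing cocircuit $\{b_n,c_n,a_0,b_0\}$) to embed $M$ in one of the structures of Figure~\ref{laddery}(b) or Figure~\ref{btringfig}. Using the orthogonality and distinctness arguments already developed in Sections~\ref{qrsec}--\ref{ladderseg} (in particular Lemmas~\ref{drossdistinct} and~\ref{dross}, and the distinctness conclusions used in Lemma~\ref{reachtheend}), I would verify that the prescribed elements are distinct and that trimming the resulting ladder structure or bowtie ring--namely, deleting $\{c_0,c_1,\dots ,c_j,g_1,\dots ,g_\ell\}$--yields an \ifc\ matroid with an $N$-minor, giving outcome (ii). The delicate points will be ruling out coincidences between the new tail elements $d_{\ell-1},d_\ell,\alpha$ of Figure~\ref{drossfigiinouv} and the ring elements $T_{j+1}\cup\cdots\cup T_n$, and confirming that the added edge structure matches precisely one of the permitted cocircuit patterns of Figure~\ref{laddery}(b); this is where the bulk of the case-work will live, following the same orthogonality-with-known-cocircuits template used in the proofs of Lemmas~\ref{killthesnake} and~\ref{killbadguy}.
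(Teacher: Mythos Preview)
Your overall architecture---extend to a right-maximal string $\mathcal{B}$, split on whether it closes back to $T_0$, invoke Lemma~\ref{ibhaz} in the closing case and Lemma~\ref{reachtheend} otherwise---matches the paper. There are, however, two genuine problems.

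First, your elimination of outcome (iv) of Lemma~\ref{reachtheend} is incomplete. You argue that since $\mathcal{B}$ has length at least three, only the first alternative of reachtheend~(iv) can occur, namely that $M\ba c_0,c_1/b_1$ has an $N$-minor, contradicting the failure of our~(iv). But the ``$b_1$'' in reachtheend is the \emph{string's} $b_1$, not the ring's. When $j\ge 2$, or when $j=1$ and $z=b_1$, the two agree. But when $j=1$ and $z=a_1$, the string cocircuit at position~$1$ is $C_0=\{a_1,c_1,e_1,f_1\}$, so the string's $b_1$ is the ring's $a_1$, and reachtheend~(iv) yields only that $M\ba c_0,c_1/a_1$ has an $N$-minor. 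This is \emph{not} outcome~(iv) of the present lemma. The paper handles exactly this case with a separate sublemma: assuming $j=1$, $z=a_1$, and that the lemma fails, one shows $M\ba c_0,c_1/a_1$ has no $N$-minor by repeatedly pushing the $N$-minor around the ring using the $4$-fans $(c_{i+1},b_{i+1},a_{i+1},b_i)$ in $M\ba c_0,c_1/a_1\ba c_2,\dots,c_i$, eventually deducing that $M\ba c_0,c_1,\dots,c_n/a_1$ has an $N$-minor, and then applying Lemma~\ref{lordoftherings} to conclude that~(v) holds. You need this argument.

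Second, your plan for outcome~(iii) of Lemma~\ref{reachtheend} is both more elaborate than necessary and headed in the wrong direction. You propose to merge the ladder segment of Figure~\ref{drossfigiinouv} with the residual ring data into a structure from Figure~\ref{laddery}(b) or Figure~\ref{btringfig} and then trim. In fact this situation simply cannot occur: the paper derives an immediate contradiction from orthogonality. Concretely, the ladder segment furnishes a triangle $\{c_j,d_j,e_1\}$; orthogonality with the ring cocircuit $D_j=\{b_j,c_j,a_{j+1},b_{j+1}\}$ forces $d_j\in\{a_{j+1},b_{j+1}\}$. The ladder also provides either the cocircuit $\{d_j,d_{j+1},e_1,g_1\}$ or the cocircuit $\{d_{j-1},d_j,z',c_j\}$. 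In the first case, orthogonality with $T_{j+1}$ forces $d_{j+1}\in T_{j+1}$, giving $\lambda(T_j\cup T_{j+1}\cup S_1)\le 2$. In the second, the cocircuit meets both $T_j$ and $T_{j+1}$, so by Lemma~\ref{bowwow} it equals $D_j$; this pins down $d_{j-1},d_j$ and a final orthogonality check with $D_{j+1}$ using the triangle $\{c_j,e_1,d_j\}$ gives the contradiction. So no structural embedding is needed---the tail ladder is incompatible with the surviving ring cocircuit $D_j$.
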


\begin{proof} Assume that the lemma fails. 
First we show the following.
\begin{sublemma}
\label{jis1} When $z = a_j$ and $j = 1$, the matroid $M\ba c_0,c_1/a_1$ has no $N$-minor.
\end{sublemma} 

Assume  that $M\ba c_0,c_1/a_1$ has an $N$-minor. As $M\ba c_0,c_1/a_1$ has 
$(c_2,b_2,a_2,b_1)$ as a $4$-fan,  by Lemma~\ref{2.2}, $M\ba c_0,c_1/a_1/b_1$  or $M\ba c_0,c_1/a_1\ba c_2$ has an $N$-minor.  
The first option gives the \cn\ that $M\ba c_0,c_1/b_1$ has an $N$-minor, so we assume the latter.  
Then $M\ba c_0,c_1/a_1\ba c_2$ has an $N$-minor and has $(c_3,b_3,a_3,b_2)$ as a $4$-fan. 
By repeatedly applying this argument, we deduce that $M\ba c_0,c_1/a_1\ba c_2,c_3,\dots ,c_n$ has an $N$-minor.  
By Lemma~\ref{lordoftherings}, $M\ba c_{0},c_{1},\dots c_n/a_1 \cong M\ba a_{0},b_{1},a_2,a_3,\dots a_n/b_2$. 
Thus we obtain the \cn\ that (v)  holds, so \ref{jis1} is proved.  

Take a right-maximal bowtie string $T_0,D_0,T_1,D_1,\dots ,T_j,C_0,S_1,C_1,\dots ,S_\ell$, where, for all $i$ in $\{2,3,\dots, \ell\}$, the set $S_i$ is $\{e_i,f_i,g_i\}$, a triangle,  and  $C_{i-1}$ is $\{f_{i-1},g_{i-1},e_{i},f_{i}\}$, a cocircuit. By Lemma~\ref{ibhaz}, $M$ has no cocircuit $\{y,g_{\ell},a_0,c_0\}$ with $y$ in $\{e_{\ell},f_{\ell}\}$. Then, by Lemma~\ref{reachtheend} and \ref{jis1}, we deduce that $M$ contains the configuration in 
Figure~\ref{drossfigiinouv} with suitable adjustments to the labelling. Let $z'$ be the element of $\{a_j,b_j\} - z$. 
Now $M$ has $\{c_j,e_1,d_j\}$ as a triangle, and $S_1$ avoids $T_0\cup T_1\cup \dots \cup T_n$. Thus \ort\ between 
$\{c_j,e_1,d_j\}$ and $D_j$ implies that $d_j \in \{a_{j+1},b_{j+1}\}$. Furthermore, $M$ has $\{d_j,d_{j+1},e_1,g_1\}$ or 
$\{d_{j-1},d_j,z',c_j\}$ as a cocircuit. If $\{d_j,d_{j+1},e_1,g_1\}$ is a cocircuit, then \ort\ between this cocircuit and $T_{j+1}$ implies that $d_{j+1} \in T_{j+1}$. Thus $\lambda(T_j \cup T_{j+1} \cup S_1) \le 2$; a \cn. We deduce that $M$ has $\{d_{j-1},d_j,z',c_j\}$ as a cocircuit. As this cocircuit meets both $T_j$ and $T_{j+1}$,  Lemma~\ref{bowwow} implies that it equals $D_j$. Thus $z' = b_j$ and $\{d_{j-1},d_j\} = \{a_{j+1},b_{j+1}\}$. Then $M$ has  $\{c_{j-1},d_{j-1}, b_j\}$ as a triangle, and \ort\ between it and $D_{j+1}$ implies that $d_{j-1} = a_{j+1}$, so $d_j = b_{j+1}$. Thus the triangle $\{c_j,e_1,d_j\}$ meets  $D_{j+1}$ in a single element; a \cn.
\end{proof}

We now assemble the pieces already proved to finish the proof of  the main result.  

\begin{proof}[Proof of Theorem \ref{killcasek}]
For notational convenience, we suppose that our bowtie is $(T_0,T_1,D_0)$, where $M\ba c_0,c_1$ has an $N$-minor, $M\ba c_0$ is \ffsc, and $M\ba c_1$ is not \ffsc.  
Assume that the theorem fails. 

Lemma~\ref{6.3rsv} implies that $T_1$ is the central triangle of a quasi rotor 
$(T_0,T_1,T_2,D_0,\{y,c_1,a_2,b_2\},\{x,y,a_2\})$, for some $x$ in $\{b_0,c_0\}$ and some 
$y$  in $\{a_1,b_1\}$.  
By possibly switching the labels of $a_1$ and $b_1$, we may assume that $y=b_1$.  
If $x=b_0$, then $(a_2,b_0,b_1,a_1,c_1)$ is a $5$-fan in $M\ba c_0$; a \cn.  
Thus $x=c_0$. 

By Lemma~\ref{betweenbts}, $M$ has    a right-maximal rotor chain $((a_0,b_0,c_0),\linebreak (a_1,b_1,c_1),\dots ,(a_n,b_n,c_n))$ for some $n \ge 2$, and $M$ has a triangle $T_{n+1}$ and a $4$-cocircuit $D_n$ where $T_{n+1} = \{a_{n+1},b_{n+1},c_{n+1}\}$ and 
$D_n =\{b_n,c_n,a_{n+1},b_{n+1}\}$ such that $T_0,D_0,T_1,D_1,\dots ,T_n,  
D_n,T_{n+1}$ is a bowtie string, $M\ba c_0,c_1,\dots ,c_{n +1}$ has an $N$-minor, $M\ba c_n$ is \ffsc, and $M/b_i$ has no $N$-minor for all $i$ in $\{1,2,\dots,n-1\}$.  Thus $M$ contains one of the  structures shown in Figure~\ref{rotorchainplus}.

\begin{figure}[htb]
\centering
\includegraphics[scale=0.65]{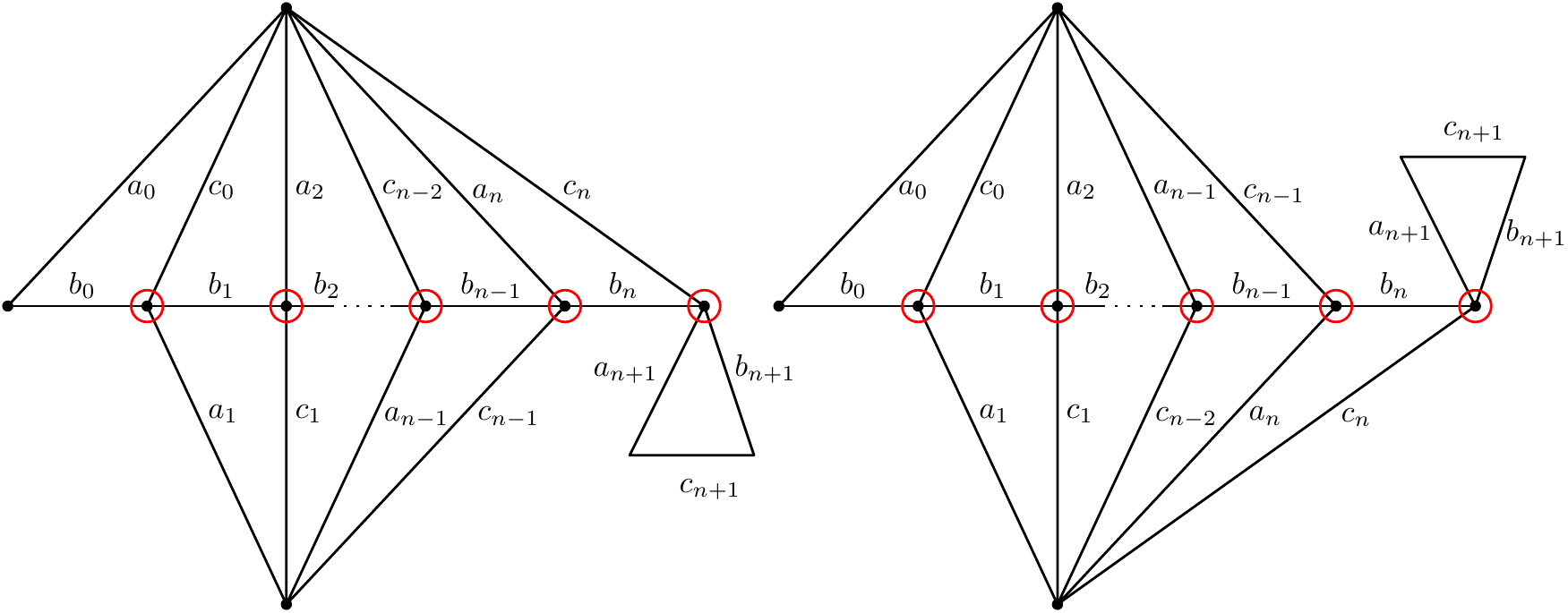}
\caption{ $n$ is even on the left and odd on the right.}
\label{rotorchainplus}
\end{figure}

\setcounter{theorem}{5}
\setcounter{sublemma}{0}

Take a right-maximal bowtie string $T_0,D_0,T_1,\dots ,T_n,D_n,T_{n+1},\dots ,T_k$.  
Then $k\geq n+1 \ge 3$.  By Lemma~\ref{rotorwin}, $M/a_1$ has no $N$-minor. Thus, by 
 Lemma~\ref{stringybark} and the final part of the previous paragraph, 
\begin{sublemma}
\label{notc}   
$M\ba c_0,c_1,\dots ,c_k$ has an $N$-minor, and $M/b_i$ has no $N$-minor for all  $i$ in $\{1,2,\dots ,n-1\}$. Moreover, for all $j$ in  $\{1,2,\dots ,k\}$, neither $M\ba c_0,c_1,\dots ,c_j/b_j$  nor  $M\ba c_0,c_1,\dots ,c_j/a_j$ has an $N$-minor. 
\end{sublemma}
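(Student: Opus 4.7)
The plan is to apply Lemma~\ref{stringybark} to the right-maximal bowtie string $T_0,D_0,T_1,D_1,\dots,T_k$ and read off \ref{notc} from its conclusion, supplementing with Lemma~\ref{betweenbts} and the preceding observation that $M/a_1$ has no $N$-minor. The second assertion of \ref{notc}, that $M/b_i$ has no $N$-minor for $i\in\{1,\dots,n-1\}$, is inherited verbatim from Lemma~\ref{betweenbts}, which was invoked in the paragraph preceding \ref{notc}, so that clause needs no separate argument.

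To invoke Lemma~\ref{stringybark}, I must verify its two hypotheses: that $M\ba c_0,c_1$ has an $N$-minor, and that $M\ba c_0,c_1/b_1$ has no $N$-minor. The first is one of the standing hypotheses of Theorem~\ref{killcasek}. For the second, applying Lemma~\ref{rotorwin} to the quasi rotor $(T_0,T_1,T_2,D_0,D_1,\{c_0,b_1,a_2\})$ with central element $e=b_1$ shows that an $N$-minor of $M/b_1$ would force one of $M\ba a_0$, $M\ba c_2$, $M\ba a_0/b_0$, $M\ba c_2/b_2$, or $M\ba c_0,a_1/b_1$ to be \ifc\ with an $N$-minor; each of these removes at most three elements from $M$, contradicting our standing assumption that $M$ has no quick win. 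Hence $M/b_1$, and therefore its minor $M\ba c_0,c_1/b_1$, has no $N$-minor.

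Lemma~\ref{stringybark} applied to $T_0,D_0,T_1,D_1,\dots,T_k$ then delivers, all at once, that $M\ba c_0,c_1,\dots,c_k$ has an $N$-minor, that $M\ba c_0,\dots,c_j/b_j$ has no $N$-minor for every $j\in\{1,\dots,k\}$, and that $M\ba c_0,\dots,c_j/a_j$ has no $N$-minor for every $j\in\{2,\dots,k\}$. The only case of \ref{notc} still to be addressed is $j=1$ on the $a$-side, namely that $M\ba c_0,c_1/a_1$ has no $N$-minor; but $M\ba c_0,c_1/a_1$ is a minor of $M/a_1$, and we have already recorded immediately above that $M/a_1$ has no $N$-minor. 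So this remaining case is immediate.

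There is no genuine obstacle here: the whole argument is a clean assembly of Lemma~\ref{stringybark}, Lemma~\ref{rotorwin}, and Lemma~\ref{betweenbts}, with no delicate combinatorial case analysis. The only mildly substantive step is the quick-win verification in the second paragraph, which is just a matter of listing the five outputs of Lemma~\ref{rotorwin} and observing that each removes at most three elements.
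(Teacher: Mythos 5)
There is a genuine gap in your final paragraph. You claim that "$M/a_1$ has no $N$-minor" was "recorded immediately above," but it was not: the only thing your Lemma~\ref{rotorwin} application (with $e=b_1$) establishes is that $M/b_1$ has no $N$-minor. Observe that $a_1$ is the element playing the role of $4$ in the quasi rotor $(\{1,2,\dots,9\},\dots)$, and $4\notin\{3,5,7\}$, so Lemma~\ref{rotorwin} does not apply directly to $M/a_1$, and the fact about $M/b_1$ does not transfer, since $M\ba c_0,c_1/a_1$ is a minor of $M/a_1$, not of $M/b_1$. So the case $j=1$, $a$-side --- that $M\ba c_0,c_1/a_1$ has no $N$-minor --- is genuinely unaddressed. (Your middle paragraph is correct but also redundant: that $M/b_1$ has no $N$-minor already follows from the invoked conclusion of Lemma~\ref{betweenbts}, since $n\ge 2$ forces $1\in\{1,\dots,n-1\}$.)

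The missing case is fixable in a couple of sentences. Since $\{b_0,a_1,b_1\}$ is a triad of $M\ba c_0,c_1$ (it is $D_0-c_0$ and $M\ba c_0,c_1$ is $3$-connected), the matroid $M\ba c_0,c_1/a_1$ has $\{b_0,b_1\}$ as a $2$-element cocircuit. As $N$ is internally $4$-connected with at least seven elements, it has no such cocircuit, so any $N$-minor of $M\ba c_0,c_1/a_1$ is an $N$-minor of $M\ba c_0,c_1/a_1/b_1$, which is a minor of $M\ba c_0,c_1/b_1$; and that matroid has no $N$-minor. Alternatively, one can show $M/a_1$ itself has no $N$-minor, which is what the paper does (citing Lemma~\ref{rotorwin}, slightly compressed): if $N\preceq M/a_1$ then, using the parallel pair $\{b_1,c_1\}$, $N\preceq M\ba b_1/a_1$; then $\{b_0,c_0,a_1\}$ is a triad of $M\ba b_1$, so $\{b_0,c_0\}$ is a series pair of $M\ba b_1/a_1$, giving $N\preceq M/c_0$, and Lemma~\ref{rotorwin} with $e=c_0$ (which is the element in position $3$) forces a quick win, a contradiction. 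Either supplement closes the gap; without it, your clean "no genuine obstacle" assessment is overoptimistic.
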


Next we show that

\begin{sublemma}
\label{bowtiestriped}
$(T_k,T_0,\{x,c_k,a_0,b_0\})$ is  a bowtie for  some  $x$ in $\{a_k,b_k\}$.
\end{sublemma}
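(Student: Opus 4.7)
We would argue by contradiction, supposing that for each $x \in \{a_k, b_k\}$ the triple $(T_k, T_0, \{x, c_k, a_0, b_0\})$ fails to be a bowtie. The plan is to apply Lemma~\ref{reachtheend} to the right-maximal bowtie string $T_0, D_0, T_1, D_1, \dots, T_k$, whose hypotheses hold since $M\ba c_0$ is \ffsc\ and $M\ba c_0, c_1$ has an $N$-minor. Of the five outcomes of that lemma, (i), (ii), and (v) produce a quick win, an open-rotor-chain or ladder win, and an enhanced-ladder win respectively, each contradicting the standing assumption that Theorem~\ref{killcasek} fails. Outcome (iv) asserts either that $M\ba c_0, c_1/b_1$ has an $N$-minor, or that $k = 1$ and $M\ba c_0, c_1/a_1$ does; but by \ref{notc}, $M/b_1$ has no $N$-minor, and $k \ge n + 1 \ge 3$ forbids $k = 1$. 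Consequently, outcome (iii) must hold: $M$ contains the configuration of Figure~\ref{drossfigiinouv} with $n$ replaced by $k$, and $M\ba c_0, c_1, \dots, c_k$ has an $N$-minor.

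To exploit this configuration, I would first apply Lemma~\ref{drossdistinct} to the ladder segment to control distinctness of its elements. Its case (iii) would give $(a_0, b_0) = (d_{k-1}, d_k)$, which, together with the $4$-cocircuit $\{d_{k-1}, a_k, c_k, d_k\}$ shown in Figure~\ref{drossfigi}, immediately yields the bowtie $(T_k, T_0, \{a_k, c_k, a_0, b_0\})$ we assumed not to exist. Its case (ii) identifies $M$, up to at most one further element, as the cycle matroid of a quartic M\"{o}bius ladder or a closely related matroid, in which a bowtie ring is present and can be trimmed for a bowtie-ring win. Thus the elements of the ladder segment are all distinct, and Lemma~\ref{dross} applies.

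The four outcomes of Lemma~\ref{dross} are then handled as follows. Outcome (i) is a ladder win, and outcome (iv), which identifies $M$ with a quartic M\"{o}bius ladder, supplies a bowtie-ring win; both contradict the assumption that the theorem fails. Outcome (ii) gives a triangle $T'$ with $\{d_{k-1}, d_k\} \subseteq T'$ together with the $4$-cocircuit $\{d_{k-1}, a_k, c_k, d_k\}$; after swapping the labels of $a_k$ and $b_k$ (which leaves $D_{k-1}$ intact as a set), the bowtie $(T_k, T', \{d_{k-1}, a_k, c_k, d_k\})$ extends the bowtie string by one more bowtie, contradicting its right-maximality. Outcome (iii) furnishes a $4$-fan of $M\ba c_0, \dots, c_k$ that is either a $4$-fan of $M\ba c_k$ with coguts $b_k$ (again yielding a bowtie extension) or a $4$-fan of $M\ba c_0$ with coguts $d_0$ and $a_0$ as an interior element. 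The main obstacle is this last subcase: I would apply Lemma~\ref{2.2} to the $4$-fan of $M\ba c_0, \dots, c_k$ and combine the resulting $N$-minor statement with orthogonality against $T_0$ and the triangle $\{c_0, d_0, a_1\}$, together with the ladder's series/parallel relations, to force an $N$-minor of $M\ba c_0, c_1/b_1$, contradicting \ref{notc}.
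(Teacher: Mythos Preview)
Your reduction to outcome (iii) of Lemma~\ref{reachtheend} is correct, but from that point on the argument has a genuine gap. The configuration of Figure~\ref{drossfigiinouv} already \emph{is} the result of applying Lemmas~\ref{drossdistinct} and~\ref{dross} inside the proof of Lemma~\ref{reachtheend}; in particular the extra $4$-fan $(\alpha,\beta,a_0,d_0)$ of $M\ba c_0$ that you isolate in your ``main obstacle'' subcase is precisely the $\alpha,\beta$ part of that figure. So re-applying those lemmas returns you to exactly the structure you started with, and your proposed escape via Lemma~\ref{2.2} does not close the loop: contracting $d_0$ in $M\ba c_0,\dots,c_k$ produces a series pair $\{a_1,d_1\}$ from the triad $\{d_0,a_1,d_1\}$, not any relation forcing $N\preceq M\ba c_0,c_1/b_1$, and deleting $u_1$ gives no contradiction either. (Your handling of case~(ii) of Lemma~\ref{drossdistinct} is also incomplete: the mere presence of a bowtie ring does not by itself yield a bowtie-ring win.)

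The paper's proof is much shorter because it uses the one piece of information you never touched: the rotor-chain triangle $\{c_0,b_1,a_2\}$ established just before~\ref{notc}. Once outcome~(iii) of Lemma~\ref{reachtheend} gives the ladder triangles $\{c_0,d_0,a_1\}$ and $\{c_1,d_1,a_2\}$, take symmetric differences:
\[
\{c_0,b_1,a_2\}\;\btu\;T_1 \;=\; \{c_0,a_1,c_1,a_2\},\qquad
\{c_0,a_1,c_1,a_2\}\;\btu\;\{c_0,d_0,a_1\} \;=\; \{c_1,a_2,d_0\}.
\]
Thus $\{c_1,a_2,d_0\}$ is a triangle, but so is $\{c_1,a_2,d_1\}$; hence $d_0=d_1$. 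Since $k\ge 3$, the set $\{d_0,a_1,c_1,d_1\}$ is one of the displayed ladder cocircuits, and with $d_0=d_1$ it collapses to a triad meeting $T_1$, contradicting internal $4$-connectivity. The missing idea in your attempt is exactly this use of the quasi-rotor triangle to clash with the ladder structure.
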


Assume that this fails. Then, by   
Lemma~\ref{reachtheend},   the bowtie string $T_0,D_0,T_1,\dots ,\linebreak T_n,D_n,T_{n+1},\dots ,T_k$ is contained in a ladder segment  as shown in Figure~\ref{drossfigiinouv}, where $k$ takes the place of $n$ in the figure. Thus $\{c_0,d_0,a_1\}$ and $\{c_1,d_1,a_2\}$ are triangles.  As $\{c_0,a_2,c_1,a_1\}$ is a circuit, using symmetric difference, we deduce that so too is $\{c_1,a_2,d_0\}$. Hence $d_0 = d_1$. But $\{d_0,a_1,c_1,d_1\}$ is a cocircuit as $k \ge 3$, so $\{d_0,a_1,c_1\}$ is a triad that meets a triangle of $M$; a \cn.  Thus \ref{bowtiestriped} holds.

By symmetry between $a_k$ and $b_k$, we may assume that $M$ has $(T_k,T_0,\{b_k,c_k,a_0,b_0\})$ as a bowtie.  
By \ref{notc} and Lemma~\ref{btring}, 
 $M\ba c_0,c_1,\dots ,c_k$ is \ffsc\ and $M$ has a triangle $\{e_1,f_1,g_1\}$ 
that is disjoint from $T_0 \cup T_1 \cup \dots \cup T_k$  
  such that, for some $j$ in $\{0,1,\dots ,k\}$, there is a cocircuit $\{e_1,f_1,h_1,c_j\}$ in $M$  for some $h_1$ in $\{b_j,a_j\}$.  
Suppose $j=0$.  
Then \ort\ implies that $\{e_1,f_1\}$ meets $\{b_1,a_2\}$, a \cn.  
Thus $j\geq 1$. 

 By Lemma~\ref{hazwaste} and \ref{notc}, part (v)  of that lemma holds. Thus $h = a_1$ and $j=1$, and $M\ba a_0,b_1,a_2,a_3,\dots ,a_k/b_2$, and hence $M/b_2$,  has an $N$-minor. Then, by \ref{notc}, $n= 2$ and $M$ contains  the structure in Figure~\ref{lastcase}.  
 
\begin{figure}[htb]
\centering
\includegraphics[scale=0.7]{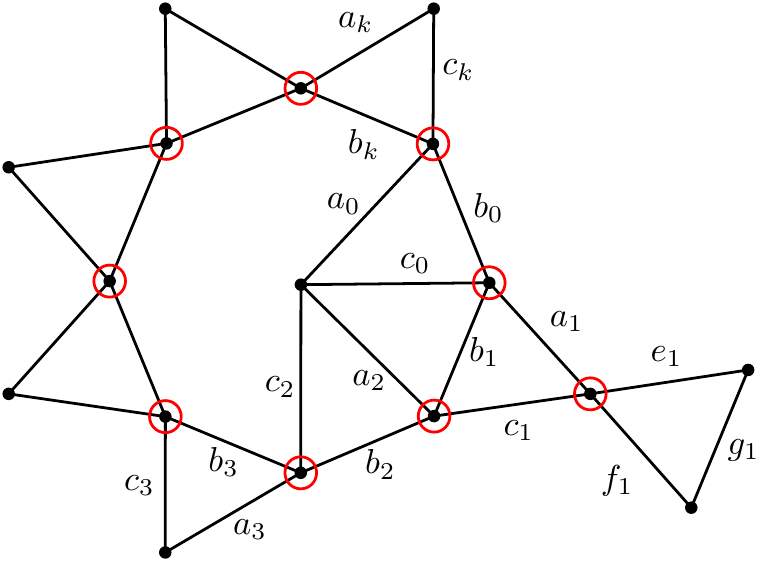}
\caption{}
\label{lastcase}
\end{figure}

To complete the proof of the theorem, we show that 
\begin{sublemma}
\label{mb2c2ifc}
$M/b_2\ba c_2$ is \ifc\ with an $N$-minor.
\end{sublemma}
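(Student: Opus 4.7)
The plan is to establish the two parts of \ref{mb2c2ifc} separately: the $N$-minor claim is straightforward, while the internal $4$-connectedness is the substantive task.

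For the $N$-minor, I invoke the fact just established via part~(v) of Lemma~\ref{hazwaste} that $M\ba a_0,b_1,a_2,a_3,\dots,a_k/b_2$ has an $N$-minor, whence so does the larger minor $M\ba a_2/b_2$. Since $T_2=\{a_2,b_2,c_2\}$ is a triangle of $M$, the pair $\{a_2,c_2\}$ is parallel in $M/b_2$, so $M/b_2\ba a_2\cong M/b_2\ba c_2$, which therefore has an $N$-minor.

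For $3$-connectedness, $M\ba c_2$ is $3$-connected because $c_2$ lies in the triangle $T_2$ of the internally $4$-connected matroid $M$. In $M\ba c_2$, the element $b_2$ belongs to the triad $\{b_2,a_3,b_3\}$ arising from $D_2=\{b_2,c_2,a_3,b_3\}$. Contracting $b_2$ preserves $3$-connectedness: $\{b_2,a_3,b_3\}$ is not also a triangle of $M$ (otherwise its symmetric difference with $T_3=\{a_3,b_3,c_3\}$ would yield the $2$-circuit $\{b_2,c_3\}$, contradicting $M$ being $3$-connected), so no parallel pair on $\{a_3,b_3\}$ is created; moreover, no $3$-element cocircuit of $M$ contains $c_2$ (by orthogonality with the triangle $T_2$), so $M\ba c_2$ is cosimple, whence no $2$-cocircuit is created in $M\ba c_2/b_2$ either.

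For internal $4$-connectedness, I would first catalogue the surviving triangles and cocircuits of $M/b_2\ba c_2$: triangles include $T_0,T_1,T_3,\dots,T_k$, the quasi-rotor triangle $\{c_0,b_1,a_2\}$, and $\{e_1,f_1,g_1\}$; cocircuits include $D_0,D_3,\dots,D_{k-1}$, the ring-closing cocircuit $\{b_k,c_k,a_0,b_0\}$, and $\{e_1,f_1,a_1,c_1\}$. The cocircuits $D_1$ and $D_2$ both contain $b_2$ and are lost under contraction, but $(D_1\symdif D_2)-c_2=\{b_1,c_1,a_2,a_3,b_3\}$ is a disjoint union of cocircuits in $M/b_2\ba c_2$. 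Assuming a $(4,3)$-violator $(X,Y)$ of $M/b_2\ba c_2$, I would apply orthogonality with these triangles and cocircuits and standard full-closure arguments to show that either $X$ or $Y$ is forced to absorb enough of the structure that adjoining $b_2$ and $c_2$ to the appropriate side produces a $3$-separation $(X',Y')$ of $M$ with $\min(|X'|,|Y'|)\geq 4$, contradicting the internal $4$-connectedness of $M$.

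The main obstacle is the case analysis for the $(4,3)$-violator: the contraction of $b_2$ and deletion of $c_2$ create a modified structure that is mostly inherited from $M$ but with $T_2$ absorbed and $D_1,D_2$ replaced by their combined trace on $\{b_1,c_1,a_2,a_3,b_3\}$. The key insight is that the triangle $\{c_0,b_1,a_2\}$, which survives intact, links $T_0$, $T_1$, and the remnant of $T_2$, so that any non-trivial $3$-separation of $M/b_2\ba c_2$ would, after full closure, absorb substantial portions of the bowtie ring together with $\{e_1,f_1,g_1\}$ and lift to a forbidden $3$-separation of $M$.
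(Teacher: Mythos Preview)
Your $N$-minor argument is fine. The gaps are in the connectivity arguments.

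For $3$-connectedness, your check is incomplete: you verify only that $\{b_2,a_3,b_3\}$ is not a triangle, but what you actually need is that $b_2$ lies in \emph{no} triangle of $M\ba c_2$, since any such triangle would produce a parallel pair after contracting $b_2$. The paper obtains this cleanly by invoking what was already established earlier in the proof of the main theorem (via Lemma~\ref{betweenbts}/Lemma~\ref{rotorchainends}): $M\ba c_2$ is $(4,4,S)$-connected. In a $(4,4,S)$-connected matroid there are no $5$-fans, so the coguts element $b_2$ of the $4$-fan $(c_3,b_3,a_3,b_2)$ cannot be in a triangle; hence $M\ba c_2/b_2$ is $3$-connected. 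Your direct approach could be repaired, but it would require separately ruling out triangles $\{b_2,y,x\}$ with $y\in\{b_1,c_1,a_2\}$ and $x\in\{a_3,b_3\}$, and you do not do this.

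For internal $4$-connectedness, your ``lift a $(4,3)$-violator to $M$'' plan is too coarse and misses the essential case split. The paper proceeds in two steps. First, it shows $M/b_2\ba c_2$ is sequentially $4$-connected: given a non-sequential $3$-separation $(U,V)$, it cites \cite[Lemma~3.3]{cmoV} to put $T_0\cup T_1\cup a_2\subseteq U$, then $(U\cup\{b_2,c_2\},V)$ is a non-sequential $3$-separation of $M$. Second, it rules out $4$-fans $(\alpha,\beta,\gamma,\delta)$ by casing on whether $\{\beta,\gamma,\delta\}$ is already a triad of $M$. If it is, then $\{\alpha,\beta,\gamma,b_2\}$ is a circuit of $M$, and orthogonality with $D_1$ and $D_2$ forces $\{\beta,\gamma\}$ to meet a triangle of $M$, contradicting that $\beta,\gamma$ lie in a triad of the internally $4$-connected $M$. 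If not, then $\{c_2,\beta,\gamma,\delta\}$ is a cocircuit of $M$; orthogonality with $T_2$ puts $a_2\in\{\beta,\gamma,\delta\}$, and then orthogonality with the quasi-rotor triangle $\{c_0,b_1,a_2\}$ and with $T_0$ or $T_1$ traps the whole cocircuit inside $T_0\cup T_1\cup T_2$, yielding $\lambda(T_0\cup T_1\cup T_2)\le 2$. Your sketch does not identify this dichotomy, and in particular the second case cannot be handled by a generic full-closure argument: the quasi-rotor triangle $\{c_0,b_1,a_2\}$ is doing real work here, not just linking pieces of the bowtie ring.
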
  

Since $M/b_2$ has an $N$-minor,  $M/b_2\ba c_2$ also has an $N$-minor.  
By Lemma~\ref{rotorchainends}, $M\ba c_2$ is \ffsc\ and, as it has $b_2$ as the coguts element of a $4$-fan, we deduce that $M/b_2\ba c_2$ is \thc.  
Suppose that $M/b_2\ba c_2$ has $(U,V)$ as a \ns\ \ths.  
Then, by~\cite[Lemma~3.3]{cmoV}, we may assume that $T_0\cup T_1\cup a_2\subseteq U$.  
Thus $(U\cup \{b_2,c_2\},V)$ is a \ns\ \ths\ of $M$; a \cn.  
Therefore $M/b_2\ba c_2$ is \sfc.  

Let $(\al ,\be,\ga,\de)$ be a $4$-fan in $M/b_2\ba c_2$.  
Suppose $\{\be,\ga,\de\}$ is a triad in $M$.  Then none of $\be, \ga,$ or $\de$ is in a triangle of $M$. 
Hence $\{b_2,\al,\be,\ga\}$ is a circuit in $M$, and \ort\ implies that this circuit meets both $\{b_1,c_1,a_2\}$ and $\{a_3,b_3\}$. Thus $\{\be, \ga\}$ meets a triangle; a \cn. 
We deduce that $\{\be,\ga,\de\}$ is not a triad in $M$. Thus $\{c_2,\be,\ga,\de\}$ is a cocircuit of $M$.  
Then, by \ort, $a_2\in\{\be,\ga,\de\}$, so $\{c_0,b_1\}$ also meets $\{\be,\ga,\de\}$, and either $T_0$ or $T_1$ contains two elements of $\{\be,\ga,\de\}$.  
Thus $\lambda (T_0\cup T_1\cup T_2)\leq 2$; a \cn.  
We conclude that $M/b_2\ba c_2$ is \ifc, so \ref{mb2c2ifc} holds and, hence, so does the theorem.  
\end{proof}

\section*{Acknowledgements} 
The authors thank Dillon Mayhew for numerous helpful discussions.

\end{document}